\journal{{\tt arXiv.org}}
\pgfplotsset{compat=newest}
\newcommand{\beginsupplement}{%
        \setcounter{table}{0}
        \renewcommand{\thetable}{S\arabic{table}}%
        \setcounter{figure}{0}
        \renewcommand{\thefigure}{S\arabic{figure}}%
     }
\title{A new entropy-variable-based discretization method for minimum entropy moment approximations of linear kinetic
equations\texorpdfstring{\footnote{{\textbf{Funding:}} The authors acknowledge funding by the Deutsche Forschungsgemeinschaft (DFG, German Research Foundation)
  under Germany's Excellence Strategy EXC 2044 –390685587, Mathematics Münster: Dynamics–Geometry–Structure.}{}
}}
\author[tl]{Tobias Leibner}
\author[mo]{Mario Ohlberger}
\address[tl]{Fachbereich Mathematik und Informatik, WWU M\"unster, Einsteinstrasse 62, 48149 M\"unster, {\tt tobias.leibner@uni-muenster.de}}
\address[mo]{Fachbereich Mathematik und Informatik, WWU M\"unster, Einsteinstrasse 62, 48149 M\"unster, {\tt mario.ohlberger@uni-muenster.de}}
\date{\today}
\newcommand*{\eulere}{\mathrm{e}}
\newcommand*{\regepsilon}{\epsilon}
\newcommand*{\Cpp}{C\nolinebreak[4]\hspace{-.05em}\raisebox{.4ex}{\relsize{-3}{\textbf{++}}}\ }
\newcommand{\quand}{\quad \mbox{and} \quad}
\theoremstyle{plain}
\newtheorem{theorem}{Theorem}[section]
\newtheorem{definition}[theorem]{Definition}
\newtheorem{remark}[theorem]{Remark}
\newtheorem{example}[theorem]{Example}
\newtheorem{corollary}[theorem]{Corollary}
\numberwithin{equation}{section}            
\DeclareMathOperator{\argmin}{argmin}
\DeclareMathOperator{\rangeop}{Range}
\newcommand*{\derivative}{\mathrm{d}}
\newcommand*{\dirac}{\delta}
\newcommand*{\floor}[1]{\left\lfloor #1 \right\rfloor}
\newcommand*{\interior}[1]{\operatorname{int}\left(#1\right)}
\newcommand*{\landauO}{\mathcal{O}}
\newcommand*{\outernormal}{\Vn}
\newcommand*{\volume}[1]{\abs{#1}}
\newcommand*{\indicator}[1]{\mathbbm{1}_{#1}}
\providecommand\given{}
\newcommand\SetSymbol[1][]{%
  \nonscript\:#1\vert{}
  \allowbreak{}
  \nonscript\:
  \mathopen{}}
\DeclarePairedDelimiterX\Set[1]\{\}{%
\renewcommand\given{\,\SetSymbol[\delimsize]\,}
\,#1\,
}
\newcommand*{\R}{\mathbb{R}}
\newcommand*{\Rpos}{\R^{>0}}
\newcommand*{\Lp}[1]{L^{#1}}
\newcommand*{\Lppos}[1]{\Lp{#1}_{+}}
\newcommand*{\LpError}[2][ ]{E_{#1}^{#2}}
\newcommand*{\AnsatzSpace}[1][\basis]{\mathcal{A}_{#1}}
\newcommand*{\sphere}[1][2]{\mathcal{S}^{#1}}
\newcommand*{\abs}[1]{\left\vert #1 \right\vert}
\newcommand*{\norm}[2]{\left\lVert #1 \right\rVert_{#2}}
\newcommand*{\generalnorm}[1]{\left\lVert #1 \right\rVert}
\newcommand*{\massmatrix}{\MM}
\newcommand*{\eyematrix}{\MI}
\newcommand*{\gridindexset}{I_{\fvgrid}}
\newcommand*{\gridindex}{i}
\newcommand*{\gridindexalt}{j}
\newcommand*{\cellindex}{\gridindex}
\newcommand*{\gridsize}{n_{\spatialvar}}
\newcommand*{\gridsizeoned}{n_{\spatialvarcomp}}
\newcommand*{\timeindex}{\kappa}
\newcommand*{\dimension}{d}
\newcommand*{\dimindex}{k}
\newcommand*{\momentindex}{l} 
\newcommand*{\momentindexvar}{m} 
\newcommand*{\rkindex}{p} 
\newcommand*{\rkindexalt}{q} 
\newcommand*{\basisindex}{\momentindex} 
\newcommand*{\basisindexvar}{\momentindexvar} 
\newcommand*{\momentorder}{N} 
\newcommand*{\momentnumber}{n} 
\newcommand*{\fvvecsize}{\fedimension} 
\newcommand*{\fvvecindex}{l} 
\newcommand*{\altvariable}[1]{\tilde{#1}} 
\newcommand*{\numtimesteps}{n_t}
\newcommand*{\dx}[1][]{\partial_{x_{#1}}}
\newcommand*{\dz}{\partial_{\z}}
\newcommand*{\dt}{\partial_\timevar}
\newcommand*{\spatialGradient}{\nabla_{\spatialvar}}
\newcommand*{\multipliersGradient}{\nabla_{\multipliers}}
\newcommand*{\jacobianop}{\MD}
\newcommand*{\x}{\ensuremath{x}}
\newcommand*{\y}{\ensuremath{y}}
\newcommand*{\z}{\ensuremath{z}}
\newcommand*{\domain}{X}
\newcommand*{\lipschitzdomain}{A}
\newcommand*{\spatialvar}{\Vx}
\newcommand*{\spatialvaronedimension}{x}
\newcommand*{\spatialvarcomp}{x}
\newcommand*{\velocityvar}{\SC}
\newcommand*{\velocityvaronedimension}{\SCheight}
\newcommand*{\gridwidth}{\Delta x}
\newcommand*{\fvgrid}{\mathcal{G}}
\newcommand*{\fvgridentity}{T}
\newcommand*{\fvgridface}{S}
\newcommand*{\cell}[1]{I_{#1}}
\newcommand*{\ccell}[1]{\cell{{#1}}}
\newcommand*{\cellmean}[2][\cellindex]{\overline{#2}_{#1}}
\newcommand*{\timevar}{t}
\newcommand*{\timeint}{T} 
\newcommand*{\timestep}{\Delta\timevar}
\newcommand*{\timestepnonadaptive}{\timestep_{\mathrm{max}}} 
\newcommand*{\timesteppingorder}{r}
\newcommand*{\rktol}{\uptau}
\newcommand*{\rkstages}{s}
\newcommand*{\multipliersrkstage}[1]{\Vbeta^{#1}}
\newcommand*{\rkabstol}{\rktol_{\mathrm{abs}}}
\newcommand*{\rkreltol}{\rktol_{\mathrm{rel}}}
\newcommand*{\rkerr}{\text{err}}
\newcommand*{\rkb}{\check{b}}
\newcommand*{\rkbalt}{\widetilde{b}}
\newcommand*{\rka}{\check{a}}
\newcommand*{\tf}{t_{\text{end}}} 
  \def\command@factory#1{%
    \expandafter\def\csname V#1\endcsname{\mathbf{#1}}
  }
\command@factory\next{}
\def\greekvectors#1{%
\@for\next:=#1\do{%
\def\X##1;{%
\expandafter\def\csname V##1\endcsname{\boldsymbol{\csname##1\endcsname}}
}
\expandafter\X\next;
}
}
  \def\command@factory#1{%
    \expandafter\def\csname M#1\endcsname{\mathbf{#1}}
  }
\command@factory\next{}
\def\greekmatrices#1{%
\@for\next:=#1\do{%
\def\X##1;{%
\expandafter\def\csname M##1\endcsname{\boldsymbol{\csname##1\endcsname}}
}
\expandafter\X\next;
}
}
\newcommand*{\fedimension}{M}
\newcommand*{\Source}{\Vs}
\DeclareMathOperator{\coll}{\mathcal{C}}
\newcommand*{\collisionop}{\coll}
\newcommand*{\collision}[1]{\collisionop\left(#1\right)}
\newcommand*{\SC}{\ensuremath{\boldsymbol{\Omega}}} 
\newcommand*{\SCx}{\ensuremath{\Omega_\x}} 
\newcommand*{\SCy}{\ensuremath{\Omega_\y}} 
\newcommand*{\SCz}{\ensuremath{\Omega_\z}} 
\newcommand*{\SCcomp}[1][i]{\ensuremath{\Omega_{#1}}}
\newcommand*{\SCheight}{\ensuremath{\mu}}
\newcommand*{\SCangle}{\ensuremath{\varphi}}
\newcommand*{\Flux}{\Vf}
\newcommand*{\Fluxmatrix}{\MF}
\newcommand*{\numericalflux}{\Vg}
\newcommand*{\kineticflux}{\numericalflux^{kin}}
\newcommand*{\gridneighbors}[1]{\mathcal{N}(#1)} 
\newcommand*{\scattering}{\sigma_s}
\newcommand*{\absorption}{\sigma_a}
\newcommand*{\crosssection}{\sigma_t}
\newcommand*{\crosssectionmax}{\sigma_{t}^{\mathrm{max}}} 
\newcommand*{\source}{Q}
\newcommand*{\moments}[1][ ]{\Vu_{#1}} 
\newcommand*{\momentsfv}[1][ ]{\overline{\Vu}_{#1}} 
\newcommand*{\multipliersfv}[1][ ]{\overline{\Valpha}_{#1}} 
\newcommand*{\multipliersembedded}[1][ ]{\widetilde{\Valpha}_{#1}} 
\newcommand*{\momentscellmean}[1]{\cellmean[#1]{\moments}} 
\newcommand*{\isotropicmomentbasis}[1]{\moments[#1]^{\mathrm{iso}}}
\newcommand*{\momentcomp}[1]{u_{#1}}
\newcommand*{\normalizedmoments}[1][ ]{\Vphi_{#1}}
\newcommand*{\uupdateterm}{\Vu^{\uparrow}}
\newcommand*{\uupdatetermexplicit}{\Vu^{\uparrow,\timeindex}}
\newcommand*{\uupdatetermcomp}{u^{\uparrow}}
\newcommand*{\alphaupdateterm}{\Valpha^{\uparrow}}
\newcommand*{\multipliers}[1][ ]{\Valpha_{#1}}
\newcommand*{\numericalmultipliers}{\widetilde{\Valpha}}
\newcommand*{\multipliersone}[1][\basis]{\Valpha_{#1}^{\mathbbm{1}}}
\newcommand*{\multipliersiso}{\Valpha^{\mathrm{iso}}}
\newcommand*{\multiplierscomp}[1]{\alpha_{#1}}
\newcommand*{\multiplierscomptilde}[1]{\widetilde{\alpha}_{#1}}
\newcommand*{\density}{\rho} 
\newcommand*{\densityvacuum}{{\rho}_{\mathrm{vac}}}
\newcommand*{\distribution}[1][ ]{\psi_{#1}}
\newcommand*{\distributioninitial}[1][ ]{\psi_0}
\newcommand*{\distributiontzero}{\distribution[\timevar=0]}
\newcommand*{\distributionboundary}{\distribution[b]}
\newcommand*{\distributionvacuum}{\distribution[\mathrm{vac}]}
\newcommand*{\basis}{\Vb} 
\DeclareMathOperator{\basisop}{\basis}
\newcommand*{\basiscomp}[1][\basisindex]{b_{#1}}
\newcommand*{\fmbasis}[1][\momentorder]{\Vf_{#1}}
\newcommand*{\pmbasis}[1][\momentnumber]{\Vp_{#1}}
\newcommand*{\pmbasiscomp}[1][\basisindex]{p_{#1}}
\newcommand*{\hfbasis}[1][\momentnumber]{\Vh_{#1}}
\newcommand*{\hfbasiscomp}[1][\basisindex]{h_{#1}}
\newcommand*{\PN}[1][\momentorder]{\mathrm{P}_{#1}}
\newcommand*{\MN}[1][\momentorder]{\mathrm{M}_{#1}}
\newcommand*{\HFMN}[1][\momentnumber]{\mathrm{HFM}_{#1}}
\newcommand*{\PMMN}[1][\momentnumber]{\mathrm{PMM}_{#1}}
\newcommand*{\RD}[2]{\mathcal{R}_{#1}^{#2}}
\newcommand*{\RDpos}[1]{\mathcal{R}^{+}_{#1}}
\newcommand*{\RQ}[1]{\RD{#1}{\mathcal{Q}}}
\newcommand*{\hankelhalfind}{k}
\newcommand*{\entropydomain}{D}
\newcommand*{\angularDomain}{V}
\newcommand*{\angularDomainp}[1]{V^{+,#1}}
\newcommand*{\angularDomainm}[1]{V^{-,#1}}
\newcommand*{\angularQuadrature}{\mathcal{Q}}
\newcommand*{\ints}[1]{\left<#1\right>} 
\newcommand*{\intA}[2]{\left<#1\right>_{#2}}
\newcommand*{\intp}[1]{\intA{#1}{+}}
\newcommand*{\intm}[1]{\intA{#1}{-}}
\newcommand*{\intpn}[2]{\intA{#1}{+,#2}}
\newcommand*{\intmn}[2]{\intA{#1}{-,#2}}
\newcommand*{\intvar}[1]{\,\mathrm{d}#1}
\newcommand*{\ld}[1]{{#1}_*} 
\newcommand*{\entropy}{\eta} 
\newcommand*{\gesamtentropy}{\hat{\mathcal{H}}} 
\newcommand*{\gesamtalpha}{\hat{\Valpha}} 
\newcommand*{\gesamtbeta}{\hat{\Vbeta}} 
\newcommand*{\gesamtrhs}{\hat{\Valpha}^{\uparrow}} 
\newcommand*{\boseeinsteinentropy}{\entropy_{{}_{\mathrm{BE}}}} 
\newcommand*{\entropyFunctional}{\mathcal{H}} 
\newcommand*{\ansatz}[1][ ]{\hat{\psi}_{#1}}
\newcommand*{\ansatziso}{\hat{\psi}^{\mathrm{iso}}}
\newcommand*{\opttol}{\tau}
\newcommand*{\opttolmod}{\altvariable{\tau}}
\newcommand*{\opttoleps}{{\varepsilon_{\gamma}}}
\newcommand*{\velocityHessian}{\MJ}
\newcommand*{\optObjective}{p}
\newcommand*{\optGradient}{\Vq}
\newcommand*{\optHessian}{\MH}
\newcommand*{\optHessianInvEntry}{H^{-1}}
\newcommand*{\newtonDirection}{\Vd}
\newcommand*{\optNewtonDirection}{\newtonDirection}
\newcommand*{\regularizationParameter}[1][ ]{r_{#1}}
\newcommand*{\regularizedmoments}[1][ ]{\moments[#1]^{\regularizationParameter}} 
\newcommand*{\multiplierstilde}{\Vbeta}
\newcommand*{\isomatrix}[1][\basis]{\MG_{#1}^{\mathrm{iso}}}
\newcommand*{\newschemeregparam}{\epsilon}
\DeclareFontFamily{U}{tipa}{}
\DeclareFontShape{U}{tipa}{m}{n}{<->tipa10}{}
\newcommand*{\arc@char}{{\usefont{U}{tipa}{m}{n}\symbol{62}}}%
\newcommand*{\arc}[1]{\mathpalette\arc@arc{#1}}
\newcommand*{\arc@arc}[2]{%
  \sbox0{\(\m@th#1#2\)}%
  \vbox{
    \hbox{\resizebox{\wd0}{\height}{\arc@char}}
    \nointerlineskip{}
    \box0
  }%
}
\newcommand*{\sphericaltriangle}{\arc{K}}
\newcommand*{\refinementnumber}{r} 
\newcommand*{\nvertex}{n_v}
\newcommand*{\nentity}{n_e}
\newcommand*{\generalpartition}{\mathcal{P}}
\definecolor{greenyellow}   {cmyk}{0.15, 0, 0.69, 0}
\definecolor{yellow}        {cmyk}{0, 0, 1, 0}
\definecolor{goldenrod}     {cmyk}{0, 0.10, 0.84, 0}
\definecolor{dandelion}     {cmyk}{0, 0.29, 0.84, 0}
\definecolor{apricot}       {cmyk}{0, 0.32, 0.52, 0}
\definecolor{peach}         {cmyk}{0, 0.50, 0.70, 0}
\definecolor{melon}         {cmyk}{0, 0.46, 0.50, 0}
\definecolor{yelloworange}  {cmyk}{0, 0.42, 1, 0}
\definecolor{orange}        {cmyk}{0, 0.61, 0.87, 0}
\definecolor{burntorange}   {cmyk}{0, 0.51, 1, 0}
\definecolor{bittersweet}   {cmyk}{0, 0.75, 1, 0.24}
\definecolor{redorange}     {cmyk}{0, 0.77, 0.87, 0}
\definecolor{mahogany}      {cmyk}{0, 0.85, 0.87, 0.35}
\definecolor{maroon}        {cmyk}{0, 0.87, 0.68, 0.32}
\definecolor{brickred}      {cmyk}{0, 0.89, 0.94, 0.28}
\definecolor{red}           {cmyk}{0, 1, 1, 0}
\definecolor{orangered}     {cmyk}{0, 1, 0.50, 0}
\definecolor{rubinered}     {cmyk}{0, 1, 0.13, 0}
\definecolor{wildstrawberry}{cmyk}{0, 0.96, 0.39, 0}
\definecolor{salmon}        {cmyk}{0, 0.53, 0.38, 0}
\definecolor{carnationpink} {cmyk}{0, 0.63, 0, 0}
\definecolor{magenta}       {cmyk}{0, 1, 0, 0}
\definecolor{violetred}     {cmyk}{0, 0.81, 0, 0}
\definecolor{rhodamine}     {cmyk}{0, 0.82, 0, 0}
\definecolor{mulberry}      {cmyk}{0.34, 0.90, 0, 0.02}
\definecolor{redviolet}     {cmyk}{0.07, 0.90, 0, 0.34}
\definecolor{fuchsia}       {cmyk}{0.47, 0.91, 0, 0.08}
\definecolor{lavender}      {cmyk}{0, 0.48, 0, 0}
\definecolor{thistle}       {cmyk}{0.12, 0.59, 0, 0}
\definecolor{orchid}        {cmyk}{0.32, 0.64, 0, 0}
\definecolor{darkorchid}    {cmyk}{0.40, 0.80, 0.20, 0}
\definecolor{purple}        {cmyk}{0.45, 0.86, 0, 0}
\definecolor{plum}          {cmyk}{0.50, 1, 0, 0}
\definecolor{violet}        {cmyk}{0.79, 0.88, 0, 0}
\definecolor{royalpurple}   {cmyk}{0.75, 0.90, 0, 0}
\definecolor{blueviolet}    {cmyk}{0.86, 0.91, 0, 0.04}
\definecolor{periwinkle}    {cmyk}{0.57, 0.55, 0, 0}
\definecolor{cadetblue}     {cmyk}{0.62, 0.57, 0.23, 0}
\definecolor{cornflowerblue}{cmyk}{0.65, 0.13, 0, 0}
\definecolor{midnightblue}  {cmyk}{0.98, 0.13, 0, 0.43}
\definecolor{navyblue}      {cmyk}{0.94, 0.54, 0, 0}
\definecolor{royalblue}     {cmyk}{1, 0.50, 0, 0}
\definecolor{blue}          {cmyk}{1, 1, 0, 0}
\definecolor{cerulean}      {cmyk}{0.94, 0.11, 0, 0}
\definecolor{cyan}          {cmyk}{1, 0, 0, 0}
\definecolor{processblue}   {cmyk}{0.96, 0, 0, 0}
\definecolor{skyblue}       {cmyk}{0.62, 0, 0.12, 0}
\definecolor{turquoise}     {cmyk}{0.85, 0, 0.20, 0}
\definecolor{tealblue}      {cmyk}{0.86, 0, 0.34, 0.02}
\definecolor{aquamarine}    {cmyk}{0.82, 0, 0.30, 0}
\definecolor{bluegreen}     {cmyk}{0.85, 0, 0.33, 0}
\definecolor{emerald}       {cmyk}{1, 0, 0.50, 0}
\definecolor{junglegreen}   {cmyk}{0.99, 0, 0.52, 0}
\definecolor{seagreen}      {cmyk}{0.69, 0, 0.50, 0}
\definecolor{green}         {cmyk}{1, 0, 1, 0}
\definecolor{forestgreen}   {cmyk}{0.91, 0, 0.88, 0.12}
\definecolor{pinegreen}     {cmyk}{0.92, 0, 0.59, 0.25}
\definecolor{limegreen}     {cmyk}{0.50, 0, 1, 0}
\definecolor{yellowgreen}   {cmyk}{0.44, 0, 0.74, 0}
\definecolor{springgreen}   {cmyk}{0.26, 0, 0.76, 0}
\definecolor{olivegreen}    {cmyk}{0.64, 0, 0.95, 0.40}
\definecolor{rawsienna}     {cmyk}{0, 0.72, 1, 0.45}
\definecolor{sepia}         {cmyk}{0, 0.83, 1, 0.70}
\definecolor{brown}         {cmyk}{0, 0.81, 1, 0.60}
\definecolor{tan}           {cmyk}{0.14, 0.42, 0.56, 0}
\definecolor{gray}          {cmyk}{0, 0, 0, 0.50}
\definecolor{black}         {cmyk}{0, 0, 0, 1}
\definecolor{white}         {cmyk}{0, 0, 0, 0}
\pgfplotsset{
  discard if not/.style 2 args={
      x filter/.code={
          \edef\tempa{\thisrow{#1}}
          \edef\tempb{#2}
          \ifx\tempa\tempb{}
          \else
            \def\pgfmathresult{inf}
          \fi
        }
    }
}
\newlength{\figurewidth}
\newlength{\figureheight}
\newlength{\figurewidthfortwo}
\newlength{\figureheightforthree}
\pgfplotsset{%
  compat=1.11,
  colormap={parula}{%
      rgb(0pt)=(0.2081,0.1663,0.5292); 
      rgb(1pt)=(0.208355,0.16778,0.532238); 
      rgb(2pt)=(0.208611,0.169261,0.535275); 
      rgb(3pt)=(0.208866,0.170741,0.538313); 
      rgb(4pt)=(0.209121,0.172222,0.54135); 
      rgb(5pt)=(0.209376,0.173702,0.544388); 
      rgb(6pt)=(0.209632,0.175183,0.547425); 
      rgb(7pt)=(0.209887,0.176663,0.550463); 
      rgb(8pt)=(0.210134,0.178144,0.553505); 
      rgb(9pt)=(0.210338,0.179624,0.556568); 
      rgb(10pt)=(0.210542,0.181105,0.559631); 
      rgb(11pt)=(0.210746,0.182585,0.562694); 
      rgb(12pt)=(0.210944,0.184066,0.565763); 
      rgb(13pt)=(0.211123,0.185546,0.568852); 
      rgb(14pt)=(0.211302,0.187027,0.57194); 
      rgb(15pt)=(0.21148,0.188507,0.575029); 
      rgb(16pt)=(0.211642,0.189996,0.578117); 
      rgb(17pt)=(0.21177,0.191502,0.581206); 
      rgb(18pt)=(0.211897,0.193008,0.584295); 
      rgb(19pt)=(0.212025,0.194514,0.587383); 
      rgb(20pt)=(0.212132,0.19602,0.590472); 
      rgb(21pt)=(0.212208,0.197526,0.59356); 
      rgb(22pt)=(0.212285,0.199032,0.596649); 
      rgb(23pt)=(0.212361,0.200538,0.599738); 
      rgb(24pt)=(0.212413,0.202044,0.602839); 
      rgb(25pt)=(0.212438,0.20355,0.605953); 
      rgb(26pt)=(0.212464,0.205056,0.609067); 
      rgb(27pt)=(0.212489,0.206562,0.612181); 
      rgb(28pt)=(0.212471,0.208083,0.61531); 
      rgb(29pt)=(0.21242,0.209614,0.61845); 
      rgb(30pt)=(0.212368,0.211146,0.621589); 
      rgb(31pt)=(0.212317,0.212677,0.624729); 
      rgb(32pt)=(0.212216,0.214209,0.627868); 
      rgb(33pt)=(0.212088,0.215741,0.631008); 
      rgb(34pt)=(0.211961,0.217272,0.634148); 
      rgb(35pt)=(0.211833,0.218804,0.637287); 
      rgb(36pt)=(0.211668,0.220354,0.640446); 
      rgb(37pt)=(0.211489,0.221911,0.643611); 
      rgb(38pt)=(0.21131,0.223468,0.646776); 
      rgb(39pt)=(0.211132,0.225025,0.649941); 
      rgb(40pt)=(0.210848,0.226603,0.653107); 
      rgb(41pt)=(0.210541,0.228186,0.656272); 
      rgb(42pt)=(0.210235,0.229768,0.659437); 
      rgb(43pt)=(0.209929,0.231351,0.662602); 
      rgb(44pt)=(0.209553,0.232934,0.665767); 
      rgb(45pt)=(0.20917,0.234516,0.668932); 
      rgb(46pt)=(0.208787,0.236099,0.672098); 
      rgb(47pt)=(0.208405,0.237681,0.675263); 
      rgb(48pt)=(0.20787,0.239289,0.678453); 
      rgb(49pt)=(0.207334,0.240897,0.681644); 
      rgb(50pt)=(0.206798,0.242505,0.684835); 
      rgb(51pt)=(0.206255,0.244114,0.688025); 
      rgb(52pt)=(0.205617,0.245722,0.691216); 
      rgb(53pt)=(0.204979,0.24733,0.694407); 
      rgb(54pt)=(0.204341,0.248938,0.697597); 
      rgb(55pt)=(0.203675,0.250554,0.700792); 
      rgb(56pt)=(0.202858,0.252213,0.704008); 
      rgb(57pt)=(0.202041,0.253872,0.707224); 
      rgb(58pt)=(0.201225,0.255531,0.710441); 
      rgb(59pt)=(0.200372,0.257184,0.713657); 
      rgb(60pt)=(0.199402,0.258818,0.716873); 
      rgb(61pt)=(0.198432,0.260452,0.720089); 
      rgb(62pt)=(0.197462,0.262085,0.723305); 
      rgb(63pt)=(0.196419,0.263735,0.726522); 
      rgb(64pt)=(0.195219,0.26542,0.729738); 
      rgb(65pt)=(0.19402,0.267105,0.732954); 
      rgb(66pt)=(0.19282,0.268789,0.73617); 
      rgb(67pt)=(0.191549,0.270474,0.739386); 
      rgb(68pt)=(0.19017,0.272159,0.742603); 
      rgb(69pt)=(0.188792,0.273843,0.745819); 
      rgb(70pt)=(0.187414,0.275528,0.749035); 
      rgb(71pt)=(0.1859,0.277237,0.752264); 
      rgb(72pt)=(0.184241,0.278973,0.755505); 
      rgb(73pt)=(0.182581,0.280709,0.758747); 
      rgb(74pt)=(0.180922,0.282444,0.761989); 
      rgb(75pt)=(0.179133,0.284209,0.765245); 
      rgb(76pt)=(0.177244,0.285996,0.768512); 
      rgb(77pt)=(0.175356,0.287783,0.77178); 
      rgb(78pt)=(0.173467,0.289569,0.775047); 
      rgb(79pt)=(0.171363,0.291406,0.778314); 
      rgb(80pt)=(0.169142,0.293269,0.781581); 
      rgb(81pt)=(0.166922,0.295132,0.784849); 
      rgb(82pt)=(0.164701,0.296996,0.788116); 
      rgb(83pt)=(0.162238,0.298934,0.791365); 
      rgb(84pt)=(0.159686,0.300899,0.794606); 
      rgb(85pt)=(0.157133,0.302865,0.797848); 
      rgb(86pt)=(0.15458,0.30483,0.80109); 
      rgb(87pt)=(0.151738,0.306858,0.804352); 
      rgb(88pt)=(0.148828,0.3089,0.80762); 
      rgb(89pt)=(0.145918,0.310942,0.810887); 
      rgb(90pt)=(0.143008,0.312984,0.814154); 
      rgb(91pt)=(0.139687,0.31514,0.81733); 
      rgb(92pt)=(0.136318,0.31731,0.820495); 
      rgb(93pt)=(0.132949,0.319479,0.82366); 
      rgb(94pt)=(0.129579,0.321649,0.826826); 
      rgb(95pt)=(0.125811,0.323918,0.829841); 
      rgb(96pt)=(0.122033,0.32619,0.832853); 
      rgb(97pt)=(0.118256,0.328462,0.835865); 
      rgb(98pt)=(0.114458,0.330737,0.838862); 
      rgb(99pt)=(0.110349,0.333059,0.841619); 
      rgb(100pt)=(0.106239,0.335382,0.844376); 
      rgb(101pt)=(0.102129,0.337705,0.847132); 
      rgb(102pt)=(0.0979874,0.340021,0.849835); 
      rgb(103pt)=(0.093648,0.342292,0.852209); 
      rgb(104pt)=(0.0893087,0.344564,0.854583); 
      rgb(105pt)=(0.0849694,0.346836,0.856957); 
      rgb(106pt)=(0.08063,0.349091,0.859234); 
      rgb(107pt)=(0.0762907,0.351286,0.861174); 
      rgb(108pt)=(0.0719514,0.353481,0.863114); 
      rgb(109pt)=(0.067612,0.355676,0.865053); 
      rgb(110pt)=(0.0633195,0.357817,0.866853); 
      rgb(111pt)=(0.0591333,0.359833,0.868333); 
      rgb(112pt)=(0.0549471,0.36185,0.869814); 
      rgb(113pt)=(0.050761,0.363866,0.871294); 
      rgb(114pt)=(0.0466838,0.365823,0.872626); 
      rgb(115pt)=(0.0427784,0.367687,0.873724); 
      rgb(116pt)=(0.038873,0.36955,0.874821); 
      rgb(117pt)=(0.0349676,0.371414,0.875919); 
      rgb(118pt)=(0.0315066,0.373217,0.876872); 
      rgb(119pt)=(0.0285456,0.374953,0.877664); 
      rgb(120pt)=(0.0255847,0.376688,0.878455); 
      rgb(121pt)=(0.0226237,0.378424,0.879246); 
      rgb(122pt)=(0.0202132,0.380061,0.879868); 
      rgb(123pt)=(0.0182477,0.381618,0.880353); 
      rgb(124pt)=(0.0162823,0.383175,0.880838); 
      rgb(125pt)=(0.0143168,0.384732,0.881323); 
      rgb(126pt)=(0.0127892,0.386241,0.881695); 
      rgb(127pt)=(0.0115129,0.387721,0.882001); 
      rgb(128pt)=(0.0102366,0.389202,0.882307); 
      rgb(129pt)=(0.00896036,0.390682,0.882614); 
      rgb(130pt)=(0.00812372,0.392089,0.88281); 
      rgb(131pt)=(0.00746006,0.393468,0.882963); 
      rgb(132pt)=(0.0067964,0.394846,0.883116); 
      rgb(133pt)=(0.00613273,0.396224,0.883269); 
      rgb(134pt)=(0.00581622,0.397562,0.88332); 
      rgb(135pt)=(0.00558649,0.398889,0.883346); 
      rgb(136pt)=(0.00535676,0.400217,0.883371); 
      rgb(137pt)=(0.00512703,0.401544,0.883397); 
      rgb(138pt)=(0.00516757,0.402804,0.883332); 
      rgb(139pt)=(0.00524414,0.404054,0.883256); 
      rgb(140pt)=(0.00532072,0.405305,0.883179); 
      rgb(141pt)=(0.0053973,0.406556,0.883103); 
      rgb(142pt)=(0.00572012,0.407757,0.882952); 
      rgb(143pt)=(0.00605195,0.408957,0.882799); 
      rgb(144pt)=(0.00638378,0.410157,0.882646); 
      rgb(145pt)=(0.00672643,0.411355,0.882489); 
      rgb(146pt)=(0.00728799,0.412529,0.882259); 
      rgb(147pt)=(0.00784955,0.413704,0.88203); 
      rgb(148pt)=(0.00841111,0.414878,0.8818); 
      rgb(149pt)=(0.00898919,0.416045,0.881564); 
      rgb(150pt)=(0.00967838,0.417168,0.881283); 
      rgb(151pt)=(0.0103676,0.418292,0.881002); 
      rgb(152pt)=(0.0110568,0.419415,0.880721); 
      rgb(153pt)=(0.011773,0.420532,0.880435); 
      rgb(154pt)=(0.0125898,0.42163,0.880129); 
      rgb(155pt)=(0.0134066,0.422728,0.879823); 
      rgb(156pt)=(0.0142234,0.423825,0.879516); 
      rgb(157pt)=(0.0150703,0.424915,0.879195); 
      rgb(158pt)=(0.0159892,0.425987,0.878838); 
      rgb(159pt)=(0.0169081,0.427059,0.87848); 
      rgb(160pt)=(0.017827,0.428132,0.878123); 
      rgb(161pt)=(0.0187748,0.429194,0.877746); 
      rgb(162pt)=(0.0197703,0.430241,0.877338); 
      rgb(163pt)=(0.0207658,0.431287,0.876929); 
      rgb(164pt)=(0.0217613,0.432334,0.876521); 
      rgb(165pt)=(0.0227802,0.43338,0.876113); 
      rgb(166pt)=(0.0238267,0.434427,0.875704); 
      rgb(167pt)=(0.0248733,0.435473,0.875296); 
      rgb(168pt)=(0.0259198,0.43652,0.874887); 
      rgb(169pt)=(0.0269802,0.437553,0.874451); 
      rgb(170pt)=(0.0280523,0.438574,0.873992); 
      rgb(171pt)=(0.0291243,0.439595,0.873532); 
      rgb(172pt)=(0.0301964,0.440616,0.873073); 
      rgb(173pt)=(0.0312844,0.441621,0.872614); 
      rgb(174pt)=(0.032382,0.442616,0.872154); 
      rgb(175pt)=(0.0334796,0.443612,0.871695); 
      rgb(176pt)=(0.0345772,0.444607,0.871235); 
      rgb(177pt)=(0.0357108,0.445603,0.870758); 
      rgb(178pt)=(0.0368595,0.446598,0.870273); 
      rgb(179pt)=(0.0380081,0.447594,0.869788); 
      rgb(180pt)=(0.0391568,0.448589,0.869303); 
      rgb(181pt)=(0.0402652,0.449565,0.868798); 
      rgb(182pt)=(0.0413628,0.450535,0.868287); 
      rgb(183pt)=(0.0424604,0.451505,0.867777); 
      rgb(184pt)=(0.043558,0.452474,0.867266); 
      rgb(185pt)=(0.0445889,0.453444,0.866756); 
      rgb(186pt)=(0.0456099,0.454414,0.866245); 
      rgb(187pt)=(0.0466309,0.455384,0.865735); 
      rgb(188pt)=(0.047652,0.456354,0.865224); 
      rgb(189pt)=(0.0486,0.457324,0.864714); 
      rgb(190pt)=(0.0495444,0.458294,0.864203); 
      rgb(191pt)=(0.0504889,0.459264,0.863692); 
      rgb(192pt)=(0.0514315,0.460234,0.863181); 
      rgb(193pt)=(0.0523249,0.461204,0.862645); 
      rgb(194pt)=(0.0532183,0.462174,0.862109); 
      rgb(195pt)=(0.0541117,0.463144,0.861573); 
      rgb(196pt)=(0.0549991,0.464111,0.861034); 
      rgb(197pt)=(0.0558414,0.465056,0.860472); 
      rgb(198pt)=(0.0566838,0.466,0.859911); 
      rgb(199pt)=(0.0575261,0.466944,0.859349); 
      rgb(200pt)=(0.0583532,0.467889,0.858793); 
      rgb(201pt)=(0.0591189,0.468833,0.858257); 
      rgb(202pt)=(0.0598847,0.469778,0.857721); 
      rgb(203pt)=(0.0606505,0.470722,0.857185); 
      rgb(204pt)=(0.0614018,0.471667,0.856641); 
      rgb(205pt)=(0.0621165,0.472611,0.85608); 
      rgb(206pt)=(0.0628312,0.473556,0.855518); 
      rgb(207pt)=(0.0635459,0.4745,0.854957); 
      rgb(208pt)=(0.064242,0.475444,0.854405); 
      rgb(209pt)=(0.0649057,0.476389,0.853868); 
      rgb(210pt)=(0.0655694,0.477333,0.853332); 
      rgb(211pt)=(0.066233,0.478278,0.852796); 
      rgb(212pt)=(0.0668625,0.479222,0.852249); 
      rgb(213pt)=(0.0674495,0.480167,0.851687); 
      rgb(214pt)=(0.0680366,0.481111,0.851126); 
      rgb(215pt)=(0.0686237,0.482056,0.850564); 
      rgb(216pt)=(0.0691838,0.483,0.850003); 
      rgb(217pt)=(0.0697198,0.483944,0.849441); 
      rgb(218pt)=(0.0702559,0.484889,0.84888); 
      rgb(219pt)=(0.0707919,0.485833,0.848318); 
      rgb(220pt)=(0.0712967,0.486778,0.847772); 
      rgb(221pt)=(0.0717817,0.487722,0.847236); 
      rgb(222pt)=(0.0722667,0.488667,0.8467); 
      rgb(223pt)=(0.0727517,0.489611,0.846164); 
      rgb(224pt)=(0.0732012,0.490573,0.845628); 
      rgb(225pt)=(0.0736351,0.491543,0.845092); 
      rgb(226pt)=(0.0740691,0.492513,0.844556); 
      rgb(227pt)=(0.074503,0.493483,0.84402); 
      rgb(228pt)=(0.0748973,0.494433,0.843484); 
      rgb(229pt)=(0.0752802,0.495378,0.842948); 
      rgb(230pt)=(0.0756631,0.496322,0.842412); 
      rgb(231pt)=(0.0760459,0.497267,0.841876); 
      rgb(232pt)=(0.0763631,0.498233,0.841362); 
      rgb(233pt)=(0.0766694,0.499203,0.840851); 
      rgb(234pt)=(0.0769757,0.500173,0.840341); 
      rgb(235pt)=(0.077282,0.501143,0.83983); 
      rgb(236pt)=(0.0775162,0.502137,0.83932); 
      rgb(237pt)=(0.0777459,0.503132,0.838809); 
      rgb(238pt)=(0.0779757,0.504128,0.838298); 
      rgb(239pt)=(0.0782042,0.505123,0.837789); 
      rgb(240pt)=(0.0783829,0.506093,0.837304); 
      rgb(241pt)=(0.0785616,0.507063,0.836819); 
      rgb(242pt)=(0.0787402,0.508033,0.836334); 
      rgb(243pt)=(0.0789135,0.509008,0.835851); 
      rgb(244pt)=(0.0790411,0.510029,0.835392); 
      rgb(245pt)=(0.0791688,0.51105,0.834932); 
      rgb(246pt)=(0.0792964,0.512071,0.834473); 
      rgb(247pt)=(0.0794048,0.513092,0.834018); 
      rgb(248pt)=(0.0794303,0.514113,0.833584); 
      rgb(249pt)=(0.0794559,0.515134,0.83315); 
      rgb(250pt)=(0.0794814,0.516155,0.832717); 
      rgb(251pt)=(0.0794862,0.517183,0.832289); 
      rgb(252pt)=(0.0794351,0.51823,0.831881); 
      rgb(253pt)=(0.0793841,0.519276,0.831473); 
      rgb(254pt)=(0.079333,0.520323,0.831064); 
      rgb(255pt)=(0.079255,0.521369,0.830665); 
      rgb(256pt)=(0.0791273,0.522416,0.830282); 
      rgb(257pt)=(0.0789997,0.523462,0.829899); 
      rgb(258pt)=(0.0788721,0.524509,0.829516); 
      rgb(259pt)=(0.0786889,0.525589,0.829156); 
      rgb(260pt)=(0.0784336,0.526712,0.828824); 
      rgb(261pt)=(0.0781784,0.527835,0.828492); 
      rgb(262pt)=(0.0779231,0.528958,0.82816); 
      rgb(263pt)=(0.077615,0.530081,0.827868); 
      rgb(264pt)=(0.0772577,0.531205,0.827613); 
      rgb(265pt)=(0.0769003,0.532328,0.827357); 
      rgb(266pt)=(0.0765429,0.533451,0.827102); 
      rgb(267pt)=(0.0761243,0.534589,0.826862); 
      rgb(268pt)=(0.0756649,0.535738,0.826632); 
      rgb(269pt)=(0.0752054,0.536886,0.826403); 
      rgb(270pt)=(0.0747459,0.538035,0.826173); 
      rgb(271pt)=(0.0742168,0.539219,0.825961); 
      rgb(272pt)=(0.0736553,0.540418,0.825756); 
      rgb(273pt)=(0.0730937,0.541618,0.825552); 
      rgb(274pt)=(0.0725321,0.542818,0.825348); 
      rgb(275pt)=(0.0718925,0.544037,0.825183); 
      rgb(276pt)=(0.0712288,0.545262,0.82503); 
      rgb(277pt)=(0.0705652,0.546487,0.824877); 
      rgb(278pt)=(0.0699015,0.547713,0.824723); 
      rgb(279pt)=(0.0691514,0.548938,0.824614); 
      rgb(280pt)=(0.0683856,0.550163,0.824511); 
      rgb(281pt)=(0.0676198,0.551388,0.824409); 
      rgb(282pt)=(0.0668541,0.552614,0.824307); 
      rgb(283pt)=(0.0660408,0.553886,0.824205); 
      rgb(284pt)=(0.065224,0.555162,0.824103); 
      rgb(285pt)=(0.0644072,0.556439,0.824001); 
      rgb(286pt)=(0.0635892,0.557715,0.823899); 
      rgb(287pt)=(0.0626703,0.558991,0.823848); 
      rgb(288pt)=(0.0617514,0.560268,0.823797); 
      rgb(289pt)=(0.0608324,0.561544,0.823746); 
      rgb(290pt)=(0.0599087,0.56282,0.823693); 
      rgb(291pt)=(0.0589387,0.564096,0.823616); 
      rgb(292pt)=(0.0579688,0.565373,0.82354); 
      rgb(293pt)=(0.0569988,0.566649,0.823463); 
      rgb(294pt)=(0.0560243,0.567925,0.823386); 
      rgb(295pt)=(0.0550288,0.569202,0.82331); 
      rgb(296pt)=(0.0540333,0.570478,0.823233); 
      rgb(297pt)=(0.0530378,0.571754,0.823157); 
      rgb(298pt)=(0.0520423,0.57303,0.82308); 
      rgb(299pt)=(0.0510468,0.574307,0.823004); 
      rgb(300pt)=(0.0500514,0.575583,0.822927); 
      rgb(301pt)=(0.0490559,0.576859,0.82285); 
      rgb(302pt)=(0.0480604,0.578127,0.822756); 
      rgb(303pt)=(0.0470649,0.579377,0.822629); 
      rgb(304pt)=(0.0460694,0.580628,0.822501); 
      rgb(305pt)=(0.0450739,0.581879,0.822374); 
      rgb(306pt)=(0.0441,0.583119,0.822235); 
      rgb(307pt)=(0.0431556,0.584344,0.822082); 
      rgb(308pt)=(0.0422111,0.585569,0.821929); 
      rgb(309pt)=(0.0412667,0.586795,0.821776); 
      rgb(310pt)=(0.0403351,0.58802,0.821597); 
      rgb(311pt)=(0.0394162,0.589245,0.821392); 
      rgb(312pt)=(0.0384973,0.59047,0.821188); 
      rgb(313pt)=(0.0375784,0.591695,0.820984); 
      rgb(314pt)=(0.0367495,0.592891,0.820735); 
      rgb(315pt)=(0.0359838,0.594065,0.820454); 
      rgb(316pt)=(0.035218,0.595239,0.820173); 
      rgb(317pt)=(0.0344523,0.596413,0.819892); 
      rgb(318pt)=(0.0337721,0.597553,0.819595); 
      rgb(319pt)=(0.0331339,0.598676,0.819288); 
      rgb(320pt)=(0.0324958,0.599799,0.818982); 
      rgb(321pt)=(0.0318577,0.600923,0.818676); 
      rgb(322pt)=(0.0312964,0.602026,0.818312); 
      rgb(323pt)=(0.0307604,0.603124,0.817929); 
      rgb(324pt)=(0.0302243,0.604222,0.817546); 
      rgb(325pt)=(0.0296883,0.605319,0.817163); 
      rgb(326pt)=(0.0292375,0.606395,0.816738); 
      rgb(327pt)=(0.0288036,0.607468,0.816304); 
      rgb(328pt)=(0.0283697,0.60854,0.81587); 
      rgb(329pt)=(0.0279357,0.609612,0.815436); 
      rgb(330pt)=(0.0275721,0.610637,0.814955); 
      rgb(331pt)=(0.0272147,0.611658,0.81447); 
      rgb(332pt)=(0.0268574,0.612679,0.813985); 
      rgb(333pt)=(0.0265,0.6137,0.8135); 
      rgb(334pt)=(0.0262447,0.614695,0.812964); 
      rgb(335pt)=(0.0259895,0.615691,0.812428); 
      rgb(336pt)=(0.0257342,0.616686,0.811892); 
      rgb(337pt)=(0.0254853,0.61768,0.811352); 
      rgb(338pt)=(0.0253066,0.61865,0.810765); 
      rgb(339pt)=(0.0251279,0.61962,0.810177); 
      rgb(340pt)=(0.0249492,0.62059,0.80959); 
      rgb(341pt)=(0.024779,0.621551,0.808995); 
      rgb(342pt)=(0.0246514,0.62247,0.808357); 
      rgb(343pt)=(0.0245237,0.623389,0.807719); 
      rgb(344pt)=(0.0243961,0.624308,0.80708); 
      rgb(345pt)=(0.0242748,0.625221,0.80643); 
      rgb(346pt)=(0.0241727,0.626114,0.805741); 
      rgb(347pt)=(0.0240706,0.627008,0.805051); 
      rgb(348pt)=(0.0239685,0.627901,0.804362); 
      rgb(349pt)=(0.0238832,0.628786,0.803656); 
      rgb(350pt)=(0.0238321,0.629654,0.802916); 
      rgb(351pt)=(0.0237811,0.630522,0.802176); 
      rgb(352pt)=(0.02373,0.631389,0.801435); 
      rgb(353pt)=(0.023679,0.632247,0.800685); 
      rgb(354pt)=(0.0236279,0.633089,0.799919); 
      rgb(355pt)=(0.0235769,0.633932,0.799153); 
      rgb(356pt)=(0.0235258,0.634774,0.798387); 
      rgb(357pt)=(0.0234748,0.635604,0.797596); 
      rgb(358pt)=(0.0234237,0.63642,0.79678); 
      rgb(359pt)=(0.0233727,0.637237,0.795963); 
      rgb(360pt)=(0.0233216,0.638054,0.795146); 
      rgb(361pt)=(0.0232706,0.638856,0.794329); 
      rgb(362pt)=(0.0232195,0.639647,0.793512); 
      rgb(363pt)=(0.0231685,0.640439,0.792695); 
      rgb(364pt)=(0.0231174,0.64123,0.791879); 
      rgb(365pt)=(0.0230832,0.642005,0.791011); 
      rgb(366pt)=(0.0230577,0.64277,0.790118); 
      rgb(367pt)=(0.0230321,0.643536,0.789225); 
      rgb(368pt)=(0.0230066,0.644302,0.788331); 
      rgb(369pt)=(0.0229811,0.645049,0.787438); 
      rgb(370pt)=(0.0229556,0.645789,0.786544); 
      rgb(371pt)=(0.02293,0.646529,0.785651); 
      rgb(372pt)=(0.0229045,0.647269,0.784758); 
      rgb(373pt)=(0.022858,0.64801,0.783843); 
      rgb(374pt)=(0.0228069,0.64875,0.782924); 
      rgb(375pt)=(0.0227559,0.64949,0.782005); 
      rgb(376pt)=(0.0227048,0.65023,0.781086); 
      rgb(377pt)=(0.0227,0.650947,0.780144); 
      rgb(378pt)=(0.0227,0.651662,0.7792); 
      rgb(379pt)=(0.0227,0.652377,0.778256); 
      rgb(380pt)=(0.0227,0.653092,0.777311); 
      rgb(381pt)=(0.0228261,0.653781,0.776341); 
      rgb(382pt)=(0.0229538,0.65447,0.775371); 
      rgb(383pt)=(0.0230814,0.655159,0.774402); 
      rgb(384pt)=(0.0232108,0.655849,0.77343); 
      rgb(385pt)=(0.023364,0.656538,0.772434); 
      rgb(386pt)=(0.0235171,0.657227,0.771439); 
      rgb(387pt)=(0.0236703,0.657916,0.770443); 
      rgb(388pt)=(0.0238312,0.658602,0.769444); 
      rgb(389pt)=(0.0240354,0.659265,0.768423); 
      rgb(390pt)=(0.0242396,0.659929,0.767402); 
      rgb(391pt)=(0.0244438,0.660592,0.766381); 
      rgb(392pt)=(0.0247021,0.661256,0.765354); 
      rgb(393pt)=(0.025136,0.66192,0.764307); 
      rgb(394pt)=(0.02557,0.662583,0.763261); 
      rgb(395pt)=(0.0260039,0.663247,0.762214); 
      rgb(396pt)=(0.0264541,0.663911,0.761168); 
      rgb(397pt)=(0.026939,0.664574,0.760121); 
      rgb(398pt)=(0.027424,0.665238,0.759074); 
      rgb(399pt)=(0.027909,0.665902,0.758028); 
      rgb(400pt)=(0.028445,0.666555,0.756971); 
      rgb(401pt)=(0.0290577,0.667193,0.755899); 
      rgb(402pt)=(0.0296703,0.667832,0.754827); 
      rgb(403pt)=(0.0302829,0.66847,0.753755); 
      rgb(404pt)=(0.030994,0.669095,0.752683); 
      rgb(405pt)=(0.0318108,0.669708,0.751611); 
      rgb(406pt)=(0.0326276,0.670321,0.750539); 
      rgb(407pt)=(0.0334444,0.670933,0.749467); 
      rgb(408pt)=(0.0343045,0.67156,0.748366); 
      rgb(409pt)=(0.0351979,0.672198,0.747243); 
      rgb(410pt)=(0.0360913,0.672837,0.74612); 
      rgb(411pt)=(0.0369847,0.673475,0.744996); 
      rgb(412pt)=(0.0380432,0.674096,0.743873); 
      rgb(413pt)=(0.0391919,0.674709,0.74275); 
      rgb(414pt)=(0.0403405,0.675322,0.741627); 
      rgb(415pt)=(0.0414892,0.675934,0.740504); 
      rgb(416pt)=(0.0427123,0.676528,0.739381); 
      rgb(417pt)=(0.0439631,0.677115,0.738258); 
      rgb(418pt)=(0.0452138,0.677702,0.737135); 
      rgb(419pt)=(0.0464646,0.678289,0.736011); 
      rgb(420pt)=(0.0477153,0.678897,0.734868); 
      rgb(421pt)=(0.0489661,0.67951,0.733719); 
      rgb(422pt)=(0.0502168,0.680123,0.73257); 
      rgb(423pt)=(0.0514676,0.680735,0.731422); 
      rgb(424pt)=(0.0529237,0.681325,0.73025); 
      rgb(425pt)=(0.0544042,0.681912,0.729076); 
      rgb(426pt)=(0.0558847,0.682499,0.727902); 
      rgb(427pt)=(0.0573652,0.683086,0.726728); 
      rgb(428pt)=(0.0587709,0.683673,0.725553); 
      rgb(429pt)=(0.0601748,0.68426,0.724379); 
      rgb(430pt)=(0.0615787,0.684847,0.723205); 
      rgb(431pt)=(0.0629946,0.685435,0.722028); 
      rgb(432pt)=(0.0646027,0.686022,0.720803); 
      rgb(433pt)=(0.0662108,0.686609,0.719577); 
      rgb(434pt)=(0.0678189,0.687196,0.718352); 
      rgb(435pt)=(0.069427,0.687779,0.717131); 
      rgb(436pt)=(0.0710351,0.688341,0.715931); 
      rgb(437pt)=(0.0726432,0.688902,0.714731); 
      rgb(438pt)=(0.0742514,0.689464,0.713532); 
      rgb(439pt)=(0.0758709,0.690026,0.712326); 
      rgb(440pt)=(0.07753,0.690587,0.711101); 
      rgb(441pt)=(0.0791892,0.691149,0.709876); 
      rgb(442pt)=(0.0808483,0.69171,0.70865); 
      rgb(443pt)=(0.0825387,0.692272,0.707417); 
      rgb(444pt)=(0.0843,0.692833,0.706167); 
      rgb(445pt)=(0.0860613,0.693395,0.704916); 
      rgb(446pt)=(0.0878225,0.693956,0.703665); 
      rgb(447pt)=(0.089564,0.694518,0.702405); 
      rgb(448pt)=(0.0912742,0.69508,0.701128); 
      rgb(449pt)=(0.0929844,0.695641,0.699852); 
      rgb(450pt)=(0.0946946,0.696203,0.698576); 
      rgb(451pt)=(0.0965009,0.696752,0.697299); 
      rgb(452pt)=(0.0984153,0.697288,0.696023); 
      rgb(453pt)=(0.10033,0.697824,0.694747); 
      rgb(454pt)=(0.102244,0.69836,0.693471); 
      rgb(455pt)=(0.10413,0.698896,0.69218); 
      rgb(456pt)=(0.105994,0.699432,0.690878); 
      rgb(457pt)=(0.107857,0.699968,0.689577); 
      rgb(458pt)=(0.10972,0.700505,0.688275); 
      rgb(459pt)=(0.111632,0.701041,0.686973); 
      rgb(460pt)=(0.113572,0.701577,0.685671); 
      rgb(461pt)=(0.115512,0.702113,0.684369); 
      rgb(462pt)=(0.117452,0.702649,0.683068); 
      rgb(463pt)=(0.119429,0.703185,0.681747); 
      rgb(464pt)=(0.12142,0.703721,0.68042); 
      rgb(465pt)=(0.123411,0.704257,0.679093); 
      rgb(466pt)=(0.125402,0.704793,0.677765); 
      rgb(467pt)=(0.127372,0.705308,0.676438); 
      rgb(468pt)=(0.129338,0.705819,0.675111); 
      rgb(469pt)=(0.131303,0.706329,0.673783); 
      rgb(470pt)=(0.133269,0.70684,0.672456); 
      rgb(471pt)=(0.135369,0.70735,0.671084); 
      rgb(472pt)=(0.137488,0.707861,0.669705); 
      rgb(473pt)=(0.139607,0.708371,0.668327); 
      rgb(474pt)=(0.141725,0.708882,0.666949); 
      rgb(475pt)=(0.143795,0.709392,0.665595); 
      rgb(476pt)=(0.145862,0.709903,0.664242); 
      rgb(477pt)=(0.14793,0.710414,0.662889); 
      rgb(478pt)=(0.150003,0.710924,0.661534); 
      rgb(479pt)=(0.152198,0.711435,0.66013); 
      rgb(480pt)=(0.154394,0.711945,0.658726); 
      rgb(481pt)=(0.156589,0.712456,0.657322); 
      rgb(482pt)=(0.158784,0.712963,0.655922); 
      rgb(483pt)=(0.160979,0.713448,0.654543); 
      rgb(484pt)=(0.163174,0.713933,0.653165); 
      rgb(485pt)=(0.16537,0.714418,0.651786); 
      rgb(486pt)=(0.16757,0.714908,0.650397); 
      rgb(487pt)=(0.169791,0.715419,0.648968); 
      rgb(488pt)=(0.172012,0.715929,0.647538); 
      rgb(489pt)=(0.174232,0.71644,0.646109); 
      rgb(490pt)=(0.176483,0.716935,0.64468); 
      rgb(491pt)=(0.178806,0.717395,0.64325); 
      rgb(492pt)=(0.181129,0.717854,0.641821); 
      rgb(493pt)=(0.183452,0.718314,0.640391); 
      rgb(494pt)=(0.185755,0.718783,0.638952); 
      rgb(495pt)=(0.188027,0.719268,0.637497); 
      rgb(496pt)=(0.190299,0.719753,0.636042); 
      rgb(497pt)=(0.192571,0.720238,0.634587); 
      rgb(498pt)=(0.194913,0.720711,0.633132); 
      rgb(499pt)=(0.197338,0.72117,0.631677); 
      rgb(500pt)=(0.199762,0.72163,0.630223); 
      rgb(501pt)=(0.202187,0.722089,0.628768); 
      rgb(502pt)=(0.204612,0.722549,0.627299); 
      rgb(503pt)=(0.207037,0.723008,0.625818); 
      rgb(504pt)=(0.209462,0.723468,0.624338); 
      rgb(505pt)=(0.211887,0.723927,0.622857); 
      rgb(506pt)=(0.214328,0.724386,0.621377); 
      rgb(507pt)=(0.216778,0.724846,0.619896); 
      rgb(508pt)=(0.219229,0.725305,0.618416); 
      rgb(509pt)=(0.221679,0.725765,0.616935); 
      rgb(510pt)=(0.224202,0.726188,0.615455); 
      rgb(511pt)=(0.226754,0.726597,0.613974); 
      rgb(512pt)=(0.229307,0.727005,0.612494); 
      rgb(513pt)=(0.231859,0.727414,0.611014); 
      rgb(514pt)=(0.234392,0.727842,0.609513); 
      rgb(515pt)=(0.236919,0.728276,0.608007); 
      rgb(516pt)=(0.239446,0.72871,0.606501); 
      rgb(517pt)=(0.241973,0.729144,0.604995); 
      rgb(518pt)=(0.244611,0.729556,0.603467); 
      rgb(519pt)=(0.247266,0.729964,0.601935); 
      rgb(520pt)=(0.24992,0.730372,0.600404); 
      rgb(521pt)=(0.252575,0.730781,0.598872); 
      rgb(522pt)=(0.25523,0.731189,0.597365); 
      rgb(523pt)=(0.257884,0.731598,0.595859); 
      rgb(524pt)=(0.260539,0.732006,0.594353); 
      rgb(525pt)=(0.263194,0.732414,0.592846); 
      rgb(526pt)=(0.265848,0.732796,0.591314); 
      rgb(527pt)=(0.268503,0.733179,0.589783); 
      rgb(528pt)=(0.271158,0.733562,0.588251); 
      rgb(529pt)=(0.27383,0.733945,0.58672); 
      rgb(530pt)=(0.276638,0.734328,0.585188); 
      rgb(531pt)=(0.279446,0.734711,0.583657); 
      rgb(532pt)=(0.282254,0.735094,0.582125); 
      rgb(533pt)=(0.285051,0.735471,0.580594); 
      rgb(534pt)=(0.287808,0.735829,0.579062); 
      rgb(535pt)=(0.290565,0.736186,0.577531); 
      rgb(536pt)=(0.293322,0.736544,0.575999); 
      rgb(537pt)=(0.2961,0.736894,0.574468); 
      rgb(538pt)=(0.298933,0.737226,0.572936); 
      rgb(539pt)=(0.301767,0.737557,0.571405); 
      rgb(540pt)=(0.3046,0.737889,0.569873); 
      rgb(541pt)=(0.307452,0.738221,0.568351); 
      rgb(542pt)=(0.310336,0.738553,0.566845); 
      rgb(543pt)=(0.313221,0.738885,0.565339); 
      rgb(544pt)=(0.316105,0.739217,0.563833); 
      rgb(545pt)=(0.318978,0.739537,0.562315); 
      rgb(546pt)=(0.321837,0.739843,0.560784); 
      rgb(547pt)=(0.324696,0.74015,0.559252); 
      rgb(548pt)=(0.327555,0.740456,0.557721); 
      rgb(549pt)=(0.330468,0.740749,0.556216); 
      rgb(550pt)=(0.333429,0.741029,0.554736); 
      rgb(551pt)=(0.336389,0.74131,0.553255); 
      rgb(552pt)=(0.33935,0.741591,0.551775); 
      rgb(553pt)=(0.342296,0.741872,0.550279); 
      rgb(554pt)=(0.345231,0.742153,0.548773); 
      rgb(555pt)=(0.348167,0.742433,0.547267); 
      rgb(556pt)=(0.351102,0.742714,0.545761); 
      rgb(557pt)=(0.354038,0.742977,0.54429); 
      rgb(558pt)=(0.356973,0.743232,0.542835); 
      rgb(559pt)=(0.359908,0.743488,0.54138); 
      rgb(560pt)=(0.362844,0.743743,0.539925); 
      rgb(561pt)=(0.365839,0.743959,0.53847); 
      rgb(562pt)=(0.368851,0.744163,0.537015); 
      rgb(563pt)=(0.371863,0.744367,0.53556); 
      rgb(564pt)=(0.374875,0.744571,0.534105); 
      rgb(565pt)=(0.377843,0.744775,0.532672); 
      rgb(566pt)=(0.380804,0.74498,0.531243); 
      rgb(567pt)=(0.383765,0.745184,0.529814); 
      rgb(568pt)=(0.386726,0.745388,0.528384); 
      rgb(569pt)=(0.389711,0.745568,0.527003); 
      rgb(570pt)=(0.392697,0.745747,0.525624); 
      rgb(571pt)=(0.395684,0.745926,0.524246); 
      rgb(572pt)=(0.39867,0.746104,0.522868); 
      rgb(573pt)=(0.401657,0.746257,0.521489); 
      rgb(574pt)=(0.404643,0.74641,0.520111); 
      rgb(575pt)=(0.40763,0.746563,0.518732); 
      rgb(576pt)=(0.410611,0.746716,0.517359); 
      rgb(577pt)=(0.413546,0.746869,0.516032); 
      rgb(578pt)=(0.416482,0.747023,0.514705); 
      rgb(579pt)=(0.419417,0.747176,0.513377); 
      rgb(580pt)=(0.422357,0.747319,0.512055); 
      rgb(581pt)=(0.425318,0.747421,0.510753); 
      rgb(582pt)=(0.428279,0.747523,0.509451); 
      rgb(583pt)=(0.43124,0.747626,0.50815); 
      rgb(584pt)=(0.43418,0.747735,0.506848); 
      rgb(585pt)=(0.437065,0.747862,0.505546); 
      rgb(586pt)=(0.439949,0.74799,0.504244); 
      rgb(587pt)=(0.442834,0.748117,0.502942); 
      rgb(588pt)=(0.445727,0.748227,0.501659); 
      rgb(589pt)=(0.448637,0.748304,0.500408); 
      rgb(590pt)=(0.451547,0.74838,0.499157); 
      rgb(591pt)=(0.454457,0.748457,0.497906); 
      rgb(592pt)=(0.457333,0.748522,0.496667); 
      rgb(593pt)=(0.460167,0.748573,0.495441); 
      rgb(594pt)=(0.463,0.748624,0.494216); 
      rgb(595pt)=(0.465833,0.748675,0.492991); 
      rgb(596pt)=(0.468667,0.748726,0.491779); 
      rgb(597pt)=(0.4715,0.748777,0.490579); 
      rgb(598pt)=(0.474333,0.748829,0.48938); 
      rgb(599pt)=(0.477167,0.74888,0.48818); 
      rgb(600pt)=(0.479969,0.748931,0.486995); 
      rgb(601pt)=(0.482752,0.748982,0.485821); 
      rgb(602pt)=(0.485534,0.749033,0.484647); 
      rgb(603pt)=(0.488316,0.749084,0.483473); 
      rgb(604pt)=(0.491081,0.7491,0.482316); 
      rgb(605pt)=(0.493838,0.7491,0.481168); 
      rgb(606pt)=(0.496595,0.7491,0.480019); 
      rgb(607pt)=(0.499351,0.7491,0.47887); 
      rgb(608pt)=(0.502069,0.74912,0.477722); 
      rgb(609pt)=(0.504775,0.749145,0.476573); 
      rgb(610pt)=(0.50748,0.749171,0.475424); 
      rgb(611pt)=(0.510186,0.749196,0.474276); 
      rgb(612pt)=(0.512892,0.7492,0.47317); 
      rgb(613pt)=(0.515598,0.7492,0.472073); 
      rgb(614pt)=(0.518303,0.7492,0.470975); 
      rgb(615pt)=(0.521009,0.7492,0.469877); 
      rgb(616pt)=(0.523644,0.749176,0.46878); 
      rgb(617pt)=(0.526273,0.749151,0.467682); 
      rgb(618pt)=(0.528902,0.749125,0.466585); 
      rgb(619pt)=(0.531531,0.7491,0.465487); 
      rgb(620pt)=(0.53416,0.749074,0.464415); 
      rgb(621pt)=(0.536789,0.749049,0.463343); 
      rgb(622pt)=(0.539418,0.749023,0.462271); 
      rgb(623pt)=(0.542043,0.748998,0.461199); 
      rgb(624pt)=(0.544621,0.748972,0.460127); 
      rgb(625pt)=(0.547199,0.748947,0.459055); 
      rgb(626pt)=(0.549777,0.748921,0.457983); 
      rgb(627pt)=(0.55235,0.748891,0.45692); 
      rgb(628pt)=(0.554903,0.74884,0.455899); 
      rgb(629pt)=(0.557456,0.748789,0.454878); 
      rgb(630pt)=(0.560008,0.748738,0.453857); 
      rgb(631pt)=(0.562554,0.748687,0.452829); 
      rgb(632pt)=(0.565081,0.748636,0.451783); 
      rgb(633pt)=(0.567608,0.748585,0.450736); 
      rgb(634pt)=(0.570135,0.748534,0.449689); 
      rgb(635pt)=(0.572653,0.748474,0.44866); 
      rgb(636pt)=(0.575155,0.748397,0.447665); 
      rgb(637pt)=(0.577656,0.748321,0.446669); 
      rgb(638pt)=(0.580158,0.748244,0.445674); 
      rgb(639pt)=(0.582649,0.748168,0.444678); 
      rgb(640pt)=(0.585125,0.748091,0.443683); 
      rgb(641pt)=(0.587601,0.748014,0.442687); 
      rgb(642pt)=(0.590077,0.747938,0.441692); 
      rgb(643pt)=(0.59254,0.747861,0.440709); 
      rgb(644pt)=(0.59499,0.747785,0.439739); 
      rgb(645pt)=(0.597441,0.747708,0.438769); 
      rgb(646pt)=(0.599891,0.747632,0.437799); 
      rgb(647pt)=(0.602311,0.747555,0.436814); 
      rgb(648pt)=(0.604711,0.747478,0.435819); 
      rgb(649pt)=(0.60711,0.747402,0.434823); 
      rgb(650pt)=(0.60951,0.747325,0.433828); 
      rgb(651pt)=(0.611909,0.747232,0.432867); 
      rgb(652pt)=(0.614308,0.747129,0.431922); 
      rgb(653pt)=(0.616708,0.747027,0.430978); 
      rgb(654pt)=(0.619107,0.746925,0.430033); 
      rgb(655pt)=(0.621487,0.746823,0.429089); 
      rgb(656pt)=(0.623861,0.746721,0.428144); 
      rgb(657pt)=(0.626235,0.746619,0.4272); 
      rgb(658pt)=(0.628609,0.746517,0.426256); 
      rgb(659pt)=(0.630962,0.746393,0.425311); 
      rgb(660pt)=(0.63331,0.746266,0.424367); 
      rgb(661pt)=(0.635658,0.746138,0.423422); 
      rgb(662pt)=(0.638007,0.746011,0.422478); 
      rgb(663pt)=(0.640332,0.745906,0.421557); 
      rgb(664pt)=(0.642654,0.745804,0.420638); 
      rgb(665pt)=(0.644977,0.745702,0.419719); 
      rgb(666pt)=(0.6473,0.7456,0.4188); 
      rgb(667pt)=(0.649623,0.745472,0.417881); 
      rgb(668pt)=(0.651946,0.745345,0.416962); 
      rgb(669pt)=(0.654268,0.745217,0.416043); 
      rgb(670pt)=(0.656587,0.745089,0.415124); 
      rgb(671pt)=(0.658859,0.744962,0.414205); 
      rgb(672pt)=(0.661131,0.744834,0.413286); 
      rgb(673pt)=(0.663402,0.744707,0.412368); 
      rgb(674pt)=(0.665674,0.744579,0.411453); 
      rgb(675pt)=(0.667946,0.744451,0.410559); 
      rgb(676pt)=(0.670218,0.744324,0.409666); 
      rgb(677pt)=(0.672489,0.744196,0.408773); 
      rgb(678pt)=(0.674755,0.744062,0.407879); 
      rgb(679pt)=(0.677001,0.743909,0.406986); 
      rgb(680pt)=(0.679247,0.743756,0.406092); 
      rgb(681pt)=(0.681494,0.743603,0.405199); 
      rgb(682pt)=(0.68374,0.743458,0.404306); 
      rgb(683pt)=(0.685986,0.74333,0.403412); 
      rgb(684pt)=(0.688232,0.743203,0.402519); 
      rgb(685pt)=(0.690479,0.743075,0.401626); 
      rgb(686pt)=(0.692704,0.742937,0.400732); 
      rgb(687pt)=(0.694899,0.742784,0.399839); 
      rgb(688pt)=(0.697094,0.742631,0.398945); 
      rgb(689pt)=(0.699289,0.742477,0.398052); 
      rgb(690pt)=(0.701497,0.742324,0.397171); 
      rgb(691pt)=(0.703718,0.742171,0.396303); 
      rgb(692pt)=(0.705939,0.742018,0.395435); 
      rgb(693pt)=(0.708159,0.741865,0.394568); 
      rgb(694pt)=(0.710351,0.741712,0.3937); 
      rgb(695pt)=(0.71252,0.741559,0.392832); 
      rgb(696pt)=(0.71469,0.741405,0.391964); 
      rgb(697pt)=(0.71686,0.741252,0.391096); 
      rgb(698pt)=(0.719029,0.741082,0.390228); 
      rgb(699pt)=(0.721199,0.740904,0.38936); 
      rgb(700pt)=(0.723369,0.740725,0.388492); 
      rgb(701pt)=(0.725538,0.740546,0.387625); 
      rgb(702pt)=(0.727708,0.740386,0.386757); 
      rgb(703pt)=(0.729878,0.740233,0.385889); 
      rgb(704pt)=(0.732047,0.74008,0.385021); 
      rgb(705pt)=(0.734217,0.739927,0.384153); 
      rgb(706pt)=(0.736366,0.739753,0.383285); 
      rgb(707pt)=(0.73851,0.739574,0.382417); 
      rgb(708pt)=(0.740654,0.739395,0.38155); 
      rgb(709pt)=(0.742798,0.739217,0.380682); 
      rgb(710pt)=(0.744919,0.739038,0.379837); 
      rgb(711pt)=(0.747038,0.738859,0.378995); 
      rgb(712pt)=(0.749156,0.738681,0.378152); 
      rgb(713pt)=(0.751275,0.738502,0.37731); 
      rgb(714pt)=(0.753394,0.738323,0.376442); 
      rgb(715pt)=(0.755512,0.738145,0.375574); 
      rgb(716pt)=(0.757631,0.737966,0.374707); 
      rgb(717pt)=(0.75975,0.737789,0.373841); 
      rgb(718pt)=(0.761868,0.737636,0.372998); 
      rgb(719pt)=(0.763987,0.737483,0.372156); 
      rgb(720pt)=(0.766105,0.73733,0.371314); 
      rgb(721pt)=(0.76822,0.737169,0.370471); 
      rgb(722pt)=(0.770313,0.736965,0.369629); 
      rgb(723pt)=(0.772406,0.73676,0.368786); 
      rgb(724pt)=(0.774499,0.736556,0.367944); 
      rgb(725pt)=(0.776592,0.736358,0.367096); 
      rgb(726pt)=(0.778686,0.736179,0.366228); 
      rgb(727pt)=(0.780779,0.736001,0.36536); 
      rgb(728pt)=(0.782872,0.735822,0.364492); 
      rgb(729pt)=(0.784957,0.735643,0.363632); 
      rgb(730pt)=(0.787024,0.735465,0.36279); 
      rgb(731pt)=(0.789092,0.735286,0.361948); 
      rgb(732pt)=(0.791159,0.735107,0.361105); 
      rgb(733pt)=(0.793227,0.734929,0.360263); 
      rgb(734pt)=(0.795295,0.73475,0.359421); 
      rgb(735pt)=(0.797362,0.734571,0.358578); 
      rgb(736pt)=(0.79943,0.734392,0.357736); 
      rgb(737pt)=(0.801485,0.734214,0.356881); 
      rgb(738pt)=(0.803527,0.734035,0.356014); 
      rgb(739pt)=(0.805569,0.733856,0.355146); 
      rgb(740pt)=(0.807611,0.733678,0.354278); 
      rgb(741pt)=(0.809668,0.733499,0.353424); 
      rgb(742pt)=(0.811735,0.73332,0.352582); 
      rgb(743pt)=(0.813803,0.733142,0.35174); 
      rgb(744pt)=(0.81587,0.732963,0.350897); 
      rgb(745pt)=(0.817921,0.732784,0.350038); 
      rgb(746pt)=(0.819963,0.732606,0.349171); 
      rgb(747pt)=(0.822005,0.732427,0.348303); 
      rgb(748pt)=(0.824047,0.732248,0.347435); 
      rgb(749pt)=(0.826071,0.73207,0.346567); 
      rgb(750pt)=(0.828087,0.731891,0.345699); 
      rgb(751pt)=(0.830104,0.731712,0.344831); 
      rgb(752pt)=(0.83212,0.731534,0.343963); 
      rgb(753pt)=(0.834158,0.731355,0.343095); 
      rgb(754pt)=(0.8362,0.731176,0.342228); 
      rgb(755pt)=(0.838242,0.730998,0.34136); 
      rgb(756pt)=(0.840284,0.730819,0.340492); 
      rgb(757pt)=(0.842303,0.73064,0.339624); 
      rgb(758pt)=(0.84432,0.730462,0.338756); 
      rgb(759pt)=(0.846336,0.730283,0.337888); 
      rgb(760pt)=(0.848353,0.730104,0.33702); 
      rgb(761pt)=(0.850369,0.729926,0.336153); 
      rgb(762pt)=(0.852386,0.729747,0.335285); 
      rgb(763pt)=(0.854402,0.729568,0.334417); 
      rgb(764pt)=(0.856419,0.729391,0.333546); 
      rgb(765pt)=(0.858435,0.729238,0.332627); 
      rgb(766pt)=(0.860452,0.729085,0.331708); 
      rgb(767pt)=(0.862468,0.728932,0.330789); 
      rgb(768pt)=(0.864481,0.728778,0.329874); 
      rgb(769pt)=(0.866472,0.728625,0.32898); 
      rgb(770pt)=(0.868463,0.728472,0.328087); 
      rgb(771pt)=(0.870454,0.728319,0.327194); 
      rgb(772pt)=(0.872445,0.728166,0.326295); 
      rgb(773pt)=(0.874436,0.728013,0.325376); 
      rgb(774pt)=(0.876427,0.727859,0.324457); 
      rgb(775pt)=(0.878418,0.727706,0.323538); 
      rgb(776pt)=(0.880417,0.727561,0.322619); 
      rgb(777pt)=(0.882433,0.727433,0.3217); 
      rgb(778pt)=(0.88445,0.727306,0.320781); 
      rgb(779pt)=(0.886466,0.727178,0.319862); 
      rgb(780pt)=(0.888463,0.72705,0.318933); 
      rgb(781pt)=(0.890429,0.726923,0.317989); 
      rgb(782pt)=(0.892394,0.726795,0.317044); 
      rgb(783pt)=(0.894359,0.726668,0.3161); 
      rgb(784pt)=(0.896337,0.726552,0.315132); 
      rgb(785pt)=(0.898328,0.72645,0.314136); 
      rgb(786pt)=(0.900319,0.726348,0.313141); 
      rgb(787pt)=(0.90231,0.726246,0.312145); 
      rgb(788pt)=(0.904301,0.726158,0.31115); 
      rgb(789pt)=(0.906292,0.726081,0.310154); 
      rgb(790pt)=(0.908283,0.726005,0.309159); 
      rgb(791pt)=(0.910274,0.725928,0.308163); 
      rgb(792pt)=(0.912249,0.725851,0.307151); 
      rgb(793pt)=(0.914214,0.725775,0.30613); 
      rgb(794pt)=(0.91618,0.725698,0.305109); 
      rgb(795pt)=(0.918145,0.725622,0.304088); 
      rgb(796pt)=(0.920111,0.7256,0.303031); 
      rgb(797pt)=(0.922076,0.7256,0.301959); 
      rgb(798pt)=(0.924041,0.7256,0.300886); 
      rgb(799pt)=(0.926007,0.7256,0.299814); 
      rgb(800pt)=(0.927972,0.7256,0.298722); 
      rgb(801pt)=(0.929938,0.7256,0.297624); 
      rgb(802pt)=(0.931903,0.7256,0.296527); 
      rgb(803pt)=(0.933869,0.7256,0.295429); 
      rgb(804pt)=(0.935812,0.725668,0.294264); 
      rgb(805pt)=(0.937752,0.725744,0.29309); 
      rgb(806pt)=(0.939692,0.725821,0.291916); 
      rgb(807pt)=(0.941632,0.725897,0.290741); 
      rgb(808pt)=(0.943571,0.726023,0.289518); 
      rgb(809pt)=(0.945511,0.726151,0.288293); 
      rgb(810pt)=(0.947451,0.726278,0.287068); 
      rgb(811pt)=(0.949389,0.726411,0.285839); 
      rgb(812pt)=(0.951278,0.726641,0.284537); 
      rgb(813pt)=(0.953167,0.72687,0.283235); 
      rgb(814pt)=(0.955056,0.7271,0.281933); 
      rgb(815pt)=(0.956938,0.72734,0.280622); 
      rgb(816pt)=(0.958776,0.727646,0.279243); 
      rgb(817pt)=(0.960614,0.727952,0.277865); 
      rgb(818pt)=(0.962451,0.728259,0.276486); 
      rgb(819pt)=(0.964273,0.728597,0.275086); 
      rgb(820pt)=(0.966034,0.729057,0.273606); 
      rgb(821pt)=(0.967795,0.729516,0.272126); 
      rgb(822pt)=(0.969557,0.729976,0.270645); 
      rgb(823pt)=(0.971288,0.730473,0.269135); 
      rgb(824pt)=(0.972947,0.73106,0.267552); 
      rgb(825pt)=(0.974606,0.731647,0.265969); 
      rgb(826pt)=(0.976265,0.732234,0.264387); 
      rgb(827pt)=(0.977857,0.732879,0.262785); 
      rgb(828pt)=(0.979338,0.733619,0.261151); 
      rgb(829pt)=(0.980818,0.734359,0.259518); 
      rgb(830pt)=(0.982299,0.735099,0.257884); 
      rgb(831pt)=(0.983697,0.73591,0.256227); 
      rgb(832pt)=(0.984999,0.736803,0.254542); 
      rgb(833pt)=(0.986301,0.737697,0.252858); 
      rgb(834pt)=(0.987603,0.73859,0.251173); 
      rgb(835pt)=(0.988753,0.739566,0.249474); 
      rgb(836pt)=(0.989774,0.740613,0.247764); 
      rgb(837pt)=(0.990795,0.741659,0.246054); 
      rgb(838pt)=(0.991816,0.742706,0.244344); 
      rgb(839pt)=(0.992677,0.743816,0.242681); 
      rgb(840pt)=(0.993443,0.744965,0.241048); 
      rgb(841pt)=(0.994209,0.746114,0.239414); 
      rgb(842pt)=(0.994975,0.747262,0.23778); 
      rgb(843pt)=(0.995578,0.748465,0.236165); 
      rgb(844pt)=(0.996114,0.74969,0.234557); 
      rgb(845pt)=(0.99665,0.750915,0.232949); 
      rgb(846pt)=(0.997186,0.752141,0.231341); 
      rgb(847pt)=(0.997562,0.753386,0.229813); 
      rgb(848pt)=(0.997893,0.754637,0.228307); 
      rgb(849pt)=(0.998225,0.755887,0.226801); 
      rgb(850pt)=(0.998557,0.757138,0.225295); 
      rgb(851pt)=(0.998711,0.758433,0.223856); 
      rgb(852pt)=(0.998839,0.759735,0.222426); 
      rgb(853pt)=(0.998966,0.761037,0.220997); 
      rgb(854pt)=(0.999094,0.762339,0.219567); 
      rgb(855pt)=(0.999076,0.763641,0.218186); 
      rgb(856pt)=(0.99905,0.764942,0.216808); 
      rgb(857pt)=(0.999025,0.766244,0.21543); 
      rgb(858pt)=(0.998995,0.767546,0.214054); 
      rgb(859pt)=(0.998868,0.768848,0.212752); 
      rgb(860pt)=(0.99874,0.77015,0.21145); 
      rgb(861pt)=(0.998613,0.771451,0.210149); 
      rgb(862pt)=(0.998473,0.772756,0.208856); 
      rgb(863pt)=(0.998243,0.774083,0.207631); 
      rgb(864pt)=(0.998014,0.775411,0.206405); 
      rgb(865pt)=(0.997784,0.776738,0.20518); 
      rgb(866pt)=(0.997539,0.77806,0.20396); 
      rgb(867pt)=(0.997232,0.779362,0.20276); 
      rgb(868pt)=(0.996926,0.780664,0.201561); 
      rgb(869pt)=(0.99662,0.781966,0.200361); 
      rgb(870pt)=(0.996299,0.783268,0.199168); 
      rgb(871pt)=(0.995942,0.784569,0.197994); 
      rgb(872pt)=(0.995584,0.785871,0.19682); 
      rgb(873pt)=(0.995227,0.787173,0.195646); 
      rgb(874pt)=(0.994842,0.788475,0.19449); 
      rgb(875pt)=(0.994408,0.789777,0.193367); 
      rgb(876pt)=(0.993974,0.791078,0.192244); 
      rgb(877pt)=(0.99354,0.79238,0.191121); 
      rgb(878pt)=(0.993083,0.793671,0.190021); 
      rgb(879pt)=(0.992598,0.794947,0.188949); 
      rgb(880pt)=(0.992113,0.796223,0.187877); 
      rgb(881pt)=(0.991628,0.797499,0.186805); 
      rgb(882pt)=(0.99113,0.798789,0.185732); 
      rgb(883pt)=(0.990619,0.800091,0.18466); 
      rgb(884pt)=(0.990109,0.801393,0.183588); 
      rgb(885pt)=(0.989598,0.802695,0.182516); 
      rgb(886pt)=(0.989072,0.803996,0.18146); 
      rgb(887pt)=(0.988536,0.805298,0.180413); 
      rgb(888pt)=(0.988,0.8066,0.179367); 
      rgb(889pt)=(0.987464,0.807902,0.17832); 
      rgb(890pt)=(0.98691,0.809186,0.177291); 
      rgb(891pt)=(0.986349,0.810462,0.17627); 
      rgb(892pt)=(0.985787,0.811738,0.175249); 
      rgb(893pt)=(0.985226,0.813015,0.174228); 
      rgb(894pt)=(0.984644,0.814311,0.173207); 
      rgb(895pt)=(0.984057,0.815613,0.172186); 
      rgb(896pt)=(0.98347,0.816914,0.171165); 
      rgb(897pt)=(0.982883,0.818216,0.170144); 
      rgb(898pt)=(0.982296,0.819518,0.169145); 
      rgb(899pt)=(0.981709,0.82082,0.16815); 
      rgb(900pt)=(0.981122,0.822122,0.167154); 
      rgb(901pt)=(0.980535,0.823423,0.166159); 
      rgb(902pt)=(0.979947,0.824725,0.165163); 
      rgb(903pt)=(0.97936,0.826027,0.164168); 
      rgb(904pt)=(0.978773,0.827329,0.163172); 
      rgb(905pt)=(0.978186,0.828631,0.162177); 
      rgb(906pt)=(0.977599,0.829932,0.161181); 
      rgb(907pt)=(0.977012,0.831234,0.160186); 
      rgb(908pt)=(0.976425,0.832536,0.15919); 
      rgb(909pt)=(0.975838,0.833841,0.158195); 
      rgb(910pt)=(0.975251,0.835168,0.157199); 
      rgb(911pt)=(0.974664,0.836495,0.156204); 
      rgb(912pt)=(0.974077,0.837823,0.155208); 
      rgb(913pt)=(0.973489,0.83915,0.154213); 
      rgb(914pt)=(0.972902,0.840477,0.153217); 
      rgb(915pt)=(0.972315,0.841805,0.152222); 
      rgb(916pt)=(0.971728,0.843132,0.151226); 
      rgb(917pt)=(0.971155,0.844466,0.150224); 
      rgb(918pt)=(0.970619,0.845819,0.149203); 
      rgb(919pt)=(0.970083,0.847172,0.148182); 
      rgb(920pt)=(0.969547,0.848525,0.147161); 
      rgb(921pt)=(0.96902,0.849886,0.14614); 
      rgb(922pt)=(0.968509,0.851265,0.145119); 
      rgb(923pt)=(0.967999,0.852643,0.144098); 
      rgb(924pt)=(0.967488,0.854022,0.143077); 
      rgb(925pt)=(0.967,0.855411,0.142056); 
      rgb(926pt)=(0.966541,0.856815,0.141035); 
      rgb(927pt)=(0.966081,0.858219,0.140014); 
      rgb(928pt)=(0.965622,0.859623,0.138992); 
      rgb(929pt)=(0.965189,0.86104,0.137945); 
      rgb(930pt)=(0.96478,0.862469,0.136873); 
      rgb(931pt)=(0.964372,0.863899,0.135801); 
      rgb(932pt)=(0.963963,0.865328,0.134729); 
      rgb(933pt)=(0.96357,0.866773,0.133657); 
      rgb(934pt)=(0.963187,0.868228,0.132585); 
      rgb(935pt)=(0.962805,0.869683,0.131513); 
      rgb(936pt)=(0.962422,0.871138,0.130441); 
      rgb(937pt)=(0.962091,0.87261,0.129351); 
      rgb(938pt)=(0.961785,0.874091,0.128253); 
      rgb(939pt)=(0.961478,0.875571,0.127156); 
      rgb(940pt)=(0.961172,0.877052,0.126058); 
      rgb(941pt)=(0.960885,0.878571,0.124961); 
      rgb(942pt)=(0.960605,0.880103,0.123863); 
      rgb(943pt)=(0.960324,0.881634,0.122765); 
      rgb(944pt)=(0.960043,0.883166,0.121668); 
      rgb(945pt)=(0.959849,0.884719,0.120549); 
      rgb(946pt)=(0.95967,0.886276,0.119426); 
      rgb(947pt)=(0.959491,0.887833,0.118302); 
      rgb(948pt)=(0.959313,0.88939,0.117179); 
      rgb(949pt)=(0.959181,0.890995,0.116032); 
      rgb(950pt)=(0.959054,0.892603,0.114884); 
      rgb(951pt)=(0.958926,0.894211,0.113735); 
      rgb(952pt)=(0.958799,0.895819,0.112587); 
      rgb(953pt)=(0.958748,0.897453,0.111464); 
      rgb(954pt)=(0.958697,0.899086,0.110341); 
      rgb(955pt)=(0.958646,0.90072,0.109217); 
      rgb(956pt)=(0.958602,0.902359,0.108089); 
      rgb(957pt)=(0.958628,0.904043,0.106915); 
      rgb(958pt)=(0.958653,0.905728,0.105741); 
      rgb(959pt)=(0.958679,0.907413,0.104567); 
      rgb(960pt)=(0.958718,0.909102,0.103393); 
      rgb(961pt)=(0.95882,0.910812,0.102219); 
      rgb(962pt)=(0.958922,0.912522,0.101044); 
      rgb(963pt)=(0.959024,0.914232,0.0998703); 
      rgb(964pt)=(0.959153,0.915962,0.0986961); 
      rgb(965pt)=(0.959357,0.917749,0.0975219); 
      rgb(966pt)=(0.959561,0.919536,0.0963477); 
      rgb(967pt)=(0.959765,0.921323,0.0951736); 
      rgb(968pt)=(0.959996,0.923118,0.0939907); 
      rgb(969pt)=(0.960277,0.924931,0.092791); 
      rgb(970pt)=(0.960557,0.926743,0.0915913); 
      rgb(971pt)=(0.960838,0.928555,0.0903916); 
      rgb(972pt)=(0.961151,0.930378,0.0891919); 
      rgb(973pt)=(0.961509,0.932216,0.0879922); 
      rgb(974pt)=(0.961866,0.934054,0.0867925); 
      rgb(975pt)=(0.962223,0.935892,0.0855928); 
      rgb(976pt)=(0.96262,0.937768,0.0843802); 
      rgb(977pt)=(0.963053,0.939683,0.083155); 
      rgb(978pt)=(0.963487,0.941597,0.0819297); 
      rgb(979pt)=(0.963921,0.943512,0.0807045); 
      rgb(980pt)=(0.964415,0.945426,0.0794643); 
      rgb(981pt)=(0.964951,0.947341,0.0782135); 
      rgb(982pt)=(0.965487,0.949255,0.0769628); 
      rgb(983pt)=(0.966023,0.951169,0.075712); 
      rgb(984pt)=(0.966594,0.953118,0.0744441); 
      rgb(985pt)=(0.967181,0.955083,0.0731679); 
      rgb(986pt)=(0.967768,0.957049,0.0718916); 
      rgb(987pt)=(0.968355,0.959014,0.0706153); 
      rgb(988pt)=(0.96898,0.960999,0.0693006); 
      rgb(989pt)=(0.969619,0.96299,0.0679733); 
      rgb(990pt)=(0.970257,0.964981,0.0666459); 
      rgb(991pt)=(0.970895,0.966972,0.0653186); 
      rgb(992pt)=(0.971554,0.968984,0.0639486); 
      rgb(993pt)=(0.972218,0.971001,0.0625703); 
      rgb(994pt)=(0.972882,0.973017,0.0611919);  
      rgb(995pt)=(0.973545,0.975034,0.0598135); 
      rgb(996pt)=(0.974232,0.97705,0.058318); 
      rgb(997pt)=(0.974922,0.979067,0.056812); 
      rgb(998pt)=(0.975611,0.981083,0.055306); 
      rgb(999pt)=(0.9763,0.9831,0.0538) 
    }
}
\tikzstyle arrowstyle=[scale=1]
\tikzstyle directed=[postaction={decorate,decoration={markings,
        mark=at position .65 with {\arrow[arrowstyle]{stealth}}}}]
\tikzstyle reverse directed=[postaction={decorate,decoration={markings,
        mark=at position .65 with {\arrowreversed[arrowstyle]{stealth};}}}]
 \newcounter{tikzsubfigcounter}[figure]
\renewcommand{\thetikzsubfigcounter}{\thesection.\the\numexpr\value{figure}+1\relax\alph{tikzsubfigcounter}}
 \newcommand*{\tikztitle}[1]{ %
   \refstepcounter{tikzsubfigcounter}
   \textbf{(\alph{tikzsubfigcounter})}\space\space #1
 }
\begin{document}

\begin{abstract}
  In this contribution we derive and analyze a new numerical method for kinetic equations based on a variable transformation of the moment approximation.
  Classical minimum-entropy moment closures are a class of reduced models for kinetic equations that conserve many of the fundamental physical properties of solutions.
  However, their practical use is limited by their high computational cost, as an optimization problem has to be solved for every cell in the space-time grid.
  In addition, implementation of numerical solvers for these models is hampered by the fact that the optimization problems are only
  well-defined if the moment vectors stay within the realizable set. For the same reason, further reducing these models by,
  e.g., reduced-basis methods is not a simple task.
  Our new method overcomes these disadvantages of classical approaches.
  The transformation is performed on the semi-discretized level which makes them
  applicable to a wide range of kinetic schemes and replaces the nonlinear optimization problems by inversion of the positive-definite Hessian matrix.
  As a result, the
  new scheme gets rid of the realizability-related problems. Moreover, a discrete entropy law can be enforced by modifying the time stepping scheme.
  Our numerical experiments demonstrate that our new method is often several times faster than the standard optimization-based scheme.
\end{abstract}
\begin{keyword}
  moment models \sep minimum entropy \sep kinetic transport equation \sep model reduction \sep realizability 
\end{keyword}
\maketitle

\noindent


\newcommand{\tikzpath}{Images/}

\section{Introduction}
Kinetic equations play an important role in many physical applications. One of the earliest and most prominent examples is the Boltzmann equation which was derived by the
Austrian physicist Ludwig Boltzmann in 1872~\cite{Boltzmann.1872} and still forms the basis for the kinetic theory of rarefied gases. The Boltzmann equation or similar kinetic equations
proved to be applicable not only to classical gases but also to electron transport in solids and plasmas, neutron transport in nuclear reactors, photon transport in superfluids
and radiative transfer, among others~\cite{Lewis1984, Cercignani1988, Lanckau1989, Mihalas1999, Markowich1990}. More recently, kinetic equations were also derived in the context of
biological modelling, e.g., for studying cell movement or wolf migration~\cite{Capasso.1999, Hillen.2013, Borsche2019}.

While analytic solutions can be derived in some special cases~\cite{Ganapol2001}, usually kinetic equations have to be solved numerically.
Due to their high dimensionality, directly solving kinetic equations with standard discretizations (e.g., finite difference methods)
is often infeasible or restricted to very small grid sizes. For that reason, a variety of specialized approximate methods
have been developed, many of which belong to the class of moment methods.
Instead of computing the whole kinetic density function, moment approximations choose a set of weight functions (usually polynomials up to some order)
on the velocity space and only track the weighted velocity averages (called moments) of the kinetic density with respect to these functions.
This is usually done by performing a Galerkin projection of the original kinetic equation
to the linear span of the weight functions. In general, the resulting moment equations are not closed and
thus an ansatz for the velocity distribution has to be made. Choosing a linear combination of the weight functions gives
the widely used \(\PN\) closure~\cite{Lewis1984}, where \(\momentorder\) is the degree of the highest-order
moments in the model. The \(\PN\) closure results in linear equations, is simple to implement and often gives reasonable results.
However, it does not guarantee non-negativity of the approximated kinetic density. This sometimes leads to physically meaningless
solutions, as the \(\PN\) solutions can, e.g., contain negative values for the local
particle density.

The so-called minimum-entropy moment models \(\MN\)~\cite{Min78,DubFeu99} avoid these problems by choosing the ansatz function such
that it minimizes an entropy functional which usually models the
(negative) physical entropy. The resulting closed system of equations is
hyperbolic and dissipates the chosen entropy~\cite{Levermore1996}. However, numerically solving the \(\MN\) equations
requires the solution of a non-linear optimization problem at every point on the
space-time grid. Although the optimization problems can be solved in parallel~\cite{Hauck2010,Alldredge2012,KristopherGarrett2015,Schaerer2017},
the computational cost for high moment orders still is prohibitively high in practical applications.
Another drawback of the entropy-based moment closures is that the optimization problem
is solvable only for so-called realizable moment vectors, i.e., vectors that actually are
moments of a positive density function. As explicit descriptions of the set of realizable moment vectors are usually not available,
discretizations (especially of higher order) often struggle to keep the
approximate solutions
realizable~\cite{Zhang2010,Schar1996,Alldredge2015,Schneider2015b,Schneider2016a,Chidyagwai2017,Olbrant2012}.

A partial remedy for the high computational cost of the minimum entropy models could be additional model reduction,
for example via reduced basis methods~\cite{Quarteroni2016}. These methods
generate a reduced description of the (discretized) equations first
and then use this reduced model to perform the actual computations.
In some cases, e.g., if a given kinetic equation has to be solved many times for different parameters,
this reduces overall computation time by several orders of magnitude.
Generating the reduced model is usually done by constructing a low-dimensional linear subspace
from solution trajectories and then projecting the problem to this subspace.
This has been successfully done for the \(\PN\) models~\cite{Himpe2018}.
In the context of minimum-entropy moment models, however, this procedure is
problematic as it does not preserve realizability, which may render the reduced
model useless as it does not admit a solution.

Checking realizability is much easier when using piecewise linear bases instead
of the standard polynomial basis on the whole velocity
space~\cite{Frank2006,DubKla02,Schneider2014,Ritter2016,Schneider2017,SchneiderLeibner2020,SchneiderLeibner2019b}.
In addition, the computational cost is significantly lower for these models.
However, solving the optimization problems is still costly compared to linear models and
maintaining realizability still requires additional limiters~\cite{SchneiderLeibner2019b}.

Another approach to fix the realizability issues is to introduce a regularization
of the optimization problem~\cite{Alldredge2019}. The regularized problem
admits a solution also for moments vectors that are not realizable and maintains most of
the desirable properties of the original problem, at the cost
of an additional approximation error
(which, however, can be controlled by the regularization parameter).
However, this approach still requires the solution of the (regularized)
minimum entropy problem in each cell of the space-time grid.

In this paper, we will
present a new discretization scheme for the minimum-entropy moment equations based on a
transformation of the semi-discretized equations to entropy variables.
The new scheme replaces the non-linear optimization problems by matrix inversions and
inherently guarantees realizability. As a consequence, it avoids many
of the problems described above.
In addition, the new scheme is often significantly faster
than the untransformed scheme and shows improved parallel scaling.
Moreover, a discrete entropy law can be enforced
for the new scheme by using a relaxed Runge-Kutta method.
On the downside,
adaptive timestepping is strictly needed for the transformed scheme. Moreover,
numerically singular Hessian matrices will result in a failure of the scheme
if no additional regularization is employed. However, we did not encounter
such a situation during our extensive numerical experiments (despite the fact that
the untransformed reference scheme had to use regularization in several of the tests).

This paper is organized as follows. First, in \cref{sec:background} we shortly
recall the necessary background on minimum entropy moment models.
In \cref{sec:newscheme}, the new scheme is presented and analysed.
In \cref{sec:implementationdetails}, we give an outline
of our implementation which is then used for the
extensive numerical investigations in \cref{sec:numericalexperiments}.

\section{Minimum-entropy moment models}\label{sec:background}
\subsection{Kinetic transport equation}

We consider the linear transport equation
\begin{subequations}\label{eq:FokkerPlanckEasy2}
  \begin{equation}
    \dt\distribution+\SC\cdot\spatialGradient\distribution + \absorption\distribution = \scattering\collision{\distribution}+\source,
  \end{equation}
  which describes the density of particles with speed \(\SC\in\sphere\) at position
  \(\spatialvar\in\domain\subseteq\R^3\) and time \(\timevar \in \timeint = [0, \tf]\) under the
  events of scattering
  (proportional to \(\scattering\left(\timevar,\spatialvar\right) \geq 0\)), absorption (proportional to \(\absorption\left(\timevar,\spatialvar\right) \geq 0\))
  and emission (proportional to \(\source\left(\timevar,\spatialvar,\SC\right) \geq 0\)).
  The equation is supplemented with initial condition and Dirichlet boundary conditions:
  \begin{align}
    \distribution(0,\spatialvar,\SC)
                                                    & =
    \distributiontzero(\spatialvar,\SC)             & \text{for } \spatialvar\in\domain, \SC\in\sphere     \label{eq:kineticequationinitial},                                    \\
    \distribution(\timevar,\spatialvar,\SC)
                                                    & =
    \distributionboundary(\timevar,\spatialvar,\SC) & \text{for } \timevar\in\timeint, \spatialvar\in\partial\domain, \outernormal\cdot\SC<0, \label{eq:kineticequationboundary}
  \end{align}
  where \(\distributiontzero\) and \(\distributionboundary\) are given functions and \(\outernormal\) is the outward unit normal vector in \(\spatialvar\in\partial\domain\).
\end{subequations}
For simplicity, we will consider isotropic scattering
\begin{equation}\label{eq:isotropicscatteringop}
  \collision{\distribution}(\timevar, \spatialvar, \SC) =  \frac{1}{\abs{\sphere}}
  \int\limits_{\sphere} \distribution(\timevar, \spatialvar, \tilde{\SC})~d\tilde{\SC} - \distribution(\timevar, \spatialvar, \SC),
\end{equation}
isotropic time-independent source \(\source\left(\timevar,\spatialvar,\SC\right) = \source(\spatialvar)\) and
time-independent scattering \(\scattering\left(\timevar,\spatialvar\right) = \scattering\left(\spatialvar\right)\)
and absorption \(\absorption\left(\timevar,\spatialvar\right) = \absorption\left(\spatialvar\right)\).

Parameterizing \(\SC\) in spherical coordinates we obtain
\begin{equation}
  \SC = \left(\sqrt{1-\SCheight^2}\cos(\SCangle),\sqrt{1-\SCheight^2}\sin(\SCangle), \SCheight\right)^T \eqqcolon \left(\SCx,\SCy,\SCz\right)^T,
\end{equation}
where \(\SCangle\in[0,2\pi]\) is the azimuthal and \(\SCheight\in[-1,1]\) the cosine of the polar angle.

As a one-dimensional simplification, we will also consider the models in slab geometry, which is a projection of the sphere onto
the \(\z\)-axis~\cite{Seibold2014}. The transport equation under consideration then has the form
\begin{equation}
  \dt\distribution+\SCheight\dz\distribution + \absorption\distribution = \scattering\collision{\distribution}+\source, \qquad \timevar\in\timeint,\z\in\domain,\SCheight\in[-1,1].
\end{equation}

\subsection{The moment approximation}

In the following, \(\angularDomain\) will always denote the angular domain, i.e., \(\angularDomain = [-1,1]\) in slab geometry and \(\angularDomain = \sphere\) in the three-dimensional case,
and \(\SC\) will denote the corresponding angular variable.
Moreover, we will use angle brackets to denote integration over \(\angularDomain\), i.e.,
\begin{equation*}
  \ints{f} \coloneqq \int\limits_{\angularDomain} f(\SC)~d\SC \quad  {\rm for  \ all  \ } f \in L^1(\angularDomain).
\end{equation*}

Due to the high-dimensionality, directly discretizing and solving~\eqref{eq:FokkerPlanckEasy2} via standard numerical schemes is usually not viable.
We will thus consider moment approximations of~\eqref{eq:FokkerPlanckEasy2}.
These models transfer the kinetic equation to a coupled system
of PDEs for weighted velocity averages (moments) of the solution.
\begin{definition}
  The vector of functions \(\basis\colon\angularDomain\to\R^{\momentnumber}\) consisting of \(\momentnumber\)
  linearly-independent
  \emph{basis functions} \(\basiscomp[\basisindex] \in \Lp{1}(\angularDomain)\), \(\basisindex \in \Set{0,\ldots\momentnumber-1}\), is called a
  \emph{moment basis}.
  The \emph{moments} \(\moments[\basis,\distribution] = {\left(\momentcomp{0},\ldots,\momentcomp{\momentnumber-1}\right)}^T \in \R^\momentnumber\)
  with respect to the basis \(\basis\)
  of a given density function \(\distribution \in \Lp{1}(\angularDomain)\) are then defined by
  \begin{equation}\label{eq:moments}
    \moments[\basis,\distribution] \coloneqq \ints{\basis\distribution},
  \end{equation}
  where the integration is performed component-wise.
  Furthermore, the vector \(\isotropicmomentbasis{\basis} \coloneqq \ints{\basis}\) is called the \emph{isotropic moment}.
\end{definition}

\begin{definition}\label{def:density}
  The quantity \(\density_{\distribution} \coloneqq \ints{\distribution} \in \R\) is called the \emph{local particle density} of the
  function \(\distribution\).
  If we assume that there exists a vector \(\multipliersone \in \R^{\momentnumber}\) such
  that
  \begin{equation}
    \multipliersone \cdot \basis \equiv 1,
  \end{equation}
  we have
  \begin{equation*}
    \multipliersone \cdot \moments[\basis,\distribution] = \ints{ \multipliersone \cdot \basis \distribution} = \ints{\distribution} = \density_{\distribution}.
  \end{equation*}
  Hence, we define the local particle density of the moment vector \(\moments \in \R^{\momentnumber}\)
  with respect to the basis \(\basis\) as
  \begin{equation}\label{eq:densityfrommultipliersone}
    \density_{\basis}(\moments) \coloneqq \multipliersone \cdot \moments.
  \end{equation}
\end{definition}
\begin{remark}
  The vector \(\multipliersone\) exists for all bases regarded in this paper (see \cref{sec:basisfunctions}).
\end{remark}
In the following, if basis \(\basis\) or density function \(\distribution\) are clear from the context,
we will usually omit the corresponding subscripts.

Equations for the moments \(\moments\) can be
obtained by multiplying~\eqref{eq:FokkerPlanckEasy2} with \(\basis\) and integrating over \(\angularDomain\), yielding
\begin{equation*}
  \ints{\basis\dt\distribution}+\ints{\basisop\left(\SC\cdot\spatialGradient\distribution\right)} + \absorption\ints{\basis\distribution} = \scattering\ints{\basis\collision{\distribution}}+\ints{\basis\source}.
\end{equation*}
Collecting known terms, and interchanging integration and differentiation where possible, the moment system has the form
\begin{equation}
  \label{eq:MomentSystemUnclosed}
  \dt\moments+\ints{\basisop\left(\SC\cdot\spatialGradient\distribution\right)} + \absorption\moments =
  \scattering\ints{\basis\collision{\distribution}}+\ints{\basis\source}.
\end{equation}
For isotropic collision operator~\eqref{eq:isotropicscatteringop}, the scattering term becomes
\begin{equation}\label{eq:MomentSystemIsotropicScatteringTerm}
  \ints{\basis\collision{\distribution}}
  = \ints{\basisop\left(\frac{\ints{\distribution}}{\volume{\angularDomain}} - \distribution\right)}
  = \frac{\ints{\distribution}}{\volume{\angularDomain}}\ints{\basis}- \moments
  = \frac{\multipliersone \cdot \moments}{\volume{\angularDomain}}\ints{\basis} - \moments
  = \left(\isomatrix - \eyematrix\right) \moments
\end{equation}
where
\(\eyematrix \in \R^{\momentnumber\times\momentnumber}\) is the unit matrix and
\begin{equation}\label{eq:isomatrix}
  \isomatrix = \frac{1}{\volume{\angularDomain}}
  \ints{\basis}{\left(\multipliersone\right)}^T \in \R^{\momentnumber\times\momentnumber}
\end{equation}
is the matrix
mapping the moment vector \(\moments\)
to the isotropic moment vector with the same density
\begin{equation*}
  \isomatrix \moments = \isotropicmomentbasis{\basis} \cdot \frac{\density_{\basis}(\moments)}{\abs{\angularDomain}}.
\end{equation*}
Consequently, for isotropic scattering,~\eqref{eq:MomentSystemUnclosed} simplifies to
\begin{equation}\label{eq:MomentSystemUnclosedIsotropic}
  \dt\moments+\ints{\basisop\left(\SC\cdot\spatialGradient\distribution\right)}
  = \left(\scattering \isomatrix - \crosssection\eyematrix\right) \moments  + \ints{\basis\source}
\end{equation}
where \(\crosssection = \absorption + \scattering\) is the \emph{total cross section}.

However, even in the isotropic case,
the transport term \(\ints{\basisop\left(\SC\cdot\spatialGradient\distribution\right)}\) usually
cannot be given explicitly in terms of \(\moments\).
For non-isotropic scattering operator, the same applies to the scattering term.
Therefore, additional assumptions have to be made to close the
unknown terms. A common approach is to replace \(\distribution\) in~\eqref{eq:MomentSystemUnclosedIsotropic} by a moment-dependent ansatz
\(\ansatz[\moments,\basis]\),
resulting in a
closed system of non-linear equations for \(\moments\):
\begin{equation}\label{eq:MomentSystemClosedIsotropic}
  \dt\moments+\sum_{\dimindex=1}^{\dimension}\dx[\dimindex]\Flux_\dimindex\left(\moments\right) = \Source\left(\spatialvar,\moments\right),
\end{equation}
where
\begin{equation}\label{eq:fluxfuncdef}
  \Flux_\dimindex\left(\moments\right) =  \ints{\SCcomp[\dimindex]\basis\ansatz[\moments,\basis]}
\end{equation}
and
\begin{equation}\label{eq:sourcedef}
  \Source\left(\spatialvar,\moments\right)
  = \left(\scattering \isomatrix - \crosssection\eyematrix\right) \moments  + \ints{\basis\source}.
\end{equation}

\begin{remark}
  We will always assume that the ansatz exactly reproduces the moments, i.e., \(\ints{\basis\ansatz[\moments,\basis]} = \moments\).
  Note that this may not be fulfilled by regularized moment approximations as regarded, e.g., in \cite{Alldredge2019}.
\end{remark}

It remains to specify the basis functions and the ansatz density \(\ansatz[\moments,\basis]\).
In the following, we will often omit the \(\basis\)-dependency of the ansatz function and only write \(\ansatz[\moments]\) if the basis is
clear from the context.

\subsection{Minimum-entropy closure}
\label{subsec:mn_and_basis}
For the minimum entropy closure~\cite{Levermore1996,Min78,minerbo1979ment,Friedrichs1971}, we choose a strictly convex and twice continuously
differentiable \emph{entropy density function} \(\entropy\colon\entropydomain \subset \R\to\R\) and demand that the ansatz function minimizes the
entropy functional
\begin{equation}
  \entropyFunctional(\distribution) = \ints{\entropy(\distribution)}
\end{equation}
under the moment constraints
\begin{equation}\label{eq:MomentConstraints}
  \ints{\basis\distribution} = \moments.
\end{equation}
Here, the minimum is simply taken over all functions
\(\distribution = \distribution(\SC)\) such that \(\entropyFunctional(\distribution)\) is well-defined, i.e.\
\begin{equation}\label{eq:primal}
  \ansatz[\moments] = \ansatz[\moments,\basis,\entropy] = \underset{\Set{\distribution \given \rangeop(\distribution)\subset\entropydomain,
      \, \entropy(\distribution)\in\Lp{1}(\angularDomain),
      \, \ints{\basis \distribution} = \moments}}{\argmin} \ints{\entropy(\distribution)}.
\end{equation}
This problem, which must be solved over the space-time mesh, is typically solved through its strictly convex finite-dimensional dual,
\begin{equation}\label{eq:dual}
  \multipliers[\basis,\entropy](\moments) \coloneqq \underset{\tilde{\multipliers} \in \R^{\momentnumber}}{\argmin}
  \ints{\ld{\entropy}(\basis\cdot\tilde{\multipliers})} - \moments\cdot\tilde{\multipliers},
\end{equation}
where \(\ld{\entropy}\) is the Legendre dual of \(\entropy\). The first-order necessary
conditions for the
multipliers \(\multipliers[\basis,\entropy](\moments)\) show that
the solution to~\eqref{eq:primal}, if it exists, has the form
\begin{equation}\label{eq:psiME}
  \ansatz[\moments,\basis,\entropy] = \ld{\entropy}' \left(\basis \cdot \multipliers[\basis,\entropy](\moments) \right),
\end{equation}
where \(\ld{\entropy}'\) is the derivative of \(\ld{\entropy}\).

As in~\cite{Levermore1996,Hauck2010,SchneiderLeibner2020}, for sake of simplicity, we focus on \emph{Maxwell-Boltzmann entropy}%
\begin{equation}\label{eq:maxwellboltzmannentropy}
  \entropy(\distribution) = \distribution \log(\distribution) - \distribution,
\end{equation}
which is used for non-interacting, classical particles as in an ideal gas. Thus, \(\entropydomain = (0,\infty)\) and~\eqref{eq:primal} becomes
\begin{equation}\label{eq:primal2}
  \ansatz[\moments] = \underset{\Set{\distribution \given \distribution \in \Lppos{1}(\angularDomain),\, \ints{\basis \distribution} = \moments}}{\argmin} \ints{\entropy(\distribution)},
\end{equation}
where
\begin{equation}
  \Lppos{1} \coloneqq \Set{ \distribution \in \Lp{1}(\angularDomain) \given \distribution > 0 \text{ almost everywhere. } }
\end{equation}
is the space of positive integrable functions.
Further, we have \(\ld{\entropy}(p) = \ld{\entropy}'(p) = \ld{\entropy}''(p) = \exp(p)\) and thus the minimum entropy
ansatz~\eqref{eq:psiME} becomes \(\ansatz[\moments] = \exp\left(\basis \cdot \multipliers[\basis,\entropy](\moments) \right)\).

\begin{remark}
  In principle, the new scheme described in \cref{sec:newscheme} could be used
  in the same way with other physically relevant entropies, e.g.\ the Bose-Einstein entropy
  \begin{equation*}
    \boseeinsteinentropy(\distribution) = \distribution \log(\distribution) -
    \left(1+\distribution\right)\log\left(1+\distribution\right).
  \end{equation*}
  In this case, the ansatz distribution is given by
  \begin{equation*}
    \ansatz[\moments] = \ld{(\boseeinsteinentropy)}'(\multipliers[{}_{\mathrm{BE}}](\moments)\cdot\basis)
    = \frac{\exp(\multipliers[{}_{\mathrm{BE}}](\moments) \cdot \basis)}{1-\exp(\multipliers[{}_{\mathrm{BE}}](\moments) \cdot \basis)}
  \end{equation*}
  with \(\multipliers[{}_{\mathrm{BE}}] = \multipliers[\basis,\boseeinsteinentropy]\).
  To ensure positivity, we thus have to keep the multipliers \(\multipliers[{}_{\mathrm{BE}}]\) in the basis-dependent set
  \begin{equation*}
    \Set{\multipliers \in \R^{\momentnumber} \given \multipliers \cdot \basis < 0},
  \end{equation*}
  i.e., other than in the Maxwell-Boltzmann case, we again have to
  deal with realizability (compare \cref{sec:realizability}), also in the transformed scheme.
  Depending on the basis, this might significantly complicate the implementation.
\end{remark}

Using the entropy-based closure, the moment system \eqref{eq:MomentSystemClosedIsotropic}
is hyperbolic and dissipates the chosen entropy~\cite{Schneider2016} (for \(\absorption = \source = 0\))
\begin{equation}\label{eq:entropyeq}
  \partial_t \entropyFunctional(\ansatz[\moments]) + \sum_{\dimindex=0}^{\dimension-1}
  \partial_{\spatialvarcomp_{\dimindex}} \ints{\SCcomp[\dimindex] \entropy(\ansatz[\moments])} \leq 0,
\end{equation}
i.e.\
\(\entropyFunctional(\ansatz[\moments]) = \ints{\entropy(\ansatz[\moments])} = \ints{\entropy \circ \ld{\entropy}'(\multipliers(\moments) \cdot \basis)} \)
and
\(\ints{\SC \entropy(\ansatz[\moments])}
= \ints{\SC\, \entropy \circ \ld{\entropy}'(\multipliers(\moments) \cdot \basis)}\)
form an entropy--entropy flux pair in the sense of hyperbolic systems.

\subsection{Basis functions}\label{sec:basisfunctions}
We will consider three options for the basis functions \(\basis\): the full moment basis \(\fmbasis\),
the hat function basis \(\hfbasis\) and the partial moment basis \(\pmbasis\).

\subsubsection{Full moment basis}
The full moment basis \(\fmbasis\) is the standard choice and consists of polynomials of up to
order \(\momentorder\), resulting in \(\momentnumber = \momentorder + 1\) and \(\momentnumber = (\momentorder+1)^2\) basis
functions in one and three dimensions, respectively.
We will use Legendre polynomials in slab geometry and real spherical harmonics in the full three-dimensional setting.
In one dimension, the isotropic moment is \(\isotropicmomentbasis{\fmbasis} = \ints{\fmbasis} = {(2, 0, 0, \ldots, 0)}^T\) and the multiplier
\(\multipliersone[\fmbasis]\) can be chosen as \(\multipliersone[\fmbasis] = {(1, 0, 0, \ldots, 0)}^T\).
In three dimensions, we have \(\isotropicmomentbasis{\fmbasis} = {(\sqrt{4\pi}, 0, 0, \ldots, 0)}^T =
\multipliersone[\fmbasis]\).
\begin{definition}
  The minimum-entropy moment models using the \(\fmbasis\)
  will be called \(\MN\) models, where \(\momentorder\)
  is the maximal polynomial order of the basis functions.
\end{definition}

\subsubsection{First-order finite-element bases}\label{sec:FirstOrderFEMBases}
Models using the full moment basis show optimal (spectral) convergence for
smooth problems. For non-smooth problems, however, instead of increasing the
polynomial order \(\momentorder\), it might be
better to keep \(\momentorder\) fixed and
regard piecewise polynomials on increasingly refined partitions of the domain.
We will here restrict ourselves to piecewise linear bases
(\(\momentorder = 1\)) which avoid many of the performance
and realizability problems of the classical polynomial
models~\cite{SchneiderLeibner2020,SchneiderLeibner2019b}.
In the following, we will shortly state the definitions of the first-order bases.
For a more detailed introduction see \cite{SchneiderLeibner2020}.

To define the
first-order bases, we choose a partition \(\generalpartition\)
dividing the
velocity domain \(\angularDomain\) into intervals (slab geometry) or spherical
triangles (three dimensions).  Let \(\nvertex\) and \(\nentity\) be the number
of nodes (vertices) and elements (intervals or spherical triangles)
of this partition, respectively.

The first basis of interest,
the hat function basis \(\hfbasis[\momentnumber]\), consists of \(\momentnumber =
\nvertex\) continuous basis functions \(\hfbasiscomp\) which, similar to the
linear basis typically used in the continuous finite element method, fulfill the
partition of unity property, i.e.\
\(\sum\limits_{\basisindex=0}^{\momentnumber-1} \hfbasiscomp[\basisindex]
\equiv 1\), and the Lagrange property, i.e.\ each basis function evaluates to
\(1\) at one node of the partition and to \(0\) at all other nodes.

The partial moment basis \(\pmbasis\), on the other hand, is
defined in analogy to the discontinuous finite element method and consists of
the \(\momentnumber = 2 \nentity\) or \(\momentnumber = 4 \nentity\) (in one
and three dimensions, respectively) basis functions for the space of piecewise
linear functions on \(\generalpartition\) that may be discontinuous between
elements of the partition.

More precisely, in slab geometry, we will always choose the partition \(\generalpartition\) as the equidistant partition
of \(\angularDomain = [-1,1]\) into \(\hankelhalfind\) intervals \(\cell{\cellindex} = [\SCheight_\cellindex,\SCheight_{\cellindex+1}]\)
given by the set of \(\hankelhalfind+1\)
angular ``grid'' points
\(-1 = \SCheight_0 < \SCheight_1 = -1+\frac{2}{\hankelhalfind}  < \cdots < \SCheight_{\hankelhalfind-1} = -1+(\hankelhalfind-1)\cdot\frac{2}{\hankelhalfind}
< \SCheight_{\hankelhalfind}=1\).
Given this partition, the continuous
piecewise linear basis
functions \(\hfbasis[\momentnumber] = {\left(\hfbasiscomp[0],\ldots,\hfbasiscomp[\momentnumber-1]\right)}^T\) (hat functions) are
defined as
\begin{equation}\label{eq:hfbasis}
  \hfbasiscomp[\basisindex](\SCheight) =
  \indicator{\ccell{\basisindex-1}}\cfrac{\SCheight-\SCheight_{\basisindex-1}}{\SCheight_\basisindex-\SCheight_{\basisindex-1}}
  +\indicator{\ccell{\basisindex}}\cfrac{\SCheight-\SCheight_{\basisindex+1}}{\SCheight_\basisindex-\SCheight_{\basisindex+1}},
\end{equation}
where \(\indicator{\ccell{\cellindex}}(\SCheight)\) is the indicator function on the interval
\(\ccell{\cellindex}\) (with \(\ccell{-1} \equiv \ccell{\hankelhalfind} \equiv 0\)) and \(\momentnumber = \hankelhalfind + 1\) is the number of basis functions.
The isotropic moment is \(\isotropicmomentbasis{\hfbasis}
= {\left(\frac{1}{\hankelhalfind}, \frac{2}{\hankelhalfind}, \ldots,
\frac{2}{\hankelhalfind}, \frac{1}{\hankelhalfind}\right)}^T\) and we have
\(\multipliersone[\hfbasis] = {(1, \ldots, 1)}^T\) due to the partition of unity property.

The partial moment basis in slab geometry is given by
\(\pmbasis = \left( \pmbasiscomp[0], \ldots, \pmbasiscomp[\momentnumber-1] \right)
= {\left(\pmbasis[\cell{0}]^T,\ldots,\pmbasis[\cell{\hankelhalfind-1}]^T\right)}^T\)
with
\begin{equation*}
  \pmbasis[\cell{\cellindex}](\SCheight) = \begin{cases} {\left(1,\SCheight\right)}^T & \text{ if } \SCheight \in \interior{\cell{\cellindex}},      \\
              {(0, 0)}^T                   & \text{ if } \SCheight \in [-1,1]\setminus\ccell{\cellindex}.
  \end{cases}
\end{equation*}
Here, \(\momentnumber=2\hankelhalfind\) is again the number of basis functions and \(\interior{\cell{\cellindex}}\) is the interior of \(\cell{\cellindex}\).
The isotropic moment is
\(\isotropicmomentbasis{\pmbasis}
= \frac{1}{2}{\left( \frac{1}{\hankelhalfind}, \SCheight_1^2 - \SCheight_0^2, \frac{1}{\hankelhalfind}, \SCheight_2^2 - \SCheight_1^2,
  \ldots\right)}^T\) and \(\multipliersone[\pmbasis] = {(1, 0, 1, 0, \ldots)}^T\).

In three dimensions, the triangulation \(\generalpartition\) will be obtained by dyadic refinement
of the octants of the sphere \(\angularDomain = \sphere\),
i.e.\ the coarsest triangulation contains the eight spherical triangles obtained by projecting the octahedron with vertices
\(\{{(\pm1, 0, 0)}^T\), \({(0, \pm1, 0)}^T\), \({(0, 0, \pm1)}^T\}\) to the sphere
and finer partitions are obtained by iteratively subdividing each spherical triangle into four new ones,
adding vertices at the midpoints of the triangle edges. After
\(\refinementnumber\) refinements, we thus obtain \(\nvertex(\refinementnumber) =
4^{\refinementnumber+1}+2\) vertices and \(\nentity(\refinementnumber) = 2 \cdot
4^{\refinementnumber+1}\) spherical triangles.

To get a three-dimensional equivalent of the continuous hat function basis~\eqref{eq:hfbasis},
we consider basis functions defined using spherical barycentric coordinates~\cite{Buss2001,Langer2006,Rustamov2010}.
On each spherical triangle \(\sphericaltriangle \in \generalpartition\)
all elements of \(\hfbasis\) are defined to be zero except for the three basis functions
associated with the vertices of \(\sphericaltriangle\). Denoting these vertices as
\(\SC_1, \SC_2, \SC_3 \in \sphere\), the values of the corresponding basis functions
\(\hfbasiscomp[1], \hfbasiscomp[2], \hfbasiscomp[3]\) are defined by requiring that
\begin{equation*}
  \hfbasiscomp[\basisindex](\SC_\basisindexvar) = \dirac_{\basisindex\basisindexvar}
  \ \ \text{ (Lagrange property) }
\end{equation*}
for \(\basisindex, \basisindexvar \in \Set{1, 2, 3}\), and that,
for every point \(\SC\in\sphericaltriangle\),
\begin{equation*}
  \hfbasiscomp[1]\left(\SC\right)+\hfbasiscomp[2]\left(\SC\right)+\hfbasiscomp[3]\left(\SC\right) = 1 \ \ \text{ (partition of unity) }
\end{equation*}
and
\begin{equation*}
  \SC\in\sphericaltriangle \text{ is the Riemannian center of mass with weights }
  \hfbasiscomp[\basisindex]\left(\SC\right)
  \text{ and nodes } \SC_\basisindex.
\end{equation*}
As in one dimension, the resulting basis functions are non-negative.
Due to the partition of unity property we again have \(\multipliersone[\hfbasis] = {(1, \ldots, 1)}^T\).

The three-dimensional discontinuous partial moment basis \(\pmbasis\) is chosen
analogously to the one-dimensional case as
\begin{equation*}
  \pmbasis = {({(\pmbasis[\sphericaltriangle_0])}^T, \ldots, {(\pmbasis[\sphericaltriangle_{\nentity-1}])}^T)}^T
\end{equation*}
with
\begin{equation*}
  \pmbasis[\sphericaltriangle](\SC)
  = \begin{cases} {\left(1,\SCx,\SCy,\SCz\right)}^T & \text{if } \SC \in \interior{\sphericaltriangle},      \\
              {(0, 0)}^T                        & \text{if } \SC \in \sphere\setminus\sphericaltriangle,
  \end{cases}
\end{equation*}
for any spherical triangle
\(\sphericaltriangle \in \generalpartition = \Set{ \sphericaltriangle_\cellindex \given \cellindex = 0, \ldots, \nentity-1 }\)
Here, \(\interior{\sphericaltriangle}\) is the interior of \(\sphericaltriangle\).
The unit multiplier is given by
\(\multipliersone[\pmbasis] = {(1, 0, 0, 0, 1, 0, 0, 0, \ldots)}^T.\)
\begin{definition}
  The minimum-entropy moment models using the \(\hfbasis\) and \(\pmbasis\) basis
  will be called \(\HFMN\) and \(\PMMN\) models, respectively,
  where \(\momentnumber\) is the number of moments
  (or equivalently, the number of basis functions, i.e.\ the length of the vectors of functions \(\hfbasis\) and \(\pmbasis\)).
\end{definition}

\subsection{Realizability}\label{sec:realizability}
Since the dual problem \eqref{eq:dual} is strictly convex, a solution exists
if and only if the first-order necessary conditions (compare \eqref{eq:psiME}) are fulfilled,
i.e.~\eqref{eq:primal} is
solvable for moment vectors in the ansatz set
\begin{equation}\label{eq:ansatzset}
  \AnsatzSpace[\basis,\entropy] \coloneqq \Set{\ints{\basis \ld{\entropy}'\left(\multipliers \cdot \basis\right) } \given \multipliers \in \R^{\momentnumber}}.
\end{equation}
For Maxwell-Boltzmann entropy, it can be shown~\cite{Junk2000, SchneiderLeibner2020}
that this set is equal to the \emph{positively realizable set}
\begin{equation}\label{eq:posrealizableset}
  \RDpos{\basis}{} \coloneqq \Set{\moments  \in \R^{\momentnumber} \given \exists \distribution \in \Lppos{1}(\angularDomain)
    \text{ such that } \moments =\ints{\basis\distribution} }.
\end{equation}
and that the map \(\multipliers[\basis,\entropy]\colon \RDpos{\basis}{} \to \R^{\momentnumber}\) given by \(\eqref{eq:dual}\)
is a diffeomorphism with inverse map
\begin{equation}\label{eq:alpha_to_u_mapping}
  \moments[\basis,\entropy]\colon \R^{\momentnumber} \to \RDpos{\basis}{}, \quad \moments[\basis,\entropy](\multipliers)
  \coloneqq \ints{\basis \ld{\entropy}'(\multipliers \cdot \basis)}.
\end{equation}
Vectors \(\moments \in \RDpos{\basis}{}\) will be called \emph{realizable}.
Similar to the ansatz function, we will often omit one or all of the subscripts of \(\moments\) and \(\multipliers\)
if the corresponding dependencies are clear from the context.
Motivated by the mapping \eqref{eq:alpha_to_u_mapping}, in the following, we will also refer to the multipliers
\(\multipliers\) as the \emph{entropy variables} (or transformed variables) and to the realizable moments \(\moments\)
as standard or original variables.

The realizable set is a convex cone that, depending on the choice of basis \(\basis\), may have a complicated structure.
For example, a moment vector \(\moments\) is realizable with respect to the full-moment basis \(\fmbasis\)
if some (\(\moments\)-dependent) Hankel matrices
are positive definite~\cite{Curto1991}. This criterion is hard to test in practice, especially for large polynomial order \(\momentorder\).
As a consequence, given a moment vector \(\moments \in \R^{\momentnumber}\), it can be very difficult to check whether
\(\moments\) is realizable and even more difficult to compute a projection to the realizable set.
This is a major problem for numerical solvers
which have to ensure that the approximate solutions stay realizable during the whole solution process
since otherwise the minimum entropy optimization problems are ill-posed.

In contrast, the realizability conditions for the piecewise linear bases are quite simple~\cite{SchneiderLeibner2020}.
In particular, a moment vector is realizable with respect to \(\hfbasis\)
if and only if all its entries are positive~\cite{Schneider2016,SchneiderLeibner2020}:
\begin{equation}\label{eq:hfbasisrealizability}
  \RDpos{\hfbasis}{} \coloneqq \Set{\moments  \in \R^{\momentnumber} \given \momentcomp{i} > 0 \text{ for all } i \in \Set{0, \ldots, \momentnumber-1}}.
\end{equation}
In this case, distinguishing realizable from non-realizable vectors is easy.
Still, as we will see in the next section, also for the piecewise linear bases we have to take some extra measures
(in particular, restrict the time step size) to ensure that the numerical solutions are always realizable.
Moreover, for higher-order numerical schemes or reduced order-models, maintaining realizability at all times (without introducing large errors)
is still challenging, also for the hat function models.

\begin{remark}
Realizability is further complicated by the fact that we usually cannot solve the velocity integrals
analytically and have to approximate them by a numerical quadrature
\(\angularQuadrature\). For the full moments, this can have a severe impact on the realizable set~\cite{Alldredge2012,Alldredge2015},
i.e.\ the numerically realizable set \(\RQ{\basis}\) obtained by replacing the integral in \eqref{eq:posrealizableset} by its quadrature approximation
significantly differs from the realizable set \(\RDpos{\basis}{}\).
In the following, for notational simplicity, we will neglect the quadrature-related complications
and assume that the integrals are evaluated exactly.
\end{remark}

\subsection{Standard finite volume discretization}\label{sec:finitevolumescheme}
We consider two discretization schemes for the moment equations~\eqref{eq:MomentSystemClosedIsotropic}, a
standard finite volume scheme presented in this section and a new scheme
based on the identification~\eqref{eq:alpha_to_u_mapping} between realizable set
and \(\R^\momentorder\) (see \cref{sec:newscheme}).
For simplicity, we will restrict ourselves to first-order schemes.

The reference scheme is a standard first-order finite volume scheme.
Let \(\fvgrid = \Set{\fvgridentity_{\gridindex} \given \gridindex \in \gridindexset=\Set{0,\ldots,\gridsize-1}}\) be
a numerical grid for the spatial domain \(\domain\) with \(\gridsize\) elements such that
\begin{equation*}
  \domain = \bigcup_{\gridindex} \fvgridentity_{\gridindex}
\end{equation*}
and define
\begin{equation*}
  \momentscellmean{\gridindex}(t) = \frac{1}{\abs{\fvgridentity_{\gridindex}}} \int_{\fvgridentity_{\gridindex}} \moments(t,\spatialvar) d\spatialvar.
\end{equation*}
Integrating~\eqref{eq:MomentSystemClosedIsotropic} over a grid cell \(\fvgridentity_{\gridindex}\) and dividing by \(\abs{{\fvgridentity}_{\gridindex}}\) gives
\begin{equation*}
  \partial_t \momentscellmean{\gridindex} + \frac{1}{\abs{\fvgridentity_{\gridindex}}} \int_{\fvgridentity_{\gridindex}} \sum_{\dimindex=0}^{\dimension-1}
  \dx[\dimindex]\Flux_{\dimindex}\left(\moments\right) \mathrm{d}\spatialvar
  = \frac{1}{\abs{\fvgridentity_{\gridindex}}} \int_{\fvgridentity_{\gridindex}} \Source\left(\spatialvar,\moments\right).
\end{equation*}
Using the midpoint rule to approximate the source term
\begin{equation*}
  \frac{1}{\abs{\fvgridentity_{\gridindex}}} \int_{\fvgridentity_{\gridindex}} \Source\left(\spatialvar,\moments\right) d\spatialvar
  = \Source\left(\spatialvar_\cellindex,\momentscellmean{\cellindex}\right)+\landauO(\gridwidth^2),
\end{equation*}
where \(\spatialvar_{\cellindex}\) is the centre of grid cell \(\fvgridentity_{\cellindex}\), we arrive at
\begin{equation*}
  \partial_t \momentscellmean{\gridindex} + \frac{1}{\abs{\fvgridentity_{\gridindex}}} \int_{\fvgridentity_{\gridindex}} \sum_{\dimindex=0}^{\dimension-1}
  \dx[\dimindex]\Flux_{\dimindex}\left(\moments\right) \mathrm{d}\spatialvar = \Source\left(\spatialvar_{\gridindex},\momentscellmean{\gridindex}\right).
\end{equation*}
By applying the divergence theorem, we obtain
\begin{equation*}
  \partial_t \momentsfv[\gridindex]
  +  \frac{1}{\abs{\fvgridentity_\gridindex}} \sum_{\gridindexalt\in\gridneighbors{\gridindex}} \int_{\fvgridface_{\gridindex \gridindexalt}} \Fluxmatrix(\moments) \outernormal_{\gridindex \gridindexalt}
  =
  \Source(\spatialvar_{\gridindex},\momentsfv[\gridindex]),
\end{equation*}
where \(\gridneighbors{\gridindex}\) is the set of all indices of neighbors of \(\fvgridentity_\gridindex\),
\(\fvgridface_{\gridindex \gridindexalt} = \fvgridentity_\gridindex \cap \fvgridentity_\gridindexalt\) is the interface between grid cells \(\fvgridentity_{\gridindex}\) and
\(\fvgridentity_{\gridindexalt}\), \(\outernormal_{\gridindex \gridindexalt}\) is the unit outer normal of \(\fvgridentity_\gridindex\)
on \(\fvgridface_{\gridindex \gridindexalt}\) and the
flux matrix is given as \(\Fluxmatrix(\moments) = \left(\Flux_0(\moments), \ldots, \Flux_{\dimension-1}(\moments)\right) \in \R^{\momentnumber \times \dimension}\).

Replacing the flux term by a numerical flux \(\numericalflux_{\gridindex\gridindexalt}\) on \(\fvgridface_{\gridindex\gridindexalt}\), we get the
semidiscrete form
\begin{equation}\label{eq:finitevolumesemidiscrete}
  \partial_t \momentsfv[\gridindex] +  \frac{1}{\abs{\fvgridentity_{\gridindex}}} \sum_{\gridindexalt} \numericalflux_{\gridindex\gridindexalt}(\momentsfv[\gridindex], \momentsfv[\gridindexalt])
  = \Source(\momentsfv[\gridindex]).
\end{equation}
In principle, we could use any numerical flux for hyperbolic equations, e.g.\ the Lax-Friedrichs flux. We will, however, use a numerical flux
which is specifically designed for the equations under consideration. Define the two half integrals
\begin{equation*}
  \intpn{\cdot}{\outernormal} = \int\limits_{\angularDomainp{\outernormal}} \cdot\ \intvar{\SC} \hspace{1cm} \text{ and } \hspace{1cm}
  \intmn{\cdot}{\outernormal} = \int\limits_{\angularDomainm{\outernormal}} \cdot\ \intvar{\SC},
\end{equation*}
where
\begin{equation*}
  \angularDomainp{\outernormal} = \Set{\SC \in \angularDomain \given \SC \cdot \outernormal > 0}, \quad
  \angularDomainm{\outernormal} = \Set{\SC \in \angularDomain \given \SC \cdot \outernormal < 0}.
\end{equation*}
In the following, we will omit the normal vector if it is clear from the context and write, e.g., \(\intp{\cdot}\) instead of
\(\intpn{\cdot}{\outernormal}\) in these cases.
The kinetic flux is defined as~\cite{Schneider2016c,Hauck2010,Garrett2013,Frank2006}
\begin{equation}\label{eq:kineticFlux}
  \kineticflux_{\gridindex\gridindexalt}(\moments[\gridindex], \moments[\gridindexalt])
  = \left( \intpn{ (\SC \cdot \outernormal_{\gridindex\gridindexalt}) \ansatz[{\moments[\gridindex]}] \basis }{\outernormal_{\gridindex\gridindexalt}} +
  \intmn{(\SC \cdot \outernormal_{\gridindex\gridindexalt}) \ansatz[{\moments[\gridindexalt]}] \basis}{\outernormal_{\gridindex\gridindexalt}}\right)
  \abs{\fvgridface_{\gridindex\gridindexalt}}.
\end{equation}
Using the kinetic flux, the semidiscrete form~\eqref{eq:finitevolumesemidiscrete} becomes
\begin{equation}\label{eq:semidiscretekinetic}
  \partial_t \momentsfv[\gridindex] +  \sum_{\gridindexalt\in\gridneighbors{\gridindex}} \frac{\abs{\fvgridface_{\gridindex\gridindexalt}}}{\abs{\fvgridentity_{\gridindex}}}
  \left(\intp{(\SC \cdot \outernormal_{\gridindex\gridindexalt}) \ansatz[{\momentsfv[\gridindex]}] \basis }
  + \intm{(\SC \cdot \outernormal_{\gridindex\gridindexalt}) \ansatz[{\momentsfv[\gridindexalt]}] \basis }\right)
  = \Source(\spatialvar_{\gridindex},\momentsfv[\gridindex]).
\end{equation}
We will then use an explicit one-step scheme for the time discretization. For example, an explicit Euler discretization gives the fully discrete form
\begin{align}\label{eq:fvscheme}
  \begin{split}
    \momentsfv[\gridindex]^{\timeindex+1}
    &= \momentsfv[\gridindex]^{\timeindex} 
    - \timestep \left(\sum_{\gridindexalt\in\gridneighbors{\gridindex}} \frac{\abs{\fvgridface_{\gridindex\gridindexalt}}}{\abs{\fvgridentity_{\gridindex}}}
    \left(\intp{(\SC \cdot \outernormal_{\gridindex\gridindexalt}) \ansatz[{\momentsfv[\gridindex]^{\timeindex}}] \basis } 
      + \intm{(\SC \cdot \outernormal_{\gridindex\gridindexalt}) \ansatz[{\momentsfv[\gridindexalt]^{\timeindex}}] \basis }\right) - 
    \Source(\spatialvar_{\gridindex},\momentsfv[\gridindex]^{\timeindex})\right) \\ 
    &= \momentsfv[\gridindex]^{\timeindex}
    - \timestep \uupdateterm_{\gridindex}(\momentsfv[0]^{\timeindex}, \momentsfv[1]^{\timeindex}, \ldots, \momentsfv[\gridsize-1]^{\timeindex}), \\
  \end{split}
\end{align}
where \(\momentsfv[\gridindex]^{\timeindex}\) is the approximation of \(\momentsfv[\gridindex]\) at time step \(\timeindex\)
and we defined
\begin{equation}\label{eq:uupdateterm}
  \uupdateterm_{\gridindex}(\moments[0]^{\timeindex}, \moments[1]^{\timeindex}, \ldots, \moments[\gridsize-1]^{\timeindex})
  \coloneqq \sum_{\gridindexalt\in\gridneighbors{\gridindex}} \frac{\abs{\fvgridface_{\gridindex\gridindexalt}}}{\abs{\fvgridentity_{\gridindex}}}
  \left(\intp{(\SC \cdot \outernormal_{\gridindex\gridindexalt}) \ansatz[{\moments[\gridindex]^{\timeindex}}] \basis }
  + \intm{(\SC \cdot \outernormal_{\gridindex\gridindexalt}) \ansatz[{\moments[\gridindexalt]^{\timeindex}}] \basis }\right) -
  \Source(\spatialvar_{\gridindex}, \moments[\gridindex]^{\timeindex}).
\end{equation}

The scheme~\eqref{eq:fvscheme} requires the solution of the minimization problem~\eqref{eq:primal2} in every time step on each grid cell.
The initial values thus have to be realizable and we have to limit the time step \(\timestep\) to ensure that the scheme yields realizable moments.

\begin{theorem}\label{thm:timestepping}
  The numerical scheme~\eqref{eq:fvscheme} using a structured cubic grid with equally-sized grid cells with edge length \(\gridwidth\)
  is realizability-preserving under the CFL-like condition
  \begin{equation}\label{eq:fvschemetimestep}
    \timestep < \frac{1}{\crosssectionmax + \frac{\sqrt{\dimension}}{\gridwidth}}
  \end{equation}
  where \(\crosssectionmax = \max\limits_{\spatialvar \in \domain} \crosssection(\spatialvar)\).
\end{theorem}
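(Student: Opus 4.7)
The key idea is to exhibit a non-negative density function whose moments equal $\momentsfv[\gridindex]^{\timeindex+1}$. Since the Maxwell--Boltzmann ansatz $\psi_i \coloneqq \ansatz[{\momentsfv[\gridindex]^{\timeindex}}] = \exp(\multipliers(\momentsfv[\gridindex]^{\timeindex}) \cdot \basis)$ is strictly positive almost everywhere and satisfies $\ints{\basis \psi_i} = \momentsfv[\gridindex]^{\timeindex}$, I will rewrite each term of the update \eqref{eq:fvscheme} as an integral of $\basis$ against such a density, collect everything under a single integral, and then verify pointwise positivity of the resulting integrand.

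First, I would exploit the structure of the cubic grid: each cell has exactly $2\dimension$ neighbors with outer unit normals $\pm \unitvector{\dimindex}$, and the face-to-cell-volume ratio equals $1/\gridwidth$. Summing the outflow contribution $\intpn{(\SC\cdot\outernormal_{\gridindex\gridindexalt})\psi_i\basis}{\outernormal_{\gridindex\gridindexalt}}$ over the $2\dimension$ faces collapses to $\sum_{\dimindex} \ints{|\SCcomp[\dimindex]|\,\psi_i\,\basis}$, while the inflow contribution reduces to $\sum_{\dimindex}\bigl(\int_{\SCcomp[\dimindex]>0}\SCcomp[\dimindex]\psi_{\leftneighborindex{\gridindex}{\dimindex}}\basis - \int_{\SCcomp[\dimindex]<0}\SCcomp[\dimindex]\psi_{\rightneighborindex{\gridindex}{\dimindex}}\basis\bigr)$, both naturally expressible as $\ints{\basis \cdot (\ldots)}$. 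The source term is handled using the identity $\isomatrix\moments = \ints{\basis}\cdot\density(\moments)/|\angularDomain|$ combined with $\density(\moments) = \ints{\psi_i}$, which allows me to write $\Source(\spatialvar_{\gridindex},\momentsfv[\gridindex]) = \ints{\basis(\scattering \density(\psi_i)/|\angularDomain| - \crosssection \psi_i + \source)}$.

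Combining all of this yields $\momentsfv[\gridindex]^{\timeindex+1} = \ints{\basis\,\tilde{\psi}}$ with
\begin{equation*}
  \tilde{\psi} = \psi_i\Bigl(1 - \crosssection\timestep - \frac{\timestep}{\gridwidth}\sum_{\dimindex}|\SCcomp[\dimindex]|\Bigr)
  + \frac{\timestep}{\gridwidth}\sum_{\dimindex}\bigl(\SCcomp[\dimindex]\psi_{\leftneighborindex{\gridindex}{\dimindex}}\indicator{\SCcomp[\dimindex]>0} - \SCcomp[\dimindex]\psi_{\rightneighborindex{\gridindex}{\dimindex}}\indicator{\SCcomp[\dimindex]<0}\bigr) + \timestep\Bigl(\scattering\tfrac{\density(\psi_i)}{|\angularDomain|} + \source\Bigr).
\end{equation*}
All summands after the leading $\psi_i$-term are manifestly non-negative almost everywhere, since all densities $\psi_j$ are strictly positive and the indicators match signs with $\SCcomp[\dimindex]$. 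Non-negativity of $\tilde{\psi}$ therefore reduces to non-negativity of the bracket multiplying $\psi_i$.

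The final step is the pointwise bound $\sum_{\dimindex=0}^{\dimension-1}|\SCcomp[\dimindex]| \leq \sqrt{\dimension}\,|\SC| \leq \sqrt{\dimension}$ by Cauchy--Schwarz together with $\SC\in\sphere$ (for slab geometry the analogous bound holds trivially). This gives $1 - \crosssection\timestep - \frac{\timestep}{\gridwidth}\sum_{\dimindex}|\SCcomp[\dimindex]| \geq 1 - \bigl(\crosssectionmax + \sqrt{\dimension}/\gridwidth\bigr)\timestep$, which is strictly positive exactly under the assumed CFL condition \eqref{eq:fvschemetimestep}. Hence $\tilde{\psi}>0$ almost everywhere, so $\tilde{\psi}\in\Lppos{1}(\angularDomain)$ and consequently $\momentsfv[\gridindex]^{\timeindex+1} \in \RDpos{\basis}{}$. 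The main obstacle is purely bookkeeping: correctly pairing the two half-space integrals across each face so that the outflow consolidates into $|\SCcomp[\dimindex]|\psi_i$ and the inflow retains the right-signed neighbor contributions; once this algebra is done, the CFL estimate is a direct Cauchy--Schwarz bound.
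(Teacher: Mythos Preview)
Your proposal is correct and follows essentially the same approach as the paper: write $\momentsfv[\gridindex]^{\timeindex+1}=\ints{\basis\,\tilde\psi}$ for an explicit kinetic density, drop the manifestly non-negative inflow/scattering/source contributions, and bound the remaining outflow factor via $\sum_{\dimindex}|\SCcomp[\dimindex]|=\norm{\SC}{1}\leq\sqrt{\dimension}\,\norm{\SC}{2}\leq\sqrt{\dimension}$ on the cubic grid. The paper's proof is slightly terser (it works directly with $\sum_{\gridindexalt}(\SC\cdot\outernormal_{\gridindex\gridindexalt})^{+}$ rather than splitting by coordinate direction), but the argument and the key Cauchy--Schwarz estimate are identical.
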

\begin{proof}
  We will generalize the proof of~\cite[Corollary 3.17]{Schneider2016} to several dimensions.
  Let \(\momentsfv[\gridindex]^{\timeindex}\) be realizable. 
  By~\eqref{eq:semidiscretekinetic}, \eqref{eq:sourcedef}
  and the definition of \(\isomatrix\) (compare~\eqref{eq:MomentSystemIsotropicScatteringTerm}), we have 
  \begin{align*}
    \momentsfv[\gridindex]^{\timeindex+1} 
     & = \bigg\langle\basis \bigg(\ansatz[{\momentsfv[\gridindex]^{\timeindex}}] 
    - \timestep\bigg(\sum_{\gridindexalt\in\gridneighbors{\gridindex}} \frac{\abs{\fvgridface_{\gridindex\gridindexalt}}}{\abs{\fvgridentity_{\gridindex}}}
    \left({\left(\SC \cdot \outernormal_{\gridindex\gridindexalt}\right)}^{+} \ansatz[{\momentsfv[\gridindex]^{\timeindex}}] 
    + {\left(\SC \cdot \outernormal_{\gridindex\gridindexalt}\right)}^{-} \ansatz[{\momentsfv[\gridindexalt]^{\timeindex}}]\right)  
    -
    \frac{\scattering}{\abs{\angularDomain}} \ints{\ansatz[{\momentsfv[\gridindex]^{\timeindex}}]} - \source + \crosssection \ansatz[{\momentsfv[\gridindex]^{\timeindex}}] 
    \bigg)\bigg)\bigg\rangle                                                     \\
     & \eqqcolon \ints{\basis \distribution[\gridindex]^{\timeindex+1}},         
  \end{align*}
  where \({\left(\SC \cdot \outernormal_{\gridindex\gridindexalt}\right)}^{+} = \max(\SC \cdot \outernormal_{\gridindex\gridindexalt}, 0)\) and
  \({\left(\SC \cdot \outernormal_{\gridindex\gridindexalt}\right)}^{-} = \min(\SC \cdot \outernormal_{\gridindex\gridindexalt}, 0)\).
  We have to show that
  \(\distribution[\gridindex]^{\timeindex+1}\) 
  is positive for all \(\SC \in \angularDomain\) under the
  time step restriction~\eqref{eq:fvschemetimestep}.  Neglecting non-negative terms (remember that \(\scattering\), \(\crosssection\)
  and \(\source\) are assumed to be non-negative), we arrive at
  \begin{equation}\label{eq:CFLproof1}
    \distribution[\gridindex]^{\timeindex+1} 
    \geq \left(1 - \timestep\left(\crosssection + \sum_{\gridindexalt\in\gridneighbors{\gridindex}} \frac{\abs{\fvgridface_{\gridindex\gridindexalt}}}{\abs{\fvgridentity_{\gridindex}}}
      {\left(\SC \cdot \outernormal_{\gridindex\gridindexalt}\right)}^{+} \right)\right) \ansatz[{\momentsfv[\gridindex]^{\timeindex}}]. 
  \end{equation}
  For our uniform equidistant grid, we have
  \(\frac{\abs{\fvgridface_{\gridindex\gridindexalt}}}{\abs{\fvgridentity_{\gridindex}}} = \frac{1}{\gridwidth}\) for all \((\gridindex, \gridindexalt)\)
  and
  \begin{equation*}
    \sum_{\gridindexalt\in\gridneighbors{\gridindex}}
    {\left(\SC \cdot \outernormal_{\gridindex\gridindexalt}\right)}^{+} = \norm{\SC}{1} \leq \sqrt{\dimension}\norm{\SC}{2} \leq \sqrt{\dimension},
  \end{equation*}
  where we used that \(\norm{\SC}{2} \leq 1\) for all \(\SC \in \angularDomain\) holds true both for \(\angularDomain = [-1,1]\) and for \(\angularDomain = \sphere\).
  Consequently,~\eqref{eq:CFLproof1} becomes
  \begin{equation*}
    \distribution[\gridindex]^{\timeindex+1} 
    \geq \left(1 - \timestep\left(\crosssection + \frac{\sqrt{\dimension}}{\gridwidth} \right)\right) \ansatz[{\momentsfv[\gridindex]^{\timeindex}}], 
  \end{equation*}
  which is positive if \(\eqref{eq:fvschemetimestep}\) holds.
\end{proof}
The time step restriction due to the cross-section \(\crosssection\) can be avoided, e.g.\ by using implicit-explicit methods
where the source term is treated implicitly~\cite{Schneider2016a,Schneider2016aCodeIMEX,schneider2016implicit,SchneiderLeibner2019b}.
As in~\cite{SchneiderLeibner2019b}, we will use a second-order Strang splitting scheme for the split system
\begin{subequations}\label{eq:splittedsystem}
  \begin{align}
    \label{eq:SplittedSystema}
    \dt\momentsfv[\gridindex] & = -\frac{1}{\abs{\fvgridentity_{\gridindex}}}
    \sum_{\gridindexalt\in\gridneighbors{\gridindex}} \kineticflux_{\gridindex\gridindexalt}(\momentsfv[\gridindex], \momentsfv[\gridindexalt]), \\
    \label{eq:SplittedSystemb}
    \dt\momentsfv[\gridindex] & = \Source\left(\spatialvar,\momentsfv[\gridindex]\right),
  \end{align}
\end{subequations}
i.e., in each time step from \(\timevar\) to \(\timevar + \timestep\) we first solve the (linear) source system~\eqref{eq:SplittedSystemb} analytically
(see \cref{sec:implementationdetails})
up to the time \(\timevar + \frac{\timestep}{2}\), then we use the result as input to solve
the hyperbolic part~\eqref{eq:SplittedSystema} with a full timestep \(\timestep\),
then we again advance~\eqref{eq:SplittedSystemb} analytically for a half time step \(\frac{\timestep}{2}\). As the source system is solved analytically (and thus preserves realizability), we only have
to ensure that realizability is preserved when solving the hyperbolic part. We can thus avoid the time step restriction due to the cross-section \(\crosssection\).
\begin{corollary}\label{cor:CFLfirstorderscheme}
  The splitting scheme based on~\eqref{eq:splittedsystem} is realizability-preserving under the CFL-like condition
  \begin{equation}\label{eq:splittingschemetimestep}
    \timestep < \frac{\gridwidth}{\sqrt{\dimension}}.
  \end{equation}
\end{corollary}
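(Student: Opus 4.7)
The plan is to exploit the fact that a Strang splitting scheme preserves any invariant that each of its substeps preserves. Here the substeps are (i) the analytical solve of the linear source ODE \eqref{eq:SplittedSystemb} on half steps of length $\timestep/2$, and (ii) a single explicit Euler step of the hyperbolic part \eqref{eq:SplittedSystema} with full length $\timestep$. I would therefore first argue that the source substep preserves realizability unconditionally, and then show that the hyperbolic substep preserves it under the claimed CFL condition \eqref{eq:splittingschemetimestep}; the corollary then follows by composing the three substeps.

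For the source substep, I would note that \eqref{eq:SplittedSystemb} with the source term given in \eqref{eq:sourcedef} reads $\dt \momentsfv[\gridindex] = (\scattering \isomatrix - \crosssection \eyematrix)\momentsfv[\gridindex] + \ints{\basis \source}$, which is a \emph{linear} constant-coefficient ODE system per spatial cell with explicit solution. As recalled in the paragraph preceding \cref{cor:CFLfirstorderscheme}, it is solved analytically in our implementation (see also \cref{sec:implementationdetails}); since the corresponding kinetic equation $\dt \distribution = -\crosssection \distribution + (\scattering/\abs{\angularDomain}) \ints{\distribution} + \source$ maps non-negative densities to non-negative densities for any step size (its exact flow has a Duhamel formula with non-negative integrand whenever $\scattering,\crosssection,\source\geq 0$), the resulting moment update stays in $\RDpos{\basis}{}$ for arbitrary $\timestep$.

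For the hyperbolic substep, I would simply rerun the proof of \cref{thm:timestepping} with $\scattering = \absorption = 0$ and $\source \equiv 0$, since the scheme \eqref{eq:fvscheme} then reduces to an explicit Euler step of \eqref{eq:SplittedSystema}. Following the same manipulations as in the proof of \cref{thm:timestepping}, one writes $\momentsfv[\gridindex]^{\timeindex+1} = \ints{\basis \distribution[\gridindex]^{\timeindex+1}}$ with
\begin{equation*}
  \distribution[\gridindex]^{\timeindex+1} = \ansatz[{\momentsfv[\gridindex]^{\timeindex}}] - \timestep \sum_{\gridindexalt\in\gridneighbors{\gridindex}} \frac{\abs{\fvgridface_{\gridindex\gridindexalt}}}{\abs{\fvgridentity_{\gridindex}}}\left({(\SC\cdot\outernormal_{\gridindex\gridindexalt})}^{+}\ansatz[{\momentsfv[\gridindex]^{\timeindex}}] + {(\SC\cdot\outernormal_{\gridindex\gridindexalt})}^{-}\ansatz[{\momentsfv[\gridindexalt]^{\timeindex}}]\right).
\end{equation*}
Dropping the non-negative term involving ${(\SC\cdot\outernormal_{\gridindex\gridindexalt})}^{-}\ansatz[{\momentsfv[\gridindexalt]^{\timeindex}}]$ (which is $\leq 0$ times a positive ansatz, and thus contributes non-negatively after the sign flip from the minus), one obtains the lower bound $\distribution[\gridindex]^{\timeindex+1} \geq (1 - \timestep \sum_\gridindexalt \frac{\abs{\fvgridface_{\gridindex\gridindexalt}}}{\abs{\fvgridentity_{\gridindex}}}{(\SC\cdot\outernormal_{\gridindex\gridindexalt})}^{+})\ansatz[{\momentsfv[\gridindex]^{\timeindex}}]$. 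Using the uniform-grid identity $\abs{\fvgridface_{\gridindex\gridindexalt}}/\abs{\fvgridentity_\gridindex} = 1/\gridwidth$ and the bound $\sum_\gridindexalt {(\SC\cdot\outernormal_{\gridindex\gridindexalt})}^+ \leq \sqrt{\dimension}$ from the proof of \cref{thm:timestepping}, the factor is strictly positive exactly under \eqref{eq:splittingschemetimestep}, so $\distribution[\gridindex]^{\timeindex+1} > 0$ and its moments lie in $\RDpos{\basis}{}$.

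There is no real obstacle: the result is essentially \cref{thm:timestepping} with the cross-section contribution removed, and the only step that needs an extra argument is the realizability preservation of the analytical source solve, which follows from non-negativity of the exact flow of the linear relaxation-with-source ODE. Chaining the three realizability-preserving substeps of Strang splitting concludes the proof.
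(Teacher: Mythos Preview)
Your proposal is correct and follows essentially the same approach as the paper: the paper simply notes that the source system is solved analytically and therefore preserves realizability, so only the hyperbolic substep needs a time-step restriction, which is obtained by applying \cref{thm:timestepping} with $\crosssection = 0$. Your write-up supplies a bit more justification for why the analytical source solve preserves realizability (via positivity of the underlying kinetic flow), but the overall structure and key steps are the same.
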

Note that we assumed in the proof of \cref{thm:timestepping} that the optimization problems are solved exactly (by using
the exact ansatz functions \(\ansatz[{\momentsfv[\gridindex]^{\timeindex}}]\)).
In practice, we have to solve these problems numerically
which inevitably introduces some numerical errors. Fortunately,
we can account for these inexact solutions by using a slightly tighter time step restriction
as long as we can control the relative error in the ansatz function (compare~\cite{Alldredge2012,Alldredge2014,SchneiderLeibner2019b}).
\begin{corollary}\label{cor:inexactsolutions}
  Let \(\opttoleps\in(0,1)\).
  If the numerical solution \(\numericalmultipliers(\moments)\) of the
  minimum-entropy problem~\eqref{eq:dual}
  fulfils
  \begin{equation}\label{eq:CFLcondcorollaryassumption}
    \gamma(\velocityvar) \coloneqq \frac{\ansatz[\moments](\velocityvar)}{\ld{\entropy}'(\basis(\velocityvar) \cdot \numericalmultipliers(\moments))} \geq 1 - \opttoleps
  \end{equation}
  for all \(\moments\) for which the problem has to be solved during the finite volume scheme,
  then \cref{cor:CFLfirstorderscheme}
  still holds if the time step restrictions are tightened to
  \begin{equation}\label{eq:CFLfirstorderschemeinexact}
    \timestep < \frac{1-\opttoleps}{\sqrt{\dimension}}\gridwidth.
  \end{equation}
\end{corollary}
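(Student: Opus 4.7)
The plan is to adapt the proof of Corollary~\ref{cor:CFLfirstorderscheme} (equivalently, Theorem~\ref{thm:timestepping} without the source term), accounting for the fact that in the fully-discrete scheme the fluxes are built from the numerically computed ansatz $\ld{\entropy}'(\basis \cdot \numericalmultipliers(\moments))$ rather than the true minimizer $\ansatz[\moments]$. The key observation is that, by the moment constraint, $\momentsfv[\gridindex]^{\timeindex} = \ints{\basis\, \ansatz[{\momentsfv[\gridindex]^{\timeindex}}]}$ \emph{exactly} (even though the scheme never forms $\ansatz[{\momentsfv[\gridindex]^{\timeindex}}]$ itself), so that when we write the update of the split hyperbolic step in terms of a pointwise density, the leading term can be taken to be the exact ansatz while the fluxes remain expressed in terms of the numerical ansatz.

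Concretely, let $\hat{\psi}_{\gridindex} \coloneqq \ansatz[{\momentsfv[\gridindex]^{\timeindex}}]$ and $\tilde{\psi}_{\gridindex} \coloneqq \ld{\entropy}'(\basis \cdot \numericalmultipliers(\momentsfv[\gridindex]^{\timeindex}))$. First, I would rewrite the fully discrete hyperbolic update step as
\begin{equation*}
  \momentsfv[\gridindex]^{\timeindex+1} = \left\langle \basis \left(\hat{\psi}_{\gridindex} - \timestep \sum_{\gridindexalt\in\gridneighbors{\gridindex}} \frac{\abs{\fvgridface_{\gridindex\gridindexalt}}}{\abs{\fvgridentity_{\gridindex}}} \left({(\SC \cdot \outernormal_{\gridindex\gridindexalt})}^{+} \tilde{\psi}_{\gridindex} + {(\SC \cdot \outernormal_{\gridindex\gridindexalt})}^{-} \tilde{\psi}_{\gridindexalt}\right)\right)\right\rangle,
\end{equation*}
where the replacement $\momentsfv[\gridindex]^{\timeindex} = \ints{\basis \hat{\psi}_{\gridindex}}$ is valid exactly by the moment constraint on the minimizer. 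Pointwise realizability of $\momentsfv[\gridindex]^{\timeindex+1}$ then follows if the integrand (call it $\distribution[\gridindex]^{\timeindex+1}$) is non-negative for all $\SC\in\angularDomain$.

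Next, following the proof of Theorem~\ref{thm:timestepping}, I would drop the contribution $-\timestep {(\SC \cdot \outernormal_{\gridindex\gridindexalt})}^{-} \tilde{\psi}_{\gridindexalt}$, which is non-negative since $\tilde{\psi}_{\gridindexalt} > 0$. Invoking the assumption~\eqref{eq:CFLcondcorollaryassumption}, which gives $\tilde{\psi}_{\gridindex}(\velocityvar) \leq \hat{\psi}_{\gridindex}(\velocityvar)/(1-\opttoleps)$, I would then bound
\begin{equation*}
  \distribution[\gridindex]^{\timeindex+1}(\velocityvar) \geq \left(1 - \frac{\timestep}{1-\opttoleps} \sum_{\gridindexalt\in\gridneighbors{\gridindex}} \frac{\abs{\fvgridface_{\gridindex\gridindexalt}}}{\abs{\fvgridentity_{\gridindex}}} {(\SC \cdot \outernormal_{\gridindex\gridindexalt})}^{+}\right) \hat{\psi}_{\gridindex}(\velocityvar).
\end{equation*}
Using once more that on a uniform cubic grid $\abs{\fvgridface_{\gridindex\gridindexalt}}/\abs{\fvgridentity_{\gridindex}} = 1/\gridwidth$ and $\sum_{\gridindexalt} {(\SC\cdot\outernormal_{\gridindex\gridindexalt})}^{+} = \norm{\SC}{1}\leq \sqrt{\dimension}$, the bracket is non-negative precisely under~\eqref{eq:CFLfirstorderschemeinexact}, which, together with $\hat{\psi}_{\gridindex}>0$, yields the claim.

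The only delicate point — and what I expect to be the conceptual obstacle rather than a calculational one — is step one, namely justifying that the \emph{exact} ansatz $\hat{\psi}_{\gridindex}$ may be used as the leading pointwise term even though the scheme only ever handles $\tilde{\psi}_{\gridindex}$. This is not circular: the argument is merely that $\momentsfv[\gridindex]^{\timeindex}$, as an element of the realizable set, admits \emph{some} positive representing density (the exact minimizer), and this density is the natural reference against which to compare the numerically used $\tilde{\psi}_{\gridindex}$ via the multiplicative tolerance $\gamma$. Once this reformulation is accepted, the rest of the proof is a verbatim adaptation of that of Corollary~\ref{cor:CFLfirstorderscheme} with a shrinkage factor of $1-\opttoleps$ absorbed into $\timestep$.
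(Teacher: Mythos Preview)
Your argument is correct and is exactly the natural adaptation of the paper's proof of Theorem~\ref{thm:timestepping}: replace the outgoing-flux ansatz by the numerical one, keep the exact ansatz as the representing density for $\momentsfv[\gridindex]^{\timeindex}$ via the moment constraint, and absorb the factor $1/(1-\opttoleps)$ coming from~\eqref{eq:CFLcondcorollaryassumption} into the CFL bound. The paper itself does not spell out a proof for this corollary (it only cites prior work), so there is nothing further to compare; your write-up fills the gap cleanly, including the correct observation that using $\hat\psi_{\gridindex}$ as the leading term is legitimate because $\momentsfv[\gridindex]^{\timeindex}\in\RDpos{\basis}$ by induction.
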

We can ensure that \eqref{eq:CFLcondcorollaryassumption} holds
by appropriately choosing the stopping criterion for the Newton scheme that is used to solve the
minimum entropy optimization problems (see \cref{sec:implementationdetails}).
We will always use \(\opttoleps = 0.1\) which means that
we scale the time step restrictions \eqref{eq:splittingschemetimestep} by a safety factor of 0.9.

If we advance the hyperbolic system~\eqref{eq:SplittedSystema} in time by strong-stability
preserving Runge-Kutta schemes~\cite{Gottlieb2005} the CFL condition~\eqref{eq:splittingschemetimestep} still holds as these schemes consist
of convex combinations of forward Euler steps. In particular, we will use Heun's method in all tests which is of second order.

\section{New scheme in transformed variables}\label{sec:newscheme}
We will now present the new scheme in transformed variables which uses the identification between the realizable set \(\RDpos{\basis}\) and \(\R^{\momentnumber}\)
given by the diffeomorphism~\eqref{eq:alpha_to_u_mapping}.

\subsection{Semidiscrete formulation}
To derive the transformed scheme, note that
\begin{equation}\label{eq:du_dalpha}
  \frac{\derivative\moments}{\derivative\multipliers}
  = \frac{\derivative}{\derivative\multipliers} \left( \ints{\basis \ansatz[\moments]}  \right)
  \overset{\eqref{eq:psiME}}{=} \frac{\derivative}{\derivative\multipliers} \left( \ints{\basis \ld{\entropy}'(\multipliers \cdot  \basis)}	\right)
  = \ints{ \basis \basis^T \ld{\entropy}''(\multipliers \cdot  \basis) }  = \optHessian(\multipliers)
\end{equation}
is the (positive definite) Hessian of the objective function in the dual problem \eqref{eq:dual} (compare \cref{sec:optimizationproblem}).
Further, for the flux
\begin{equation}
  \Flux_{\dimindex}(\multipliers) = \ints{\SCcomp[\dimindex] \basis \ld{\entropy}'(\multipliers \cdot \basis)}
\end{equation}
(compare~\eqref{eq:fluxfuncdef}) we have
\begin{equation}\label{eq:velocityHessianDef}
  \frac{\derivative\Vf_{\dimindex}}{\derivative\multipliers} = \ints{ \SCcomp[\dimindex] \basis \basis^T \ld{\entropy}''(\multipliers \cdot  \basis) }
  \eqqcolon \velocityHessian_{\dimindex}(\multipliers).
\end{equation}
In transformed variables, assuming \(\moments\) is sufficiently smooth, the hyperbolic system of equations~\eqref{eq:MomentSystemClosedIsotropic} thus
becomes~\cite{Levermore1996}
\begin{align}\label{eq:symmetrichyperbolicform}
  \begin{split}
    \Source\left(\spatialvar,\moments(\multipliers)\right) &=
    \dt\moments(\multipliers)+\sum_{\dimindex=0}^{\dimension-1}\dx[\dimindex]\Flux_\dimindex\left(\moments(\multipliers)\right)
    = \frac{\derivative\moments}{\derivative\multipliers}(\multipliers) \dt \multipliers
    + \sum_{\dimindex=0}^{\dimension-1}\frac{\derivative\Flux_\dimindex}{\derivative\multipliers}(\multipliers) \dx[\dimindex] \multipliers\\
    &= \optHessian(\multipliers) \dt \multipliers
    + \sum_{\dimindex=0}^{\dimension-1}\velocityHessian_{\dimindex}(\multipliers) \dx[\dimindex] \multipliers.
  \end{split}
\end{align}
A numerical scheme based on the form~\eqref{eq:symmetrichyperbolicform} could potentially be much faster than the standard
finite volume scheme~\eqref{eq:fvscheme}
as it avoids solving the non-linear optimization problem and only needs inversion of the positive definite symmetric matrix
\(\optHessian(\multipliers)\).
However,~\eqref{eq:symmetrichyperbolicform} is not in conservation form which makes it hard to guarantee that numerical schemes
converge to the correct weak solution of~\eqref{eq:MomentSystemClosedIsotropic}.

On the other hand, if we perform the space discretization first and then
transform the semi-discrete equation~\eqref{eq:finitevolumesemidiscrete} to the new variables, we arrive at
\begin{equation}\label{eq:transformedschemesemidiscrete}
  \optHessian(\multipliersfv[\gridindex]) \partial_t \multipliersfv[\gridindex] + \frac{1}{\abs{\fvgridentity_{\gridindex}}} \sum_{\gridindexalt\in\gridneighbors{\gridindex}}
  \numericalflux_{\gridindex\gridindexalt}(\moments(\multipliersfv[\gridindex]), \moments(\multipliersfv[\gridindexalt]))
  = \Source(\spatialvar_{\gridindex},\moments(\multipliersfv[\gridindex])),
\end{equation}
where \(\multipliersfv[\gridindex] = \multipliers(\momentsfv[\gridindex])\) are the multipliers corresponding to the
finite volume averaged moment \(\momentsfv[\gridindex]\) via the inverse diffeomorphism~\eqref{eq:dual}.

Since the space discretization is inherited from the conservative form \eqref{eq:finitevolumesemidiscrete},
we now only have to discretize in time and can expect that the moments
of the solution converge to the corresponding solution of the non-transformed scheme.
In the following, we will show that this is indeed the case (see \cref{sec:timediscretization}).

\begin{remark}
  A similar idea has been used in~\cite{Schaerer2017} to efficiently solve an semi-implicit
  version of the standard finite volume
  scheme~\eqref{eq:fvscheme} with Lax-Friedrichs flux.
  In fact, the scheme presented in the following is equivalent to using the approach in~\cite{Schaerer2017}
  with explicit time discretization and only performing a single step of the Newton iteration.
  The authors in \cite{Schaerer2017} restrict their investigation to slab geometry
  and note that further research is needed to examine the efficiency of their scheme
  in higher dimensions where the
  Jacobians of the coupled system are not block-tridiagonal anymore.
  In contrast, using the fully explicit approach considered here
  the grid cells decouple and
  the equations can be solved independently
  for each grid cell, also in several dimensions.
\end{remark}

\subsection{Entropy stability on the semi-discrete level}\label{sec:entropystabilitysemidiscrete}
On the semidiscrete level, the new scheme is just a variable transformation of the standard scheme. We can thus show that the
solutions of \eqref{eq:transformedschemesemidiscrete} also inherit a semidiscrete version of the
entropy-dissipation property \eqref{eq:entropyeq}.
\begin{theorem}\label{thm:entropydissipationsemidiscrete}
  Let \(\Set{ \multipliersfv[\gridindex](\timevar) \given \gridindex \in \gridindexset}\) be a solution
  of the transformed semidiscrete equation~\eqref{eq:transformedschemesemidiscrete} on a (hyper)rectangular grid.
  Then we have
  \begin{equation}\begin{split}\label{eq:entropyeqsemidiscrete}
      \begin{multlined}[t][.8\displaywidth]
        \partial_t \entropyFunctional(\ansatz[{\moments(\multipliersfv[\gridindex])}])
        + \sum_{\gridindexalt\in\gridneighbors{\gridindex}} \frac{\abs{\fvgridface_{\gridindex\gridindexalt}}}{\abs{\fvgridentity_{\gridindex}}}
        \left(\intp{(\SC \cdot \outernormal_{\gridindex\gridindexalt})
            \entropy(\ansatz[{\moments(\multipliersfv[\gridindex])}])}
        + \intm{(\SC \cdot \outernormal_{\gridindex\gridindexalt})
            \entropy(\ansatz[{\moments(\multipliersfv[\gridindexalt])}])}\right) \\
        \leq -\absorption \ints{ (\multipliersfv[\gridindex] \cdot \basis) \ld{\entropy}'(\multipliersfv[\gridindex] \cdot \basis)}
        + \ints{ (\multipliersfv[\gridindex] \cdot \basis) \source}.
      \end{multlined}
    \end{split}
  \end{equation}
\end{theorem}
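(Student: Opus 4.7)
The plan is to exploit the chain rule together with Legendre duality to reduce the time-derivative on the left to $\multipliersfv[\gridindex]\cdot\partial_t \moments(\multipliersfv[\gridindex])$, insert the semidiscrete equation \eqref{eq:transformedschemesemidiscrete}, and then handle the resulting flux and source contributions separately. Concretely, I would first use the identity $\entropy'(\ansatz[\moments]) = \multipliers(\moments)\cdot\basis$, which follows from the first-order condition $\ansatz[\moments]=\ld{\entropy}'(\multipliers\cdot\basis)$ for \eqref{eq:dual}, to obtain
\begin{equation*}
  \partial_t \entropyFunctional(\ansatz[{\moments(\multipliersfv[\gridindex])}])
  = \ints{\entropy'(\ansatz[\moments])\,\partial_t \ansatz[\moments]}
  = \multipliersfv[\gridindex]\cdot\partial_t\!\ints{\basis\,\ansatz[\moments]}
  = \multipliersfv[\gridindex]\cdot\optHessian(\multipliersfv[\gridindex])\partial_t\multipliersfv[\gridindex].
\end{equation*}
Substituting \eqref{eq:transformedschemesemidiscrete} then produces
\begin{equation*}
  \partial_t \entropyFunctional(\ansatz[{\moments(\multipliersfv[\gridindex])}])
  = -\frac{1}{\abs{\fvgridentity_{\gridindex}}}\multipliersfv[\gridindex]\cdot\sum_{\gridindexalt\in\gridneighbors{\gridindex}}
   \numericalflux_{\gridindex\gridindexalt}(\moments(\multipliersfv[\gridindex]),\moments(\multipliersfv[\gridindexalt]))
  +\multipliersfv[\gridindex]\cdot\Source(\spatialvar_\gridindex,\moments(\multipliersfv[\gridindex])).
\end{equation*}

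Next I would treat the source term. Using the explicit form \eqref{eq:sourcedef} and Legendre duality (which gives $\ints{\entropy'(\ansatz)\collision(\ansatz)}\le 0$, equivalently $\scattering\,\multipliersfv[\gridindex]\cdot(\isomatrix-\eyematrix)\moments\le 0$ since $\collision$ has $\ansatz$ as a fixed point and $\entropy'(\ansatz)=\multipliersfv[\gridindex]\cdot\basis$), the scattering part drops out nonnegatively and one is left with exactly the absorption and emission terms on the right-hand side of \eqref{eq:entropyeqsemidiscrete}. This is the routine part.

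The main obstacle is the flux term, where the multiplier is $\multipliersfv[\gridindex]$ but the inflow half is built from $\ansatz[{\moments(\multipliersfv[\gridindexalt])}]$. Using \eqref{eq:kineticFlux} and the pointwise Legendre identity $p\,\ld{\entropy}'(p)=\entropy(\ld{\entropy}'(p))+\ld{\entropy}(p)$, the outflow half immediately yields
\begin{equation*}
  \intp{(\SC\cdot\outernormal_{\gridindex\gridindexalt})(\multipliersfv[\gridindex]\cdot\basis)\ansatz[{\moments(\multipliersfv[\gridindex])}]}
  = \intp{(\SC\cdot\outernormal_{\gridindex\gridindexalt})\entropy(\ansatz[{\moments(\multipliersfv[\gridindex])}])}
   +\intp{(\SC\cdot\outernormal_{\gridindex\gridindexalt})\ld{\entropy}(\multipliersfv[\gridindex]\cdot\basis)}.
\end{equation*}
For the inflow half, where $\SC\cdot\outernormal_{\gridindex\gridindexalt}<0$, I would use convexity of $\ld{\entropy}$ applied to $\multipliersfv[\gridindex]\cdot\basis$ around $\multipliersfv[\gridindexalt]\cdot\basis$, i.e.
\begin{equation*}
  \ld{\entropy}(\multipliersfv[\gridindex]\cdot\basis)\ge \ld{\entropy}(\multipliersfv[\gridindexalt]\cdot\basis)
   +\ansatz[{\moments(\multipliersfv[\gridindexalt])}]\,(\multipliersfv[\gridindex]-\multipliersfv[\gridindexalt])\cdot\basis,
\end{equation*}
which rearranges to $(\multipliersfv[\gridindex]\cdot\basis)\ansatz[{\moments(\multipliersfv[\gridindexalt])}]\le \entropy(\ansatz[{\moments(\multipliersfv[\gridindexalt])}])+\ld{\entropy}(\multipliersfv[\gridindex]\cdot\basis)$. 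After multiplying by $\SC\cdot\outernormal_{\gridindex\gridindexalt}<0$ and integrating, the inequality flips into the direction required by \eqref{eq:entropyeqsemidiscrete}.

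Finally, the $\ld{\entropy}(\multipliersfv[\gridindex]\cdot\basis)$ contributions from the two half integrals recombine, after summation over $\gridneighbors{\gridindex}$, into
\begin{equation*}
  \ints{\ld{\entropy}(\multipliersfv[\gridindex]\cdot\basis)\,\SC\cdot
    \sum_{\gridindexalt\in\gridneighbors{\gridindex}}\abs{\fvgridface_{\gridindex\gridindexalt}}\outernormal_{\gridindex\gridindexalt}},
\end{equation*}
which vanishes because on a (hyper)rectangular grid cell $\sum_{\gridindexalt}\abs{\fvgridface_{\gridindex\gridindexalt}}\outernormal_{\gridindex\gridindexalt}=0$. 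This geometric cancellation is the key reason the hypothesis on the grid enters and it completes the flux estimate; combining it with the source bound yields \eqref{eq:entropyeqsemidiscrete}.
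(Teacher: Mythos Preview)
Your proposal is correct and follows essentially the same route as the paper: multiply the semidiscrete equation by $\multipliersfv[\gridindex]$, use $\entropy'\circ\ld{\entropy}'=\mathrm{id}$ for the time derivative, handle the source term by noting the scattering contribution has a sign (the paper simply cites a reference here), apply the Legendre identity $p\,\ld{\entropy}'(p)=\entropy(\ld{\entropy}'(p))+\ld{\entropy}(p)$ on the outflow half and convexity of $\ld{\entropy}$ on the inflow half, and finally cancel the residual $\ld{\entropy}(\multipliersfv[\gridindex]\cdot\basis)$ terms via $\sum_{\gridindexalt}\abs{\fvgridface_{\gridindex\gridindexalt}}\outernormal_{\gridindex\gridindexalt}=0$ on a hyperrectangular cell. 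Your brief justification of the scattering inequality (``$\collision$ has $\ansatz$ as a fixed point'') is slightly loose---the actual mechanism is that the isotropic collision operator drives toward the constant-in-$\SC$ equilibrium and $\ints{\entropy'(\ansatz)\collision(\ansatz)}\le 0$ follows from convexity of $\entropy$---but the conclusion is correct and the paper itself defers this point to the literature.
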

\begin{proof}
  The negativity of the entropy contribution by the scattering term
  \(
  \scattering (\isomatrix - \eyematrix) \moments
  \)
  can be shown
  exactly as in the continuous case, see, e.g., \cite{Schneider2016}.
  The remaining part \(-\absorption \moments + \ints{\basis \source}\) of the source term
  directly gives the right-hand side
  of~\eqref{eq:entropyeqsemidiscrete} after multiplication by \(\multipliersfv[\gridindex]\) (as done below).
  In the following, for notational simplicity, we will thus only regard the flux term. To that end, first note that
  \begin{align}\begin{split}\label{eq:entropytimederivative}
      \partial_t \entropyFunctional(\ansatz[{\moments(\multipliers)}])
      &= \partial_t \ints{\entropy(\ansatz[{\moments(\multipliers)}])}
      = \partial_t \ints{\entropy \circ \ld{\entropy}'(\multipliers \cdot \basis)} \\
      &= \ints{ \entropy' \circ \ld{\entropy}'(\multipliers \cdot \basis) \ld{\entropy}''(\multipliers \cdot  \basis) \basis } \cdot \partial_t \multipliers
      = \ints{ (\multipliers \cdot \basis) \ld{\entropy}''(\multipliers \cdot  \basis) \basis } \cdot \partial_{\timevar} \multipliers, \end{split}
  \end{align}
  where we used that \(\entropy' \circ \ld{\entropy}'\) is the identity by definition of the Legendre transform (see, e.g.,~\cite{Rockafellar1970}).
  Further,
  the Legendre transform fulfills the relation
  \begin{equation}\label{eq:legendrerelation}
    p \ld{\entropy}'(p) - \ld{\entropy}(p) = \entropy(\ld{\entropy}'(p)).
  \end{equation}
  Multiplying
  the semi-discrete equation \eqref{eq:transformedschemesemidiscrete}
  by \(\multipliersfv[\gridindex]\), we thus obtain (using the kinetic flux \eqref{eq:kineticFlux} and neglecting the source term)
  \begingroup
  \allowdisplaybreaks
  \begin{align*}
    0
                                               & =
    \multipliersfv[\gridindex] \cdot \optHessian(\multipliersfv[\gridindex]) \partial_t \multipliersfv[\gridindex]
    + \multipliersfv[\gridindex] \cdot \frac{1}{\abs{\fvgridentity_{\gridindex}}} \sum_{\gridindexalt\in\gridneighbors{\gridindex}}
    \numericalflux_{\gridindex\gridindexalt}(\moments(\multipliersfv[\gridindex]), \moments(\multipliersfv[\gridindexalt])) \\
                                               & = \begin{multlined}[t][.9\displaywidth]
      \multipliersfv[\gridindex] \cdot \ints{ \basis \basis^T \ld{\entropy}''(\multipliersfv[\gridindex]\cdot  \basis) }
      \partial_t \multipliersfv[\gridindex] \\
      + \multipliersfv[\gridindex] \cdot
      \left(\sum_{\gridindexalt\in\gridneighbors{\gridindex}} \frac{\abs{\fvgridface_{\gridindex\gridindexalt}}}{\abs{\fvgridentity_{\gridindex}}}
      \left(\intp{(\SC \cdot \outernormal_{\gridindex\gridindexalt}) \ld{\entropy}'(\multipliersfv[\gridindex]\cdot \basis) \basis }
        + \intm{(\SC \cdot \outernormal_{\gridindex\gridindexalt}) \ld{\entropy}'(\multipliersfv[\gridindexalt]\cdot \basis) \basis }\right)\right)
    \end{multlined}                                               \\
                                               & = \begin{multlined}[t][.9\displaywidth]
      \ints{ (\multipliersfv[\gridindex] \cdot \basis) \ld{\entropy}''(\multipliersfv[\gridindex]\cdot  \basis) \basis }
      \cdot \partial_t \multipliersfv[\gridindex] \\
      + \left(\sum_{\gridindexalt\in\gridneighbors{\gridindex}} \frac{\abs{\fvgridface_{\gridindex\gridindexalt}}}{\abs{\fvgridentity_{\gridindex}}}
      \left(\intp{(\SC \cdot \outernormal_{\gridindex\gridindexalt}) (\multipliersfv[\gridindex] \cdot \basis)
            \ld{\entropy}'(\multipliersfv[\gridindex]\cdot \basis)}
        + \intm{(\SC \cdot \outernormal_{\gridindex\gridindexalt})
            (\multipliersfv[\gridindex] \cdot \basis) \ld{\entropy}'(\multipliersfv[\gridindexalt]\cdot \basis)
          }\right)\right)
    \end{multlined}                                               \\
    \overset{\eqref{eq:entropytimederivative}} & = \begin{multlined}[t][.9\displaywidth]
      \partial_t \entropyFunctional(\ansatz[{\moments(\multipliersfv[\gridindex])}]) \\
      + \Bigg(\sum_{\gridindexalt\in\gridneighbors{\gridindex}} \frac{\abs{\fvgridface_{\gridindex\gridindexalt}}}{\abs{\fvgridentity_{\gridindex}}}
      \Big(\intp{(\SC \cdot \outernormal_{\gridindex\gridindexalt}) (\multipliersfv[\gridindex] \cdot \basis)
            \ld{\entropy}'(\multipliersfv[\gridindex]\cdot \basis)}
        + \intm{(\SC \cdot \outernormal_{\gridindex\gridindexalt})
            (\multipliersfv[\gridindexalt] \cdot \basis) \ld{\entropy}'(\multipliersfv[\gridindexalt]\cdot \basis)} \\
        +
        \intm{(\SC \cdot \outernormal_{\gridindex\gridindexalt})
            ((\multipliersfv[\gridindex] -  \multipliersfv[\gridindexalt])\cdot \basis) \ld{\entropy}'(\multipliersfv[\gridindexalt]\cdot \basis)
          }
        \Big)\Bigg)
    \end{multlined}                                               \\
    \overset{\eqref{eq:legendrerelation}}      & = \begin{multlined}[t][.9\displaywidth]
      \partial_t \entropyFunctional(\ansatz[{\moments(\multipliersfv[\gridindex])}])
      + \sum_{\gridindexalt\in\gridneighbors{\gridindex}} \frac{\abs{\fvgridface_{\gridindex\gridindexalt}}}{\abs{\fvgridentity_{\gridindex}}}
      \Bigg(\intp{(\SC \cdot \outernormal_{\gridindex\gridindexalt})
          \entropy \circ \ld{\entropy}'(\multipliersfv[\gridindex]\cdot \basis)}
      + \intp{(\SC \cdot \outernormal_{\gridindex\gridindexalt})
          \ld{\entropy}(\multipliersfv[\gridindex]\cdot \basis)} \\
      \hspace{1cm}+ \intm{(\SC \cdot \outernormal_{\gridindex\gridindexalt})
          \entropy \circ \ld{\entropy}'(\multipliersfv[\gridindexalt]\cdot \basis)}
      + \intm{(\SC \cdot \outernormal_{\gridindex\gridindexalt})
          \ld{\entropy}(\multipliersfv[\gridindexalt]\cdot \basis)} \\
      + \intm{(\SC \cdot \outernormal_{\gridindex\gridindexalt})
          ((\multipliersfv[\gridindex] -  \multipliersfv[\gridindexalt])\cdot \basis) \ld{\entropy}'(\multipliersfv[\gridindexalt]\cdot \basis)
        }
      \Bigg)
    \end{multlined}                                               \\
                                               & = \begin{multlined}[t][.9\displaywidth]
      \partial_t \entropyFunctional(\ansatz[{\moments(\multipliersfv[\gridindex])}])
      + \sum_{\gridindexalt\in\gridneighbors{\gridindex}} \frac{\abs{\fvgridface_{\gridindex\gridindexalt}}}{\abs{\fvgridentity_{\gridindex}}}
      \Bigg(\intp{(\SC \cdot \outernormal_{\gridindex\gridindexalt})
          \entropy(\ansatz[{\moments(\multipliersfv[\gridindex])}])}
      + \intm{(\SC \cdot \outernormal_{\gridindex\gridindexalt})
          \entropy(\ansatz[{\moments(\multipliersfv[\gridindexalt])}])} \\
      + \intp{(\SC \cdot \outernormal_{\gridindex\gridindexalt})
          \ld{\entropy}(\multipliersfv[\gridindex]\cdot \basis)} \\
      + \intm{(\SC \cdot \outernormal_{\gridindex\gridindexalt})
          \left(\ld{\entropy}(\multipliersfv[\gridindexalt]\cdot \basis) +
          ((\multipliersfv[\gridindex] -  \multipliersfv[\gridindexalt])\cdot \basis) \ld{\entropy}'(\multipliersfv[\gridindexalt]\cdot \basis)\right)
        }
      \Bigg)
    \end{multlined}                                               \\
                                               & \geq \begin{multlined}[t][.9\displaywidth]
      \partial_t \entropyFunctional(\ansatz[{\moments(\multipliersfv[\gridindex])}])
      + \sum_{\gridindexalt\in\gridneighbors{\gridindex}} \frac{\abs{\fvgridface_{\gridindex\gridindexalt}}}{\abs{\fvgridentity_{\gridindex}}}
      \Bigg(\intp{(\SC \cdot \outernormal_{\gridindex\gridindexalt})
          \entropy(\ansatz[{\moments(\multipliersfv[\gridindex])}])}
      + \intm{(\SC \cdot \outernormal_{\gridindex\gridindexalt})
          \entropy(\ansatz[{\moments(\multipliersfv[\gridindexalt])}])} \\
      + \ints{(\SC \cdot \outernormal_{\gridindex\gridindexalt})
          \ld{\entropy}(\multipliersfv[\gridindex]\cdot \basis)}\Bigg),
    \end{multlined}
  \end{align*}
  \endgroup
  where we used for the estimate
  that the integral \(\intm{\cdot}\) is defined such that
  \(\SC \cdot \outernormal_{\gridindex\gridindexalt}\) is negative
  and
  that \(\multipliers \mapsto \ld{\entropy}(\multipliers \cdot \basis)\) is convex
  (and thus
  \(\ld{\entropy}(\multipliersfv[\gridindexalt]\cdot \basis) +
  ((\multipliersfv[\gridindex] -  \multipliersfv[\gridindexalt])\cdot \basis) \ld{\entropy}'(\multipliersfv[\gridindexalt]\cdot \basis)
  \leq \ld{\entropy}(\multipliersfv[\gridindex]\cdot \basis)\)).
  For a hyperrectangular grid, for each interface of grid cell \(\fvgridentity_\gridindex\) with outer normal \(\outernormal_{\gridindex\gridindexalt}\)
  the opposite interface has the same area and outer normal \(-\outernormal_{\gridindex\gridindexalt}\). Thus, the
  \(\ints{(\SC \cdot \outernormal_{\gridindex\gridindexalt})
    \ld{\entropy}(\multipliersfv[\gridindex]\cdot \basis)}\)
  terms cancel out which finally gives \eqref{eq:entropyeqsemidiscrete}.
\end{proof}

\begin{remark}
  The entropy density \(\entropy\) is strictly convex and thus attains its
  minimum at \(\distribution\) with \(\entropy'(\distribution) = 0\).
  Consequently, for \(\distribution = \ansatz[\moments(\multipliers)]\),
  the entropy density is minimal if
  \(0 = \entropy'(\ansatz[\moments(\multipliers)]) = \entropy' \circ \ld{\entropy}'(\multipliers \cdot \basis) = \multipliers \cdot \basis\).
  This explains why absorption decreases the entropy for \(\multipliers \cdot \basis > 0\) and increases the
  entropy for \(\multipliers \cdot \basis < 0\) (see the right-hand side of~\eqref{eq:entropyeqsemidiscrete}).
  Similarly, a source of particles \(\source(\velocityvar)\) with velocity \(\velocityvar\) increases the entropy if
  \(\multipliers \cdot \basis(\velocityvar) > 0\) and decreases it otherwise.
\end{remark}

\begin{remark}
  Since the two schemes are related by the transformation \eqref{eq:alpha_to_u_mapping} on the semidiscrete level,
  the entropy dissipation law~\eqref{eq:entropyeqsemidiscrete} also holds for the standard unsplit finite volume scheme
  if we replace \(\moments(\multipliersfv[\gridindex])\) by \(\momentsfv[\gridindex]\) and \(\multipliersfv[\gridindex]\)
  by \(\multipliers(\momentsfv[\gridindex])\).
\end{remark}

\subsection{Time discretization}\label{sec:timediscretization}
To get a fully discrete numerical scheme, we still have to choose a time discretization for \eqref{eq:transformedschemesemidiscrete}.
To avoid having to solve a large coupled non-linear system of equations in each time step, we will only consider explicit schemes.
Using, for example, the explicit Euler scheme and the kinetic flux~\eqref{eq:kineticFlux}, the fully discrete form
of~\eqref{eq:transformedschemesemidiscrete} becomes
\begin{align}
  \begin{split}\label{eq:newschemeeuler}
    \multipliersfv[\gridindex]^{\timeindex+1} 
    &=
    \begin{multlined}[t][.8\displaywidth]
      \multipliersfv[\gridindex]^{\timeindex} + \timestep 
      {\optHessian(\multipliersfv[\gridindex]^{\timeindex})}^{-1} 
      \bigg(\Source(\spatialvar_{\gridindex},\moments(\multipliersfv[\gridindex]^{\timeindex})) \\ 
      -  \sum_{\gridindexalt\in\gridneighbors{\gridindex}}
      \frac{\abs{\fvgridface_{\gridindex\gridindexalt}}}{\abs{\fvgridentity_{\gridindex}}}
      \left(\intp{(\SC \cdot \outernormal_{\gridindex\gridindexalt})
            \ld{\entropy}'(\multipliersfv[\gridindex]^{\timeindex} \cdot \basis) \basis } 
        + \intm{(\SC \cdot \outernormal_{\gridindex\gridindexalt})
            \ld{\entropy}'(\multipliersfv[\gridindexalt]^{\timeindex} \cdot \basis) \basis }\right)\bigg) 
    \end{multlined} \\
    &= \multipliersfv[\gridindex]^{\timeindex} + \timestep 
    {\optHessian(\multipliersfv[\gridindex]^{\timeindex})}^{-1} 
    \uupdateterm_{\gridindex}(\moments(\multipliersfv[0]^{\timeindex}),
    \moments(\multipliersfv[1]^{\timeindex}), \ldots,
    \moments(\multipliersfv[\gridsize-1]^{\timeindex}))
    \eqqcolon \multipliersfv[\gridindex]^{\timeindex} + \timestep 
    {\optHessian(\multipliersfv[\gridindex]^{\timeindex})}^{-1} 
    \uupdatetermexplicit_{\gridindex},
  \end{split}
\end{align}
where the update term \(\uupdateterm_{\gridindex}\) has been defined in~\eqref{eq:uupdateterm}.
However, a time discretization using fixed time steps might not be a suitable choice for the transformed scheme.

\begin{example}\label{ex:initialtimesteps}
  Consider exemplarily the plane-source test (compare \cref{sec:numericalexperiments})
  using an equidistant grid with \(\gridsizeoned = 240\) elements for the domain \(\domain = [-1.2, 1.2]\).
  We thus have \(\gridwidth = \frac{2.4}{240} = 10^{-2}\).
  The initial value in this test is a small isotropic vacuum density \(\distributionvacuum = 5 \cdot 10^{-7}\) plus a Dirac delta at \(\spatialvaronedimension=0\)
  which is split into the grid cells
  \(\fvgridentity_{\text{left}} = \fvgridentity_{119}\) and \(\fvgridentity_{\text{right}} = \fvgridentity_{120}\) adjacent to \(0\).
  The initial values for the finite volume scheme thus are given as
  \begin{equation*}
    \momentsfv[119]^0 = \momentsfv[120]^0 
    = \frac{1}{\gridwidth}
    \int\limits_{0}^{\gridwidth} \ints{\basis(\velocityvaronedimension) \distributiontzero(\spatialvaronedimension,\velocityvaronedimension)} \mathrm{d}\spatialvaronedimension
    = \frac{1}{\gridwidth}
    \int\limits_{0}^{\gridwidth} \ints{\basis(\velocityvaronedimension) \big(\distributionvacuum + \dirac_0(\spatialvaronedimension)\big)} \mathrm{d}\spatialvaronedimension
    =
    (\distributionvacuum + \frac{1}{2\gridwidth})\ints{\basis} \approx 50 \ints{\basis}
  \end{equation*}
  and
  \(\momentsfv[\gridindex]^0 
  = \distributionvacuum \ints{\basis} = 5 \cdot 10^{-7} \ints{\basis}\)
  for \(\gridindex \notin \{119,120\}\).
  The initial values are thus isotropic with local
  particle densities
  \(\density_{\gridindex} \coloneqq \density(\momentsfv[\gridindex]^0)\) 
  given as \(\density_{119} = \density_{120} = \ints{50} = 100\)
  and \(\density_{\gridindexalt} = \ints{5 \cdot 10^{-7}} = 10^{-6}\) for \(\gridindexalt \notin \{119,120\}\).
  The multipliers corresponding to the isotropic moment
  with local particle density \(\density\) are
  \begin{equation}\label{eq:isotropicmultipliers}
    \multipliersiso_{\basis}(\density) = \mathop{\entropy'}\left(\frac{\density}{\ints{1}}\right) \multipliersone
  \end{equation}
  with corresponding ansatz function
  \begin{equation*}
    \ansatziso_{\basis}(\density) = \mathop{\ld{\entropy}'}\left(\multipliersiso_{\basis}(\density) \cdot \basis\right)
    = \mathop{\ld{\entropy}'}\left(\mathop{\entropy'}\left(\frac{\density}{\ints{1}}\right) \multipliersone \cdot \basis\right)
    = \mathop{\ld{\entropy}'}\left(\mathop{\entropy'}\left(\frac{\density}{\ints{1}}\right)\right)
    = \frac{\density}{\ints{1}},
  \end{equation*}
  where we used again that \(\entropy'\) and \(\ld{\entropy}'\) are inverse functions.
  If we now focus on cell \(\fvgridentity_{121}\) and ignore the source term \(\Source(\spatialvar_{121},\moments(\multipliersfv[121]))\),
  the update~\eqref{eq:newschemeeuler} takes the form
  \begin{align*}
    \multipliersfv[121]^{1} 
     & = \multipliersfv[121]^{0}                                                                                             
    - \timestep {\optHessian(\multipliersfv[121])}^{-1}\left(
    \sum_{\gridindexalt\in \{120,122\}}
    \frac{1}{\gridwidth}
    \left(\intp{\abs{\velocityvaronedimension}\basis \ld{\entropy}'(\multipliersfv[121] \cdot \basis) } 
    + \intm{-\abs{\velocityvaronedimension}\basis \ld{\entropy}'(\multipliersfv[\gridindexalt] \cdot \basis) }\right)\right) \\ 
     & = \multipliersfv[121]^{0}                                                                                             
    - \timestep {\optHessian(\multipliersfv[121])}^{-1}\left(
    \sum_{\gridindexalt\in \{120,122\}}
    \frac{1}{\gridwidth}
    \left(\intp{\abs{\velocityvaronedimension}\basis \frac{\density_{121}}{\ints{1}} }
    + \intm{-\abs{\velocityvaronedimension}\basis \frac{\density_{\gridindexalt}}{\ints{1}}}\right)\right)                   \\
     & \approx
    \multipliersfv[121]^{0} 
    + \timestep {\optHessian(\multipliersfv[121])}^{-1}\left(
    \frac{1}{\gridwidth}
    \intm{\abs{\velocityvaronedimension}\basis \frac{\density_{120}}{\ints{1}}}\right),
  \end{align*}
  where the approximation in the last step is based on the observation that
  \(\density_{121} = \density_{122} = 10^{-6}\) are much smaller than \(\density_{120} = 100\) (see
  above).
  Inserting the definitions of the Hessian matrix~\eqref{eq:optHessian}
  and the values of \(\gridwidth, \density_{120}\) and \(\density_{121}\) we finally obtain
  \begin{equation*}
    \multipliersfv[121]^{1} 
    \approx
    \multipliersfv[121]^{0} 
    + \timestep {\ints{\basis\basis^T \, 5 \cdot 10^{-7}}}^{-1}\left(
    100 \intm{\abs{\velocityvaronedimension}\basis\, 50}\right)
    =
    \multipliersfv[121]^{0} 
    +  10^{10} \, \timestep \massmatrix^{-1}\intm{\abs{\velocityvaronedimension}\basis}.
  \end{equation*}
  For the full-moment \(\MN\) models using Legendre polynomials as a basis,
  the mass matrix \(\massmatrix\) is the unit matrix and
  the basis functions \(\basis\) are approximately of unit order.
  We thus see that the update term is in the order of
  \(10^{10}\) which is why a very small
  time step \(\timestep\) has to be chosen
  initially to limit the time stepping error.
\end{example}
The example shows that the transformed scheme is expected to require
very small time steps in some instances, e.g.\
whenever there are large differences in the particle density between adjacent cells
(which is initially the case for all our
numerical tests, see \cref{sec:numericalexperiments}).
On the other hand,
the time step does not have to be restricted
to ensure realizability which may allow for time steps
that are even larger than those used in the
standard scheme in some situations.
A time stepping scheme using a fixed time
step \(\timestep\) would thus be very inefficient for the new scheme.

Instead, we will use
the Runge-Kutta method by
Bogacki and Shampine~\cite{Bogacki1989}
which adaptively chooses the time step according to an embedded error estimate (see \cref{sec:adaptivetimestepping} for details).
This way,
we can use large time steps where possible without introducing
uncontrollable errors in time regions where a small time step is required.
If we define
\begin{equation}\label{eq:alphaupdateterm}
  \alphaupdateterm_{\gridindex}(\multipliersfv[0]^{\timeindex},
  \multipliersfv[1]^{\timeindex}, \ldots,
  \multipliersfv[\gridsize-1]^{\timeindex})
  \coloneqq {\optHessian(\multipliersfv[\gridindex]^{\timeindex})}^{-1} \uupdateterm_{\gridindex}(\moments(\multipliersfv[0]^{\timeindex}),
  \ldots, \moments(\multipliersfv[\gridsize-1]^{\timeindex})),
\end{equation}
the Runge-Kutta update takes the form
\begin{subequations}\label{eq:rkscheme}
  \begin{equation}
    \multipliersfv[\gridindex]^{\timeindex+1}
    = \multipliersfv[\gridindex]^{\timeindex} + \timestep \sum_{\rkindex=0}^{\rkstages-1} \rkb_{\rkindex}
    \alphaupdateterm_{\gridindex}(\multipliersrkstage{\rkindex}_{0}, \ldots, \multipliersrkstage{\rkindex}_{\gridsize-1}) \label{eq:rkschemeupdate}
  \end{equation}
  with stages \(\multipliersrkstage{\rkindex}_{\gridindex}\) given by
  \begin{equation}
    \multipliersrkstage{\rkindex}_{\gridindex}
    = \multipliersfv[\gridindex]^{\timeindex}
    + \timestep \sum_{\rkindexalt=0}^{\rkindex-1} \rka_{\rkindex\rkindexalt}
    \alphaupdateterm_{\gridindex}(\multipliersrkstage{\rkindexalt}_{0}, \ldots, \multipliersrkstage{\rkindexalt}_{\gridsize-1}).
  \end{equation}
\end{subequations}
Here, \(\rkstages\) is the number of stages in the Runge-Kutta scheme
\(\rka_{\rkindex\rkindexalt}, \rkb_{\rkindex}\) are the
Runge-Kutta coefficients and we neglected the varying time step for notational simplicity.

\subsection{Convergence properties}\label{sec:convergenceproperties}
The new scheme calculates approximate solutions in transformed (\(\multipliers\)) variables.
However, we are usually interested in the solution in original (\(\moments\)) variables.
We can easily obtain such a solution by applying the
diffeomorphism~\eqref{eq:alpha_to_u_mapping} to the transformed solution. However,
a priori, it might be possible that this
transformation amplifies the discretization errors and destroys the accuracy of the scheme.
Luckily, we can show that the order of convergence of the time stepping scheme is preserved by the transformation.
\begin{theorem}\label{thm:convergenceorder}
  Let \(\Set{\multipliersfv[\gridindex](\timevar) \given \gridindex \in \gridindexset}\) be a solution of
  the transformed semidiscrete equation \eqref{eq:transformedschemesemidiscrete} and let
  \(\Set{\momentsfv[\gridindex](t) \coloneqq \moments(\multipliersfv[\gridindex](\timevar)) \given \gridindex \in \gridindexset}\)
  be the corresponding solution of the untransformed equation \eqref{eq:finitevolumesemidiscrete}.
  Let further \(\Set{\multipliersfv[\gridindex]^{\timeindex}}\) be approximations of \(\multipliersfv[\gridindex]\) at
  discrete time points \(\timevar_{\timeindex}\)
  obtained by a time stepping scheme of order \(\timesteppingorder\), i.e.
  \begin{equation*}
    \multipliersfv[\gridindex]^\timeindex 
    = \multipliersfv[\gridindex](\timevar_\timeindex) + \Vr_{\gridindex}^{\timeindex}
  \end{equation*}
  with \( \Vr_{\gridindex}^{\timeindex} = \landauO({\left(\timestep\right)}^{\timesteppingorder})\).
  If the spectral norm \(\generalnorm{\optHessian}\) of the Hessian is bounded,
  then the corresponding moments converge with the same order, i.e.\
  \begin{equation*}
    \mathop{\moments}\left(\multipliersfv[\gridindex]^\timeindex\right) 
    = \momentsfv[\gridindex](\timevar_\timeindex) + \landauO({\left(\timestep\right)}^{\timesteppingorder}).
  \end{equation*}
\end{theorem}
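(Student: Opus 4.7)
The plan is to exploit the fact that the map $\multipliers \mapsto \moments(\multipliers) = \ints{\basis\,\ld{\entropy}'(\multipliers\cdot\basis)}$ has Jacobian equal to the Hessian $\optHessian(\multipliers)$, as established in equation~\eqref{eq:du_dalpha}. Since $\generalnorm{\optHessian}$ is assumed bounded, this map is globally Lipschitz on the relevant trajectory, so a perturbation of size $\landauO((\timestep)^{\timesteppingorder})$ in $\multipliers$ produces a perturbation of the same order in $\moments$.

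First I would write the difference of interest as
\begin{equation*}
  \mathop{\moments}\left(\multipliersfv[\gridindex]^\timeindex\right) - \momentsfv[\gridindex](\timevar_\timeindex)
  = \mathop{\moments}\left(\multipliersfv[\gridindex](\timevar_\timeindex) + \Vr_{\gridindex}^{\timeindex}\right)
  - \mathop{\moments}\left(\multipliersfv[\gridindex](\timevar_\timeindex)\right),
\end{equation*}
using the definition of $\momentsfv[\gridindex](\timevar_\timeindex) = \moments(\multipliersfv[\gridindex](\timevar_\timeindex))$. Then I apply the fundamental theorem of calculus along the line segment connecting the two arguments, together with~\eqref{eq:du_dalpha}, to obtain
\begin{equation*}
  \mathop{\moments}\left(\multipliersfv[\gridindex]^\timeindex\right) - \momentsfv[\gridindex](\timevar_\timeindex)
  = \int_{0}^{1} \optHessian\!\left(\multipliersfv[\gridindex](\timevar_\timeindex) + s\,\Vr_{\gridindex}^{\timeindex}\right) \Vr_{\gridindex}^{\timeindex}\, \mathrm{d}s.
\end{equation*}

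Finally I would take norms and use the assumed bound on $\generalnorm{\optHessian}$ to conclude
\begin{equation*}
  \generalnorm{\mathop{\moments}\left(\multipliersfv[\gridindex]^\timeindex\right) - \momentsfv[\gridindex](\timevar_\timeindex)}
  \leq \sup_{s \in [0,1]}\generalnorm{\optHessian\!\left(\multipliersfv[\gridindex](\timevar_\timeindex) + s\,\Vr_{\gridindex}^{\timeindex}\right)}
  \generalnorm{\Vr_{\gridindex}^{\timeindex}} = \landauO\!\left({(\timestep)}^{\timesteppingorder}\right),
\end{equation*}
which is the claimed statement.

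There is no real obstacle here beyond being careful about two minor points. First, the bound on $\generalnorm{\optHessian}$ must hold uniformly along the perturbed segment, not only at the nominal trajectory; this is implicit in the wording of the assumption but worth stating explicitly (for sufficiently small $\timestep$ the perturbation $\Vr_{\gridindex}^{\timeindex}$ is small, so continuity of $\optHessian$ suffices). Second, one should note that the argument is local in the grid index: the same constant works cell-by-cell, so taking the maximum over $\gridindex \in \gridindexset$ preserves the asymptotic order. No further global analysis of the time stepping scheme is needed, since the order $\timesteppingorder$ of the error in $\multipliers$ is directly inherited by the image under the Lipschitz map $\moments$.
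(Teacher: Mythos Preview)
Your proof is correct and follows essentially the same approach as the paper's: both use the identity $\frac{\derivative\moments}{\derivative\multipliers}=\optHessian$ from~\eqref{eq:du_dalpha} together with the assumed bound on $\generalnorm{\optHessian}$ to transfer the order of the perturbation from $\multipliers$ to $\moments$. The only cosmetic difference is that the paper invokes a zeroth-order Taylor expansion with Lagrange remainder (a single intermediate point on the segment), whereas you use the integral form of the mean value theorem; your version is in fact slightly cleaner for a vector-valued map, but the idea is identical.
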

\begin{proof}
  Using a zeroth order Taylor approximation with Lagrange form of the remainder, we have
  \begin{equation*}
    \mathop{\moments}\left(\multipliersfv[\gridindex]^\timeindex\right) 
    = \mathop{\moments}\left(\multipliersfv[\gridindex](\timevar_\timeindex) + \Vr_{\gridindex}^{\timeindex}\right)
    = \mathop{\moments}\left(\multipliersfv[\gridindex](\timevar_\timeindex)\right) + \optHessian(\multipliersfv[\gridindex]^{\xi})\Vr_{\gridindex}^{\timeindex}
    = \momentsfv[\gridindex](\timevar_\timeindex) + \optHessian(\multipliersfv[\gridindex]^{\xi})\Vr_{\gridindex}^{\timeindex}
  \end{equation*}
  with \(\multipliersfv[\gridindex]^{\xi} = (1-\xi)\multipliersfv[\gridindex](\timevar_{\timeindex}) + \xi \Vr_{\gridindex}^{\timeindex}\) for some
  \(\xi \in [0,1]\). Due to the boundedness of \(\optHessian\), we further have
  \begin{equation*}
    \generalnorm{\optHessian(\multipliersfv[\gridindex]^{\xi})\Vr_{\gridindex}^{\timeindex}}
    \leq \generalnorm{\optHessian(\multipliersfv[\gridindex]^{\xi})} \generalnorm{\Vr_{\gridindex}^{\timeindex}} =
    \landauO(\timestep^{\timesteppingorder}). \qedhere
  \end{equation*}
\end{proof}
For arbitrary \(\multipliers\), we cannot expect the upper bound on \(\generalnorm{\optHessian}\) required by \cref{thm:convergenceorder} to hold.
However, note that
\begin{equation}\label{eq:optHessiandoubleproduct}
  \Vw^T\optHessian(\multipliers)\Vw
  = \ints{{(\basis \cdot \Vw)}^2 \ld{\entropy}''(\multipliers \cdot \basis)} \leq \left(\max_{\SC \in \angularDomain} \generalnorm{\basis(\SC)}^2\right)
  \generalnorm{\Vw}^2 \ints{\ld{\entropy}''(\multipliers \cdot \basis)}.
\end{equation}
For Maxwell-Boltzmann entropy, we have \(\ld{\entropy}'' = \ld{\entropy}'\) and thus
\begin{equation*}
  \ints{\ld{\entropy}''(\multipliers \cdot \basis)} = \ints{\ld{\entropy}'(\multipliers \cdot \basis)} = \density(\moments(\multipliers))
\end{equation*}
is the local particle density corresponding to the multipliers \(\multipliers\).
Due to the conservation properties, the local particle density remains bounded for the solutions of the moment equations.
As a consequence, \(\generalnorm{\optHessian(\multipliers)}\) should be bounded
for \(\multipliers\) close enough to a solution of \(\eqref{eq:transformedschemesemidiscrete}\).

Once we have a \(r\)-th order time discretization for \eqref{eq:transformedschemesemidiscrete},
we thus can expect the corresponding moments to converge with the same order in practice,
at least for time steps \(\timestep\) small enough.
However, the order of convergence of Runge-Kutta schemes is usually shown under the assumption
of Lipschitz continuity of the ordinary differential
equation's right-hand side (see, e.g., \cite[Theorem II.3.4]{Hairer1993}).
In our case, we have to show Lipschitz continuity of the
functions \(\alphaupdateterm_{\gridindex}\). Under some additional assumptions,
we indeed obtain Lipschitz continuity for multipliers \(\multipliers\) in a domain \(\lipschitzdomain\).

\begin{theorem}\label{thm:lipschitzcontinuity}
  The function \(\alphaupdateterm_{\gridindex}\) is Lipschitz-continuous on \(\lipschitzdomain^{\gridsize}\)
  if there exist constants
  \(c_1, c_2, c_3, c_4, c_5 \in \Rpos\) such that for all \(\multipliers \in \lipschitzdomain \subset \R^{\momentnumber}\) we have
  \begin{subequations}\label{eq:proofconvergenceassumptions}
    \begin{align}
      c_1 \leq \generalnorm{\optHessian(\multipliers)}                                    & \leq c_2, \label{eq:Hessianbounded}            \\
      \density(\moments(\multipliers)) = \ints{\ld{\entropy}'(\multipliers \cdot \basis)} & \leq c_3, \label{eq:densitybounded}            \\
      \ints{\ld{\entropy}''(\multipliers \cdot \basis)}                                   & \leq c_4, \label{eq:entropysecondderivbounded} \\
      \ints{\ld{\entropy}'''(\multipliers \cdot \basis)}                                  & \leq c_5 \label{eq:entropythirdderivbounded}.
    \end{align}
  \end{subequations}
\end{theorem}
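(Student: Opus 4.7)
The plan is to decompose $\alphaupdateterm_{\gridindex}$ into the product of the matrix-valued function $\multipliers \mapsto \optHessian(\multipliers)^{-1}$ and the vector-valued function $\uupdateterm_{\gridindex} \circ \moments$, and then invoke the fact that a product of bounded Lipschitz-continuous functions is again Lipschitz continuous. It therefore suffices to establish Lipschitz continuity and boundedness of each factor separately on $\lipschitzdomain$.

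First I would handle $\multipliers \mapsto \optHessian(\multipliers)$. Differentiating the expression $\optHessian(\multipliers) = \ints{\basis \basis^T \ld{\entropy}''(\multipliers \cdot \basis)}$ componentwise with respect to $\multipliers$ yields an integrand involving $\ld{\entropy}'''(\multipliers \cdot \basis)$, whose integral is controlled by assumption \eqref{eq:entropythirdderivbounded} and a uniform bound on $\generalnorm{\basis(\SC)}$ (which holds since $\basis$ is continuous on the compact set $\angularDomain$). Hence $\optHessian$ is Lipschitz. Passing to the inverse, the identity
\begin{equation*}
\optHessian(\multipliers)^{-1} - \optHessian(\multipliersmod)^{-1}
= \optHessian(\multipliers)^{-1}\bigl(\optHessian(\multipliersmod) - \optHessian(\multipliers)\bigr)\optHessian(\multipliersmod)^{-1}
\end{equation*}
combined with \eqref{eq:Hessianbounded} (read as a two-sided spectral bound, so that $\generalnorm{\optHessian^{-1}}$ is bounded uniformly on $\lipschitzdomain$) shows that $\optHessian^{-1}$ is itself Lipschitz and bounded.

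Next I would show Lipschitz continuity of $\uupdateterm_{\gridindex}(\moments(\multipliersfv[0]),\ldots,\moments(\multipliersfv[\gridsize-1]))$ jointly in all arguments. The dependence on $\multipliersfv[\gridindexalt]$ enters only through integrals of the form $\intpm{(\SC \cdot \outernormal)\,\ld{\entropy}'(\multipliersfv[\gridindexalt] \cdot \basis)\basis}$, whose derivative with respect to $\multipliersfv[\gridindexalt]$ equals the corresponding half-space Hessian integral. By the estimate \eqref{eq:optHessiandoubleproduct} specialized to half-space integrals and by \eqref{eq:entropysecondderivbounded}, this derivative is uniformly bounded; hence each flux contribution is Lipschitz. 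The source contribution $\Source(\spatialvar_{\gridindex}, \moments(\multipliersfv[\gridindex]))$ is affine in $\moments$ with coefficients depending on the bounded quantities $\scattering, \crosssection, \source$, and $\moments(\multipliers)$ is Lipschitz in $\multipliers$ since its derivative is $\optHessian$, bounded by $c_2$ via \eqref{eq:Hessianbounded}. Boundedness of $\uupdateterm_{\gridindex}$ itself on $\lipschitzdomain^{\gridsize}$ follows because $\ld{\entropy}'(\multipliers\cdot\basis) = \ansatz[\moments(\multipliers)]$ integrates against $\basis$ to give $\moments(\multipliers)$, whose norm is controlled by the bounded density \eqref{eq:densitybounded} (after using uniform boundedness of $\basis$ and bounding $|\intpm{\SC \cdot \outernormal_{\gridindex\gridindexalt}\,\ld{\entropy}'\basis}|$ by a constant times the density $\ints{\ld{\entropy}'(\multipliers\cdot\basis)}$).

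Finally, combining both pieces, since $\optHessian^{-1}$ and $\uupdateterm_{\gridindex}\circ\moments$ are both bounded and Lipschitz on $\lipschitzdomain^{\gridsize}$, a standard computation
\begin{equation*}
\optHessian(\multipliers)^{-1} A - \optHessian(\multipliersmod)^{-1} B
= \optHessian(\multipliers)^{-1}(A-B) + \bigl(\optHessian(\multipliers)^{-1}-\optHessian(\multipliersmod)^{-1}\bigr) B
\end{equation*}
yields Lipschitz continuity of $\alphaupdateterm_{\gridindex}$, proving the claim. The main obstacle I foresee is a clean interpretation of the spectral bound in \eqref{eq:Hessianbounded}: the lower bound $c_1 \leq \generalnorm{\optHessian}$ must be used together with positive definiteness of $\optHessian$ to obtain a uniform upper bound on $\generalnorm{\optHessian^{-1}}$, which requires the smallest eigenvalue of $\optHessian$ to be bounded away from zero on $\lipschitzdomain$ (implicit in the phrasing of the assumption). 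Everything else reduces to elementary estimates on integrals of derivatives of $\ld{\entropy}$ controlled by \eqref{eq:densitybounded}--\eqref{eq:entropythirdderivbounded}.
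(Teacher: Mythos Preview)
Your proposal is correct and follows essentially the same approach as the paper: both arguments decompose $\alphaupdateterm_{\gridindex} = \optHessian^{-1}\uupdateterm_{\gridindex}$, bound each factor and its variation using \eqref{eq:Hessianbounded}--\eqref{eq:entropythirdderivbounded}, and combine via the product rule (the paper phrases this as bounding the partial derivatives $\partial\alphaupdateterm_{\gridindex}/\partial\multipliers[\gridindexalt]$ directly, using $\partial\optHessian^{-1} = -\optHessian^{-1}(\partial\optHessian)\optHessian^{-1}$ in place of your difference-of-inverses identity, but the content is identical). Your closing remark about the interpretation of \eqref{eq:Hessianbounded} is well taken: the paper's proof in fact writes $\generalnorm{\optHessian^{-1}} = \generalnorm{\optHessian}^{-1}$, which only makes sense if the bound is read as a two-sided spectral bound on the positive-definite matrix $\optHessian$, exactly as you surmise.
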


The proof is technical and can be found in \cref{sec:appendixlipschitzproof}.

For Maxwell-Boltzmann entropy, since \(\ld{\entropy}' = \ld{\entropy}'' = \ld{\entropy}''' = \exp\) and by definition of the Hessian
\eqref{eq:optHessian}, we see that \eqref{eq:densitybounded} implies \eqref{eq:entropysecondderivbounded}, \eqref{eq:entropythirdderivbounded}
and the upper bound in \eqref{eq:Hessianbounded}. Moreover, all the assumptions \eqref{eq:Hessianbounded}-\eqref{eq:entropythirdderivbounded} are fulfilled
if \(\generalnorm{\multipliers}\) remains bounded.

\begin{corollary}\label{cor:lipschitzmaxwellboltzmann}
  For Maxwell-Boltzmann entropy, \(\alphaupdateterm_{\gridindex}\) is Lipschitz-continuous on \(\lipschitzdomain^{\gridsize}\)
  if \(\lipschitzdomain\) is bounded.
\end{corollary}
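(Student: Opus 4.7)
The plan is to verify each of the four hypotheses \eqref{eq:Hessianbounded}--\eqref{eq:entropythirdderivbounded} of \cref{thm:lipschitzcontinuity} directly, exploiting that for Maxwell-Boltzmann entropy one has $\ld{\entropy}' = \ld{\entropy}'' = \ld{\entropy}''' = \exp$, so everything reduces to pointwise bounds on $\exp(\multipliers \cdot \basis)$.

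First, since $\lipschitzdomain \subset \R^{\momentnumber}$ is bounded, there is $M \in \Rpos$ with $\generalnorm{\multipliers} \leq M$ for all $\multipliers \in \lipschitzdomain$. All basis functions considered in \cref{sec:basisfunctions} are bounded on the compact velocity domain $\angularDomain$, so $B \coloneqq \max_{\SC \in \angularDomain} \generalnorm{\basis(\SC)} < \infty$. Cauchy-Schwarz then gives the pointwise sandwich
\begin{equation*}
\eulere^{-MB} \leq \exp(\multipliers \cdot \basis(\SC)) \leq \eulere^{MB} \qquad \text{for all } \SC \in \angularDomain,\ \multipliers \in \lipschitzdomain.
\end{equation*}
The upper bound, integrated over $\angularDomain$, immediately yields constants $c_3, c_4, c_5$ for \eqref{eq:densitybounded}--\eqref{eq:entropythirdderivbounded}, with the common value $\abs{\angularDomain}\, \eulere^{MB}$.

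The upper bound on $\generalnorm{\optHessian(\multipliers)}$ follows by inserting \eqref{eq:entropysecondderivbounded} into \eqref{eq:optHessiandoubleproduct}, giving $c_2 = B^2 c_4$. For the lower bound, which is really a lower bound on the smallest eigenvalue (as needed to control $\optHessian^{-1}$ in $\alphaupdateterm_{\gridindex}$), I would estimate
\begin{equation*}
\Vw^T \optHessian(\multipliers) \Vw = \ints{(\basis \cdot \Vw)^2 \exp(\multipliers \cdot \basis)} \geq \eulere^{-MB} \ints{(\basis \cdot \Vw)^2} = \eulere^{-MB}\, \Vw^T \massmatrix \Vw,
\end{equation*}
where $\massmatrix = \ints{\basis \basis^T}$ is the Gram matrix of the basis functions. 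Linear independence of $\basiscomp[0], \ldots, \basiscomp[\momentnumber-1]$ (from the definition of a moment basis) combined with boundedness of $\basis$ (so $\basiscomp[\basisindex] \in \Lp{2}(\angularDomain)$) makes $\massmatrix$ symmetric positive definite, so $\lambda_{\min}(\massmatrix) > 0$ and we obtain $c_1 = \eulere^{-MB} \lambda_{\min}(\massmatrix)$.

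Having verified \eqref{eq:proofconvergenceassumptions}, an application of \cref{thm:lipschitzcontinuity} concludes the proof. There is no real obstacle: the argument is a direct verification, made clean by the fact that for Maxwell-Boltzmann entropy the quantities $\ld{\entropy}', \ld{\entropy}'', \ld{\entropy}'''$ coincide and that a bounded $\multipliers$ together with a bounded $\basis$ forces $\multipliers \cdot \basis$ into a compact interval on which $\exp$ is bounded above and bounded away from zero.
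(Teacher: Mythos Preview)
Your proposal is correct and follows exactly the route the paper takes: the corollary is stated immediately after the sentence ``all the assumptions \eqref{eq:Hessianbounded}--\eqref{eq:entropythirdderivbounded} are fulfilled if $\generalnorm{\multipliers}$ remains bounded,'' and you simply spell out why, using the pointwise bound on $\exp(\multipliers\cdot\basis)$ and positive definiteness of the mass matrix. Your remark that the lower bound in \eqref{eq:Hessianbounded} really needs to control the smallest eigenvalue (so that $\generalnorm{\optHessian^{-1}}$ is bounded) is a useful clarification of what the paper's proof of \cref{thm:lipschitzcontinuity} actually uses.
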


\subsection{Regularization}\label{subsec:regularization}
As discussed above, the bound \eqref{eq:densitybounded} on the local particle density \(\density\)
can be expected to hold in practice.
Unfortunately, this is not true for the lower bound on \(\generalnorm{\optHessian}\).
Considering again \eqref{eq:optHessiandoubleproduct}, we see that we cannot expect the lower bound
to hold as long as \(\ld{\entropy}''\) is not bounded from below. In particular,
for Maxwell-Boltzmann entropy, \eqref{eq:proofconvergenceassumptions}
would require a lower bound on the
ansatz density \(\ansatz[{\moments(\multipliers[\gridindex])}]\).
Even if such a lower bound exists for the exact solution, it might be extremely small.
Factoring in numerical errors, it is clear that in practice this bound will not always hold.
In addition, \(\optHessian\) may be very badly conditioned, up to the point
that it might be numerically singular.

For the standard scheme, where \(\optHessian\) shows
up as the Hessian of the objective function in the
minimum entropy optimization problem \eqref{eq:dual}, we thus
use several regularization techniques to ensure that the Newton
scheme always converges (see \cref{sec:fvschemeregularization}).
While most of these techniques are not easily carried over to the transformed scheme,
we here investigate two approaches to regularize the new scheme.

\subsubsection{Isotropic regularization of the Hessian}
As a first approach, we regularize the Hessian matrix by adding a small multiple of the mass matrix
\(\massmatrix = \ints{\basis\basis^T}\).
Exemplarily, for the explicit Euler scheme, the update formula
\eqref{eq:newschemeeuler} then becomes
\begin{equation}\label{eq:alpha_rk_scheme_regularized}
  \multipliersfv[\gridindex]^{\timeindex+1}
  = \multipliersfv[\gridindex]^\timeindex + \timestep {\left(\optHessian(\multipliersfv[\gridindex]^{\timeindex})
    + \newschemeregparam \massmatrix\right)}^{-1}
  \uupdatetermexplicit_{\gridindex}.
\end{equation}
This corresponds to adding a small isotropic particle density to the derivative of the ansatz function in the Hessian
\begin{equation*}
  \optHessian(\multipliers)
  + \newschemeregparam \massmatrix = \ints{\basis\basis^T \ld{\entropy}''(\multipliers \cdot \basis)} + \newschemeregparam \ints{\basis\basis^T}
  = \ints{\basis\basis^T (\ld{\entropy}''(\multipliers \cdot \basis) + \newschemeregparam)}.
\end{equation*}
and ensures that the lower bound in \eqref{eq:Hessianbounded} holds for the regularized matrix.

To analyze the introduced error, note that without regularization the update in the Euler scheme is
\begin{align}
  \begin{split}\label{eq:localtruncationerror}
    \moments\left(\multipliersfv[\gridindex]^{\timeindex+1}\right)
    & =
    \moments\left(\multipliersfv[\gridindex]^\timeindex
    + \timestep
    {\optHessian(\multipliersfv[\gridindex]^{\timeindex})}^{-1}
    \uupdatetermexplicit_{\gridindex}\right)
    =
    \moments\left(\multipliersfv[\gridindex]^\timeindex\right)
    +
    \frac{\derivative\moments}{\derivative\multipliers}(\multipliersfv[\gridindex]^{\timeindex})
    \timestep
    {\optHessian(\multipliersfv[\gridindex]^{\timeindex})}^{-1}
    \uupdatetermexplicit_{\gridindex} + \Vr \\
    \overset{\eqref{eq:du_dalpha}} &=
    \moments\left(\multipliersfv[\gridindex]^\timeindex\right)
    + \timestep
    \uupdatetermexplicit_{\gridindex} +
    \Vr
    = \momentsfv[\gridindex]^{\timeindex+1} + \Vr
  \end{split}
\end{align}
with remainder of order \(\landauO({(\timestep)}^2)\).
Using the regularization \eqref{eq:alpha_rk_scheme_regularized}, the local truncation error becomes
\begin{equation*}
  \moments\left(\multipliersfv[\gridindex]^{\timeindex+1}\right) =
  \momentsfv[\gridindex]^{\timeindex+1} - \timestep \regepsilon \massmatrix
  {\left(\optHessian(\multipliersfv[\gridindex]^{\timeindex})+\regepsilon \massmatrix\right)}^{-1}
  \uupdatetermexplicit_{\gridindex} + \Vr.
\end{equation*}
Remember that the terms in \(\uupdatetermexplicit_{\gridindex}\), except for the constant source term \(\ints{\basis\source}\), all
scale with either \(\ansatz[{\moments(\multipliersfv[\gridindex]^{\timeindex})}]\)
or \(\ansatz[{\moments(\multipliersfv[\gridindexalt]^{\timeindex})}]\), \(\gridindexalt \in \gridneighbors{\gridindex}\).
Thus, if \(\multipliersfv[\gridindex]^{\timeindex}\) corresponds to an ansatz density
\(\ansatz[{\moments(\multipliersfv[\gridindex]^{\timeindex})}]\)
which is significantly greater than \(\newschemeregparam\),
the additional error term
\begin{equation*}
  \timestep \regepsilon \massmatrix
  {\left(\optHessian(\multipliersfv[\gridindex]^{\timeindex})+\regepsilon \massmatrix\right)}^{-1}
  \uupdatetermexplicit_{\gridindex} \approx
  \timestep \regepsilon \massmatrix
  {\left(\optHessian(\multipliersfv[\gridindex]^{\timeindex})\right)}^{-1}
  \uupdatetermexplicit_{\gridindex} =
  \landauO(\timestep \regepsilon)
\end{equation*}
does not negatively impact the rate of convergence as long as we
choose \(\newschemeregparam\) as \(\landauO(\timestep)\).
Similarly, if \(\ansatz[{\moments(\multipliersfv[\gridindex]^{\timeindex})}]\),
\(\ansatz[{\moments(\multipliersfv[\gridindexalt]^{\timeindex})}]\) and \(\source\)
are small (more precisely, \(\landauO(\newschemeregparam)\)), the additional error is of
order \(\landauO(\timestep \newschemeregparam \frac{1}{\newschemeregparam} \newschemeregparam)
= \landauO(\timestep \newschemeregparam)\). However, if
\(\ansatz[{\moments(\multipliersfv[\gridindex]^{\timeindex})}]\) is small, but at least one of
\(\ansatz[{\moments(\multipliersfv[\gridindexalt]^{\timeindex})}]\) and \(\source\) is not,
we obtain an additional error of
\(\landauO(\timestep \newschemeregparam \frac{1}{\newschemeregparam} 1) = \landauO(\timestep)\).
In this case, i.e.\ if the particle density in
grid cell \(\fvgridentity_{\gridindex}\) is small
and
there is a strong source or an influx
from an neighbor entity \(\fvgridentity_{\gridindex}\),
the regularization \eqref{eq:alpha_rk_scheme_regularized} might negatively affect the solution.
For higher-order time stepping schemes, the error analysis gets much more involved, but we
expect similar results, i.e.\ significant regularization errors only in regions
with large differences in particle densities between adjacent cells.

\subsubsection{Direct constraints for the entropy variables}\label{sec:directconstraintsregularization}
Instead of modifying the Hessian, we might try to directly enforce the boundedness of
the entropy variables required in \cref{cor:lipschitzmaxwellboltzmann}. For example, we could replace all multipliers \(\multipliers\) with
\(\generalnorm{\multipliers} > \alpha_{\text{max}}\) for some \(\alpha_{\text{max}} \in \Rpos\)
by the closest multiplier \(\multipliers[\text{reg}]\) with
\(\generalnorm{\multipliers[\text{reg}]} \leq \alpha_{\text{max}}\).

We tested this approach in several numerical tests and found that, for
general bases \(\basis\), the regularization either introduced large errors
into the solution (for \(\alpha_{\text{max}}\) small) or
did not improve the numerical stability (for large \(\alpha_{\text{max}}\)).

A notable exception is the hat function basis \(\hfbasis\).
Since the hat functions are non-negative,
remembering
that the ansatz density has the form \(\exp(\multipliers \cdot \hfbasis\)) (for Maxwell-Boltzmann entropy),
we see that
large positive entries of \(\multipliers\) correspond to large ansatz densities
and large (in absolute values) negative
entries correspond to small ansatz densities.
Thus, if we enforce a lower bound on the entries of \(\multipliers\)
by replacing all entries \(\multiplierscomp{\basisindex} < \alpha_{\text{min}}\)
by \(\alpha_{\text{min}}\) for some negative \(\alpha_{\text{min}} \in \R\),
we would expect in general that the introduced error is small
since we are only replacing entries corresponding
to very small ansatz densities by entries corresponding
to slightly larger but still very small densities.

Indeed, in our numerical tests, the error is very small (see \cref{sec:hfmbasisregularization}).
Note, however, that there might be cases where also this regularization technique leads to significant errors.
Consider, e.g., a highly anisotropic particle distributions in one dimension where the density
is very low at one boundary of an interval in the velocity partition and very high at the other boundary.
Here, replacing the entry corresponding to the very low density might significantly
alter the ansatz density in that interval.

For the other bases (\(\pmbasis\) and \(\fmbasis\)), a similar regularization technique which
replaces only multipliers corresponding to small ansatz densities is
not straightforward since for these bases there is no clear correspondence between the
sign of \(\multipliers\) entries and the magnitude of the ansatz density.

\subsection{Entropy stability on the fully discrete level}\label{sec:entropystability}
In general, we cannot expect that an explicit time discretization preserves the entropy-stability of the semidiscrete scheme.
However, we can enforce this property by using a relaxation of the standard Runge-Kutta scheme~\cite{Ranocha2020}.
To that end, we collect the multipliers for each grid cell in a single vector, i.e.\ we define
\begin{equation*}
  \gesamtalpha \coloneqq \begin{pmatrix}
    \multipliersfv[0](\timevar) \\
    \vdots                      \\
    \multipliersfv[\gridsize - 1](\timevar)
  \end{pmatrix} \quand
  \gesamtrhs(\gesamtalpha) = \begin{pmatrix}
    \alphaupdateterm_{0}(\multipliersfv[0], \ldots, \multipliersfv[\gridsize-1]) \\
    \vdots                                                                       \\
    \alphaupdateterm_{\gridsize-1}(\multipliersfv[0], \ldots, \multipliersfv[\gridsize-1])
  \end{pmatrix}
  = \begin{pmatrix}
    {\optHessian(\multipliersfv[0])}^{-1} \uupdateterm_{0}(\multipliersfv[0], \ldots, \multipliersfv[\gridsize-1]) \\
    \vdots                                                                                                         \\
    {\optHessian(\multipliersfv[\gridsize-1])}^{-1} \uupdateterm_{\gridsize-1}(\multipliersfv[0], \ldots, \multipliersfv[\gridsize-1])
  \end{pmatrix}.
\end{equation*}
Further, we define the total entropy as
\begin{equation*}
  \gesamtentropy(\gesamtalpha) \coloneqq
  \sum_{\gridindex \in \gridindexset}
  \entropyFunctional(\ansatz[{\moments(\multipliersfv[\gridindex])}])
  = \sum_{\gridindex \in \gridindexset} \ints{\entropy(\ansatz[{\moments(\multipliersfv[\gridindex])}])}
  = \sum_{\gridindex \in \gridindexset} \ints{\entropy(\ld{\entropy}'(\multipliersfv[\gridindex] \cdot \basis)}.
\end{equation*}
Then
\begin{equation*}
  \gesamtentropy'(\gesamtalpha) = \begin{pmatrix}
    \optHessian(\multipliersfv[0]) \multipliersfv[0]                        \\
    \vdots                                                                  \\
    \optHessian(\multipliersfv[\gridsize -1]) \multipliersfv[\gridsize -1], \\
  \end{pmatrix}
\end{equation*}
(where we again used that \(\entropy' \circ \ld{\entropy}'\) is the identity)
and thus
\begin{align}
  \begin{split}\label{eq:discreteentropydings2}
    \frac{\mathrm{d}}{\mathrm{d}\timevar} \gesamtentropy(\gesamtalpha(t)) &= \gesamtentropy'(\gesamtalpha(t)) \cdot \gesamtrhs(\gesamtalpha(t)) \\
    & = \sum_{\gridindex \in \gridindexset}
    \multipliersfv[\gridindex] \cdot \uupdateterm_{\gridindex}(\multipliersfv[0], \ldots, \multipliersfv[\gridsize-1]) \\
    \overset{\text{(Proof of)} \eqref{eq:entropyeqsemidiscrete}} & {\leq}
    \begin{multlined}[t][.6\displaywidth]
      \sum_{\gridindex\in\gridindexset} \Bigg(
      \ints{(\multipliersfv[\gridindex] \cdot \basis) \source}
      - \absorption \ints{(\multipliersfv[\gridindex] \cdot \basis) \ld{\entropy}'(\multipliersfv[\gridindex] \cdot \basis)} \\
      -\sum_{\gridindexalt\in\gridneighbors{\gridindex}} \frac{\abs{\fvgridface_{\gridindex\gridindexalt}}}{\abs{\fvgridentity_{\gridindex}}}
      \left(\intp{(\SC \cdot \outernormal_{\gridindex\gridindexalt})
            \entropy(\ansatz[{\moments(\multipliersfv[\gridindex])}])}
        + \intm{(\SC \cdot \outernormal_{\gridindex\gridindexalt})
            \entropy(\ansatz[{\moments(\multipliersfv[\gridindexalt])}])}\right)\Bigg)
    \end{multlined}                                                                                         \\
    & =
    \begin{multlined}[t][.6\displaywidth]
      \sum_{\gridindex\in\gridindexset} \left(
      \ints{(\multipliersfv[\gridindex] \cdot \basis) \source}
      - \absorption \ints{(\multipliersfv[\gridindex] \cdot \basis) \ld{\entropy}'(\multipliersfv[\gridindex] \cdot \basis)}\right) \\
      - \sum_{\fvgridface_{\gridindex\gridindexalt} \text{ boundary interface}}
      \frac{\abs{\fvgridface_{\gridindex\gridindexalt}}}{\abs{\fvgridentity_{\gridindex}}}
      \left(\intp{(\SC \cdot \outernormal_{\gridindex\gridindexalt})
          \entropy(\ansatz[{\moments(\multipliersfv[\gridindex])}])}
      + \intm{(\SC \cdot \outernormal_{\gridindex\gridindexalt})
          \entropy(\distributionboundary(\spatialvar_{\gridindex\gridindexalt})}\right),
    \end{multlined}
  \end{split}
\end{align}
i.e.\ the change in total entropy is bounded by entropy fluxes over the domain boundary and entropy production
via particle absorption or creation. Here,
\(\spatialvar_{\gridindex\gridindexalt}\) is
the center of interface \(\fvgridface_{\gridindex\gridindexalt}\) and \(\distributionboundary\)
is the boundary value of the kinetic equation \eqref{eq:kineticequationboundary}.
If we use an explicit Runge-Kutta scheme of the form \eqref{eq:rkscheme}
for time discretization of \eqref{eq:transformedschemesemidiscrete},
we would expect the total entropy to approximately evolve as
\begin{equation}\label{eq:entropyupdate2}
  \gesamtentropy(\gesamtalpha^{\timeindex+1}) = \gesamtentropy(\gesamtalpha^{\timeindex})
  + \timestep \sum_{\rkindex=0}^{\rkstages-1} \rkb_{\rkindex} \gesamtentropy'(\gesamtbeta^{\rkindex}) \cdot
  \gesamtrhs(\gesamtbeta^{\rkindex}),                                                        \\
\end{equation}
where \(\gesamtbeta^{\rkindex} =
{(\multipliersrkstage{\rkindex}_{0}, \ldots, \multipliersrkstage{\rkindex}_{\gridsize-1})}^T\) contains the collected
Runge-Kutta stages (compare the definition \eqref{eq:rkscheme} of the Runge-Kutta scheme).
Though \eqref{eq:entropyupdate2} does not hold exactly, we can enforce this property by introducing
a relaxation into the Runge-Kutta scheme \eqref{eq:rkscheme}
and replace \eqref{eq:rkschemeupdate} by the relaxed version \cite{Ranocha2020}
\begin{equation}\label{eq:modifiedrkupdate}
  \multipliers[\gridindex,\gamma]^{\timeindex+1}
  = \multipliers[\gridindex]^{\timeindex} + \gamma_{\timeindex} \timestep \sum_{\rkindex=0}^{\rkstages-1} \rkb_{\rkindex}
  \alphaupdateterm_{\gridindex}(\gesamtbeta^{\rkindex}),
\end{equation}
where \(\gamma_{\timeindex}\) is calculated by finding a root of
\begin{align}
  \begin{split}\label{eq:r_gamma}
    r(\gamma) & \coloneqq
    \gesamtentropy(\gesamtalpha_{\gamma}^{\timeindex+1}) - \gesamtentropy(\gesamtalpha^{\timeindex})
    - \gamma \timestep \sum_{\rkindex=0}^{\rkstages-1} \rkb_{\rkindex}
    \gesamtentropy'(\gesamtbeta^{\rkindex}) \cdot
    \gesamtrhs(\gesamtbeta^{\rkindex})                                                                          \\
    & = \sum_{\gridindex \in \gridindexset} \left(\ints{\entropy \circ \ld{\entropy}'(\multipliers[\gridindex,\gamma]^{\timeindex+1} \cdot \basis)}
    - \ints{\entropy \circ \ld{\entropy}'(\multipliers[\gridindex]^{\timeindex} \cdot \basis)}
    - \gamma \timestep \sum_{\rkindex=0}^{\rkstages-1} \rkb_{\rkindex}
    \multipliersrkstage{\rkindex}_{\gridindex} \cdot \uupdateterm_{\gridindex}(\gesamtbeta_{\rkindex})\right).
  \end{split}
\end{align}
If \(\gesamtentropy\) is convex, \(r(\gamma)\) is also convex and has exactly two roots, one at zero
and one close to one \cite{Ranocha2020} (for \(\timestep\) small enough).  Unfortunately,
in our case, \(\gesamtentropy\) is not necessarily convex since \(\entropy \circ \ld{\entropy}'\) is not necessarily convex.
In the Maxwell-Boltzmann case, for example, we have
\(\entropy \circ \ld{\entropy}'(p) =
\exp(p)(p - 1)\)
which is convex only for \(p > -1\) (and concave for \(p < -1\)).
However, in our numerical experiments, we always found a root of \(r(\gamma)\) close to one.

\begin{remark}
  We could use the same approach to
  obtain entropy-stability for the standard finite volume scheme.
  However, in that case, each time we want to evaluate \(r(\gamma)\) for a new \(\gamma\)
  we would have to solve the minimum-entropy optimization problem on each grid cell
  (to evaluate the equivalent of \(\gesamtentropy(\gesamtalpha_{\gamma}^{\timeindex+1})\)),
  which would make the root finding procedure prohibitively expensive.
\end{remark}

Additional details on our implementation of the relaxed Runge-Kutta scheme can be found in \cref{sec:adaptivetimestepping}.

\section{Implementation details}\label{sec:implementationdetails}
We implemented both schemes in the generic \Cpp framework 
\texttt{DUNE}~\cite{Bastian2008,Bastian2008a}, more specifically in the \texttt{DUNE} generic
discretization toolbox \texttt{dune-gdt}~\cite{Schindler2017} and the \texttt{dune-xt}-modules~\cite{Milk2017, Milk2017a}.
The implementation is available in \cite{Leibner2021}.

\subsection{Standard finite volume scheme}
The implementation for the standard finite volume scheme is taken from \cite{SchneiderLeibner2019b}.
Here, we only shortly recall the relevant parts. Further, we will restrict ourselves
to Maxwell-Boltzmann entropy~\eqref{eq:maxwellboltzmannentropy} such that
the ansatz becomes \(\ansatz[\moments] =
\ld{\entropy}'(\multipliers(\moments) \cdot \basis) = \exp(\multipliers(\moments) \cdot \basis)\).

\subsubsection{Analytic solution of the source system}
As mentioned above, we use a second-order splitting approach (see~\eqref{eq:splittedsystem}) to
handle the source term independently of the flux term.
Remember that we assume that the scattering is isotropic and
that the parameters \(\scattering\), \(\absorption\), \(\source\) are time-independent.
The source system~\eqref{eq:SplittedSystemb} on grid cell \(\fvgridentity_{\gridindex}\) thus takes the form
\begin{equation}\label{eq:SourceTermODE}
  \dt\momentsfv[\gridindex](\timevar)
  = \left(\scattering(\spatialvar_{\gridindex}) \isomatrix - \crosssection(\spatialvar_{\gridindex})\eyematrix\right) \momentsfv[\gridindex](\timevar)  + \ints{\basis\source}.
\end{equation}
Since~\eqref{eq:SourceTermODE} is
linear in \(\momentsfv[\gridindex]\) with time-independent system matrix, we can solve it explicitly
using matrix exponentials and the variation of constants formula. Under the additional assumption that
the source is also isotropic, i.e.\ that \(\source\) does not depend on \(\SC\),
the solution to~\eqref{eq:SourceTermODE} is (see~\cite{SchneiderLeibner2019b} for a detailed derivation)
\begin{equation*}
  \momentsfv[\gridindex](\timevar)
  = \eulere^{-\absorption \timevar} \left(\eulere^{-\scattering \timevar} \momentsfv[\gridindex](0) +
  \left(1 - \eulere^{-\scattering \timevar}\right) \isomatrix \momentsfv[\gridindex](0) \right)
  +
  \frac{1-\eulere^{-\absorption \timevar}}{\absorption} \ints{\basis} \source.
  \label{eq:matexpsolutionisotropicsource}
\end{equation*}
Note that~\eqref{eq:matexpsolutionisotropicsource} can easily be calculated without any matrix
operations due to the rank one structure of \(\isomatrix\) (see~\eqref{eq:isomatrix}).

\subsubsection{Solving the optimization problem}\label{sec:optimizationproblem}
The second part of the standard splitting scheme, the flux system~\eqref{eq:SplittedSystema}, is advanced
in time using Heun's method,
which is a second-order strong-stability preserving Runge-Kutta scheme~\cite{Gottlieb2005}.
In each stage of the time stepping scheme, we
have to solve the optimization problem~\eqref{eq:primal} once in each cell.
This usually accounts for
the majority of computation time which makes it mandatory to pay special
attention to the implementation of the optimization algorithm.

Recall that the objective function in the dual problem~\eqref{eq:dual} is
\begin{equation*}
  \optObjective(\multipliers) = \optObjective_{\moments,\basis,\entropy}(\multipliers) = \ints{\ld{\entropy}(\basis \cdot \multipliers)} - \moments \cdot \multipliers.
\end{equation*}
The gradient and the Hessian of \(\optObjective\) are given by
\begin{equation}\label{eq:optGradient}
  \optGradient(\multipliers) = \optGradient_{\moments,\basis,\entropy}(\multipliers) = \multipliersGradient \optObjective(\multipliers) = \ints{\basis \ld{\entropy}'(\basis \cdot \multipliers)} - \moments
\end{equation}
and
\begin{equation}\label{eq:optHessian}
  \optHessian(\multipliers) = \optHessian_{\basis,\entropy}(\multipliers) =
  \jacobianop_{\multipliers} \optGradient(\multipliers)
  = \ints{\basis \basis^T \ld{\entropy}''\left(\basis \cdot \multipliers\right)},
\end{equation}
respectively. Note that \(\ld{\entropy}'' > 0\) since we assumed that \(\entropy\) (and thus also \(\ld{\entropy}\)) is
strictly convex, and remember that
the basis functions contained in \(\basis\) are linearly independent. As a consequence, the Hessian \(\optHessian\)
is positive definite (and thus invertible).

To find a minimizer of \(\optObjective\), we are searching for a root of the gradient \(\optGradient\) using Newton's method.
The Newton direction \(\optNewtonDirection(\multipliers)\) solves
\begin{equation*}
  \optHessian(\multipliers)\optNewtonDirection(\multipliers) = -\optGradient(\multipliers).
\end{equation*}
The update takes the form
\begin{equation*}
  \multipliers[k+1] = \multipliers[k] + \zeta_k \optNewtonDirection(\multipliers[k])
\end{equation*}
where \(\zeta_k\) is determined by a backtracking line search such that
\begin{equation*}
  \optObjective(\multipliers[k+1]) < \optObjective(\multipliers[k]) + \xi \zeta_k {\optGradient(\multipliers[k])} \cdot \optNewtonDirection(\multipliers[k])
\end{equation*}
with \(\xi \in (0,1)\). In our implementation, we always use \(\xi = 10^{-3}\).

To avoid numerical problems for moments corresponding to a
very small local particle density,
before entering the Newton algorithm
for the moment vector \(\moments\) we rescale it to
\begin{equation*}
  \normalizedmoments \coloneqq \frac{\moments}{\density(\moments)}
\end{equation*}
such that
\(\density(\normalizedmoments) = 1\). If the optimization algorithm
for \(\normalizedmoments\) stops at an iterate \(\multiplierstilde\), we return
\begin{equation}\label{eq:beta_to_alpha}
  \numericalmultipliers = \multiplierstilde + \multipliersone
  \log\left(\frac{\density(\moments)}{\density(\moments(\multiplierstilde))}\right).
\end{equation}
(remember that \(\multipliersone\) is the multiplier with the
property that \(\multipliersone \cdot \basis \equiv 1\), see \cref{def:density}).
This ensures that the local particle density is preserved exactly:
\begin{equation}\label{eq:densityadjustmultipliers}
  \density\left(\moments(\numericalmultipliers)\right) =
  \ints{\exp(\basis \cdot \numericalmultipliers)} = \ints{\exp(\basis \cdot \multiplierstilde)}
  \frac{\density(\moments)}{\density(\moments(\multiplierstilde))} =
  \density(\moments).
\end{equation}
Given tolerances \(\opttol\in\Rpos\), \(\opttoleps\in(0,1)\),
we will stop the Newton iteration
at iterate \(\multiplierstilde\) if
\begin{align}\label{eq:firststoppingcriterion}
  (1) & \ \ \norm{\optGradient_{\normalizedmoments}(\multiplierstilde)}{2} < \opttolmod \coloneqq
  \begin{cases}
    \frac{\opttol}{\left(1+\norm{\normalizedmoments}{2}
    \right)\density(\moments) + \opttol}                     & \text{ if }\quad  \basis = \fmbasis                \\
    \frac{\opttol}{\left(1+\sqrt{\momentnumber}\norm{\normalizedmoments}{2}
    \right)\density(\moments) + \sqrt{\momentnumber}\opttol} & \text{ if }\quad  \basis\in\{\hfbasis, \pmbasis\},
  \end{cases} \quad \text{and }                                                         \\
  (2) & \ \ \moments - (1-\opttoleps) \moments(\numericalmultipliers)               \in\RDpos{\basis},
  \label{eq:thirdstoppingcriterion}
\end{align}
where \(\numericalmultipliers\) is obtained from \(\multiplierstilde\) by~\eqref{eq:beta_to_alpha} and, as always, \(\momentnumber\) is the number of moments.
As shown in \cite{SchneiderLeibner2019b}, the first criterion guarantees that
the gradient of the objective function is sufficiently small, i.e.,
\(\norm{\optGradient_{\moments}(\numericalmultipliers)}{2} \leq \opttol\).
The second criterion~\eqref{eq:thirdstoppingcriterion} ensures that~\eqref{eq:CFLcondcorollaryassumption}
holds and thus that the whole scheme is realizability-preserving although
the optimization problems are only solved approximately.
In our implementation, we choose the tolerances as \(\opttol = 10^{-9}\) and \(\opttoleps = 0.1\).

Checking the second stopping criterion~\eqref{eq:thirdstoppingcriterion}
might be quite expensive (depending
on the basis \(\basis\)). We therefore check this criterion only if additionally
\begin{equation}
  1-\opttoleps < \exp(-\left(\norm{\newtonDirection(\multiplierstilde)}{1} +
    \abs{\log{\density(\moments(\multiplierstilde))}} \right))
  \label{eq:secondstoppingcriterion}
\end{equation}
holds. This criterion approximately ensures~\eqref{eq:CFLcondcorollaryassumption}
(see~\cite{Schneider2016, Alldredge2014}) but, in general, is much easier to
evaluate than~\eqref{eq:thirdstoppingcriterion}. For the \(\HFMN\) models, however,
checking realizability is just checking positivity, so in that case we do not
need to check~\eqref{eq:secondstoppingcriterion} first.

\subsubsection{Caching}\label{subsec:caching}
We use two types of caching for the standard scheme.
First, for each grid cell we
store the moment vector \(\momentsfv[\gridindex]^{\timeindex-1}\) 
from the last time step and
the corresponding multiplier \(\numericalmultipliers_{\gridindex}^{\timeindex-1}\)  
obtained by entropy minimization. In this way we do not have to
solve the optimization problem again if the moment vector in that grid cell
did not change during the last time step. In addition, we store the
last few solutions of the minimization problem with corresponding input moment vectors per thread of execution, so if several grid cells contain the
same values,
we only have to perform the optimization once and then use the cached values. If we encounter a moment
vector that can not
be found in the caches, we take the moment vector that is closest to the input vector (in one-norm) and use the corresponding multiplier
\(\multipliers\) as an initial guess.

\subsubsection{Linear solvers}\label{sec:linearsolvers}
In each iteration of the Newton scheme described above and
in each time step of the new scheme,
we have to apply the inverse
of a positive definite Hessian matrix. We assemble the matrices
using the quadratures described in \cref{sec:quadratures}.
Inversion is then done by computing a Cholesky factorization
of the assembled matrix. For the full moment models,
the Hessian matrices are dense, so we use
the \texttt{LAPACK}~\cite{Anderson1999} routine \texttt{dpotrf}
to compute the factorization and then use
\texttt{dtrsv} to actually invert the linear systems.
For the \(\PMMN\) models,
the Hessian is block-diagonal (each
block corresponds to one interval/triangle of the partition)
such that we can perform the Cholesky decomposition
independently for each block. For the \(\HFMN\)
models in one dimension, the Hessian matrices are
tridiagonal, so we can use the specialized \texttt{LAPACK}
algorithms \texttt{dpttrf} and \texttt{dpttrs}.
In three dimensions, the \(\HFMN\)
Hessians are not tridiagonal anymore
but still sparse, so we use the sparse
\texttt{SimplicialLDLT} solver from
the \texttt{Eigen} library~\cite{Guennebaud2010}.

\subsubsection{Regularization}\label{sec:fvschemeregularization}
Though the Hessian \(\optHessian(\multipliers)\) is positive definite and thus invertible, it may be very badly conditioned,
especially for multipliers \(\multipliers\) corresponding to moments \(\moments(\multipliers)\) close to the boundary
of the realizable set. Moreover, in general, the integral in the definition~\eqref{eq:optHessian} of \(\optHessian\)
can only be calculated approximately using a numerical quadrature (see \cref{sec:quadratures}). If the quadrature is not sufficiently accurate,
the approximate Hessian may have a significantly worse condition or may even be numerically singular.

To improve this situation, a change of
basis can be performed after each Newton iteration such that
the Hessian at the current iterate becomes the unit matrix in the new basis \cite{Alldredge2014}.
We use this procedure in our implementation for all bases except for the
hat function bases \(\hfbasis\) where the change of basis would destroy the sparsity
of the Hessian~\cite{SchneiderLeibner2019b}.

For tests with strong absorption,
the local particle density may become very small in parts of the domain.
As a consequence, also the entries
of the Hessian \(\optHessian\) become very small
which may cause numerical problems.
We thus choose a ``vacuum'' density
\(\distributionvacuum\) with corresponding local particle density
\(\densityvacuum = \ints{\distributionvacuum}\). We then enforce
a minimum local particle density of \(\densityvacuum\)
by replacing moments \(\moments\) with local particle density
\(\density(\moments) < \densityvacuum\) by the
isotropic moment with vacuum density \(\densityvacuum\).
Obviously, this approach leads to a violation of the conservation properties of the scheme.
However, since we only replace moments with
very small local particle densities by moments with slightly larger
but still very small densities, the effect should be negligible in practice.

Finally,
if the optimizer fails for a moment vector \(\moments\)
(for example, by reaching a maximum number of iterations or being
unable to solve for the Newton direction)
we use the isotropic-regularization
technique from~\cite{Alldredge2014}, i.e.\
we replace \(\moments\) by the regularized moment vector
\begin{equation}\label{eq:RegularizedMoments}
  \regularizedmoments \coloneqq (1 - \regularizationParameter) \moments + \regularizationParameter \isomatrix \moments.
\end{equation}
and retry the optimization.
If the optimizer still fails,
we increase \(\regularizationParameter\) until the optimizer succeeds,
which is guaranteed at least for \(\regularizationParameter = 1\) where
\(\regularizedmoments\) is isotropic.
In our implementation, the sequence of regularization parameters
\(\regularizationParameter\) is
chosen as
\(\Set{10^{-8}, 10^{-6}, 10^{-4}, 10^{-3}, 0.01, 0.05, 0.1, 0.5, 1}\).
As the regularized moment vector \(\regularizedmoments\) always has the same local particle
density as the original moment vector \(\moments\),
this technique does not violate the mass conservation of the scheme
but it may potentially completely alter the solution.
In practice, regularization is only used rarely and if it is used, a
small regularization parameters is usually sufficient.

\subsubsection{Quadrature rules}\label{sec:quadratures}
We have to approximate the same integrals for both schemes,
so we use the quadratures and quadrature orders
that have been determined in~\cite{SchneiderLeibner2019b}
for the standard scheme. Using these quadratures,
the quadrature error should usually
be negligible compared to the moment
approximation error~\cite{SchneiderLeibner2019b}.

In one dimension, we use Gauss-Lobatto quadratures. These quadratures include
the endpoints of the interval in the set of quadrature points
which ensures that the numerically realizable set and
the analytically realizable set agree for the \(\HFMN\) and \(\PMMN\) models (see~\cite{SchneiderLeibner2019b}
for details).
For these models, we use a quadrature of order \(15\)
per interval of the partition \(\generalpartition\).
For the full moment \(\MN\) models, we split
the domain in the two intervals \([-1,0]\) and \([0,1]\) that
are needed for calculation of the kinetic flux and
use a quadrature of order \(2\momentorder+40\) on each interval.

In three dimensions, for the \(\HFMN\) and \(\PMMN\) models,
we use Fekete quadratures
on each spherical triangle of the triangulation \(\generalpartition\).
Similar to the one-dimensional situation,
these quadratures include
the vertices of the triangle
which is again important for realizability considerations~\cite{SchneiderLeibner2019b}.
We use a quadrature order of \(15\) for the \(\HFMN[6]\) and \(\PMMN[32]]\)
models and a quadrature order of \(9\) for the other \(\HFMN\) and \(\PMMN\)
models. For the \(\MN\) models, we use tensor-product
quadrature rules of order \(2\momentorder+8\) on the octants of the sphere.

\subsubsection{Implementation of initial and boundary conditions}\label{sec:initialandboundaryfvscheme}
The initial values for the finite volume scheme
are computed by integration of the kinetic equation's initial values \eqref{eq:kineticequationinitial}:
\begin{equation*}
  \moments[\gridindex]^{0} = \frac{1}{\abs{\fvgridentity_{\gridindex}}} \int_{\fvgridentity_{\gridindex}} \ints{\distributiontzero(\spatialvar,\SC) \basis} \mathrm{d}\spatialvar
\end{equation*}
Since the initial values in our test cases are isotropic (see \cref{sec:numericalexperiments}), i.e.\
\(\distributiontzero(\spatialvar,\SC) = \distributiontzero(\spatialvar)\), we only have
to compute the velocity integral of the basis \(\ints{\basis}\). For this integral, we use the
same quadratures as in \cref{sec:quadratures} to ensure that the result is numerically realizable.
Except for the plane-source and point-source tests, the initial values are constant in each grid cell
\(\fvgridentity_{\gridindex}\), so we use the midpoint quadrature to evaluate the spatial integral.
For the plane-source test, we always use an even number of grid cells and distribute
the Dirac delta at \(\spatialvaronedimension=0\) into the two adjacent grid cells, i.e.\ the initial
value in these grid cells is set
to the constant \(\left.\distributiontzero\right|_{\fvgridentity_{\gridindex}}(\spatialvar) = \distributionvacuum + \frac{1}{2\gridwidth}\).
For the point-source test, we use a Gauss-Legendre tensor product quadrature of order 20 to evaluate the spatial integrals for the initial values.

Boundary conditions for the moment equations are implemented
by replacing the ansatz function
\(\ansatz[{\moments[\gridindexalt]}]\) belonging to a grid cell \(\fvgridentity_{\gridindexalt}\) outside of the computational domain
(such cells often called ``ghost cells'') by the boundary
condition \(\distributionboundary\) of the kinetic equation \eqref{eq:kineticequationboundary}
in the computation of the kinetic flux \(\eqref{eq:kineticFlux}\).

\subsection{New scheme}
For the new scheme, evaluation of quadrature rules and boundary conditions and
assembly and inversion of the Hessian matrices is performed exactly in
the same way as for the standard scheme (see \cref{sec:quadratures,sec:linearsolvers,sec:initialandboundaryfvscheme}.
To get the initial values \(\Set{\multipliersfv[\gridindex]^0}\) for the new scheme,
we solve the minimum entropy problems for the initial moments
\(\Set{\momentsfv[\gridindex]^0}\) which
are computed as described in \cref{sec:initialandboundaryfvscheme}
using the Newton scheme from \cref{sec:optimizationproblem}.

\subsubsection{Embedded and relaxed Runge-Kutta schemes}\label{sec:adaptivetimestepping}
Our time stepping scheme is outlined in \cref{alg:embeddedRK}.
As mentioned above (see \cref{sec:timediscretization}), we use embedded Runge-Kutta methods
(see, e.g., \cite[Chapter II.4]{Hairer1993})
to adaptively choose the time step for the new scheme.
These schemes include a second set
of coefficients \(\{\rkbalt_{\rkindex}\}_{\rkindex=0,\ldots,\rkstages-1}\)
to obtain a different approximation (usually of lower order) of the solution
at the next time step which is used for error estimation.
More precisely, in addition to the approximation \(\multipliersfv[\gridindex]^{\timeindex+1}\)
given by \eqref{eq:rkschemeupdate} we compute a second approximation
\begin{equation}\label{eq:secondapproximationRK}
  \multipliersembedded[\gridindex]^{\timeindex+1}
  = \multipliersfv[\gridindex]^{\timeindex} + \timestep \sum_{\rkindex=0}^{\rkstages-1} \rkbalt_{\rkindex}
  \alphaupdateterm_{\gridindex}(\multipliersrkstage{\rkindex}_{0}, \ldots, \multipliersrkstage{\rkindex}_{\gridsize-1})
\end{equation}
in each grid cell \(\fvgridentity_{\gridindex}\)
and regard the error between these two approximations to decide if the time step is appropriate.
As an error measure, we use the mixed error (see~\cite[Chapter II.4, Equation (4.11)]{Hairer1993})
\begin{equation}\label{eq:embeddedrkerror}
  \rkerr(\gesamtalpha^{\timeindex+1}, \multipliersembedded^{\timeindex+1})
  = \max\limits_{\fvvecindex=0,\ldots,\fvvecsize-1}{\frac{\abs{\multiplierscomp{\fvvecindex}-\multiplierscomptilde{\fvvecindex}}}{\rkabstol
      + \max(\multiplierscomp{\fvvecindex},\multiplierscomptilde{\fvvecindex})\rkreltol}}.
\end{equation}
Here, \(\rkabstol\) and \(\rkreltol\) are absolute and relative error tolerances and \(\multiplierscomp{\fvvecindex}\) and \(\multiplierscomptilde{\fvvecindex}\)
are the \(\fvvecsize = \momentnumber\cdot\gridsize\) components of the coefficient vectors
\begin{equation}\label{eq:definitiongesamtalphas}
  \gesamtalpha^{\timeindex+1}         =
  {\big({(\multipliersfv[0]^{\timeindex+1})}^T, 
  \ldots,
  {(\multipliersfv[\gridsize-1]^{\timeindex+1})}^T\big)}^T \quad \text{and} \quad
  \multipliersembedded^{\timeindex+1}=
  {\big({(\multipliersembedded[0]^{\timeindex+1})}^T, 
  \ldots,
  {(\multipliersembedded[\gridsize-1]^{\timeindex+1})}^T\big)}^T 
\end{equation}
respectively.
For simplicity, we will always use \(\rkabstol = \rkreltol = \rktol\) in the following.
We use an automatic step size control which tries to select the time step as large
as possible while still maintaining an error \eqref{eq:embeddedrkerror} below one, i.e.\
at \(\timevar = 0\) we start with a very small time step of \(\timestep_0 = 10^{-15}\)
and then compute the
new time step \(\timestep_{\mathrm{new}}\) from the previous time step \(\timestep\) by (compare~\cite[Chapter II.4, Equation (4.13)]{Hairer1993})
\begin{equation}\label{eq:timestepnew}
\timestep_{\mathrm{new}}(\timestep, \rkerr) = \timestep \cdot \min\left(\max\bigg(0.8 \cdot {\left(\frac{1}{\rkerr}\right)}^{\frac{1}{q + 1.}}, \frac{1}{5}\bigg), 5\right)
\end{equation}
where \(q\) is the order of the lower-order scheme in the adaptive Runge-Kutta method, \(0.8\) is a safety factor and the minimum and maximum
ensure that the time step does not change too fast.
If \(\rkerr > 1\)
or if an exception is thrown during the computation (e.g.\ if a
matrix inversion fails or \texttt{inf}s or \texttt{NaN}s are
detected in the results)
we recompute the solutions with halved time step \(\frac{\timestep}{2}\).
\begin{algorithm}
  \caption{Adaptive time stepping scheme}\label{alg:embeddedRK}
  \SetKwProg{try}{try}{:}{}
  \SetKwProg{catch}{catch}{:}{end}
  $\timestep \gets 10^{-15}, \ \timevar \gets 0, \ \gesamtalpha^{\timeindex} \gets \gesamtalpha^{0}$\;
    \While{$\timevar < \tf$}{
      $e = 1000$\tcp*[l]{Make sure the following while-loop is entered}
    \While{$e > 1$}{
      \try{}{Compute \(\gesamtalpha^{\timeindex+1}\), \(\multipliersembedded^{\timeindex+1}\) from \(\gesamtalpha^{\timeindex}\) by applying \eqref{eq:rkschemeupdate}, \eqref{eq:secondapproximationRK} on each grid cell.}
      \catch{(Linear solver failures or invalid values (\texttt{inf}s or \texttt{NaN}s) in \(\gesamtalpha^{\timeindex+1}\), \(\multipliersembedded^{\timeindex+1}\))}{
      $\timestep \gets \timestep / 2$\;
      continue\;
    }
    \(\timestep_{\mathrm{accepted}} \gets \timestep\)\;
    \(e \gets \rkerr(\gesamtalpha^{\timeindex+1}, \multipliersembedded^{\timeindex+1})\)\tcp*[l]{see \eqref{eq:embeddedrkerror}}
    \(\timestep \gets \timestep_{\mathrm{new}}(\timestep, e)\)\tcp*[l]{see \eqref{eq:timestepnew}}
  }
  \(\timevar \gets \timevar + \timestep_{\mathrm{accepted}}\)\;
  \(\gesamtalpha^{\timeindex} \gets \gesamtalpha^{\timeindex+1}\)\;
}
\end{algorithm}

For implementation of the relaxed Runge-Kutta scheme, we simply compute (compare \eqref{eq:r_gamma})
\begin{equation*}
  \sum_{\gridindex \in \gridindexset} \sum_{\rkindex=0}^{\rkstages-1} \rkb_{\rkindex}
  \multipliersrkstage{\rkindex}_{\gridindex} \cdot \uupdateterm_{\gridindex}(\multipliersrkstage{\rkindex}_{0}, \ldots, \multipliersrkstage{\rkindex}_{\gridsize-1})
\end{equation*}
on the fly while computing our Runge-Kutta scheme.
Once the time step is accepted by the adaptive control, we compute
\begin{equation*}
  \timestep \sum_{\rkindex=0}^{\rkstages-1} \rkb_{\rkindex}
  \alphaupdateterm_{\gridindex}(\multipliersrkstage{\rkindex}_{0}, \ldots, \multipliersrkstage{\rkindex}_{\gridsize-1})
\end{equation*}
(which is needed to compute \(\multipliers[\gridindex,\gamma]^{\timeindex+1}\), see \eqref{eq:modifiedrkupdate}))
and use a simple bisection
algorithm to find the root \(\gamma_{\timeindex}\) of \(r(\gamma)\) that is close to 1.
We then compute \(\multipliersfv[\gridindex]^{\timeindex+1} = \multipliersfv[\gridindex,\gamma_{\timeindex}]^{\timeindex+1}\) according to \eqref{eq:modifiedrkupdate}.


\section{Numerical Experiments}\label{sec:numericalexperiments}
We want to investigate the behaviour of the new scheme
in several benchmarks. For that purpose, we use the same test cases
as in~\cite{SchneiderLeibner2019b}. Our \Cpp implementation and the generated data can be found in \cite{Leibner2021}.

In the following, we will briefly restate the test cases. For a more detailed description and plots of (numerical) solutions
see~\cite{SchneiderLeibner2019b} and references therein. As the minimum entropy models cannot handle zero densities, we use
the small isotropic distribution \(\distributionvacuum = 5 \cdot 10^{-7}\) to approximate a vacuum.
Note that we increased the vacuum density slightly compared to~\cite{SchneiderLeibner2019b} to avoid numerical difficulties with very low densities.
We use the following test cases:
\begin{itemize}
  \item \emph{Plane-source}. In this test case, all mass is concentrated in the middle of the computational domain \(\domain = [-1.2, 1.2]\), i.e.,
        we use the isotropic initial distribution
        \begin{equation*}
          \distributiontzero(\z, \SCheight) = \distributionvacuum + \delta(\z) \text{ for } \z \in \domain.
        \end{equation*}
        See \cref{sec:initialandboundaryfvscheme} for details on the implementation of this initial condition.
        The physical coefficients are set to \(\scattering \equiv 1\), \(\absorption \equiv 0\) and \(\source \equiv 0\). Vacuum boundary conditions are used.
  \item \emph{Source-beam}. In this test case, a strongly anisotropic beam enters the computational domain \(\domain = [0,3]\) from the left.
        In addition, a source is present in the interval \([1, 1.5]\). More precisely, the approximate vacuum is used as initial
        condition and boundary condition on the right-hand side, and the left boundary distribution is
        \begin{equation*}
          \distributionboundary(\timevar,0,\SCheight) = \cfrac{e^{-10^5{(\SCheight-1)}^2}}{\ints{e^{-10^5{(\SCheight-1)}^2}}}
        \end{equation*}
        The parameters are set to
        \begin{gather*}
          \absorption(\z) = \begin{cases}
            1 & \text{ if } \z\leq 2, \\
            0 & \text{ else},
          \end{cases} \quad
          \scattering(\z) = \begin{cases}
            0  & \text{ if } \z\leq 1,   \\
            2  & \text{ if } 1<\z\leq 2, \\
            10 & \text{ else}
          \end{cases} \quad
          \source(\z) = \begin{cases}
            \frac12 & \text{ if } 1\leq \z\leq 1.5, \\
            0       & \text{ else},
          \end{cases}
        \end{gather*}
  \item \emph{Point-source}. The point-source test is a smoothed three-dimensional analogue of the plane-source test.
        The initial condition in the domain \(\domain = {[-1,1]}^3\) is
        \begin{equation*}
          \distributiontzero(\spatialvar, \SC) = \distributionvacuum +
          \frac{1}{4\pi^4\sigma^3}\exp\left(-\frac{\abs{\spatialvar}^2}{\pi\sigma^2}
          \right),
        \end{equation*}
        where \(\sigma = 0.03\). The parameters are the same as in the plane-source test.
  \item \emph{Checkerboard}.
        The checkerboard test case is loosely based on a part of a reactor core~\cite{Brunner2005}.
        The domain \(\domain = {[0,7]}^3\) is split into scattering and absorbing regions, \(\domain = \domain_s \cup \domain_a\), where
        \begin{equation*}
          \domain_a = \Set*{\spatialvar = {(\x,\y,\z)}^T \in {[1,6]}^3 \given \begin{aligned}               & (\floor{\x}+\floor{\y}+\floor{\z})\bmod 2=1,                \\
                              & \spatialvar\notin {[3,4]}^3\cup [3,4]\times[5,6]\times[3,4]\end{aligned}}
        \end{equation*}
        The parameters are
        \begin{equation*}
          \scattering(\spatialvar) = \begin{cases}
            1 & \text{ if } \spatialvar\in\domain_s, \\
            0 & \text{ else},
          \end{cases},~
          \absorption(\spatialvar) = \begin{cases}
            0  & \text{ if } \spatialvar\in\domain_s, \\
            10 & \text{ else},
          \end{cases},~
          \source(\spatialvar) = \begin{cases}
            \frac{1}{4\pi} & \text{ if } \spatialvar\in {[3,4]}^3, \\
            0              & \text{ else}.
          \end{cases}
        \end{equation*}
        Vacuum initial and boundary conditions are used.
  \item \emph{Shadow}.
        The shadow test case represents an isotropic particle stream that is partially blocked by
        an absorber, resulting in a shadowed region behind the absorber.
        The particle stream is given by an isotropic boundary condition with density \(\rho = 2\)
        at \(\x = 0\). On the other boundaries of the domain \(\domain = [0,12]\times[0,4]\times[0,3]\) and as an initial condition,
        the approximate vacuum is prescribed. The parameters are as follows:
        \begin{align*}
          \scattering(\spatialvar) & = \source(\spatialvar) = 0   \\
          \absorption(\spatialvar) & = \begin{cases}
            50 & \text{ if } \spatialvar\in[2,3]\times[1,3]\times[0,2] \\
            0  & \text{ else},
          \end{cases}
        \end{align*}
\end{itemize}

Whenever we need a reference scheme, we use the splitting scheme based on~\eqref{eq:splittedsystem}.
The obvious choice might be the unsplit scheme~\eqref{eq:fvscheme},
as the new scheme is just a coordinate transformation of this scheme.
However, the splitting scheme is easy to implement and avoids the time step
restriction due to the physical parameters.
For the new scheme, a similar splitting approach is not straightforward.
Thus, using~\eqref{eq:fvscheme} as
a reference would arguably give the new scheme an unfair advantage.

\subsection{Convergence}\label{subsec:numericalconvergence}
Under some assumed bounds on the Hessian \(\optHessian\) and the local particle density \(\density\), the transformed scheme will always converge to the same solution
as the splitting scheme~\eqref{eq:splittedsystem} (see \cref{sec:convergenceproperties}).
However, in practice, taking numerical errors into account, these bounds may not always hold (in particular the lower bound on the norm of \(\optHessian\)).

In our first experiment, we thus want to validate that the two schemes converge to the same solutions also in our numerical test cases.
For that purpose, we compute numerical solutions with both schemes for varying tolerance
and time step parameter, respectively, and calculate the errors with respect to a reference solution (new scheme with \(\rktol =
10^{-9}\)).
\begin{remark}
  It might be more intuitive to compute the reference solution using the standard finite volume scheme with a very small time step \(\timestep\).
  However, since we only want to show that both schemes converge to the same solution, it does not matter whether we use
  the standard scheme or the new transformed scheme as a reference, and computing the new scheme for a
  small tolerance \(\rktol\) is significantly faster.
\end{remark}
As an error measure, we choose the \(\Lp{1}\)-error of the piecewise constant finite volume approximations
\(\LpError{1}(\moments) = \norm{\moments - \moments[\mathrm{ref}]}{\Lp{1}(\domain)}\) at the final time \(\tf\).
We use a relatively coarse grid for all test cases to be able to compute the results for very small tolerance parameter \(\rktol\) or time step
\(\timestep\)
in reasonable time.
However, we confirmed at least for large parameters (\(\rktol\in\{10^{-2},10^{-3}\}\), \(\timestep = \timestepnonadaptive\))
that the results are similar for the grid sizes and
final times used in~\cref{subsec:numericalperformance} (see \cref{tab:AdditionalErrors1d,tab:AdditionalErrors3d} in the supplementary materials). 

As can be seen in \cref{fig:Convergence1d} (for the plane source test)
and \cref{fig:AppendixConvergenceSourcebeam,fig:AppendixConvergencePointsource} in the supplementary materials (source-beam and point-source),
both schemes nicely converge to the same solution.
For the standard scheme, the
error is basically independent of the model which is not true for the new scheme. This is probably due to the fact that for the new scheme, the
error estimate
during the time stepping is calculated in transformed (\(\multipliers\)-)variables 
while the final error is
plotted in the original (\(\moments\)-)variables. 
The \(\Lp{\infty}\)-errors behave similarly (data not shown).
\begin{figure}
  \centering
  \externaltikz{PlanesourceConvergence}{\begin{tikzpicture}
  \def\pfad{Images/Results/}
  \begin{groupplot}[group style={group size=2 by 1, horizontal sep = 2cm,  vertical sep = 2cm},
      width=\figurewidth,
      height=\figureheight,
      scale only axis,
      cycle list name=color parula pairwise,
      xmode = log,
      ymode = log,
      grid = major,
      title style = {yshift = -0.2cm},
      x dir = reverse,
      ymin = 4e-9,
      ymax = 1e-2,
    ]
    \nextgroupplot[
      title = \tikztitle{New scheme},
      xlabel = {\(\rktol\)},
      ylabel = {\(\LpError{1}\)},
      legend style={at={(1.2,1.2)},xshift=-0cm,yshift=0cm,anchor=center,nodes=right},
      legend columns = 6,
    ]
    \node [text width=5cm,anchor=south]  {{\phantomsubcaption\normalfont\label{fig:Convergence1d_a}}};
    \addplot+ [thick] plot table[x=tau,y=E1] {\pfad planesource_convergence_m10.txt};
    \addplot+ [thick] plot table[x=tau,y=E1] {\pfad planesource_convergence_m50.txt};
    \addplot+ [thick] plot table[x=tau,y=E1] {\pfad planesource_convergence_hfm10.txt};
    \addplot+ [thick] plot table[x=tau,y=E1] {\pfad planesource_convergence_hfm50.txt};
    \addplot+ [thick] plot table[x=tau,y=E1] {\pfad planesource_convergence_pmm10.txt};
    \addplot+ [thick] plot table[x=tau,y=E1] {\pfad planesource_convergence_pmm50.txt};
    \addlegendentry{\(\MN[10]\)};
    \addlegendentry{\(\MN[50]\)};
    \addlegendentry{\(\HFMN[10]\)};
    \addlegendentry{\(\HFMN[50]\)};
    \addlegendentry{\(\PMMN[10]\)};
    \addlegendentry{\(\PMMN[50]\)};

    \nextgroupplot[
      title = \tikztitle{Standard scheme},
      xlabel = {\(\timestep\)},
      ylabel = {\(\LpError{1}\)},
    ]
    \node [text width=5cm,anchor=south]  {{\phantomsubcaption\normalfont\label{fig:Convergence1d_b}}};
    \addplot+ [thick] plot table[x=dt,y=E1] {\pfad planesource_fv_convergence_m10.txt};
    \addplot+ [thick] plot table[x=dt,y=E1] {\pfad planesource_fv_convergence_m50.txt};
    \addplot+ [thick] plot table[x=dt,y=E1] {\pfad planesource_fv_convergence_hfm10.txt};
    \addplot+ [thick] plot table[x=dt,y=E1] {\pfad planesource_fv_convergence_hfm50.txt};
    \addplot+ [thick] plot table[x=dt,y=E1] {\pfad planesource_fv_convergence_pmm10.txt};
    \addplot+ [thick] plot table[x=dt,y=E1] {\pfad planesource_fv_convergence_pmm50.txt};
  \end{groupplot}
\end{tikzpicture}
  \caption[Convergence of tested schemes in the plane-source test]{\(\Lp{1}\)-error against reference solution (new scheme with \(\rktol = 10^{-9}\))
    in the plane-source test (\(\gridsizeoned =
    240, \tf = 0.5\)).
    (a) New scheme for decreasing tolerance parameter \(\rktol\).
    (b) Standard scheme for decreasing time step \(\timestep\).}\label{fig:Convergence1d}
\end{figure}

\subsection{Time stepping behavior}\label{sec:timesteppingtests}
Now that we have confirmed also numerically that the new scheme indeed yields the same solutions as the standard scheme,
we would like to investigate the properties of the new scheme. We will focus on the time stepping behavior first.
We will only present the results for some exemplary models in the one-dimensional test cases here.
Results for additional models and for the three-dimensional tests are similar and can be found in the supplementary materials
(\cref{fig:Timesteps3d,fig:Timesteptimes3d,fig:AdditionalTimestepsSourceBeamM1,fig:AdditionalTimestepsSourceBeamM2,fig:AdditionalTimestepsShadowM,fig:AdditionalTimestepsShadowOtherModels,fig:AdditionalTimestepsConvergencePlanesource,fig:AdditionalTimestepsConvergencePointsource}).

\cref{fig:Timesteps1d} shows the time steps chosen by the adaptive time stepping scheme (with a tolerance of \(\rktol = 10^{-3}\))
in the one-dimensional test cases
for some exemplary models.
As expected, for all models, the new scheme takes very small time steps initially.
This can be explained by the large time derivatives
of the solution in \(\multipliers\)-variables at the beginning of the test (see \cref{ex:initialtimesteps}).

In the plane-source test (\cref{fig:Timesteps1d_a}),
the time steps are rapidly and almost monotonically increasing for all models and finally reach a time step that is
even above the maximal realizability-preserving time step~\eqref{eq:splittingschemetimestep}
used in the standard scheme (except for the \(\PMMN[2]\) model). In the source-beam test (\cref{fig:Timesteps1d_b}),
the time steps are also increasing initially but are less stable afterwards.
In particular for the \(\MN\) models there are strong oscillations in the time step sizes.

\begin{figure}
  \centering \externaltikz{Timesteps1d}{\begin{tikzpicture}
  \def\pfad{Images/Results/}
  \begin{groupplot}[group style={group size=2 by 1, horizontal sep = 2cm,  vertical sep = 2cm},
      width=\figurewidth,
      height=\figureheight,
      scale only axis,
      cycle list name=color parula pairwise,
      ymode = log,
      grid = major,
      title style = {yshift = -0.2cm},
    ]
    \nextgroupplot[
      title = \tikztitle{Planesource},
      xlabel = {\(\timevar\)},
      ylabel = {\(\timestep\)},
      legend style={at={(1.2,1.2)},xshift=-0cm,yshift=0cm,anchor=center,nodes=right},
      legend columns = 6,
      ymin = 1e-8,
    ]
    \addplot+ [thick,mark=none] plot table[x=t,y=dt] {\pfad planesource_timesteps_tol_0.001_hfm2.txt};
    \addplot+ [thick,mark=none] plot table[x=t,y=dt] {\pfad planesource_timesteps_tol_0.001_hfm100.txt};
    \addplot+ [thick,mark=none] plot table[x=t,y=dt] {\pfad planesource_timesteps_tol_0.001_pmm2.txt};
    \addplot+ [thick,mark=none] plot table[x=t,y=dt] {\pfad planesource_timesteps_tol_0.001_pmm100.txt};
    \addplot+ [thick,mark=none] plot table[x=t,y=dt] {\pfad planesource_timesteps_tol_0.001_m2.txt};
    \addplot+ [thick,mark=none] plot table[x=t,y=dt] {\pfad planesource_timesteps_tol_0.001_m100.txt};
    \node [text width=5cm,anchor=south]  {{\phantomsubcaption\normalfont\label{fig:Timesteps1d_a}}};
    \draw (-1,0.002) -- (2.,0.002);
    \addlegendentry{\(\HFMN[2]\)};
    \addlegendentry{\(\HFMN[100]\)};
    \addlegendentry{\(\PMMN[2]\)};
    \addlegendentry{\(\PMMN[100]\)};
    \addlegendentry{\(\MN[2]\)};
    \addlegendentry{\(\MN[100]\)};

    \nextgroupplot[
      title = \tikztitle{Sourcebeam},
      xlabel = {\(\timevar\)},
      ylabel = {\(\timestep\)},
      ymin = 1e-8,
    ]
    \addplot+ [thick, mark=none] plot table[x=t,y=dt] {\pfad sourcebeam_timesteps_tol_0.001_hfm2.txt};
    \addplot+ [thick, mark=none] plot table[x=t,y=dt] {\pfad sourcebeam_timesteps_tol_0.001_hfm100.txt};
    \addplot+ [thick, mark=none] plot table[x=t,y=dt] {\pfad sourcebeam_timesteps_tol_0.001_pmm2.txt};
    \addplot+ [thick, mark=none] plot table[x=t,y=dt] {\pfad sourcebeam_timesteps_tol_0.001_pmm100.txt};
    \addplot+ [thick, mark=none] plot table[x=t,y=dt] {\pfad sourcebeam_timesteps_tol_0.001_m2.txt};
    \addplot+ [thick, mark=none] plot table[x=t,y=dt] {\pfad sourcebeam_timesteps_tol_0.001_m100.txt};
    \node [text width=5cm,anchor=south]  {{\phantomsubcaption\normalfont\label{fig:Timesteps1d_b}}};
    \draw (-5,0.0025) -- (5,0.0025);
  \end{groupplot}
\end{tikzpicture}
  \caption[Time steps taken in the adaptive Runge-Kutta scheme for one-dimensional tests]{
    Time steps
    taken in the adaptive Runge-Kutta scheme. For clarity, instead of plotting each time point we
    plot the mean of 10 time steps each to avoid small oscillations.
    The solid horizontal line represents the upper bound~\eqref{eq:splittingschemetimestep} on the time
    step used in the standard splitting scheme
    (which approximately agrees with the bound~\eqref{eq:fvschemetimestep} because
    \(\frac{1}{\gridwidth} \gg \crosssection\) for these test cases).
    (a)~Plane-source test, \(\gridsizeoned = 1200\), \(\tf = 1\), \(\rktol=0.001\).
    (b)~Source-beam test, \(\gridsizeoned = 1200\), \(\tf = 2.5\), \(\rktol=0.001\).}\label{fig:Timesteps1d}
\end{figure}

To test the influence of the tolerance parameter \(\rktol\) on the time steps,
we compute the test cases for the \(\MN[10]\) model again for varying \(\rktol\).
The \(\MN[10]\) model was chosen since it is more complex than the \(\MN[2]\)
model, not as expensive to compute as the \(\MN[100]\) model and still shows the
time step oscillations in the source-beam test. However, we also tested
several other models and found that they all show a similar behavior
with respect to the \(\rktol\) tolerance.
As can be seen in \cref{fig:TimestepsConvergence}, for small
tolerances (\(\rktol \leq 10^{-3}\)), increasing \(\rktol\)
basically just scales the time step curve by a constant factor
which is due to the third-order time-stepping scheme
(increasing \(\rktol\) by a factor of \(10\)
results in an increase of the time step by a factor of
approximately \(\sqrt[3]{10}\)).
However, this scaling does not extend to large tolerance parameters (\(\rktol=10^{-1},10^{-2}\)).
In particular, for the plane-source test, increasing the tolerance above \(\rktol = 10^{-3}\) does not
result in larger time steps (see \cref{fig:TimestepsConvergence_a}).
In addition, the time steps oscillate much more. This is due to the fact that the
time step predicted from the standard error estimate for these tolerances is
often too large, leading to {\ttfamily inf}s
and {\ttfamily NaN}s during the computations and a subsequent reduction in the time step
(compare \cref{sec:adaptivetimestepping}).

\begin{figure}
  \centering
  \externaltikz{TimestepsConvergence}{\begin{tikzpicture}
  \def\pfad{Images/Results/}
  \begin{groupplot}[group style={group size=2 by 2, horizontal sep = 2cm,  vertical sep = 2cm},
      width=\figurewidth,
      height=\figureheight,
      scale only axis,
      cycle list name=color parula,
      ymode = log,
      grid = major,
      title style = {yshift = -0.2cm},
    ]
    \nextgroupplot[
      title = \tikztitle{Planesource},
      xlabel = {\(\timevar\)},
      ylabel = {\(\timestep\)},
      legend style={at={(1.2,1.2)},xshift=-0cm,yshift=0cm,anchor=center,nodes=right},
      legend columns = 6,
      ymin = 1e-8,
    ]
    \addplot+ [thick,mark=none] plot table[x=t,y=dt] {\pfad planesource_timesteps_convergence_averaged_tol_0.1_m10.txt};
    \addplot+ [thick,mark=none] plot table[x=t,y=dt] {\pfad planesource_timesteps_convergence_averaged_tol_0.01_m10.txt};
    \addplot+ [thick,mark=none] plot table[x=t,y=dt] {\pfad planesource_timesteps_convergence_averaged_tol_0.001_m10.txt};
    \addplot+ [thick,mark=none] plot table[x=t,y=dt] {\pfad planesource_timesteps_convergence_averaged_tol_0.0001_m10.txt};
    \addplot+ [thick,mark=none] plot table[x=t,y=dt] {\pfad planesource_timesteps_convergence_averaged_tol_1e-05_m10.txt};
    \addplot+ [thick,mark=none] plot table[x=t,y=dt] {\pfad planesource_timesteps_convergence_averaged_tol_1e-06_m10.txt};
    \node [text width=5cm,anchor=south]  {{\phantomsubcaption\normalfont\label{fig:TimestepsConvergence_a}}};
    \draw (-1,0.002) -- (2.,0.002);
    \addlegendentry{\(\rktol=10^{-1}\)};
    \addlegendentry{\(\rktol=10^{-2}\)};
    \addlegendentry{\(\rktol=10^{-3}\)};
    \addlegendentry{\(\rktol=10^{-4}\)};
    \addlegendentry{\(\rktol=10^{-5}\)};
    \addlegendentry{\(\rktol=10^{-6}\)};

    \nextgroupplot[
      title = \tikztitle{Sourcebeam},
      xlabel = {\(\timevar\)},
      ylabel = {\(\timestep\)},
      ymin = 1e-8,
    ]
    \addplot+ [thick,mark=none] plot table[x=t,y=dt] {\pfad sourcebeam_timesteps_convergence_averaged_tol_0.1_m10.txt};
    \addplot+ [thick,mark=none] plot table[x=t,y=dt] {\pfad sourcebeam_timesteps_convergence_averaged_tol_0.01_m10.txt};
    \addplot+ [thick,mark=none] plot table[x=t,y=dt] {\pfad sourcebeam_timesteps_convergence_averaged_tol_0.001_m10.txt};
    \addplot+ [thick,mark=none] plot table[x=t,y=dt] {\pfad sourcebeam_timesteps_convergence_averaged_tol_0.0001_m10.txt};
    \addplot+ [thick,mark=none] plot table[x=t,y=dt] {\pfad sourcebeam_timesteps_convergence_averaged_tol_1e-05_m10.txt};
    \addplot+ [thick,mark=none] plot table[x=t,y=dt] {\pfad sourcebeam_timesteps_convergence_averaged_tol_1e-06_m10.txt};
    \node [text width=5cm,anchor=south]  {{\phantomsubcaption\normalfont\label{fig:TimestepsConvergence_b}}};
    \draw (-5,0.0025) -- (5,0.0025);
  \end{groupplot}
\end{tikzpicture}
  \caption[Time steps for different models in the plane-source and source-beam tests]{Time steps for the \(\MN[10]\) model in the
    plane-source (\(\gridsize=1200\), \(\tf = 1\))
    and the source-beam test (\(\gridsize=600\), \(\tf = 2.5\)) for different tolerance parameters~\(\rktol\).
    Again, the solid horizontal line represents the upper bound~\eqref{eq:splittingschemetimestep} and the time steps are plotted
    as the mean of 10 time steps each.}\label{fig:TimestepsConvergence}
\end{figure}

Finally, we measured the times needed to compute a single time step of the new scheme or the standard scheme
for several models (see \cref{fig:Timesteptimes1d}).
For the new scheme, the time needed to compute a time step is basically constant, except for time steps which have to be recomputed because
the error estimate is above the tolerance. In contrast, time step computation times of the standard scheme are increasing over time.
This is probably mostly due to the caching used in the implementation of the standard scheme (see \cref{subsec:caching})
which is particularly effective during the first time
steps where most grid cells still contain the initial approximate vacuum.
The new scheme does not use any caching. As a consequence, the standard scheme is faster for the first few time steps
but after a short time the new scheme's time steps are computed significantly faster.

As can also be seen from \cref{fig:Timesteptimes1d},
recomputations of time steps in the adaptive time stepping scheme (which show up in \cref{fig:Timesteptimes1d}
as spikes in the computation times of the new scheme) occur rarely, except for the \(\MN\)
models in the source-beam test. This is in line with the erratic time stepping behavior
that we observed for these models (compare \cref{fig:Timesteps1d_b,fig:TimestepsConvergence_b})
and can be improved using regularization (see next section).

\begin{figure}
  \centering \externaltikz{Timesteptimes1d}{\begin{tikzpicture}
  \def\pfad{Images/Results/}
  \begin{groupplot}[group style={group size=2 by 1, horizontal sep = 2cm,  vertical sep = 2cm},
      width=\figurewidth,
      height=\figureheight,
      scale only axis,
      cycle list name=color parula pairwise,
      ymode = log,
      grid = major,
      title style = {yshift = -0.2cm},
    ]
    \nextgroupplot[
      title = \tikztitle{Planesource},
      xlabel = {\(\timevar\)},
      ylabel = {\(\timestep\)},
      legend style={at={(1.2,1.2)},xshift=-0cm,yshift=0cm,anchor=center,nodes=right},
      legend columns = 6,
    ]
    \addplot+ [thick] plot table[x=t,y=steptime,mark=none] {\pfad planesource_timesteptimes_tol_0.001_hfm50.txt};
    \addplot+ [thick] plot table[x=t,y=steptime,mark=none] {\pfad planesource_timesteptimes_ref_hfm50.txt};
    \addplot+ [thick] plot table[x=t,y=steptime,mark=none] {\pfad planesource_timesteptimes_tol_0.001_pmm50.txt};
    \addplot+ [thick] plot table[x=t,y=steptime,mark=none] {\pfad planesource_timesteptimes_ref_pmm50.txt};
    \addplot+ [thick] plot table[x=t,y=steptime,mark=none] {\pfad planesource_timesteptimes_tol_0.001_m50.txt};
    \addplot+ [thick] plot table[x=t,y=steptime,mark=none] {\pfad planesource_timesteptimes_ref_m50.txt};
    \addlegendentry{\(\HFMN[50]\) (\(\multipliers\))};
    \addlegendentry{\(\HFMN[50]\) (\(\moments\))};
    \addlegendentry{\(\PMMN[50]\) (\(\multipliers\))};
    \addlegendentry{\(\PMMN[50]\) (\(\moments\))};
    \addlegendentry{\(\MN[50]\) (\(\multipliers\))};
    \addlegendentry{\(\MN[50]\) (\(\moments\))};
    \node [text width=5cm,anchor=south]  {{\phantomsubcaption\normalfont\label{fig:Timesteptimes1d_a}}};

    \nextgroupplot[
      title = \tikztitle{Sourcebeam},
      xlabel = {\(\timevar\)},
      ylabel = {\(\timestep\)},
    ]
    \addplot+ [thick] plot table[x=t,y=steptime,mark=none] {\pfad sourcebeam_timesteptimes_tol_0.001_hfm50.txt};
    \addplot+ [thick] plot table[x=t,y=steptime,mark=none] {\pfad sourcebeam_timesteptimes_ref_hfm50.txt};
    \addplot+ [thick] plot table[x=t,y=steptime,mark=none] {\pfad sourcebeam_timesteptimes_tol_0.001_pmm50.txt};
    \addplot+ [thick] plot table[x=t,y=steptime,mark=none] {\pfad sourcebeam_timesteptimes_ref_pmm50.txt};
    \addplot+ [thick] plot table[x=t,y=steptime,mark=none] {\pfad sourcebeam_timesteptimes_tol_0.001_m50.txt};
    \addplot+ [thick] plot table[x=t,y=steptime,mark=none] {\pfad sourcebeam_timesteptimes_ref_m50.txt};
    \node [text width=5cm,anchor=south]  {{\phantomsubcaption\normalfont\label{fig:Timesteptimes1d_b}}};

  \end{groupplot}
\end{tikzpicture}
  \caption[Wall times for computing a single time step of transformed
    and standard schemes in one-dimensional tests]{
    Wall times for computing a single time step of new
    and standard schemes (using \(\multipliers\) and \(\moments\) variables, respectively) in one-dimensional tests
    (a)~Plane-source test, \(\gridsizeoned = 1200\), \(\tf = 1\), \(\rktol=0.001\).
    (b)~Source-beam test, \(\gridsizeoned = 1200\), \(\tf = 2.5\), \(\rktol=0.001\).}\label{fig:Timesteptimes1d}
\end{figure}

\subsection{Regularization}
\subsubsection{Isotropic regularization of the Hessian}
In the previous section,
we saw that the time steps for the \(\MN[10]\) model in the source-beam test (\cref{fig:TimestepsConvergence_b})
oscillate a lot between \(\timevar = 0.5\) and \(\timevar = 1\) and
sometimes even get as small as \(10^{-8}\). These oscillations are mostly caused
by ill-conditioned Hessian matrices which arise from the interaction between
the highly anisotropic particle beam from the left boundary
and the particles from the source \(\source\) in the absorbing
but non-scattering region \([0, 1]\).

A possible workaround for this problem is the regularization
\eqref{eq:alpha_rk_scheme_regularized} which adds a small
isotropic particle density during computation of the Hessian matrix. As can be seen when
comparing \cref{fig:SourcebeamRegularization_a} to \cref{fig:TimestepsConvergence_b},
this indeed improves the time step sizes and removes the very small time steps.
As a consequence, the regularized scheme needs significantly less time
steps (see \cref{fig:SourcebeamRegularization_b}). For example, the regularized scheme
with regularization parameter \(\newschemeregparam = 10^{-7}\) only needs \(\numtimesteps = 1495\)
time steps instead of \(3918\) for the non-regularized scheme.

The price to pay for the regularization is an additional error
in the order of \(10^{-3}\) to \(10^{-4}\) at the final time \(\tf\)
(see \cref{fig:SourcebeamRegularization_b} for the \(\Lp{1}\) error,
the \(\Lp{\infty}\) error is of the same order (data not shown)).
This seems to be below the typical error introduced by
the temporal discretization in the standard scheme
(see \cref{sec:timesteppingparamchoice,fig:AppendixConvergenceSourcebeam_b}).

Increasing the regularization parameter \(\newschemeregparam\) above \(10^{-7}\)
increases the regularization error but does not further decrease
the number of time steps.

\begin{figure}
  \centering
  \externaltikz{SourcebeamRegularization}{\begin{tikzpicture}
  \def\pfad{Images/Results/}
  \begin{groupplot}[group style={group size=2 by 1, horizontal sep = 2cm,  vertical sep = 2cm},
      width=\figurewidth,
      height=\figureheight,
      scale only axis,
      cycle list name=color parula,
      ymode = log,
      grid = major,
      title style = {yshift = -0.2cm},
      legend pos = south east,
    ]

    \pgfplotsset{select coords between index/.style 2 args={
        x filter/.code={
          \ifnum\coordindex<#1\def\pgfmathresult{}\fi
          \ifnum\coordindex>#2\def\pgfmathresult{}\fi
        }
    }}

    \nextgroupplot[
    title = \tikztitle{Timesteps},
      xlabel = {\(\timevar\)},
      ylabel = {\(\timestep\)},
      ymin = 1e-8,
    ]
    \addplot+ [thick,mark=none] plot table[x=t,y=dt] {\pfad sourcebeam_timesteps_convergence_regularized_averaged_tol_0.1_m10.txt};
    \addplot+ [thick,mark=none] plot table[x=t,y=dt] {\pfad sourcebeam_timesteps_convergence_regularized_averaged_tol_0.01_m10.txt};
    \addplot+ [thick,mark=none] plot table[x=t,y=dt] {\pfad sourcebeam_timesteps_convergence_regularized_averaged_tol_0.001_m10.txt};
    \addplot+ [thick,mark=none] plot table[x=t,y=dt] {\pfad sourcebeam_timesteps_convergence_regularized_averaged_tol_0.0001_m10.txt};
    \addplot+ [thick,mark=none] plot table[x=t,y=dt] {\pfad sourcebeam_timesteps_convergence_regularized_averaged_tol_1e-05_m10.txt};
    \addplot+ [thick,mark=none] plot table[x=t,y=dt] {\pfad sourcebeam_timesteps_convergence_regularized_averaged_tol_1e-06_m10.txt};
    \node [text width=5cm,anchor=south]  {{\phantomsubcaption\normalfont\label{fig:SourcebeamRegularization_a}}};

    \addlegendentry{\(\rktol=10^{-1}\)};
    \addlegendentry{\(\rktol=10^{-2}\)};
    \addlegendentry{\(\rktol=10^{-3}\)};
    \addlegendentry{\(\rktol=10^{-4}\)};
    \addlegendentry{\(\rktol=10^{-5}\)};
    \addlegendentry{\(\rktol=10^{-6}\)};

    \nextgroupplot[
      title = \tikztitle{Regularization error},
      xlabel = {\(\numtimesteps\)},
      ylabel = {\(\LpError{1}\)},
      ymode = log,
      grid = both,
      minor x tick num = 1,
      legend pos = north east,
      xtick distance = 400,
    ]
  \addplot+ [thick] plot table[x=nt,y=E1_1e-9,select coords between index={0}{0}] {\pfad sourcebeam_reg_error_m10.txt};
  \addplot+ [thick] plot table[x=nt,y=E1_1e-9,select coords between index={1}{1}] {\pfad sourcebeam_reg_error_m10.txt};
  \addplot+ [thick] plot table[x=nt,y=E1_1e-9,select coords between index={3}{3}] {\pfad sourcebeam_reg_error_m10.txt};
  \addplot+ [thick] plot table[x=nt,y=E1_1e-9,select coords between index={5}{5}] {\pfad sourcebeam_reg_error_m10.txt};
  \addplot+ [thick] plot table[x=nt,y=E1_1e-9,select coords between index={7}{7}] {\pfad sourcebeam_reg_error_m10.txt};
  \addplot+ [thick] plot table[x=nt,y=E1_1e-9,select coords between index={9}{9}] {\pfad sourcebeam_reg_error_m10.txt};
  \addlegendentry{\(\regepsilon = 10^{-5}\)};
  \addlegendentry{\(\regepsilon = 10^{-6}\)};
  \addlegendentry{\(\regepsilon = 10^{-7}\)};
  \addlegendentry{\(\regepsilon = 10^{-8}\)};
  \addlegendentry{\(\regepsilon = 10^{-9}\)};
  \addlegendentry{\(\regepsilon = 10^{-10}\)};
  \node [text width=5cm,anchor=south]  {{\phantomsubcaption\normalfont\label{fig:SourcebeamRegularization_b}}};
  \end{groupplot}
\end{tikzpicture}
  \caption[Effect of regularization on timesteps and error]{Effect of the regularization \eqref{eq:alpha_rk_scheme_regularized}
    on time steps and errors for the \(\MN[10]\) model in the source-beam test.
    (a)~Time steps for different tolerance parameters \(\rktol\) using a regularization parameter of \(\newschemeregparam = 10^{-7}\).
    (b)~Errors (\(\Lp{1}\)-error with respect to the non-regularized solution) and number of time steps \(\numtimesteps\) for \(\rktol = 10^{-9}\) and
    varying regularization parameter \(\newschemeregparam\). The non-regularized version uses \(\numtimesteps = 3918\) time steps. }\label{fig:SourcebeamRegularization}
\end{figure}

\subsubsection{Regularization for the hat function basis}\label{sec:hfmbasisregularization}
For the hat function (\(\HFMN\)) models, we can use a different regularization technique
that might result in smaller regularization errors.
Although, according to our testing, time step declines are much less common for the \(\HFMN\) models than for the \(\MN\) models,
we observed such declines for the \(\HFMN[6]\) model in the shadow test
around \(\timevar = 8\) (see \cref{fig:TimestepsHFM6Shadow_a}).
In this test, (almost) all particles enter the domain with positive \(\x\)-velocity.
Since there is no scattering and a strongly absorbing region,
the density of particles with negative \(\x\)-velocity becomes very low in parts of the domain
which also leads to large (in absolute values) negative entries
in the \(\gesamtalpha\) coefficient vectors. To improve the situation, we tested the
regularization technique introduced in \cref{sec:directconstraintsregularization}:
Whenever the time step falls below \(\timestep_{\text{min}} = 0.01\),
we replace all entries in the \(\gesamtalpha\) vector that are smaller
than \(\multiplierscomp{\text{min}}=-1000\) by \(\multiplierscomp{\text{min}}\).

As
can be seen in \cref{fig:TimestepsHFM6Shadow_b},
this simple regularization technique removes the very small time steps. The overall number of time steps is reduced
from \(1152\) to \(547\). The \(\Lp{1}\)-error at the final time \(\tf\) between the results with
and without regularization is only about \(10^{-6}\) in this case.

\begin{figure}
  \centering
  \externaltikz{TimestepsHFMShadow}{\begin{tikzpicture}
  \def\pfad{Images/Results/}
  \begin{groupplot}[group style={group size=2 by 1, horizontal sep = 2cm,  vertical sep = 2cm},
      width=\figurewidth,
      height=\figureheight,
      scale only axis,
      cycle list name=color parula pairwise,
      ymode = log,
      grid = major,
      title style = {yshift = -0.2cm},
      legend style={at={(1.2,1.2)},xshift=-0cm,yshift=0cm,anchor=center,nodes=right},
      legend columns = 6,
      ymin = 1e-13,
      ymax = 3e-1,
    ]
    \nextgroupplot[
      title = \tikztitle{\(\HFMN[6]\)},
      xlabel = {\(\timevar\)},
      ylabel = {\(\timestep\)},
    ]
    \addplot+ [thick] plot table[x=t,y=dt,mark=none] {\pfad shadow_timesteps_tol_0.01_adjust_0_hfm6.txt};
    \draw (-5,0.11547005383) -- (25,0.11547005383);
    \draw[thick,densely dotted] (-5,0.01704731792) -- (25,0.01704731792);
    \node [text width=5cm,anchor=south]  {{\phantomsubcaption\normalfont\label{fig:TimestepsHFM6Shadow_a}}};

    \nextgroupplot[
      title = \tikztitle{\(\HFMN[6]\) with regularization},
      xlabel = {\(\timevar\)},
      ylabel = {\(\timestep\)},
    ]
    \addplot+ [thick] plot table[x=t,y=dt,mark=none] {\pfad shadow_timesteps_tol_0.01_adjust_0.01_hfm6.txt};
    \draw (-5,0.11547005383) -- (25,0.11547005383);
    \draw[thick,densely dotted] (-5,0.01704731792) -- (25,0.01704731792);
    \node [text width=5cm,anchor=south]  {{\phantomsubcaption\normalfont\label{fig:TimestepsHFM6Shadow_b}}};
  \end{groupplot}
\end{tikzpicture}
  \caption[Effect of regularization on timesteps and error]{
    Time steps taken by the adaptive Runge-Kutta scheme in the \(\HFMN[6]\) shadow
    test case (\(\gridsize=60\times20\times15\), \(\tf=20\)) for a tolerance of \(\rktol = 0.01\).
    The solid and dotted horizontal line represent the time step restrictions~\eqref{eq:splittingschemetimestep} and~\eqref{eq:fvschemetimestep}, respectively.
    (a)~No regularization. The time step sharply declines around \(\timevar=8\).
    (b)~When enforcing a lower bound of \(-1000\) on the
    entries of the multipliers \(\multipliers\), the time steps stay above \(10^{-3}\).
  }\label{fig:TimestepsHFM6Shadow}
\end{figure}

\subsection{Entropy stability}
To test the entropy stability properties of the different schemes, we
calculate the entropy \(\gesamtentropy\) (see \cref{sec:entropystability})
at each time step and compute the difference between the actual entropy
\(\gesamtentropy(\gesamtalpha^{\timeindex+1})\)
and the entropy given by the discrete entropy law.
To that end, let
\(\gesamtentropy_{\text{est}}(\gesamtalpha^{\timeindex})\)
be the entropy estimated for the next time point \(\timevar_{\timeindex+1}\)
from \(\gesamtalpha^{\timeindex}\) using the entropy law \eqref{eq:entropyupdate2}.
To get an compact error measure, we use the cumulated difference between actual entropy and
estimated entropy, i.e.
\begin{equation*}
  \Delta\gesamtentropy \coloneqq
  \sum_{\timeindex=0}^{\numtimesteps-1} \abs{\gesamtentropy(\gesamtalpha^{\timeindex+1})
    - \gesamtentropy_{\text{est}}(\gesamtalpha^{\timeindex})} \timestep_{\timeindex}
\end{equation*}
We tested several representative moment models in the one-dimensional test cases using the
new transformed scheme with either
the non-modified Runge-Kutta scheme (RK) or the relaxed Runge-Kutta scheme (RRK).
We restrict our investigation to
one dimension here since we do not expect a qualitatively
different behavior in three dimensions.

As can be seen in \cref{tab:EntropyErrors}, with the non-relaxed Runge-Kutta scheme
the difference \(\Delta\gesamtentropy\) between the entropy of the solutions
and the discrete entropy law \(\eqref{eq:entropyupdate2}\)
is in the order of \(10^{-4}\) to \(10^{-7}\). The entropy
\(\gesamtentropy(\gesamtalpha^{\timeindex})\) is in the order of \(100\) to \(1000\),
so the relative error is quite low even without relaxation.
If relaxation is used,
the error vanishes for all test cases (up to a remainder in the order of \(10^{-12}\) which is the tolerance
parameter for our root finding algorithm). However, the relaxation comes at a price,
as can be seen from the computation times. With relaxation, the computations take at least
twice as long (for the \(\MN[50]\) model in the source-beam test),
up to a factor of about 20 for the \(\HFMN[50]\) model in the plane-source test. Note that we use a simple
custom bisection root finding algorithm to compute the relaxation parameters
and that we did not optimize the relaxed version for performance,
so it should be possible to significantly reduce the performance impact
of the relaxation. Still, given that the error is already quite low without relaxation,
it seems advisable to simply use the non-modified Runge-Kutta scheme
unless exact entropy stability is required in the application.
\begin{table}[htbp]
  \centering
  \begin{tabular}{l c c c c c c c}
                 &               &                                  & \multicolumn{3}{c}{Wall time} & \multicolumn{2}{c}{\(\Delta\gesamtentropy\)} \\
    \cmidrule(r){4-6} \cmidrule(r){7-8}
    Test case    & Model         & \(|\gesamtentropy|_{\text{av}}\) & RK (s) & RRK (s) & Factor     & RK      & RRK                                \\
    \midrule
    Plane-source & \(\HFMN[10]\) & 877                              & 22     & 170     & 7.7        & 3.9e-04 & 4.4e-13                            \\
    Plane-source & \(\HFMN[50]\) & 878                              & 81     & 1689    & 20.8       & 4.7e-04 & 4.5e-13                            \\
    Plane-source & \(\PMMN[10]\) & 881                              & 24     & 135     & 5.6        & 3.7e-05 & 4.7e-13                            \\
    Plane-source & \(\PMMN[50]\) & 872                              & 83     & 1181    & 14.2       & 5.9e-05 & 4.6e-13                            \\
    Plane-source & \(\MN[10]\)   & 865                              & 45     & 189     & 4.2        & 2.5e-04 & 4.4e-13                            \\
    Plane-source & \(\MN[50]\)   & 876                              & 253    & 821     & 3.3        & 3.2e-04 & 4.6e-13                            \\
    \midrule
    Source-beam  & \(\HFMN[10]\) & 411                              & 41     & 442     & 10.8       & 1.3e-04 & 1.1e-12                            \\
    Source-beam  & \(\HFMN[50]\) & 500                              & 140    & 2073    & 14.8       & 2.2e-05 & 1.1e-12                            \\
    Source-beam  & \(\PMMN[10]\) & 336                              & 38     & 474     & 12.5       & 3.7e-04 & 2.5e-12                            \\
    Source-beam  & \(\PMMN[50]\) & 492                              & 122    & 1413    & 11.6       & 5.2e-06 & 1.1e-11                            \\
    Source-beam  & \(\MN[10]\)   & 507                              & 955    & 3519    & 3.7        & 3.8e-07 & 1.4e-11                            \\
    Source-beam  & \(\MN[50]\)   & 518                              & 1827   & 3398    & 1.9        & 1.4e-05 & 1.3e-12                            \\
  \end{tabular}
  \caption[Entropy stability results]{Entropy stability results. The average absolute value of the entropy is computed as
  \(|\gesamtentropy|_{\text{av}} = \frac{1}{\tf} \sum_{\timeindex=0}^{\numtimesteps-1} |\gesamtentropy(\gesamtalpha^{\timeindex})| \timestep_{\timeindex}\).
  RK: Transformed scheme using Runge-Kutta method. RRK: Transformed scheme using relaxed Runge-Kutta method.
  }\label{tab:EntropyErrors}
\end{table}


\subsection{Performance}\label{subsec:numericalperformance}
We now want to compare the performance of the new scheme to the standard splitting scheme.
Since the standard scheme uses a fixed time step \(\timestep\) and the new scheme
uses adaptive time steps controlled by the tolerance parameter \(\rktol\),
we first have to decide on how to choose these parameters to have a fair comparison.

\subsubsection{Choice of time stepping parameters}\label{sec:timesteppingparamchoice}
For the standard scheme, we have an
upper bound \(\timestepnonadaptive \coloneqq \frac{1-\opttoleps}{\sqrt{\dimension}}\gridwidth\)
(see \cref{eq:CFLfirstorderschemeinexact})
on the time step due to realizability considerations.
If we again consider the convergence results for the standard scheme
(compare \cref{fig:Convergence1d_b,fig:AppendixConvergenceSourcebeam_b,fig:AppendixConvergencePointsource_b,tab:AdditionalErrors1d,tab:AdditionalErrors3d})
we see that choosing \(\timestep = \timestepnonadaptive\) results in a time stepping error
in the order of about \(10^{-2}\) to \(10^{-3}\).
Note that the errors regarded here are solely due to the time stepping, i.e.\ the solution both schemes
are converging to in the convergence tests
is the exact solution of the semidiscrete moment equation \eqref{eq:finitevolumesemidiscrete}.
Since we are usually interested in an approximation of the solution to the kinetic
equation~\eqref{eq:FokkerPlanckEasy2}, we also have to take errors into account that arise
due to the space discretization and due to the moment approximation.
Choosing a time step smaller than \(\timestepnonadaptive\) thus would only be reasonable
if the time stepping error is of the same order or even larger than the errors due to the spatial discretization and the
moment approximation. This seems to be the case, e.g., for the checkerboard test where the moment approximation errors are relatively small
and for the shadow test which has a large final time \(\tf\) such
that time discretization errors accumulate over time \cite{SchneiderLeibner2019b}.
However, for most of the regarded test cases, the errors introduced by the moment approximation (compare~\cite{SchneiderLeibner2019b})
are much larger than the error of up to \(10^{-2}\) we observed due to the time stepping.
We thus always use \(\timestep = \timestepnonadaptive\) with \(\opttoleps = 0.1\) for the standard scheme.

By the same arguments, we could use a tolerance parameter of \(\rktol = 0.1\) for the new scheme which also yields time stepping errors
in the order of \(10^{-3}\) to \(10^{-2}\) in our tests. However, we observed in \cref{sec:timesteppingtests} that, with
with the current standard error estimate,
increasing the tolerance above about \(10^{-3}\) to \(10^{-2}\)
does not necessarily improve performance since the time steps do not increase accordingly.
In some cases, we even observed increased
computation
times for larger tolerances as time steps had to be recomputed more frequently. We thus
choose a tolerance of \(\rktol = 10^{-3}\) in slab geometry and \(\rktol =
10^{-2}\) in three dimensions.

Note that with this choice of parameters, the time stepping error is
probably considerably lower for the new scheme. For the test cases
where this error is insignificant, it might be possible to
choose larger time steps for the new scheme with an improved
error estimate that is specifically adapted to the moment equations.
However, we did not find such an error estimate yet.
On the other hand, for test cases (e.g.\ checkerboard)
where the time stepping error is relevant, a smaller
time step would have to be
used for the standard scheme which then would
be significantly slower than the following results indicate.
Alternatively, adaptive time stepping could also for the standard scheme.
However, finding an adaptive time stepping scheme of the desired order that (provenly) preserves
realizability may be difficult.
The Bogacki-Shampine method used here is realizability-preserving (with the same
times step restriction~\eqref{eq:fvschemetimestep} as the forward Euler method) since the intermediate stages
are just convex combinations of forward Euler steps and the zero vector. Similarly,
the strong-stability-preserving embedded methods from the preprint~\cite{Conde2018} could be used.
For the Dormand-Prince method~\cite{Dormand1980}, on the other hand,
it is not clear under which conditions realizability is preserved
(due to the negative coefficients in its Butcher tableau).
In contrast, any adaptive time
stepping scheme can be used for the new scheme.

For the standard scheme, we use the regularization techniques described in \cref{sec:optimizationproblem}
to ensure that we are always able to solve the optimization problems. For the new scheme,
we do not use any regularization.

\subsubsection{Timings}
Computational times for the one-dimensional test cases can be found in \cref{fig:Timings1d}.
As expected (see~\cite{SchneiderLeibner2019b}), computational times are increasing linearly (\(\HFMN\), \(\PMMN\)) or quadratically (\(\MN\)).

In the plane-source test (see \cref{fig:Timings1d_a}), the new scheme is several times faster
than the standard scheme for all models except for the low-order \(\HFMN\) and
\(\PMMN\) models.

For the source-beam test, we saw in \cref{sec:timesteppingtests} that
the time steps significantly vary over time for some models. In particular,
the \(\MN\) models show strongly oscillating time steps.
These oscillations seem to be much less pronounced for the higher-order models
(see Supplementary \cref{fig:AdditionalTimestepsSourceBeamM1,fig:AdditionalTimestepsSourceBeamM2}).
As a consequence, computational times for the new scheme are not increasing
monotonically with the moment order (see \cref{fig:Timings1d_b}), e.g.\ computing the \(\MN[20]\) model takes longer than
computing the \(\MN[60]\) model.
Thus, the standard scheme is faster for the low-order models and again significantly slower for the high-order models.
Except for the \(\HFMN[2]\) and \(\PMMN[2]\) model, the \(\HFMN\) and \(\PMMN\) models do not show these oscillations
and again reach time steps that are
larger than \(\timestepnonadaptive\) after some time. Consequently, overall computation times are faster with the new scheme.

For all models, the time steps are initially very small (see analysis in \cref{subsec:numericalconvergence})
but are rapidly increasing (see \cref{fig:Timesteps1d_a}).
After some time, the time steps are even larger than the time steps taken by the standard scheme for most models. In addition,
the time to compute a time step is (on average) much smaller for the new scheme (see~\cref{fig:Timesteptimes1d_a}). This is especially
true for the \(\MN\) models which is why the speed-up for these models is significantly larger than for the \(\HFMN\) and \(\PMMN\) models where
solving the optimization
problem is already quite fast.

In the three-dimensional point-source test, the final time \(\tf=0.75\) is relatively small and none of the models reaches
\(\timestepnonadaptive\)
during the test (compare Supplementary \cref{fig:Timesteps3d}). Consequently, the computation times are
only slightly faster for most models with the new scheme (see \cref{fig:Timings3d_a}). The
higher-order
\(\PMMN\) models show considerably smaller time steps than the other models and thus overall
computation times are even higher than with the standard
scheme.

The checkerboard test case has strongly absorbing regions and thus is the first test where the time step restrictions~\eqref{eq:splittingschemetimestep}
and~\eqref{eq:fvschemetimestep} significantly differ. After some time, the time steps are mostly between these two bounds and even exceed
the upper bound several times (compare Supplementary \cref{fig:Timesteps3d_b}).
The higher order \(\MN\) models show some oscillations in the beginning but much less than in
the source-beam test and the time steps always stay relatively large.
Thus, overall computation times are greatly improved and up to ten times as
fast as with the standard splitting scheme (see \cref{fig:Timings3d_b}).
In addition, as mentioned earlier, for this test case the increased accuracy of the new scheme
might be important, as the error due to the
timestepping with the standard scheme are of the same order as the error due to the spatial and moment
approximation.
Again, the speed-up is smaller for the \(\HFMN\) models and non-existent for the \(\PMMN\) models.

\begin{figure}
  \centering \externaltikz{Timings1d}{\begin{tikzpicture}
  \def\pfad{Images/Results/}
  \begin{groupplot}[group style={group size=2 by 1, horizontal sep = 2cm,  vertical sep = 2cm},
      width=\figurewidth,
      height=\figureheight,
      scale only axis,
      cycle list name=color parula pairwise,
      ymode = log,
      grid = major,
      title style = {yshift = -0.2cm},
    ]
    \nextgroupplot[
      title = \tikztitle{Planesource},
      xlabel = {\(\momentnumber\)},
      ylabel = {Walltime (s)},
      legend style={at={(1.2,1.2)},xshift=-0cm,yshift=0cm,anchor=center,nodes=right},
      legend columns = 6,
    ]
    \addplot+ [thick] plot table[x=n,y=new] {\pfad planesource_timings_hfm.txt};
    \addplot+ [thick] plot table[x=n,y=ref] {\pfad planesource_timings_hfm.txt};
    \addplot+ [thick] plot table[x=n,y=new] {\pfad planesource_timings_pmm.txt};
    \addplot+ [thick] plot table[x=n,y=ref] {\pfad planesource_timings_pmm.txt};
    \addplot+ [thick] plot table[x=n,y=new] {\pfad planesource_timings_m.txt};
    \addplot+ [thick] plot table[x=n,y=ref] {\pfad planesource_timings_m.txt};
    \addlegendentry{\(\HFMN\) \((\multipliers)\)};
    \addlegendentry{\(\HFMN\) \((\moments)\)};
    \addlegendentry{\(\PMMN\) \((\multipliers)\)};
    \addlegendentry{\(\PMMN\) \((\moments)\)};
    \addlegendentry{\(\MN\) \((\multipliers)\)};
    \addlegendentry{\(\MN\) \((\moments)\)};
    \node [text width=5cm,anchor=south]  {{\phantomsubcaption\normalfont\label{fig:Timings1d_a}}};

    \nextgroupplot[
      title = \tikztitle{Sourcebeam},
      xlabel = {\(\momentnumber\)},
      ylabel = {Walltime (s)},
      legend style={at={(1.2,1.2)},xshift=-0cm,yshift=0cm,anchor=center,nodes=right},
    ]
    \addplot+ [thick] plot table[x=n,y=new] {\pfad sourcebeam_timings_tol_0.001_hfm.txt};
    \addplot+ [thick] plot table[x=n,y=ref] {\pfad sourcebeam_timings_tol_0.001_hfm.txt};
    \addplot+ [thick] plot table[x=n,y=new] {\pfad sourcebeam_timings_tol_0.001_pmm.txt};
    \addplot+ [thick] plot table[x=n,y=ref] {\pfad sourcebeam_timings_tol_0.001_pmm.txt};
    \addplot+ [thick] plot table[x=n,y=new] {\pfad sourcebeam_timings_tol_0.001_m.txt};
    \addplot+ [thick] plot table[x=n,y=ref] {\pfad sourcebeam_timings_tol_0.001_m.txt};
    \draw (0,0.01) -- (2.5,0.01);
    \node [text width=5cm,anchor=south]  {{\phantomsubcaption\normalfont\label{fig:Timings1d_b}}};
  \end{groupplot}
\end{tikzpicture}
  \caption[Timings for transformed and standard finite volume schemes in one-dimensional tests]{Computational
    times versus moment number \(\momentnumber\) for the two schemes in the one-dimensional tests
    (\(\gridsizeoned = 1200\) grid elements, \(\rktol = 0.001\), no parallelization). }\label{fig:Timings1d}
\end{figure}

\begin{figure}
  \centering \externaltikz{Timings3d}{
\begin{tikzpicture}
  \def\pfad{Images/Results/}
  \begin{groupplot}[group style={group size=2 by 2, horizontal sep = 2cm,  vertical sep = 2cm},
      width=1.11\figurewidth,
      height=1.11\figureheight,
      scale only axis,
      cycle list name=color parula pairwise,
      xmode = log,
      ymode = log,
      grid = major,
      title style = {yshift = -0.2cm},
    ]
    \nextgroupplot[
      title = \tikztitle{Point-source},
      xlabel = {\(\momentnumber\)},
      ylabel = {Walltime (seconds)},
      legend style={at={(1.2,1.2)},xshift=-0cm,yshift=0.1cm,anchor=center,nodes=right},
      legend columns = 6,
    ]
    \node [text width=5cm,anchor=south]  {\subcaption{\label{fig:Timings3d_a}}};
    \addplot+ [thick] plot table[x=n,y=new] {\pfad pointsource_timings_tol_0.01_hfm.txt};
    \addplot+ [thick] plot table[x=n,y=ref] {\pfad pointsource_timings_tol_0.01_hfm.txt};
    \addplot+ [thick] plot table[x=n,y=new] {\pfad pointsource_timings_tol_0.01_pmm.txt};
    \addplot+ [thick] plot table[x=n,y=ref] {\pfad pointsource_timings_tol_0.01_pmm.txt};
    \addplot+ [thick] plot table[x=n,y=new] {\pfad pointsource_timings_tol_0.01_m.txt};
    \addplot+ [thick] plot table[x=n,y=ref] {\pfad pointsource_timings_tol_0.01_m.txt};
    \addlegendentry{\(\HFMN\) (\(\multipliers\))};
    \addlegendentry{\(\HFMN\) (\(\moments\))};
    \addlegendentry{\(\PMMN\) (\(\multipliers\))};
    \addlegendentry{\(\PMMN\) (\(\moments\))};
    \addlegendentry{\(\MN\) (\(\multipliers\))};
    \addlegendentry{\(\MN\) (\(\moments\))};

    \nextgroupplot[
      title = \tikztitle{Checkerboard},
      xlabel = {\(\momentnumber\)},
      ylabel = {Walltime (seconds)},
      legend style={at={(1.2,1.2)},xshift=-0cm,yshift=0cm,anchor=center,nodes=right},
    ]
    \node [text width=5cm,anchor=south]  {\subcaption{\label{fig:Timings3d_b}}};
    \addplot+ [thick] plot table[x=n,y=new] {\pfad checkerboard_timings_tol_0.01_hfm.txt};
    \addplot+ [thick] plot table[x=n,y=ref] {\pfad checkerboard_timings_tol_0.01_hfm.txt};
    \addplot+ [thick] plot table[x=n,y=new] {\pfad checkerboard_timings_tol_0.01_pmm.txt};
    \addplot+ [thick] plot table[x=n,y=ref] {\pfad checkerboard_timings_tol_0.01_pmm.txt};
    \addplot+ [thick] plot table[x=n,y=new] {\pfad checkerboard_timings_tol_0.01_m.txt};
    \addplot+ [thick] plot table[x=n,y=ref] {\pfad checkerboard_timings_tol_0.01_m.txt};

    \nextgroupplot[
      title = \tikztitle{Shadow},
      xlabel = {\(\momentnumber\)},
      ylabel = {Walltime (seconds)},
      legend style={at={(1.2,1.2)},xshift=-0cm,yshift=0cm,anchor=center,nodes=right},
    ]
    \node [text width=5cm,anchor=south]  {\subcaption{\label{fig:Timings3d_c}}};
    \addplot+ [thick] plot table[x=n,y=new] {\pfad shadow_timings_tol_0.01_hfm.txt};
    \addplot+ [thick] plot table[x=n,y=ref] {\pfad shadow_timings_tol_0.01_hfm.txt};
    \addplot+ [thick] plot table[x=n,y=new] {\pfad shadow_timings_tol_0.01_pmm.txt};
    \addplot+ [thick] plot table[x=n,y=ref] {\pfad shadow_timings_tol_0.01_pmm.txt};
    \addplot+ [thick] plot table[x=n,y=new] {\pfad shadow_timings_tol_0.01_m.txt};
    \addplot+ [thick] plot table[x=n,y=ref] {\pfad shadow_timings_tol_0.01_m.txt};

    \nextgroupplot[
      scale only axis,
      cycle list name=color parula pairwise,
      xmode=log,
      ymode=log,
      grid = major,
      title = \tikztitle{Parallel scaling},
      xlabel = {Threads},
      ylabel = {Walltime (seconds)},
    ]
    \node [text width=5cm,anchor=south]  {\subcaption{\label{fig:ScalingPointsource}}};
    \addplot+ [thick] plot table[x=threads,y=new] {\pfad pointsource_scaling_means_factor_1_m3.txt};
    \addplot+ [thick] plot table[x=threads,y=new] {\pfad pointsource_scaling_means_factor_1000_m3.txt};
    \addplot+ [thick] plot table[x=threads,y=ref] {\pfad pointsource_scaling_means_factor_1_m3.txt};
    \addplot+ [thick] plot table[x=threads,y=ref] {\pfad pointsource_scaling_means_factor_1000_m3.txt};
    \draw[thick, densely dotted] (1,5142.797609640666) -- (32,160.712425301);
    \draw[thick, densely dotted] (1,3288.837291278) -- (32,102.776165352);
    \addlegendentry{New \((\multipliers)\), \phantom{000}1 t/t};
    \addlegendentry{New \((\multipliers)\), 1000 t/t};
    \addlegendentry{Std. \((\moments)\), \phantom{000}1 t/t};
    \addlegendentry{Std. \((\moments)\), 1000 t/t};

  \end{groupplot}
\end{tikzpicture}
  \caption[Timings for transformed and standard finite volume schemes in three-dimensional tests]{Computational times versus moment number \(\momentnumber\)
    for the two schemes in the three-dimensional tests (\(\rktol = 0.01\), 32 threads, 1000 tasks per thread)
    and parallel scaling.
    (a)~Point-source problem (\(\gridsize=50^3\), \(\tf=0.75\)). The new scheme is slightly faster for the \(\MN\) and \(\HFMN\)
    models and slower for the \(\PMMN\) models.
    (b)~Checkerboard problem (\(\gridsize=70^3\), \(\tf=3.2\)). Here, the new scheme is several times faster for the \(\MN\) models and
    slightly faster for the \(\HFMN\) models. In the \(\PMMN\) tests, the new model is on par or slightly faster in the low-order tests and
    slightly slower for \(\PMMN[512]\).
    (c)~Shadow problem (\(\gridsize=60\times20\times15\), \(\tf=20\)). The new scheme is significantly slower for the
    \(\HFMN\) and \(\PMMN\) models and about as fast as the standard scheme for most \(\MN\) models.
    (d)~Computational times for 10 time steps in the point-source
    test (\(\gridsize = 100^3\), \(\MN[3]\) model) against number of threads. The dotted
    lines represent perfect scaling.\ t/t: tasks per thread}\label{fig:Timings3d}
\end{figure}

The shadow test case is highly challenging for the numerical solvers. In the absorbing domain, very small local particle densities occur which lead
to numerical problems when inverting the Hessians (whose entries scale with the density).
In addition, as only right-going particles are entering the
domain,
densities for particles with negative \(\x\)-velocity decline much faster than positive \(\x\)-velocity densities, resulting in very anisotropic
distributions
and ill-conditioned Hessians.
For the standard scheme,
we are dealing with these problems by enforcing a minimum density and using an
isotropic regularization technique to replace ill-conditioned
moment vectors (see \cref{sec:optimizationproblem}). These techniques,
in particular the isotropic regularization, introduce additional errors
which in theory might completely alter the solution. In practice,
regularization is usually mainly applied to moment vectors with very low densities and thus does not destroy accuracy.
However, it should be noted that this
is not guaranteed automatically and has to be verified for every new application of the scheme.
In our case, regularization is massively used by the
\(\MN\) and \(\PMMN\) models. The \(\HFMN\) models do not use regularization.

For the new scheme, for this comparison, we did not use any regularization. Thus, it is particularly remarkable
that, for the \(\MN\) models, the new scheme is about as fast as the standard splitting scheme,
although frequent recomputations can be observed (see Supplementary \cref{fig:Timesteptimes3d_d}) and the time steps
are considerably smaller and highly varying (compare Supplementary \cref{fig:AdditionalTimestepsShadowM})

For the \(\PMMN\) and higher-order \(\HFMN\) models,
computational times are several times higher for the new scheme than
for the standard splitting scheme (see \cref{fig:Timings3d_c}). For these models,
the time steps are converging to a value well above the maximum time step of the unsplit scheme but also significantly below the time step of the
splitting scheme (compare Supplementary \cref{fig:AdditionalTimestepsShadowOtherModels}).
In addition, computing a time step for these models is already quite fast with the standard scheme
and the speed-up of the new scheme is not large
enough to compensate the smaller time step (compare Supplementary \cref{fig:Timesteptimes3d_c}).
Note however
that the \(\PMMN\) models do not show significant oscillations or other problems though the standard scheme has to use regularization.
To be competitive with respect to computation times in this test case,
the new scheme probably also needs to use a splitting technique. Though there is no formal limit on the time step for the new scheme due to the
strong absorption, we would expect the approximation error (and thus the time step) to be dominated by this term.

\subsubsection{Parallel scaling}\label{subsec:scaling}
As already mentioned, one of the major drawbacks of the minimum-entropy-based moment models are their computational costs. As we have seen
above,
the new scheme often is several times faster than the scheme in standard variables. However, even with this speed-up computations without
parallelization
still take excessively long. In addition, there are cases where the new scheme is not faster or even slower than the standard scheme.

As the minimization problems or matrix inversions on different grid cells are
independent, parallelization is easily possible for both schemes. However, for the standard scheme, load balancing may be a serious
issue~\cite{KristopherGarrett2015}.
Usually, some minimization problems are harder to solve than others, resulting in different numbers of iterations in the Newton scheme. The new
scheme does not have this problem,
as it only needs the inversion of a relatively small positive definite matrix in each grid cell. For these matrix sizes, direct solvers usually
perform at least as good as iterative methods and take an approximately constant time per inversion.

To investigate the scaling behaviour, for both schemes we computed ten time steps of the point-source test case with a varying number of threads. We
use a work-stealing task-based
parallelization (implemented using Intel \texttt{TBB}~\cite{Intel2020}). To see the impact of load balancing we perform all test
both with 1 task per
thread (no load balancing) and
1000 tasks per thread. The results are shown in \cref{fig:ScalingPointsource}. If load balancing is used, both schemes scale almost perfectly to 16
threads.
When going to 32 threads the scaling is slightly worse which may be due to the used dual-socket system with \(2\times16\) CPU cores.

Removing the load balancing has a large impact on the standard scheme while the new scheme is much less affected. We would expect that this
difference is emphasized if even more
threads (or processes) are used. The new scheme thus should be better suited to massively (MPI)-parallel computations.

\subsubsection{Masslumping for the transformed scheme with hat function basis}
The basis functions used by the \(\HFMN\) models are basically the Lagrange \(\mathbb{P}_1\) nodal basis functions used in the
(continuous) finite element method, i.e.\
each basis function evaluates to 1 on exactly one node of the triangulation and to 0 on all other nodes (see \cref{sec:FirstOrderFEMBases}). As a
consequence, the
Hessian matrix~\eqref{eq:du_dalpha} is tridiagonal (in one dimension) or sparse (three dimensions). Compared to the \(\MN\) models where the
Hessian is dense, this significantly
reduces the computational effort required for assembly and inversion. However, especially in three dimensions, assembling and inverting the Hessian
matrix still account
for the vast majority of the new scheme's computational time.

We can significantly speed up these computations by using a quadrature that only contains the nodes of the triangulation. With such a quadrature,
the basis functions always evaluate to either zero or one and the Hessian matrix becomes diagonal. The downside is, of course,
that an additional quadrature error is introduced as the nodal quadrature is only of first order. However,
this additional error is of the same order as introduced by the linear finite element discretization.
This approach is sometimes called masslumping as using such a quadrature diagonalizes the mass matrix in the finite element method (``all mass is
lumped together on the diagonal'').

\begin{remark}
  Masslumping could also be used for the standard scheme and should lead to similar speed-ups (assuming that the masslumping does not negatively affect the
  number of iterations needed for the solution of the optimization problems). Since our focus in this work is on the new scheme,
  we did not yet test masslumping for the standard scheme.
\end{remark}

For the one-dimensional tests, we use the two-point Gauss-Lobatto quadrature in each interval (containing only the end-points of interval) for the
masslumped version. As
quadrature points that are on the same vertex of the partition can be merged, we only have one quadrature point per vertex. The reference
quadrature uses
24 quadrature points per interval.
In addition, we only have to evaluate one component of the integrand per quadrature point (the one corresponding to the non-zero basis function)
instead of two.
Overall, this reduces the number of integrand evaluations by a factor of about 48.
The results can be found in \cref{fig:TimingsMasslumped1d}.
For both test cases, almost independently of the number of moments \(\momentnumber\), the computations are
about 40 times as fast using masslumping (see \cref{fig:TimingsMasslumped1d_a}). This is in line with the reduction
in the number of evaluations. The \(\Lp{1}\) errors (compared to the non-masslumped result) are
decreasing with second order (see \cref{fig:TimingsMasslumped1d_b}). The \(\Lp{\infty}\) errors in the
source-beam test are decreasing with order about 1.3,
while the \(\Lp{\infty}\) errors in the plane-source tests are converging with very low order.

\begin{figure}
  \centering \externaltikz{TimingsMasslumped1d}{\begin{tikzpicture}
  \def\pfad{Images/Results/}
  \begin{groupplot}[group style={group size=2 by 1, horizontal sep = 2cm,  vertical sep = 2cm},
      width=\figurewidth,
      height=\figureheight,
      scale only axis,
      cycle list name=color parula pairwise,
      xmode = log,
      ymode = log,
      grid = major,
      title style = {yshift = -0.2cm},
    ]
    \nextgroupplot[
      title = \tikztitle{Timings},
      xlabel = {\(\momentnumber\)},
      ylabel = {Walltime (s)},
      legend pos = south east,
    ]
    \addplot+ [thick] plot table[x=n,y=new] {\pfad planesource_masslumped_timings_tol_0.001.txt};
    \addplot+ [thick] plot table[x=n,y=ref] {\pfad planesource_masslumped_timings_tol_0.001.txt};
    \addplot+ [thick] plot table[x=n,y=new] {\pfad sourcebeam_masslumped_timings_tol_0.001.txt};
    \addplot+ [thick] plot table[x=n,y=ref] {\pfad sourcebeam_masslumped_timings_tol_0.001.txt};
    \addlegendentry{Ps.\ ml.};
    \addlegendentry{Ps.};
    \addlegendentry{Sb.\ ml.};
    \addlegendentry{Sb.};
    \node [text width=5cm,anchor=south]  {{\phantomsubcaption\normalfont\label{fig:TimingsMasslumped1d_a}}};

    \nextgroupplot[
      title = \tikztitle{Errors},
      xlabel = {\(\momentnumber\)},
      ylabel = {Error},
      legend pos = south west,
    ]
    \addplot+ [thick] plot table[x=n,y=E1] {\pfad planesource_masslumped_error.txt};
    \addplot+ [thick] plot table[x=n,y=Einf] {\pfad planesource_masslumped_error.txt};
    \addplot+ [thick] plot table[x=n,y=E1] {\pfad sourcebeam_masslumped_error.txt};
    \addplot+ [thick] plot table[x=n,y=Einf] {\pfad sourcebeam_masslumped_error.txt};
    \addlegendentry{Ps.\ \(\LpError{1}\)};
    \addlegendentry{Ps.\ \(\LpError{\infty}\)};
    \addlegendentry{Sb.\ \(\LpError{1}\)};
    \addlegendentry{Sb.\ \(\LpError{\infty}\)};
    \draw (10, 4e-3) -- (500, 0.0000016)
    node[draw=none,fill=none,font=\scriptsize,pos=0.6,xshift=0.1cm,below,rotate=-33] {2nd order};
    \draw (20, 8e-2) -- (500, 0.000128)
    node[draw=none,fill=none,font=\scriptsize,pos=0.4,xshift=0.1cm,below,rotate=-33] {2nd order};
    \node [text width=5cm,anchor=south]  {{\phantomsubcaption\normalfont\label{fig:TimingsMasslumped1d_b}}};
  \end{groupplot}
\end{tikzpicture}
  \caption[Effect of masslumping on computational times and accuracy of the transformed finite volume scheme in one-dimensional tests]{Effect of masslumping on computational times and accuracy of the \(\HFMN\) models in the one-dimensional tests (1200 grid
    elements, \(\rktol = 0.001\), no parallelization).
    Ps: Plane-source, Sb: Source-beam, ml: masslumped.
    (a) Computational times versus moment number.
    (b) Errors introduced by masslumping (reference is the non-masslumped solution).}\label{fig:TimingsMasslumped1d}
\end{figure}

For the three-dimensional tests, we use the vertex quadrature on the
reference triangle (with the vertexes \((0, 0)\), \((1, 0)\), \((0, 1)\) as
quadrature points and
weight \(\frac{1}{6}\) each) transferred to each spherical triangle. This results in one
quadrature point per vertex of the
triangulation.
The triangulation consists of \(2\cdot 4^{\refinementnumber+1}\) triangles and \(2 + 4^{\refinementnumber+1}\)
vertices (where \(\refinementnumber\) is the number of refinements of the initial octants, see~\cref{sec:FirstOrderFEMBases}),
and the
reference quadrature has 55 quadrature points per spherical triangle.
The standard implementation thus uses about 110 times as many evaluations.
The results can be found in \cref{fig:TimingsMasslumped3d}. For all test cases, the masslumped version is more than
two orders of magnitude faster than the version using the reference quadrature. The maximum speed-up is 216 times (point-source,
\(\HFMN[1026]\)),
which is considerably higher than the reduction in quadrature points. The additional speed-up is due to the more efficient implementation,
as the masslumped version does not need to use sparse matrices and the associated indirect indexing. When looking at the profiler results,
we see that the time for assembling and inverting the (diagonal) Hessian is negligible in the masslumped version.
Overall, the time needed for the operator evaluation (which consists of calculating the kinetic fluxes and the source term and applying the inverse
Hessian matrix)
has been reduced to a point where the vector operations in the adaptive Runge-Kutta scheme now make up a major part of the computation time.

For all three-dimensional test cases, the errors compared to the non-masslumped version are
quite large (see \cref{fig:TimingsMasslumped3d_b}). However, both the \(\Lp{1}\) and
the \(\Lp{\infty}\) error
converge with first order in \(\momentnumber\) for all test cases which corresponds
to second-order convergence in the grid width of the (velocity space) triangulation,
as each refinement halves the grid width but increases the
number of vertices (approximately)
by a factor of 4. The convergence rate thus is similar (for the checkerboard test) to or
even higher (point-source, shadow) than the convergence
of the (second-order discretization of the) moment approximation (compare~\cite{SchneiderLeibner2019b}).
Thus, for high-order models, the additional
quadrature error introduced by
masslumping might be acceptable given the massive speed-up.
In any case, it might be preferable to replace a lower-order moment model with high-order quadrature by
a higher-order masslumped model.\enlargethispage{1\baselineskip}

\begin{figure}
  \centering \externaltikz{TimingsMasslumped3d}{\begin{tikzpicture}
  \def\pfad{Images/Results/}
  \begin{groupplot}[group style={group size=2 by 2, horizontal sep = 2cm,  vertical sep = 2cm},
      width=\figurewidth,
      height=\figureheight,
      scale only axis,
      cycle list name=color parula pairwise,
      xmode = log,
      ymode = log,
      grid = major,
      title style = {yshift = -0.2cm},
    ]
    \nextgroupplot[
      title = \tikztitle{Timings},
      xlabel = {\(\momentnumber\)},
      ylabel = {Walltime (s)},
      legend pos = north west,
      legend columns = 2,
    ]
    \addplot+ [thick] plot table[x=n,y=new] {\pfad pointsource_masslumped_timings_tol_0.01.txt};
    \addplot+ [thick] plot table[x=n,y=ref] {\pfad pointsource_masslumped_timings_tol_0.01.txt};
    \addplot+ [thick] plot table[x=n,y=new] {\pfad checkerboard_masslumped_timings_tol_0.01.txt};
    \addplot+ [thick] plot table[x=n,y=ref] {\pfad checkerboard_masslumped_timings_tol_0.01.txt};
    \addplot+ [thick] plot table[x=n,y=new] {\pfad shadow_masslumped_timings_tol_0.01.txt};
    \addplot+ [thick] plot table[x=n,y=ref] {\pfad shadow_masslumped_timings_tol_0.01.txt};
    \addlegendentry{Ps.\ ml.};
    \addlegendentry{Ps.};
    \addlegendentry{Cb.\ ml};
    \addlegendentry{Cb.};
    \addlegendentry{S.\ ml};
    \addlegendentry{S.};
    \node [text width=5cm,anchor=south]  {{\phantomsubcaption\normalfont\label{fig:TimingsMasslumped3d_a}}};

    \nextgroupplot[
      title = \tikztitle{Errors},
      xlabel = {\(\momentnumber\)},
      ylabel = {Error},
      legend columns = 2,
    ]
    \addplot+ [thick] plot table[x=n,y=E1] {\pfad pointsource_masslumped_error.txt};
    \addplot+ [thick] plot table[x=n,y=Einf] {\pfad pointsource_masslumped_error.txt};
    \addplot+ [thick] plot table[x=n,y=E1] {\pfad checkerboard_masslumped_error.txt};
    \addplot+ [thick] plot table[x=n,y=Einf] {\pfad checkerboard_masslumped_error.txt};
    \addplot+ [thick] plot table[x=n,y=E1] {\pfad shadow_masslumped_error.txt};
    \addplot+ [thick] plot table[x=n,y=Einf] {\pfad shadow_masslumped_error.txt};
    \addlegendentry{Ps.\ \(\LpError{1}\)};
    \addlegendentry{Ps.\ \(\LpError{\infty}\)};
    \addlegendentry{Cb.\ \(\LpError{1}\)};
    \addlegendentry{Cb.\ \(\LpError{\infty}\)};
    \addlegendentry{S.\ \(\LpError{1}\)};
    \addlegendentry{S.\ \(\LpError{\infty}\)};
    \draw (50, 0.15) -- (1026, 0.00730994152)
    node[draw=none,fill=none,font=\scriptsize,midway,below,rotate=-35] {1st order};
    \node [text width=5cm,anchor=south]  {{\phantomsubcaption\normalfont\label{fig:TimingsMasslumped3d_b}}};
  \end{groupplot}
\end{tikzpicture}
  \caption[Effect of masslumping on computational times and accuracy of the transformed finite volume scheme in three-dimensional tests]{Effect of masslumping on computational times and accuracy of the \(\HFMN\) models in the three-dimensional tests
    (\(\rktol = 0.01\), no parallelization).
    Ps: Point-source (\(\gridsize = 30^3\), \(\tf = 0.75\)), Cb: Checkerboard (\(\gridsize = 35^3\), \(\tf=3.2\)), S\@: Shadow (\(\gridsize = 36\times12\times9\), \(\tf=20\)), ml: masslumped.
    (a) Computational times versus moment number.
    (b) Errors introduced by masslumping (reference is the non-masslumped solution).}\label{fig:TimingsMasslumped3d}
\end{figure}

\section{Conclusion and outlook}\label{sec:outlook}
In this paper, we introduced a new numerical scheme for entropy-based moment equations
that is based on a variable transformation of the semi-discretized equations and
gets rid of the minimum-entropy optimization problems (except for the initial values). We have shown analytically and numerically that the new
scheme converges to the correct solutions, and that it follow a discrete entropy law if a relaxed Runge-Kutta method is used for time stepping.
In addition, we investigated the performance of the new scheme in several numerical benchmarks and
showed that it is often several times faster than the untransformed scheme, at the same or
even higher accuracy in time. In addition, for the hat function basis, we showed that a massive speed-up can be obtained by using a
quadrature that contains only the vertices of the triangulation (at the cost of additional quadrature error), making very high-order models
computable in reasonable time.
Finally, we did some tests on parallel scaling of the schemes which suggest that the new scheme does not
have the same load-balancing problems as the untransformed scheme.

To improve the scheme, better error estimates for the adaptive timesteppers should be investigated to get rid of the erratic time step behaviour observed in the
source-beam and shadow tests. Here, larger errors could be allowed for multipliers that correspond to small densities and thus only have a minor effect
on the solution in original variables. In addition, regularization techniques could be used to replace such multipliers if they limit the time step.
These regularization techniques might also be needed to be able to solve problems where some Hessians are numerically singular. For applications involving strong scattering or absorption,
splitting methods for the new scheme might be of interest to remove the time step restriction induced by the corresponding terms.

While we restricted ourselves to a first-order scheme, the same variable transformation can also be applied to higher-order kinetic schemes as regarded, e.g.,
in~\cite{Schneider2015b, Schneider2016}.

In future work, we will investigate further model reduction by POD-based reduced basis methods~\cite{Quarteroni2016,Himpe2018} which should be much easier with
the new scheme as it is well-defined on the whole \(\R^{\momentnumber}\) and not only on the realizable set (which is a convex cone in \(\R^{\momentnumber}\)).

\section*{Acknowledgments}
The authors thank Florian Schneider and Hendrik Ranocha for fruitful discussions.

\bibliographystyle{siamplain}
\bibliography{jabreflib}

\appendix
\gdef\thesection{\Alph{section}} 
\makeatletter
\renewcommand\@seccntformat[1]{Appendix \csname the#1\endcsname.\hspace{0.5em}}
\makeatother
\section{\texorpdfstring{Proof of \cref{thm:lipschitzcontinuity}}{Proof of Lipschitz continuity of the update functions}}\label{sec:appendixlipschitzproof}

\begin{proof}
  We want to show that the derivatives of \(\alphaupdateterm_{\gridindex}\) are bounded,
  i.e.\ \(\generalnorm{\frac{\partial\alphaupdateterm_{\gridindex}}{\partial\multipliers[\gridindexalt]}} \leq C\).
  Here, since all matrix norms are equivalent, \(\generalnorm{\cdot}\) can be an arbitrary matrix norm.
  We will show that the rows of \(\frac{\partial\alphaupdateterm_{\gridindex}}{\partial\multipliers[\gridindexalt]}\)
  are bounded in Euclidean norm which bounds the Frobenius matrix norm.
  By the
  definition of \(\alphaupdateterm_{\gridindex}\) (see \eqref{eq:alphaupdateterm}),
  we have
  \begin{align}
    \frac{\partial\alphaupdateterm_{\gridindex}}{\partial\multipliers[\gridindexalt]}
     & = \frac{\partial}{\partial\multipliers[\gridindexalt]} \left(
    {\optHessian(\multipliers[\gridindex])}^{-1}
    \uupdateterm_{\gridindex}\left(\moments(\multipliers[0]), \ldots, \moments(\multipliers[\gridsize-1])\right)\right)
    = \frac{\partial}{\partial\multipliers[\gridindexalt]} \begin{pmatrix}
      \sum_{\basisindex} \optHessianInvEntry_{0,\basisindex} \uupdatetermcomp_{\gridindex,\basisindex} \nonumber   \\
      \vdots                                                                                                       \\
      \sum_\basisindex \optHessianInvEntry_{\momentnumber-1,\basisindex} \uupdatetermcomp_{\gridindex,\basisindex} \\
    \end{pmatrix} \\
     & = {\left(
    \sum_\basisindex \frac{\partial\optHessianInvEntry_{k,\basisindex}}{\partial\multiplierscomp{\gridindexalt\basisindexvar}}
    \uupdatetermcomp_{\gridindex,\basisindex}
    + \optHessianInvEntry_{k,\basisindex}
    \frac{\partial\uupdatetermcomp_{\gridindex,\basisindex}}{\partial\multiplierscomp{\gridindexalt,\basisindexvar}}
    \right)}_{k,\basisindexvar=0,\ldots,\momentnumber-1} \nonumber                   \\
     & = \left(
    \frac{\partial\optHessian^{-1}}{\partial\multiplierscomp{\gridindexalt,0}}
    \uupdateterm_{\gridindex}, \ldots,
    \frac{\partial\optHessian^{-1}}{\partial\multiplierscomp{\gridindexalt,\momentnumber-1}}
    \uupdateterm_{\gridindex}\right)
    + \optHessian^{-1}\left(
    \frac{\partial\uupdateterm_{\gridindex}}{\partial\multiplierscomp{\gridindexalt,1}},
    \ldots,
    \frac{\partial\uupdateterm_{\gridindex}}{\partial\multiplierscomp{\gridindexalt,\momentnumber-1}} \right) \label{eq:lipschitzproof1}
  \end{align}
  where \(\optHessianInvEntry_{k,\basisindex}\) is the \((k,\basisindex)\)-th entry of \(\optHessian^{-1}\) and
  \(\uupdatetermcomp_{\gridindex\basisindex}\) and \(\multiplierscomp{\gridindexalt\basisindex}\)  are the \(\basisindex\)-th component of
  \(\uupdateterm_{\gridindex}\) and \(\multipliers[\gridindexalt]\), respectively.

  For \(\gridindex \neq \gridindexalt\), the first term in \eqref{eq:lipschitzproof1} vanishes.
  For \(\gridindex = \gridindexalt\), since (see, e.g., \cite{Giles2008})
  \begin{equation}
    \frac{\partial\optHessian^{-1}}{\partial\alpha} = -\optHessian^{-1}\frac{\partial\optHessian}{\partial\alpha}\optHessian^{-1},
  \end{equation}
  the first term in \eqref{eq:lipschitzproof1} depends on \(\optHessian^{-1}(\multipliers[\gridindex])\),
  \(\uupdateterm_{\gridindex}\) and derivatives of \(\optHessian(\multipliers[\gridindex])\).
  The terms in \(\uupdateterm_{\gridindex}\), except for the constant source term \(\ints{\basis\source}\), all
  scale with either \(\density(\moments(\multipliersfv[\gridindex]))\)
  or \(\density(\moments(\multipliersfv[\gridindexalt]))\), \(\gridindexalt \in \gridneighbors{\gridindex}\)
  (compare \eqref{eq:uupdateterm} and \eqref{eq:sourcedef}).
  By \eqref{eq:densitybounded}, \(\uupdateterm_{\gridindex}\) is thus bounded.
  By \eqref{eq:Hessianbounded}, after multiplication with \(\optHessian^{-1}\)
  the term is still bounded (as \(\generalnorm{\optHessian^{-1}} = \generalnorm{\optHessian}^{-1}\)).
  For the first term of \eqref{eq:lipschitzproof1}, it remains to show that
  \(\generalnorm{\frac{\partial\optHessian(\multipliers[\gridindex])}{\partial\multiplierscomp{\gridindex\basisindex}}}\)
  is bounded for \(\basisindex \in \{0, \ldots, \momentnumber-1\}\). We have
  \begin{align*}
    \Vw^T \frac{\partial\optHessian(\multipliers[\gridindex])}{\partial\multiplierscomp{\gridindex\basisindex}} \Vw
     & = \Vw^T \frac{\partial}{\partial\multiplierscomp{\gridindex\basisindex}} \ints{\basis\basis^T\ld{\entropy}''(\multipliers[\gridindex] \cdot \basis)} \Vw
    = \ints{{(\basis \cdot \Vw)}^2 \basiscomp[\basisindex] \ld{\entropy}'''(\multipliers[\gridindex] \cdot \basis)}                                             \\
     & \leq \left(\max_{\SC \in \angularDomain} \abs{\basiscomp[\basisindex](\SC)} \generalnorm{\basis(\SC)}^2\right)
    \generalnorm{\Vw}^2 \ints{\ld{\entropy}'''(\multipliers[\gridindex] \cdot \basis)}.
  \end{align*}
  Thus, by \eqref{eq:entropythirdderivbounded},
  \(\generalnorm{\frac{\partial\optHessian(\multipliers[\gridindex])}{\partial\multiplierscomp{\gridindex\basisindex}}}\) is bounded.

  To show that the second summand in \eqref{eq:lipschitzproof1} is bounded, by \eqref{eq:Hessianbounded}, we only have to show boundedness of
  \(\frac{\partial\uupdateterm_{\gridindex}}{\partial\multiplierscomp{\gridindexalt\basisindex}}\)
  for
  \(\basisindex \in \{0, \ldots, \momentnumber-1\}\). We have (compare \eqref{eq:uupdateterm})
  \begin{equation}\label{eq:appendixproof4}
    \frac{\partial\uupdateterm_{\gridindex}}{\partial\multiplierscomp{\gridindexalt\basisindex}} =
    \frac{\partial}{\partial\multiplierscomp{\gridindexalt\basisindex}}
    \left(
    \sum_{k\in\gridneighbors{\gridindex}}
    \frac{\abs{\fvgridface_{\gridindex k}}}{\abs{\fvgridentity_{\gridindex}}}
    \left(
      \intp{(\SC \cdot \outernormal_{\gridindex k}) \ld{\entropy}'(\multipliers[\gridindex]^{\timeindex} \cdot \basis) \basis }
      + \intm{(\SC \cdot \outernormal_{\gridindex k}) \ld{\entropy}'(\multipliers[k]^{\timeindex} \cdot \basis) \basis }
      \right)
    - \Source(\spatialvar_{\gridindex}, \moments[\gridindex]^{\timeindex})
    \right).
  \end{equation}
  Since \(\Source\) is linear in \(\moments\), and
  \(
  \frac{\derivative\moments[\gridindex]}{\derivative\multipliers[\gridindexalt]} = \optHessian(\moments[\gridindex]) \dirac_{\gridindex\gridindexalt}
  \) (compare \eqref{eq:du_dalpha}), the term in \eqref{eq:appendixproof4} involving \(\Source\) results in columns of
  the Hessian (multiplied by some constant matrices, see \eqref{eq:sourcedef}) and thus is bounded by \eqref{eq:Hessianbounded}.
  Computing the derivative of the flux terms in \eqref{eq:appendixproof4}, we obtain terms like
  \begin{equation*}
    \frac{\partial}{\partial\multiplierscomp{\gridindexalt\basisindex}}
    \intp{(\SC \cdot \outernormal_{\gridindex k}) \ld{\entropy}'(\multipliers[\gridindex]^{\timeindex} \cdot \basis) \basis }
    =
    \intp{(\SC \cdot \outernormal_{\gridindex k}) \ld{\entropy}''(\multipliers[\gridindex]^{\timeindex}
      \cdot \basis) \basiscomp[\basisindex] \dirac_{\gridindex\basisindex} \basis }
  \end{equation*}
  which are bounded by \eqref{eq:entropysecondderivbounded}.
\end{proof}

\pagebreak
\gdef\thesection{S} 
\makeatletter
\renewcommand\@seccntformat[1]{\csname the#1\endcsname.\hspace{0.5em}}
\makeatother
\section{Supplementary data}\label{sec:supplementarydata}
\beginsupplement
This supplement contains additional figures and tabular data from the numerical experiments.
\vspace{0.5cm}

\begin{figure}[!hbp]
  \centering
  \externaltikz{SourcebeamConvergence}{\begin{tikzpicture}
  \def\pfad{Images/Results/}
  \begin{groupplot}[group style={group size=2 by 1, horizontal sep = 2cm,  vertical sep = 2cm},
      width=\figurewidth,
      height=\figureheight,
      scale only axis,
      cycle list name=color parula pairwise,
      xmode = log,
      ymode = log,
      grid = major,
      title style = {yshift = -0.2cm},
      x dir = reverse,
      ymax = 1e-2,
    ]
    \nextgroupplot[
      title = \tikztitle{New scheme},
      xlabel = {\(\rktol\)},
      ylabel = {\(\LpError{1}\)},
      legend style={at={(1.2,1.2)},xshift=-0cm,yshift=0cm,anchor=center,nodes=right},
      legend columns = 6,
    ]
    \node [text width=5cm,anchor=south]  {{\phantomsubcaption\normalfont\label{fig:AppendixConvergenceSourcebeam_a}}};
    \addplot+ [thick] plot table[x=tau,y=E1] {\pfad sourcebeam_error_m10.txt};
    \addplot+ [thick] plot table[x=tau,y=E1] {\pfad sourcebeam_error_m50.txt};
    \addplot+ [thick] plot table[x=tau,y=E1] {\pfad sourcebeam_error_hfm10.txt};
    \addplot+ [thick] plot table[x=tau,y=E1] {\pfad sourcebeam_error_hfm50.txt};
    \addplot+ [thick] plot table[x=tau,y=E1] {\pfad sourcebeam_error_pmm10.txt};
    \addplot+ [thick] plot table[x=tau,y=E1] {\pfad sourcebeam_error_pmm50.txt};
    \addlegendentry{\(\MN[10]\)};
    \addlegendentry{\(\MN[50]\)};
    \addlegendentry{\(\HFMN[10]\)};
    \addlegendentry{\(\HFMN[50]\)};
    \addlegendentry{\(\PMMN[10]\)};
    \addlegendentry{\(\PMMN[50]\)};

    \nextgroupplot[
      title = \tikztitle{Standard scheme},
      xlabel = {\(\timestep\)},
      ylabel = {\(\LpError{1}\)},
    ]
    \node [text width=5cm,anchor=south]  {{\phantomsubcaption\normalfont\label{fig:AppendixConvergenceSourcebeam_b}}};
    \addplot+ [thick] plot table[x=dt,y=E1] {\pfad sourcebeam_fv_error_m10.txt};
    \addplot+ [thick] plot table[x=dt,y=E1] {\pfad sourcebeam_fv_error_m50.txt};
    \addplot+ [thick] plot table[x=dt,y=E1] {\pfad sourcebeam_fv_error_hfm10.txt};
    \addplot+ [thick] plot table[x=dt,y=E1] {\pfad sourcebeam_fv_error_hfm50.txt};
    \addplot+ [thick] plot table[x=dt,y=E1] {\pfad sourcebeam_fv_error_pmm10.txt};
    \addplot+ [thick] plot table[x=dt,y=E1] {\pfad sourcebeam_fv_error_pmm50.txt};
  \end{groupplot}
\end{tikzpicture}
  \caption[Convergence of tested schemes in the source-beam test]{\(\Lp{1}\)-error against reference solution (new scheme with \(\rktol = 10^{-9}\)) in
    the source-beam test (\(\gridsizeoned = 600, \tf = 2.5\)).
    (a) New scheme for decreasing tolerance parameter \(\rktol\).
    (b) Standard scheme for decreasing time step \(\timestep\).}\label{fig:AppendixConvergenceSourcebeam}
\end{figure}

\begin{figure}[!hbp]
  \centering
  \externaltikz{PointsourceConvergence}{\begin{tikzpicture}
  \def\pfad{Images/Results/}
  \begin{groupplot}[group style={group size=2 by 1, horizontal sep = 2cm,  vertical sep = 2cm},
      width=\figurewidth,
      height=\figureheight,
      scale only axis,
      cycle list name=color parula pairwise,
      xmode = log,
      ymode = log,
      grid = major,
      title style = {yshift = -0.2cm},
      x dir = reverse,
    ]
    \nextgroupplot[
      title = \tikztitle{New scheme},
      xlabel = {\(\rktol\)},
      ylabel = {\(\LpError{1}\)},
      legend style={at={(1.2,1.2)},xshift=-0cm,yshift=0cm,anchor=center,nodes=right},
      legend columns = 6,
    ]
    \node [text width=5cm,anchor=south]  {{\phantomsubcaption\normalfont\label{fig:AppendixConvergencePointsource_a}}};
    \addplot+ [thick] plot table[x=tau,y=E1] {\pfad pointsource_convergence_m2.txt};
    \addplot+ [thick] plot table[x=tau,y=E1] {\pfad pointsource_convergence_m4.txt};
    \addplot+ [thick] plot table[x=tau,y=E1] {\pfad pointsource_convergence_hfm6.txt};
    \addplot+ [thick] plot table[x=tau,y=E1] {\pfad pointsource_convergence_hfm66.txt};
    \addplot+ [thick] plot table[x=tau,y=E1] {\pfad pointsource_convergence_pmm32.txt};
    \addplot+ [thick] plot table[x=tau,y=E1] {\pfad pointsource_convergence_pmm128.txt};
    \addlegendentry{\(\MN[2]\)};
    \addlegendentry{\(\MN[4]\)};
    \addlegendentry{\(\HFMN[6]\)};
    \addlegendentry{\(\HFMN[66]\)};
    \addlegendentry{\(\PMMN[32]\)};
    \addlegendentry{\(\PMMN[128]\)};

    \nextgroupplot[
      title = \tikztitle{Standard scheme},
      xlabel = {\(\timestep\)},
      ylabel = {\(\LpError{1}\)},
    ]
    \node [text width=5cm,anchor=south]  {{\phantomsubcaption\normalfont\label{fig:AppendixConvergencePointsource_b}}};
    \addplot+ [thick] plot table[x=dt,y=E1] {\pfad pointsource_fv_convergence_m2.txt};
    \addplot+ [thick] plot table[x=dt,y=E1] {\pfad pointsource_fv_convergence_m4.txt};
    \addplot+ [thick] plot table[x=dt,y=E1] {\pfad pointsource_fv_convergence_hfm6.txt};
    \addplot+ [thick] plot table[x=dt,y=E1] {\pfad pointsource_fv_convergence_hfm66.txt};
    \addplot+ [thick] plot table[x=dt,y=E1] {\pfad pointsource_fv_convergence_pmm32.txt};
    \addplot+ [thick] plot table[x=dt,y=E1] {\pfad pointsource_fv_convergence_pmm128.txt};
  \end{groupplot}
\end{tikzpicture}
  \caption[Convergence of tested schemes in the point-source test]{\(\Lp{1}\)-error against reference solution (new scheme with \(\rktol = 10^{-9}\)) in
    the point-source test (\(\gridsize = 30^3, \tf = 0.25\)).
    (a) New scheme for decreasing tolerance parameter \(\rktol\).
    (b) Standard scheme for decreasing time step \(\timestep\).}\label{fig:AppendixConvergencePointsource}
\end{figure}

\begin{figure}
  \centering \externaltikz{Timesteptimes3d}{\begin{tikzpicture}
  \def\pfad{Images/Results/}
  \begin{groupplot}[group style={group size=2 by 2, horizontal sep = 2cm,  vertical sep = 2cm},
      width=\figurewidth,
      height=\figureheight,
      scale only axis,
      cycle list name=color parula pairwise,
      ymode = log,
      grid = major,
      title style = {yshift = -0.2cm},
    ]
    \nextgroupplot[
      title = \tikztitle{Pointsource},
      xlabel = {\(\timevar\)},
      ylabel = {\(\timestep\)},
      legend style={at={(1.2,1.2)},xshift=-0cm,yshift=0cm,anchor=center,nodes=right},
      legend columns = 6,
    ]
    \addplot+ [thick] plot table[x=t,y=steptime,mark=none] {\pfad pointsource_timesteptimes_tol_0.01_hfm258.txt};
    \addplot+ [thick] plot table[x=t,y=steptime,mark=none] {\pfad pointsource_timesteptimes_ref_hfm258.txt};
    \addplot+ [thick] plot table[x=t,y=steptime,mark=none] {\pfad pointsource_timesteptimes_tol_0.01_pmm512.txt};
    \addplot+ [thick] plot table[x=t,y=steptime,mark=none] {\pfad pointsource_timesteptimes_ref_pmm512.txt};
    \addplot+ [thick] plot table[x=t,y=steptime,mark=none] {\pfad pointsource_timesteptimes_tol_0.01_m6.txt};
    \addplot+ [thick] plot table[x=t,y=steptime,mark=none] {\pfad pointsource_timesteptimes_ref_m6.txt};
    \addlegendentry{\(\HFMN[258]\) (\(\multipliers\))};
    \addlegendentry{\(\HFMN[258]\) (\(\moments\))};
    \addlegendentry{\(\PMMN[512]\) (\(\multipliers\))};
    \addlegendentry{\(\PMMN[512]\) (\(\moments\))};
    \addlegendentry{\(\MN[6]\) (\(\multipliers\))};
    \addlegendentry{\(\MN[6]\) (\(\moments\))};
    \node [text width=5cm,anchor=south]  {{\phantomsubcaption\normalfont\label{fig:Timesteptimes3d_a}}};

    \nextgroupplot[
      title = \tikztitle{Checkerboard},
      xlabel = {\(\timevar\)},
      ylabel = {\(\timestep\)},
    ]
    \addplot+ [thick] plot table[x=t,y=steptime,mark=none] {\pfad checkerboard_timesteptimes_tol_0.01_hfm258.txt};
    \addplot+ [thick] plot table[x=t,y=steptime,mark=none] {\pfad checkerboard_timesteptimes_ref_hfm258.txt};
    \addplot+ [thick] plot table[x=t,y=steptime,mark=none] {\pfad checkerboard_timesteptimes_tol_0.01_pmm512.txt};
    \addplot+ [thick] plot table[x=t,y=steptime,mark=none] {\pfad checkerboard_timesteptimes_ref_pmm512.txt};
    \addplot+ [thick] plot table[x=t,y=steptime,mark=none] {\pfad checkerboard_timesteptimes_tol_0.01_m6.txt};
    \addplot+ [thick] plot table[x=t,y=steptime,mark=none] {\pfad checkerboard_timesteptimes_ref_m6.txt};
    \node [text width=5cm,anchor=south]  {{\phantomsubcaption\normalfont\label{fig:Timesteptimes3d_b}}};

    \nextgroupplot[
      title = \tikztitle{Shadow \(\HFMN[258]\) and \(\PMMN[512]\)},
      xlabel = {\(\timevar\)},
      ylabel = {\(\timestep\)},
    ]
    \addplot+ [thick] plot table[x=t,y=steptime,mark=none] {\pfad shadow_timesteptimes_tol_0.01_adjust_0_hfm258.txt};
    \addplot+ [thick] plot table[x=t,y=steptime,mark=none] {\pfad shadow_timesteptimes_ref_hfm258.txt};
    \addplot+ [thick] plot table[x=t,y=steptime,mark=none] {\pfad shadow_timesteptimes_tol_0.01_adjust_0_pmm512.txt};
    \addplot+ [thick] plot table[x=t,y=steptime,mark=none] {\pfad shadow_timesteptimes_ref_pmm512.txt};
    \node [text width=5cm,anchor=south]  {{\phantomsubcaption\normalfont\label{fig:Timesteptimes3d_c}}};

    \nextgroupplot[
      title = \tikztitle{Shadow \(\MN[6]\)},
      xlabel = {\(\timevar\)},
      ylabel = {\(\timestep\)},
    ]
    \pgfplotsset{cycle list shift=4}
    \addplot+ [thick] plot table[x=t,y=steptime,mark=none] {\pfad shadow_timesteptimes_tol_0.01_adjust_0_m6.txt};
    \addplot+ [thick] plot table[x=t,y=steptime,mark=none] {\pfad shadow_timesteptimes_ref_m6.txt};
    \node [text width=5cm,anchor=south]  {{\phantomsubcaption\normalfont\label{fig:Timesteptimes3d_d}}};

  \end{groupplot}
\end{tikzpicture}
  \caption[Wall times for computing a single time step of transformed and standard schemes in three-dimensional tests]{
    Wall times for computing a single time step.
    (a)~Point-source problem, \(\gridsize = 50^3\), \(\tf = 0.75\).
    (b)~Checkerboard problem, \(\gridsize = 70^3\), \(\tf = 3.2\).
    (c)~Shadow problem, \(\gridsize = 60\times20\times15\), \(\tf = 20\), \(\HFMN[258]\) and \(\PMMN[512]\).
    (d)~Shadow problem, \(\MN[6]\).}\label{fig:Timesteptimes3d}
\end{figure}

\begin{figure}
  \centering
  \externaltikz{AppendixTimestepsSourcebeamM}{\begin{tikzpicture}
  \def\pfad{Images/Results/}
  \begin{groupplot}[group style={group size=2 by 3, horizontal sep = 2cm,  vertical sep = 1.7cm},
      width=0.9\figurewidth,
      height=0.8\figureheight,
      scale only axis,
      cycle list name=color parula pairwise,
      ymode = log,
      grid = major,
      title style = {yshift = -0.2cm},
      ymax = 5e-3,
      ymin=1e-06,
    ]

    \nextgroupplot[
      title = \tikztitle{\(\MN[10]\)},
      xlabel = {\(\timevar\)},
      ylabel = {\(\timestep\)},
    ]
    \addplot+ [thick, mark=none] plot table[x=t,y=dt] {\pfad sourcebeam_timesteps_tol_0.001_m10.txt};
    \draw (-5,0.0025) -- (5,0.0025);

    \nextgroupplot[
      title = \tikztitle{\(\MN[20]\)},
      xlabel = {\(\timevar\)},
      ylabel = {\(\timestep\)},
    ]
    \addplot+ [thick, mark=none] plot table[x=t,y=dt] {\pfad sourcebeam_timesteps_tol_0.001_m20.txt};
    \draw (-5,0.0025) -- (5,0.0025);

    \nextgroupplot[
      title = \tikztitle{\(\MN[30]\)},
      xlabel = {\(\timevar\)},
      ylabel = {\(\timestep\)},
    ]
    \addplot+ [thick, mark=none] plot table[x=t,y=dt] {\pfad sourcebeam_timesteps_tol_0.001_m30.txt};
    \draw (-5,0.0025) -- (5,0.0025);

    \nextgroupplot[
      title = \tikztitle{\(\MN[40]\)},
      xlabel = {\(\timevar\)},
      ylabel = {\(\timestep\)},
    ]
    \addplot+ [thick, mark=none] plot table[x=t,y=dt] {\pfad sourcebeam_timesteps_tol_0.001_m40.txt};
    \draw (-5,0.0025) -- (5,0.0025);

    \nextgroupplot[
      title = \tikztitle{\(\MN[50]\)},
      xlabel = {\(\timevar\)},
      ylabel = {\(\timestep\)},
    ]
    \addplot+ [thick, mark=none] plot table[x=t,y=dt] {\pfad sourcebeam_timesteps_tol_0.001_m50.txt};
    \draw (-5,0.0025) -- (5,0.0025);

    \nextgroupplot[
      title = \tikztitle{\(\MN[60]\)},
      xlabel = {\(\timevar\)},
      ylabel = {\(\timestep\)},
    ]
    \addplot+ [thick, mark=none] plot table[x=t,y=dt] {\pfad sourcebeam_timesteps_tol_0.001_m60.txt};
    \draw (-5,0.0025) -- (5,0.0025);

  \end{groupplot}
\end{tikzpicture}
  \caption[Time steps taken in the source-beam test case for different \texorpdfstring{\(\MN\)}{MN} models]{Time steps taken in the source-beam test
    case (\(\gridsizeoned=1200\), \(\tf=2.5\), \(\rktol=10^{-3}\)) for different models. The solid line
  represents the maximum realizability preserving time step \(\timestepnonadaptive\) for the standard splitting scheme.}\label{fig:AdditionalTimestepsSourceBeamM1}
\end{figure}

\begin{figure}
  \centering
  \externaltikz{AppendixTimestepsSourcebeamM2}{\begin{tikzpicture}
  \def\pfad{Images/Results/}
  \begin{groupplot}[group style={group size=2 by 2, horizontal sep = 2cm,  vertical sep = 2cm},
      width=\figurewidth,
      height=\figureheight,
      scale only axis,
      cycle list name=color parula pairwise,
      ymode = log,
      grid = major,
      title style = {yshift = -0.2cm},
      ymax = 5e-3,
      ymin=1e-06,
    ]

    \nextgroupplot[
      title = \tikztitle{\(\MN[70]\)},
      xlabel = {\(\timevar\)},
      ylabel = {\(\timestep\)},
    ]
    \addplot+ [thick, mark=none] plot table[x=t,y=dt] {\pfad sourcebeam_timesteps_tol_0.001_m70.txt};
    \draw (-5,0.0025) -- (5,0.0025);

    \nextgroupplot[
      title = \tikztitle{\(\MN[80]\)},
      xlabel = {\(\timevar\)},
      ylabel = {\(\timestep\)},
    ]
    \addplot+ [thick, mark=none] plot table[x=t,y=dt] {\pfad sourcebeam_timesteps_tol_0.001_m80.txt};
    \draw (-5,0.0025) -- (5,0.0025);

    \nextgroupplot[
      title = \tikztitle{\(\HFMN\)},
      xlabel = {\(\timevar\)},
      ylabel = {\(\timestep\)},
      legend pos = south east,
      legend columns = 2,
    ]
    \addplot+ [thick, mark=none] plot table[x=t,y=dt] {\pfad sourcebeam_timesteps_tol_0.001_hfm2.txt};
    \addplot+ [thick, mark=none] plot table[x=t,y=dt] {\pfad sourcebeam_timesteps_tol_0.001_hfm10.txt};
    \addplot+ [thick, mark=none] plot table[x=t,y=dt] {\pfad sourcebeam_timesteps_tol_0.001_hfm20.txt};
    \addplot+ [thick, mark=none] plot table[x=t,y=dt] {\pfad sourcebeam_timesteps_tol_0.001_hfm30.txt};
    \addplot+ [thick, mark=none] plot table[x=t,y=dt] {\pfad sourcebeam_timesteps_tol_0.001_hfm40.txt};
    \addplot+ [thick, mark=none] plot table[x=t,y=dt] {\pfad sourcebeam_timesteps_tol_0.001_hfm50.txt};
    \addplot+ [thick, mark=none] plot table[x=t,y=dt] {\pfad sourcebeam_timesteps_tol_0.001_hfm60.txt};
    \addplot+ [thick, mark=none] plot table[x=t,y=dt] {\pfad sourcebeam_timesteps_tol_0.001_hfm70.txt};
    \addplot+ [thick, mark=none] plot table[x=t,y=dt] {\pfad sourcebeam_timesteps_tol_0.001_hfm80.txt};
    \addplot+ [thick, mark=none] plot table[x=t,y=dt] {\pfad sourcebeam_timesteps_tol_0.001_hfm90.txt};
    \addplot+ [thick, mark=none] plot table[x=t,y=dt] {\pfad sourcebeam_timesteps_tol_0.001_hfm100.txt};
    \draw (-5,0.0025) -- (5,0.0025);
    \addlegendentry{\(\HFMN[2]\)}
    \addlegendentry{\(\HFMN[10]\)}
    \addlegendentry{\(\HFMN[20]\)}
    \addlegendentry{\(\HFMN[30]\)}
    \addlegendentry{\(\HFMN[40]\)}
    \addlegendentry{\(\HFMN[50]\)}
    \addlegendentry{\(\HFMN[60]\)}
    \addlegendentry{\(\HFMN[70]\)}
    \addlegendentry{\(\HFMN[80]\)}
    \addlegendentry{\(\HFMN[90]\)}
    \addlegendentry{\(\HFMN[100]\)}

    \nextgroupplot[
      title = \tikztitle{\(\PMMN\)},
      xlabel = {\(\timevar\)},
      ylabel = {\(\timestep\)},
      legend pos = south east,
      legend columns = 2,
    ]
    \addplot+ [thick, mark=none] plot table[x=t,y=dt] {\pfad sourcebeam_timesteps_tol_0.001_pmm2.txt};
    \addplot+ [thick, mark=none] plot table[x=t,y=dt] {\pfad sourcebeam_timesteps_tol_0.001_pmm10.txt};
    \addplot+ [thick, mark=none] plot table[x=t,y=dt] {\pfad sourcebeam_timesteps_tol_0.001_pmm20.txt};
    \addplot+ [thick, mark=none] plot table[x=t,y=dt] {\pfad sourcebeam_timesteps_tol_0.001_pmm30.txt};
    \addplot+ [thick, mark=none] plot table[x=t,y=dt] {\pfad sourcebeam_timesteps_tol_0.001_pmm40.txt};
    \addplot+ [thick, mark=none] plot table[x=t,y=dt] {\pfad sourcebeam_timesteps_tol_0.001_pmm50.txt};
    \addplot+ [thick, mark=none] plot table[x=t,y=dt] {\pfad sourcebeam_timesteps_tol_0.001_pmm60.txt};
    \addplot+ [thick, mark=none] plot table[x=t,y=dt] {\pfad sourcebeam_timesteps_tol_0.001_pmm70.txt};
    \addplot+ [thick, mark=none] plot table[x=t,y=dt] {\pfad sourcebeam_timesteps_tol_0.001_pmm80.txt};
    \addplot+ [thick, mark=none] plot table[x=t,y=dt] {\pfad sourcebeam_timesteps_tol_0.001_pmm90.txt};
    \addplot+ [thick, mark=none] plot table[x=t,y=dt] {\pfad sourcebeam_timesteps_tol_0.001_pmm100.txt};
    \draw (-5,0.0025) -- (5,0.0025);
    \addlegendentry{\(\PMMN[2]\)}
    \addlegendentry{\(\PMMN[10]\)}
    \addlegendentry{\(\PMMN[20]\)}
    \addlegendentry{\(\PMMN[30]\)}
    \addlegendentry{\(\PMMN[40]\)}
    \addlegendentry{\(\PMMN[50]\)}
    \addlegendentry{\(\PMMN[60]\)}
    \addlegendentry{\(\PMMN[70]\)}
    \addlegendentry{\(\PMMN[80]\)}
    \addlegendentry{\(\PMMN[90]\)}
    \addlegendentry{\(\PMMN[100]\)}

  \end{groupplot}
\end{tikzpicture}
  \caption[Time steps taken in the source-beam test case for different \texorpdfstring{\(\MN\)}{MN} models (continued)]{Time steps taken in the source-beam
    test case (\(\gridsizeoned=1200\), \(\tf=2.5\), \(\rktol=10^{-3}\)) for different models (continued). The solid line
  represents the maximum realizability preserving time step \(\timestepnonadaptive\) for the standard splitting scheme.}\label{fig:AdditionalTimestepsSourceBeamM2}
\end{figure}

\begin{figure}
  \centering \externaltikz{Timesteps3d}{\begin{tikzpicture}
  \def\pfad{Images/Results/}
  \begin{groupplot}[group style={group size=2 by 3, horizontal sep = 2cm,  vertical sep = 2cm},
      width=\figurewidth,
      height=\figureheight,
      scale only axis,
      cycle list name=color parula pairwise,
      ymode = log,
      grid = major,
      title style = {yshift = -0.2cm},
    ]
    \nextgroupplot[
      title = \tikztitle{Pointsource},
      xlabel = {\(\timevar\)},
      ylabel = {\(\timestep\)},
      legend style={at={(1.2,1.2)},xshift=-0cm,yshift=0cm,anchor=center,nodes=right},
      legend columns = 6,
      ymin = 3e-4,
    ]
    \addplot+ [thick] plot table[x=t,y=dt,mark=none] {\pfad pointsource_timesteps_tol_0.01_hfm6.txt};
    \addplot+ [thick] plot table[x=t,y=dt,mark=none] {\pfad pointsource_timesteps_tol_0.01_hfm258.txt};
    \addplot+ [thick] plot table[x=t,y=dt,mark=none] {\pfad pointsource_timesteps_tol_0.01_pmm32.txt};
    \addplot+ [thick] plot table[x=t,y=dt,mark=none] {\pfad pointsource_timesteps_tol_0.01_pmm512.txt};
    \addplot+ [thick] plot table[x=t,y=dt,mark=none] {\pfad pointsource_timesteps_tol_0.01_m2.txt};
    \addplot+ [thick] plot table[x=t,y=dt,mark=none] {\pfad pointsource_timesteps_tol_0.01_m6.txt};
    \draw (-1,0.02309401076) -- (2.,0.02309401076);
    \addlegendentry{\(\HFMN[6]\)};
    \addlegendentry{\(\HFMN[258]\)};
    \addlegendentry{\(\PMMN[32]\)};
    \addlegendentry{\(\PMMN[512]\)};
    \addlegendentry{\(\MN[2]\)};
    \addlegendentry{\(\MN[6]\)};
    \node [text width=5cm,anchor=south]  {{\phantomsubcaption\normalfont\label{fig:Timesteps3d_a}}};

    \nextgroupplot[
      title = \tikztitle{Checkerboard},
      xlabel = {\(\timevar\)},
      ylabel = {\(\timestep\)},
      ymin = 1e-3,
    ]
    \addplot+ [thick] plot table[x=t,y=dt,mark=none] {\pfad checkerboard_timesteps_tol_0.01_hfm6.txt};
    \addplot+ [thick] plot table[x=t,y=dt,mark=none] {\pfad checkerboard_timesteps_tol_0.01_hfm258.txt};
    \addplot+ [thick] plot table[x=t,y=dt,mark=none] {\pfad checkerboard_timesteps_tol_0.01_pmm32.txt};
    \addplot+ [thick] plot table[x=t,y=dt,mark=none] {\pfad checkerboard_timesteps_tol_0.01_pmm512.txt};
    \addplot+ [thick] plot table[x=t,y=dt,mark=none] {\pfad checkerboard_timesteps_tol_0.01_m2.txt};
    \addplot+ [thick] plot table[x=t,y=dt,mark=none] {\pfad checkerboard_timesteps_tol_0.01_m6.txt};
    \draw (-5,0.05773502691) -- (12,0.05773502691);
    \draw[thick,densely dotted] (-5,0.03660254037) -- (12,0.03660254037);
    \node [text width=5cm,anchor=south]  {{\phantomsubcaption\normalfont\label{fig:Timesteps3d_b}}};
  \end{groupplot}
\end{tikzpicture}
  \caption[Time steps taken in the adaptive Runge-Kutta scheme for point-source and checkerboard tests]{Time steps taken in the point-source and checkerboard tests (\(\rktol = 0.01\)).
    The solid and dotted horizontal line represent the time step restrictions~\eqref{eq:splittingschemetimestep} and~\eqref{eq:fvschemetimestep},
    respectively (which almost agree for the point-source test).
    (a)~Point-source problem with \(50^3\) grid cells, \(\tf = 0.75\).
    (b)~Checkerboard problem with \(70^3\) grid cells, \(\tf = 3.2\).}\label{fig:Timesteps3d}
\end{figure}

\begin{figure}
  \centering
  \externaltikz{AdditionalTimesteps3dShadowOtherModels}{\begin{tikzpicture}
  \def\pfad{Images/Results/}
  \begin{groupplot}[group style={group size=2 by 1, horizontal sep = 2cm,  vertical sep = 2cm},
      width=\figurewidth,
      height=\figureheight,
      scale only axis,
      cycle list name=color parula,
      ymode = log,
      grid = major,
      title style = {yshift = -0.2cm},
      ymin = 1e-5,
      ymax = 0.2,
    ]

    \nextgroupplot[
      title = \tikztitle{\(\HFMN\)},
      xlabel = {\(\timevar\)},
      ylabel = {\(\timestep\)},
      legend pos = south east,
    ]
    \addplot+ [thick] plot table[x=t,y=dt,mark=none] {\pfad shadow_timesteps_tol_0.01_adjust_0_hfm18.txt};
    \addplot+ [thick] plot table[x=t,y=dt,mark=none] {\pfad shadow_timesteps_tol_0.01_adjust_0_hfm66.txt};
    \addplot+ [thick] plot table[x=t,y=dt,mark=none] {\pfad shadow_timesteps_tol_0.01_adjust_0_hfm258.txt};
    \addlegendentry{\(\HFMN[18]\)}
    \addlegendentry{\(\HFMN[66]\)}
    \addlegendentry{\(\HFMN[258]\)}
    \draw (-5,0.11547005383) -- (25,0.11547005383);
    \draw[thick,densely dotted] (-5,0.01704731792) -- (25,0.01704731792);

    \nextgroupplot[
      title = \tikztitle{\(\PMMN\)},
      xlabel = {\(\timevar\)},
      ylabel = {\(\timestep\)},
      legend pos = south east,
    ]
    \addplot+ [thick] plot table[x=t,y=dt,mark=none] {\pfad shadow_timesteps_tol_0.01_adjust_0_pmm32.txt};
    \addplot+ [thick] plot table[x=t,y=dt,mark=none] {\pfad shadow_timesteps_tol_0.01_adjust_0_pmm128.txt};
    \addplot+ [thick] plot table[x=t,y=dt,mark=none] {\pfad shadow_timesteps_tol_0.01_adjust_0_pmm512.txt};
    \addlegendentry{\(\PMMN[32]\)}
    \addlegendentry{\(\PMMN[128]\)}
    \addlegendentry{\(\PMMN[512]\)}
    \draw (-5,0.11547005383) -- (25,0.11547005383);
    \draw[thick,densely dotted] (-5,0.01704731792) -- (25,0.01704731792);

  \end{groupplot}
\end{tikzpicture}
  \caption[Time steps taken in the shadow test case for \texorpdfstring{\(\HFMN\) and \(\PMMN\)}{HFMN and PMMN} models]{Time steps taken in the shadow test case (\(\gridsize=60\times20\times15\), \(\tf=20\), \(\rktol=10^{-2}\)) for the \(\HFMN\) and \(\PMMN\) models.
  The solid and dotted horizontal line represent the time step restrictions~\eqref{eq:splittingschemetimestep} and~\eqref{eq:fvschemetimestep},
  respectively.}\label{fig:AdditionalTimestepsShadowOtherModels}
\end{figure}

\begin{figure}
  \centering
  \externaltikz{AdditionalTimesteps3dShadowM}{
\begin{tikzpicture}
  \def\pfad{Images/Results/}
  \begin{groupplot}[group style={group size=2 by 3, horizontal sep = 2cm,  vertical sep = 1.5cm},
      width=\figurewidth,
      height=0.94\figureheight,
      scale only axis,
      cycle list name=color parula pairwise,
      ymode = log,
      grid = major,
      title style = {yshift = -0.2cm},
      legend style={at={(1.2,1.2)},xshift=-0cm,yshift=0cm,anchor=center,nodes=right},
      legend columns = 6,
      ymin = 1e-5,
      ymax = 0.2,
    ]

    \nextgroupplot[
      title = \tikztitle{\(\MN[1]\)},
      xlabel = {\(\timevar\)},
      ylabel = {\(\timestep\)},
    ]
    \pgfplotsset{cycle list shift=5}
    \addplot+ [thick] plot table[x=t,y=dt,mark=none] {\pfad shadow_timesteps_tol_0.01_adjust_0_m1.txt};
    \draw (-5,0.11547005383) -- (25,0.11547005383);
    \draw[thick,densely dotted] (-5,0.01704731792) -- (25,0.01704731792);

    \nextgroupplot[
      title = \tikztitle{\(\MN[2]\)},
      xlabel = {\(\timevar\)},
      ylabel = {\(\timestep\)},
    ]
    \pgfplotsset{cycle list shift=5}
    \addplot+ [thick] plot table[x=t,y=dt,mark=none] {\pfad shadow_timesteps_tol_0.01_adjust_0_m2.txt};
    \draw (-5,0.11547005383) -- (25,0.11547005383);
    \draw[thick,densely dotted] (-5,0.01704731792) -- (25,0.01704731792);

    \nextgroupplot[
      title = \tikztitle{\(\MN[3]\)},
      xlabel = {\(\timevar\)},
      ylabel = {\(\timestep\)},
    ]
    \pgfplotsset{cycle list shift=5}
    \addplot+ [thick] plot table[x=t,y=dt,mark=none] {\pfad shadow_timesteps_tol_0.01_adjust_0_m3.txt};
    \draw (-5,0.11547005383) -- (25,0.11547005383);
    \draw[thick,densely dotted] (-5,0.01704731792) -- (25,0.01704731792);

    \nextgroupplot[
      title = \tikztitle{\(\MN[4]\)},
      xlabel = {\(\timevar\)},
      ylabel = {\(\timestep\)},
    ]
    \pgfplotsset{cycle list shift=5}
    \addplot+ [thick] plot table[x=t,y=dt,mark=none] {\pfad shadow_timesteps_tol_0.01_adjust_0_m4.txt};
    \draw (-5,0.11547005383) -- (25,0.11547005383);
    \draw[thick,densely dotted] (-5,0.01704731792) -- (25,0.01704731792);

    \nextgroupplot[
      title = \tikztitle{\(\MN[5]\)},
      xlabel = {\(\timevar\)},
      ylabel = {\(\timestep\)},
    ]
    \pgfplotsset{cycle list shift=5}
    \addplot+ [thick] plot table[x=t,y=dt,mark=none] {\pfad shadow_timesteps_tol_0.01_adjust_0_m5.txt};
    \draw (-5,0.11547005383) -- (25,0.11547005383);
    \draw[thick,densely dotted] (-5,0.01704731792) -- (25,0.01704731792);

    \nextgroupplot[
      title = \tikztitle{\(\MN[6]\)},
      xlabel = {\(\timevar\)},
      ylabel = {\(\timestep\)},
    ]
    \pgfplotsset{cycle list shift=5}
    \addplot+ [thick] plot table[x=t,y=dt,mark=none] {\pfad shadow_timesteps_tol_0.01_adjust_0_m6.txt};
    \draw (-5,0.11547005383) -- (25,0.11547005383);
    \draw[thick,densely dotted] (-5,0.01704731792) -- (25,0.01704731792);

  \end{groupplot}
\end{tikzpicture}
  \caption[Time steps taken in the shadow test case for \texorpdfstring{\(\MN\)}{MN} models]{Time steps taken in the shadow test case (\(\gridsize=60\times20\times15\), \(\tf=20\), \(\rktol=10^{-2}\)) for the \(\MN\) models.
  The solid and dotted horizontal line represent the time step restrictions~\eqref{eq:splittingschemetimestep} and~\eqref{eq:fvschemetimestep},
  respectively.}\label{fig:AdditionalTimestepsShadowM}
\end{figure}

\begin{figure}
  \centering
  \externaltikz{AppendixTimestepsConvergencePlanesource}{\begin{tikzpicture}
  \def\pfad{Images/Results/}
  \begin{groupplot}[group style={group size=2 by 3, horizontal sep = 2cm,  vertical sep = 1.6cm},
      width=\figurewidth,
      height=0.88\figureheight,
      scale only axis,
      cycle list name=color parula,
      ymode = log,
      grid = major,
      title style = {yshift = -0.15cm},
    ]
    \nextgroupplot[
      title = \tikztitle{\(\MN[10]\)},
      xlabel = {\(\rktol\)},
      ylabel = {\(\timestep\)},
      legend style={at={(1.2,1.2)},xshift=-0cm,yshift=0.15cm,anchor=center,nodes=right},
      legend columns = 6,
      ymin = 0.0001,
    ]
    \addplot+ [thick,mark=none] plot table[x=t,y=dt] {\pfad planesource_timesteps_convergence_tol_0.1_m10.txt};
    \addplot+ [thick,mark=none] plot table[x=t,y=dt] {\pfad planesource_timesteps_convergence_tol_0.01_m10.txt};
    \addplot+ [thick,mark=none] plot table[x=t,y=dt] {\pfad planesource_timesteps_convergence_tol_0.001_m10.txt};
    \addplot+ [thick,mark=none] plot table[x=t,y=dt] {\pfad planesource_timesteps_convergence_tol_0.0001_m10.txt};
    \addplot+ [thick,mark=none] plot table[x=t,y=dt] {\pfad planesource_timesteps_convergence_tol_1e-05_m10.txt};
    \addplot+ [thick,mark=none] plot table[x=t,y=dt] {\pfad planesource_timesteps_convergence_tol_1e-06_m10.txt};
    \addlegendentry{\(\rktol=10^{-1}\)};
    \addlegendentry{\(\rktol=10^{-2}\)};
    \addlegendentry{\(\rktol=10^{-3}\)};
    \addlegendentry{\(\rktol=10^{-4}\)};
    \addlegendentry{\(\rktol=10^{-5}\)};
    \addlegendentry{\(\rktol=10^{-6}\)};

    \nextgroupplot[
      title = \tikztitle{\(\MN[50]\)},
      xlabel = {\(\rktol\)},
      ylabel = {\(\timestep\)},
      legend columns = 6,
      ymin = 0.0001,
    ]
    \addplot+ [thick,mark=none] plot table[x=t,y=dt] {\pfad planesource_timesteps_convergence_tol_0.1_m50.txt};
    \addplot+ [thick,mark=none] plot table[x=t,y=dt] {\pfad planesource_timesteps_convergence_tol_0.01_m50.txt};
    \addplot+ [thick,mark=none] plot table[x=t,y=dt] {\pfad planesource_timesteps_convergence_tol_0.001_m50.txt};
    \addplot+ [thick,mark=none] plot table[x=t,y=dt] {\pfad planesource_timesteps_convergence_tol_0.0001_m50.txt};
    \addplot+ [thick,mark=none] plot table[x=t,y=dt] {\pfad planesource_timesteps_convergence_tol_1e-05_m50.txt};
    \addplot+ [thick,mark=none] plot table[x=t,y=dt] {\pfad planesource_timesteps_convergence_tol_1e-06_m50.txt};

    \nextgroupplot[
      title = \tikztitle{\(\HFMN[10]\)},
      xlabel = {\(\rktol\)},
      ylabel = {\(\timestep\)},
      legend columns = 6,
      ymin = 0.0001,
    ]
    \addplot+ [thick,mark=none] plot table[x=t,y=dt] {\pfad planesource_timesteps_convergence_tol_0.1_hfm10.txt};
    \addplot+ [thick,mark=none] plot table[x=t,y=dt] {\pfad planesource_timesteps_convergence_tol_0.01_hfm10.txt};
    \addplot+ [thick,mark=none] plot table[x=t,y=dt] {\pfad planesource_timesteps_convergence_tol_0.001_hfm10.txt};
    \addplot+ [thick,mark=none] plot table[x=t,y=dt] {\pfad planesource_timesteps_convergence_tol_0.0001_hfm10.txt};
    \addplot+ [thick,mark=none] plot table[x=t,y=dt] {\pfad planesource_timesteps_convergence_tol_1e-05_hfm10.txt};
    \addplot+ [thick,mark=none] plot table[x=t,y=dt] {\pfad planesource_timesteps_convergence_tol_1e-06_hfm10.txt};

    \nextgroupplot[
      title = \tikztitle{\(\HFMN[50]\)},
      xlabel = {\(\rktol\)},
      ylabel = {\(\timestep\)},
      legend columns = 6,
      ymin = 0.0001,
    ]
    \addplot+ [thick,mark=none] plot table[x=t,y=dt] {\pfad planesource_timesteps_convergence_tol_0.1_hfm50.txt};
    \addplot+ [thick,mark=none] plot table[x=t,y=dt] {\pfad planesource_timesteps_convergence_tol_0.01_hfm50.txt};
    \addplot+ [thick,mark=none] plot table[x=t,y=dt] {\pfad planesource_timesteps_convergence_tol_0.001_hfm50.txt};
    \addplot+ [thick,mark=none] plot table[x=t,y=dt] {\pfad planesource_timesteps_convergence_tol_0.0001_hfm50.txt};
    \addplot+ [thick,mark=none] plot table[x=t,y=dt] {\pfad planesource_timesteps_convergence_tol_1e-05_hfm50.txt};
    \addplot+ [thick,mark=none] plot table[x=t,y=dt] {\pfad planesource_timesteps_convergence_tol_1e-06_hfm50.txt};

    \nextgroupplot[
      title = \tikztitle{\(\PMMN[10]\)},
      xlabel = {\(\rktol\)},
      ylabel = {\(\timestep\)},
      legend columns = 6,
      ymin = 0.0001,
    ]
    \addplot+ [thick,mark=none] plot table[x=t,y=dt] {\pfad planesource_timesteps_convergence_tol_0.1_pmm10.txt};
    \addplot+ [thick,mark=none] plot table[x=t,y=dt] {\pfad planesource_timesteps_convergence_tol_0.01_pmm10.txt};
    \addplot+ [thick,mark=none] plot table[x=t,y=dt] {\pfad planesource_timesteps_convergence_tol_0.001_pmm10.txt};
    \addplot+ [thick,mark=none] plot table[x=t,y=dt] {\pfad planesource_timesteps_convergence_tol_0.0001_pmm10.txt};
    \addplot+ [thick,mark=none] plot table[x=t,y=dt] {\pfad planesource_timesteps_convergence_tol_1e-05_pmm10.txt};
    \addplot+ [thick,mark=none] plot table[x=t,y=dt] {\pfad planesource_timesteps_convergence_tol_1e-06_pmm10.txt};

    \nextgroupplot[
      title = \tikztitle{\(\PMMN[50]\)},
      xlabel = {\(\rktol\)},
      ylabel = {\(\timestep\)},
      legend columns = 6,
      ymin = 0.0001,
    ]
    \addplot+ [thick,mark=none] plot table[x=t,y=dt] {\pfad planesource_timesteps_convergence_tol_0.1_pmm50.txt};
    \addplot+ [thick,mark=none] plot table[x=t,y=dt] {\pfad planesource_timesteps_convergence_tol_0.01_pmm50.txt};
    \addplot+ [thick,mark=none] plot table[x=t,y=dt] {\pfad planesource_timesteps_convergence_tol_0.001_pmm50.txt};
    \addplot+ [thick,mark=none] plot table[x=t,y=dt] {\pfad planesource_timesteps_convergence_tol_0.0001_pmm50.txt};
    \addplot+ [thick,mark=none] plot table[x=t,y=dt] {\pfad planesource_timesteps_convergence_tol_1e-05_pmm50.txt};
    \addplot+ [thick,mark=none] plot table[x=t,y=dt] {\pfad planesource_timesteps_convergence_tol_1e-06_pmm50.txt};

  \end{groupplot}
\end{tikzpicture}
  \caption[Time steps taken in the plane-source test case for different tolerances]{Time steps taken in the
    plane-source test case (\(\gridsizeoned=240\), \(\tf = 0.5\)) for different tolerance parameters \(\rktol\).
    The last step has been omitted for all models as
    it was chosen to reach \(\tf\) exactly
    and thus may be artificially small.}\label{fig:AdditionalTimestepsConvergencePlanesource}
\end{figure}

\begin{figure}
  \centering
  \externaltikz{AppendixTimestepsConvergencePointsource}{\begin{tikzpicture}
  \def\pfad{Images/Results/}
  \begin{groupplot}[group style={group size=2 by 3, horizontal sep = 2cm,  vertical sep = 1.6cm},
      width=\figurewidth,
      height=0.88\figureheight,
      scale only axis,
      cycle list name=color parula,
      ymode = log,
      grid = major,
      title style = {yshift = -0.15cm},
    ]
    \nextgroupplot[
      title = \tikztitle{\(\MN[2]\)},
      xlabel = {\(\rktol\)},
      ylabel = {\(\timestep\)},
      legend style={at={(1.2,1.2)},xshift=-0cm,yshift=0.15cm,anchor=center,nodes=right},
      legend columns = 6,
      ymin = 0.0001,
    ]
    \addplot+ [thick,mark=none] plot table[x=t,y=dt] {\pfad pointsource_timesteps_convergence_tol_0.1_m2.txt};
    \addplot+ [thick,mark=none] plot table[x=t,y=dt] {\pfad pointsource_timesteps_convergence_tol_0.01_m2.txt};
    \addplot+ [thick,mark=none] plot table[x=t,y=dt] {\pfad pointsource_timesteps_convergence_tol_0.001_m2.txt};
    \addplot+ [thick,mark=none] plot table[x=t,y=dt] {\pfad pointsource_timesteps_convergence_tol_0.0001_m2.txt};
    \addplot+ [thick,mark=none] plot table[x=t,y=dt] {\pfad pointsource_timesteps_convergence_tol_1e-05_m2.txt};
    \addplot+ [thick,mark=none] plot table[x=t,y=dt] {\pfad pointsource_timesteps_convergence_tol_1e-06_m2.txt};
    \addlegendentry{\(\rktol=10^{-1}\)};
    \addlegendentry{\(\rktol=10^{-2}\)};
    \addlegendentry{\(\rktol=10^{-3}\)};
    \addlegendentry{\(\rktol=10^{-4}\)};
    \addlegendentry{\(\rktol=10^{-5}\)};
    \addlegendentry{\(\rktol=10^{-6}\)};

    \nextgroupplot[
      title = \tikztitle{\(\MN[4]\)},
      xlabel = {\(\rktol\)},
      ylabel = {\(\timestep\)},
      legend columns = 6,
      ymin = 0.0001,
    ]
    \addplot+ [thick,mark=none] plot table[x=t,y=dt] {\pfad pointsource_timesteps_convergence_tol_0.1_m4.txt};
    \addplot+ [thick,mark=none] plot table[x=t,y=dt] {\pfad pointsource_timesteps_convergence_tol_0.01_m4.txt};
    \addplot+ [thick,mark=none] plot table[x=t,y=dt] {\pfad pointsource_timesteps_convergence_tol_0.001_m4.txt};
    \addplot+ [thick,mark=none] plot table[x=t,y=dt] {\pfad pointsource_timesteps_convergence_tol_0.0001_m4.txt};
    \addplot+ [thick,mark=none] plot table[x=t,y=dt] {\pfad pointsource_timesteps_convergence_tol_1e-05_m4.txt};
    \addplot+ [thick,mark=none] plot table[x=t,y=dt] {\pfad pointsource_timesteps_convergence_tol_1e-06_m4.txt};

    \nextgroupplot[
      title = \tikztitle{\(\HFMN[6]\)},
      xlabel = {\(\rktol\)},
      ylabel = {\(\timestep\)},
      legend columns = 6,
      ymin = 0.0001,
    ]
    \addplot+ [thick,mark=none] plot table[x=t,y=dt] {\pfad pointsource_timesteps_convergence_tol_0.1_hfm6.txt};
    \addplot+ [thick,mark=none] plot table[x=t,y=dt] {\pfad pointsource_timesteps_convergence_tol_0.01_hfm6.txt};
    \addplot+ [thick,mark=none] plot table[x=t,y=dt] {\pfad pointsource_timesteps_convergence_tol_0.001_hfm6.txt};
    \addplot+ [thick,mark=none] plot table[x=t,y=dt] {\pfad pointsource_timesteps_convergence_tol_0.0001_hfm6.txt};
    \addplot+ [thick,mark=none] plot table[x=t,y=dt] {\pfad pointsource_timesteps_convergence_tol_1e-05_hfm6.txt};
    \addplot+ [thick,mark=none] plot table[x=t,y=dt] {\pfad pointsource_timesteps_convergence_tol_1e-06_hfm6.txt};

    \nextgroupplot[
      title = \tikztitle{\(\HFMN[66]\)},
      xlabel = {\(\rktol\)},
      ylabel = {\(\timestep\)},
      legend columns = 6,
      ymin = 0.0001,
    ]
    \addplot+ [thick,mark=none] plot table[x=t,y=dt] {\pfad pointsource_timesteps_convergence_tol_0.1_hfm66.txt};
    \addplot+ [thick,mark=none] plot table[x=t,y=dt] {\pfad pointsource_timesteps_convergence_tol_0.01_hfm66.txt};
    \addplot+ [thick,mark=none] plot table[x=t,y=dt] {\pfad pointsource_timesteps_convergence_tol_0.001_hfm66.txt};
    \addplot+ [thick,mark=none] plot table[x=t,y=dt] {\pfad pointsource_timesteps_convergence_tol_0.0001_hfm66.txt};
    \addplot+ [thick,mark=none] plot table[x=t,y=dt] {\pfad pointsource_timesteps_convergence_tol_1e-05_hfm66.txt};
    \addplot+ [thick,mark=none] plot table[x=t,y=dt] {\pfad pointsource_timesteps_convergence_tol_1e-06_hfm66.txt};

    \nextgroupplot[
      title = \tikztitle{\(\PMMN[32]\)},
      xlabel = {\(\rktol\)},
      ylabel = {\(\timestep\)},
      legend columns = 6,
      ymin = 0.0001,
    ]
    \addplot+ [thick,mark=none] plot table[x=t,y=dt] {\pfad pointsource_timesteps_convergence_tol_0.1_pmm32.txt};
    \addplot+ [thick,mark=none] plot table[x=t,y=dt] {\pfad pointsource_timesteps_convergence_tol_0.01_pmm32.txt};
    \addplot+ [thick,mark=none] plot table[x=t,y=dt] {\pfad pointsource_timesteps_convergence_tol_0.001_pmm32.txt};
    \addplot+ [thick,mark=none] plot table[x=t,y=dt] {\pfad pointsource_timesteps_convergence_tol_0.0001_pmm32.txt};
    \addplot+ [thick,mark=none] plot table[x=t,y=dt] {\pfad pointsource_timesteps_convergence_tol_1e-05_pmm32.txt};
    \addplot+ [thick,mark=none] plot table[x=t,y=dt] {\pfad pointsource_timesteps_convergence_tol_1e-06_pmm32.txt};

    \nextgroupplot[
      title = \tikztitle{\(\PMMN[128]\)},
      xlabel = {\(\rktol\)},
      ylabel = {\(\timestep\)},
      legend columns = 6,
      ymin = 0.0001,
    ]
    \addplot+ [thick,mark=none] plot table[x=t,y=dt] {\pfad pointsource_timesteps_convergence_tol_0.1_pmm128.txt};
    \addplot+ [thick,mark=none] plot table[x=t,y=dt] {\pfad pointsource_timesteps_convergence_tol_0.01_pmm128.txt};
    \addplot+ [thick,mark=none] plot table[x=t,y=dt] {\pfad pointsource_timesteps_convergence_tol_0.001_pmm128.txt};
    \addplot+ [thick,mark=none] plot table[x=t,y=dt] {\pfad pointsource_timesteps_convergence_tol_0.0001_pmm128.txt};
    \addplot+ [thick,mark=none] plot table[x=t,y=dt] {\pfad pointsource_timesteps_convergence_tol_1e-05_pmm128.txt};
    \addplot+ [thick,mark=none] plot table[x=t,y=dt] {\pfad pointsource_timesteps_convergence_tol_1e-06_pmm128.txt};

  \end{groupplot}
\end{tikzpicture}
  \caption[Time steps taken in the point-source test case for different tolerances]{Time steps taken in the point-source test case (\(\gridsize=30^3\), \(\tf = 0.25\)) for different tolerance parameters \(\rktol\).
    The last step has been omitted for all models as it was chosen to reach \(\tf\) exactly and thus may be artificially small.}\label{fig:AdditionalTimestepsConvergencePointsource}
\end{figure}

\begin{table}[htbp]
  \centering
  \small
  \begin{tabular}{l c c l c c c r@{.}l r@{.}l}
    Test case    & \(\gridsizeoned\) & \(\tf\) & Model          & Scheme   & \(\timestep\) & \(\rktol\) & \multicolumn{2}{c}{\(\LpError{1}\)} & \multicolumn{2}{c}{\(\LpError{\infty}\)} \\
    \midrule
    Plane-source & 1200              & 1       & \(\HFMN[2]\)   & new      & ---           & 1e-03      & 1 & 47e-04                          & 1 & 63e-04                               \\
    Plane-source & 1200              & 1       & \(\HFMN[2]\)   & standard & 0.001800      & ---        & 4 & 24e-03                          & 9 & 33e-03                               \\
    Plane-source & 1200              & 1       & \(\HFMN[10]\)  & new      & ---           & 1e-03      & 2 & 89e-04                          & 1 & 97e-04                               \\
    Plane-source & 1200              & 1       & \(\HFMN[10]\)  & standard & 0.001800      & ---        & 4 & 27e-03                          & 5 & 69e-03                               \\
    Plane-source & 1200              & 1       & \(\HFMN[50]\)  & new      & ---           & 1e-03      & 2 & 40e-04                          & 1 & 07e-04                               \\
    Plane-source & 1200              & 1       & \(\HFMN[50]\)  & standard & 0.001800      & ---        & 2 & 79e-04                          & 9 & 40e-04                               \\
    Plane-source & 1200              & 1       & \(\HFMN[100]\) & new      & ---           & 1e-03      & 2 & 43e-04                          & 1 & 07e-04                               \\
    Plane-source & 1200              & 1       & \(\HFMN[100]\) & standard & 0.001800      & ---        & 2 & 61e-04                          & 9 & 42e-04                               \\
    Plane-source & 1200              & 1       & \(\PMMN[2]\)   & new      & ---           & 1e-03      & 8 & 80e-05                          & 9 & 68e-05                               \\
    Plane-source & 1200              & 1       & \(\PMMN[2]\)   & standard & 0.001800      & ---        & 4 & 24e-03                          & 9 & 33e-03                               \\
    Plane-source & 1200              & 1       & \(\PMMN[10]\)  & new      & ---           & 1e-03      & 2 & 83e-05                          & 5 & 52e-05                               \\
    Plane-source & 1200              & 1       & \(\PMMN[10]\)  & standard & 0.001800      & ---        & 3 & 94e-03                          & 5 & 53e-03                               \\
    Plane-source & 1200              & 1       & \(\PMMN[50]\)  & new      & ---           & 1e-03      & 1 & 66e-05                          & 1 & 38e-05                               \\
    Plane-source & 1200              & 1       & \(\PMMN[50]\)  & standard & 0.001800      & ---        & 4 & 42e-04                          & 8 & 76e-04                               \\
    Plane-source & 1200              & 1       & \(\PMMN[100]\) & new      & ---           & 1e-03      & 1 & 55e-05                          & 1 & 23e-05                               \\
    Plane-source & 1200              & 1       & \(\PMMN[100]\) & standard & 0.001800      & ---        & 2 & 65e-04                          & 9 & 40e-04                               \\
    Plane-source & 1200              & 1       & \(\MN[1]\)     & new      & ---           & 1e-03      & 8 & 79e-05                          & 9 & 67e-05                               \\
    Plane-source & 1200              & 1       & \(\MN[1]\)     & standard & 0.001800      & ---        & 4 & 24e-03                          & 9 & 33e-03                               \\
    Plane-source & 1200              & 1       & \(\MN[10]\)    & new      & ---           & 1e-03      & 3 & 00e-05                          & 3 & 77e-05                               \\
    Plane-source & 1200              & 1       & \(\MN[10]\)    & standard & 0.001800      & ---        & 4 & 61e-03                          & 4 & 58e-03                               \\
    Plane-source & 1200              & 1       & \(\MN[50]\)    & new      & ---           & 1e-03      & 4 & 61e-05                          & 3 & 63e-05                               \\
    Plane-source & 1200              & 1       & \(\MN[50]\)    & standard & 0.001800      & ---        & 3 & 22e-04                          & 9 & 43e-04                               \\
    Plane-source & 1200              & 1       & \(\MN[100]\)   & new      & ---           & 1e-03      & 5 & 13e-05                          & 3 & 13e-05                               \\
    Plane-source & 1200              & 1       & \(\MN[100]\)   & standard & 0.001800      & ---        & 2 & 66e-04                          & 9 & 43e-04                               \\
    \midrule
    Source-beam  & 1200              & 2.5     & \(\HFMN[2]\)   & new      & ---           & 1e-03      & 1 & 83e-04                          & 5 & 52e-04                               \\
    Source-beam  & 1200              & 2.5     & \(\HFMN[2]\)   & standard & 0.002250      & ---        & 2 & 19e-05                          & 8 & 27e-04                               \\
    Source-beam  & 1200              & 2.5     & \(\HFMN[10]\)  & new      & ---           & 1e-03      & 2 & 57e-05                          & 8 & 08e-05                               \\
    Source-beam  & 1200              & 2.5     & \(\HFMN[10]\)  & standard & 0.002250      & ---        & 2 & 49e-05                          & 8 & 31e-04                               \\
    Source-beam  & 1200              & 2.5     & \(\HFMN[50]\)  & new      & ---           & 1e-03      & 7 & 60e-07                          & 6 & 42e-07                               \\
    Source-beam  & 1200              & 2.5     & \(\HFMN[50]\)  & standard & 0.002250      & ---        & 2 & 23e-05                          & 8 & 35e-04                               \\
    Source-beam  & 1200              & 2.5     & \(\HFMN[100]\) & new      & ---           & 1e-03      & 1 & 14e-06                          & 1 & 01e-06                               \\
    Source-beam  & 1200              & 2.5     & \(\HFMN[100]\) & standard & 0.002250      & ---        & 2 & 23e-05                          & 8 & 35e-04                               \\
    Source-beam  & 1200              & 2.5     & \(\PMMN[2]\)   & new      & ---           & 1e-03      & 2 & 43e-04                          & 2 & 24e-03                               \\
    Source-beam  & 1200              & 2.5     & \(\PMMN[2]\)   & standard & 0.002250      & ---        & 2 & 21e-05                          & 8 & 27e-04                               \\
    Source-beam  & 1200              & 2.5     & \(\PMMN[10]\)  & new      & ---           & 1e-03      & 5 & 60e-05                          & 3 & 17e-04                               \\
    Source-beam  & 1200              & 2.5     & \(\PMMN[10]\)  & standard & 0.002250      & ---        & 2 & 72e-05                          & 8 & 26e-04                               \\
    Source-beam  & 1200              & 2.5     & \(\PMMN[50]\)  & new      & ---           & 1e-03      & 1 & 59e-07                          & 1 & 26e-07                               \\
    Source-beam  & 1200              & 2.5     & \(\PMMN[50]\)  & standard & 0.002250      & ---        & 2 & 25e-05                          & 8 & 26e-04                               \\
    Source-beam  & 1200              & 2.5     & \(\PMMN[100]\) & new      & ---           & 1e-03      & 1 & 74e-07                          & 1 & 51e-07                               \\
    Source-beam  & 1200              & 2.5     & \(\PMMN[100]\) & standard & 0.002250      & ---        & 2 & 24e-05                          & 8 & 26e-04                               \\
    Source-beam  & 1200              & 2.5     & \(\MN[1]\)     & new      & ---           & 1e-03      & 3 & 29e-04                          & 6 & 00e-03                               \\
    Source-beam  & 1200              & 2.5     & \(\MN[1]\)     & standard & 0.002250      & ---        & 2 & 21e-05                          & 8 & 27e-04                               \\
    Source-beam  & 1200              & 2.5     & \(\MN[10]\)    & new      & ---           & 1e-03      & 2 & 21e-07                          & 1 & 54e-06                               \\
    Source-beam  & 1200              & 2.5     & \(\MN[10]\)    & standard & 0.002250      & ---        & 2 & 95e-05                          & 8 & 20e-04                               \\
    Source-beam  & 1200              & 2.5     & \(\MN[50]\)    & new      & ---           & 1e-03      & 5 & 35e-07                          & 4 & 62e-07                               \\
    Source-beam  & 1200              & 2.5     & \(\MN[50]\)    & standard & 0.002250      & ---        & 2 & 28e-05                          & 8 & 26e-04                               \\
    Source-beam  & 1200              & 2.5     & \(\MN[100]\)   & new      & ---           & 1e-03      & 1 & 60e-06                          & 1 & 55e-06                               \\
    Source-beam  & 1200              & 2.5     & \(\MN[100]\)   & standard & 0.002250      & ---        & 2 & 24e-05                          & 8 & 26e-04                               \\
  \end{tabular}
  \caption[Errors for the one-dimensional test cases.]{\(\Lp{1}\)/\(\Lp{\infty}\) errors compared to reference solution (new scheme with \(\rktol = 10^{-6}\))
    for the one-dimensional test cases.}\label{tab:AdditionalErrors1d}
\end{table}

\begin{table}[htbp]
  \centering
  \small
  \resizebox{0.97\textwidth}{!}{\begin{minipage}{\textwidth}
      \begin{tabular}{l c c l c c c r@{.}l r@{.}l}
        Test case    & \(\gridsize\) & \(\tf\) & Model          & Scheme   & \(\timestep\) & \(\rktol\) & \multicolumn{2}{c}{\(\LpError{1}\)} & \multicolumn{2}{c}{\(\LpError{\infty}\)} \\
        \midrule
        Point-source & \(50^3\)      & 0.75    & \(\MN[1]\)     & new      & ---           & 1e-02      & 1 & 35e-04                          & 6 & 56e-05                               \\
        Point-source & \(50^3\)      & 0.75    & \(\MN[1]\)     & standard & 0.020785      & ---        & 1 & 62e-02                          & 1 & 24e-02                               \\
        Point-source & \(50^3\)      & 0.75    & \(\MN[2]\)     & new      & ---           & 1e-02      & 1 & 36e-04                          & 1 & 13e-04                               \\
        Point-source & \(50^3\)      & 0.75    & \(\MN[2]\)     & standard & 0.020785      & ---        & 1 & 56e-02                          & 1 & 45e-02                               \\
        Point-source & \(50^3\)      & 0.75    & \(\MN[3]\)     & new      & ---           & 1e-02      & 1 & 68e-04                          & 8 & 16e-05                               \\
        Point-source & \(50^3\)      & 0.75    & \(\MN[3]\)     & standard & 0.020785      & ---        & 1 & 40e-02                          & 9 & 21e-03                               \\
        Point-source & \(50^3\)      & 0.75    & \(\MN[4]\)     & new      & ---           & 1e-02      & 1 & 74e-04                          & 7 & 35e-05                               \\
        Point-source & \(50^3\)      & 0.75    & \(\MN[4]\)     & standard & 0.020785      & ---        & 1 & 30e-02                          & 7 & 88e-03                               \\
        Point-source & \(50^3\)      & 0.75    & \(\HFMN[6]\)   & new      & ---           & 1e-02      & 2 & 97e-04                          & 5 & 92e-04                               \\
        Point-source & \(50^3\)      & 0.75    & \(\HFMN[6]\)   & standard & 0.020785      & ---        & 1 & 27e-02                          & 2 & 43e-02                               \\
        Point-source & \(50^3\)      & 0.75    & \(\HFMN[18]\)  & new      & ---           & 1e-02      & 2 & 72e-04                          & 2 & 34e-04                               \\
        Point-source & \(50^3\)      & 0.75    & \(\HFMN[18]\)  & standard & 0.020785      & ---        & 1 & 34e-02                          & 1 & 48e-02                               \\
        Point-source & \(50^3\)      & 0.75    & \(\HFMN[66]\)  & new      & ---           & 1e-02      & 2 & 86e-04                          & 1 & 51e-04                               \\
        Point-source & \(50^3\)      & 0.75    & \(\HFMN[66]\)  & standard & 0.020785      & ---        & 1 & 30e-02                          & 8 & 56e-03                               \\
        Point-source & \(50^3\)      & 0.75    & \(\PMMN[32]\)  & new      & ---           & 1e-02      & 4 & 37e-05                          & 2 & 84e-05                               \\
        Point-source & \(50^3\)      & 0.75    & \(\PMMN[32]\)  & standard & 0.020785      & ---        & 1 & 31e-02                          & 8 & 02e-03                               \\
        Point-source & \(50^3\)      & 0.75    & \(\PMMN[128]\) & new      & ---           & 1e-02      & 1 & 24e-05                          & 1 & 14e-05                               \\
        Point-source & \(50^3\)      & 0.75    & \(\PMMN[128]\) & standard & 0.020785      & ---        & 1 & 30e-02                          & 7 & 66e-03                               \\
        \midrule
        Checkerboard & \(70^3\)      & 3.2     & \(\MN[1]\)     & new      & ---           & 1e-02      & 4 & 50e-05                          & 3 & 67e-06                               \\
        Checkerboard & \(70^3\)      & 3.2     & \(\MN[1]\)     & standard & 0.051962      & ---        & 2 & 10e-02                          & 2 & 35e-02                               \\
        Checkerboard & \(70^3\)      & 3.2     & \(\MN[2]\)     & new      & ---           & 1e-02      & 5 & 10e-05                          & 7 & 04e-06                               \\
        Checkerboard & \(70^3\)      & 3.2     & \(\MN[2]\)     & standard & 0.051962      & ---        & 2 & 36e-02                          & 3 & 27e-02                               \\
        Checkerboard & \(70^3\)      & 3.2     & \(\MN[3]\)     & new      & ---           & 1e-02      & 4 & 54e-05                          & 6 & 06e-06                               \\
        Checkerboard & \(70^3\)      & 3.2     & \(\MN[3]\)     & standard & 0.051962      & ---        & 2 & 43e-02                          & 2 & 85e-02                               \\
        Checkerboard & \(70^3\)      & 3.2     & \(\PMMN[32]\)  & new      & ---           & 1e-02      & 4 & 29e-05                          & 5 & 52e-06                               \\
        Checkerboard & \(70^3\)      & 3.2     & \(\PMMN[32]\)  & standard & 0.051962      & ---        & 2 & 41e-02                          & 2 & 93e-02                               \\
        Checkerboard & \(70^3\)      & 3.2     & \(\HFMN[6]\)   & new      & ---           & 1e-02      & 1 & 14e-04                          & 1 & 54e-05                               \\
        Checkerboard & \(70^3\)      & 3.2     & \(\HFMN[6]\)   & standard & 0.051962      & ---        & 2 & 25e-02                          & 2 & 75e-02                               \\
        Checkerboard & \(70^3\)      & 3.2     & \(\HFMN[18]\)  & new      & ---           & 1e-02      & 1 & 19e-04                          & 1 & 48e-05                               \\
        Checkerboard & \(70^3\)      & 3.2     & \(\HFMN[18]\)  & standard & 0.051962      & ---        & 2 & 39e-02                          & 2 & 95e-02                               \\
        \midrule
        Shadow       & \(18000\)     & 20      & \(\MN[1]\)     & new      & ---           & 1e-02      & 1 & 34e-08                          & 3 & 84e-09                               \\
        Shadow       & \(18000\)     & 20      & \(\MN[1]\)     & standard & 0.103923      & ---        & 4 & 32e-02                          & 9 & 14e-03                               \\
        Shadow       & \(18000\)     & 20      & \(\MN[1]\)     & standard & 0.040000      & ---        & 1 & 12e-02                          & 5 & 28e-03                               \\
        Shadow       & \(18000\)     & 20      & \(\MN[2]\)     & new      & ---           & 1e-02      & 5 & 98e-10                          & 1 & 25e-10                               \\
        Shadow       & \(18000\)     & 20      & \(\MN[2]\)     & standard & 0.103923      & ---        & 3 & 40e-02                          & 1 & 06e-02                               \\
        Shadow       & \(18000\)     & 20      & \(\MN[2]\)     & standard & 0.040000      & ---        & 8 & 94e-03                          & 4 & 42e-03                               \\
        Shadow       & \(18000\)     & 20      & \(\PMMN[32]\)  & new      & ---           & 1e-02      & 2 & 07e-10                          & 7 & 18e-11                               \\
        Shadow       & \(18000\)     & 20      & \(\PMMN[32]\)  & standard & 0.103923      & ---        & 3 & 30e-02                          & 9 & 87e-03                               \\
        Shadow       & \(18000\)     & 20      & \(\PMMN[32]\)  & standard & 0.040000      & ---        & 8 & 77e-03                          & 4 & 39e-03                               \\
        Shadow       & \(18000\)     & 20      & \(\PMMN[128]\) & new      & ---           & 1e-02      & 1 & 29e-10                          & 4 & 14e-11                               \\
        Shadow       & \(18000\)     & 20      & \(\PMMN[128]\) & standard & 0.103923      & ---        & 3 & 27e-02                          & 9 & 77e-03                               \\
        Shadow       & \(18000\)     & 20      & \(\PMMN[128]\) & standard & 0.040000      & ---        & 8 & 72e-03                          & 4 & 39e-03                               \\
        Shadow       & \(18000\)     & 20      & \(\HFMN[6]\)   & new      & ---           & 1e-02      & 3 & 42e-08                          & 4 & 82e-09                               \\
        Shadow       & \(18000\)     & 20      & \(\HFMN[6]\)   & standard & 0.103923      & ---        & 2 & 75e-02                          & 7 & 22e-03                               \\
        Shadow       & \(18000\)     & 20      & \(\HFMN[6]\)   & standard & 0.040000      & ---        & 7 & 61e-03                          & 3 & 55e-03                               \\
        Shadow       & \(18000\)     & 20      & \(\HFMN[18]\)  & new      & ---           & 1e-02      & 1 & 05e-09                          & 2 & 73e-10                               \\
        Shadow       & \(18000\)     & 20      & \(\HFMN[18]\)  & standard & 0.103923      & ---        & 3 & 04e-02                          & 9 & 69e-03                               \\
        Shadow       & \(18000\)     & 20      & \(\HFMN[18]\)  & standard & 0.040000      & ---        & 8 & 30e-03                          & 4 & 31e-03                               \\
        Shadow       & \(18000\)     & 20      & \(\HFMN[66]\)  & new      & ---           & 1e-02      & 2 & 61e-10                          & 9 & 21e-11                               \\
        Shadow       & \(18000\)     & 20      & \(\HFMN[66]\)  & standard & 0.103923      & ---        & 3 & 29e-02                          & 9 & 69e-03                               \\
      \end{tabular}
    \end{minipage}}
  \caption[Errors for the three-dimensional test cases.]{\(\Lp{1}\)/\(\Lp{\infty}\) errors compared to reference solution (new scheme with \(\rktol = 10^{-6}\))
    for the three-dimensional test cases.}\label{tab:AdditionalErrors3d}
\end{table}

\end{document}